\documentclass[11pt,a4paper]{amsart}

\usepackage{amsfonts}
\usepackage{amssymb}
\usepackage{amsmath}
\usepackage{amsthm}
\usepackage{amssymb}
\usepackage[english]{babel} 
\usepackage{bm}
\usepackage{color}
\usepackage{hyperref}
\usepackage{enumitem}
\usepackage{graphicx}
\usepackage{hyperref} 
\usepackage[utf8]{inputenc} 
\usepackage{lscape}
\usepackage{multicol}
\usepackage{multirow}
\usepackage{soul}
\usepackage{url}
\usepackage{xcolor}
\usepackage[normalem]{ulem}
\usepackage{enumitem}
\usepackage{soul}

\usepackage{listings}

	\graphicspath{{images/}} 

\newcommand{\conv}  {\operatorname{conv} }
\newcommand{\width}  {\operatorname{width} }
\newcommand{\length}{\operatorname{length}}
\renewcommand{\int}  {\operatorname{int} }
\newcommand{\vol}  {\operatorname{vol} }
\newcommand{\zono}{\operatorname Z}
\newcommand{\fractional}[1]  { \{#1\} }
\newcommand{\double}[1]  { {#1}^{\pm 2} }
\newcommand{\mucosimple}{\mu^{\text{(c)}}}

\newcommand{\LR}{\mathrm{LR}}
\newcommand{\sLR}{\mathrm{sLR}}

\newtheorem{theorem}{Theorem}[section]
\newtheorem{corollary}[theorem]{Corollary}
\newtheorem{lemma}[theorem]{Lemma}
\newtheorem{proposition}[theorem]{Proposition}

\newtheorem{example}[theorem]{Example}
\newtheorem{remark}[theorem]{Remark}

\newtheorem{definition}[theorem]{Definition}
\newtheorem{conjecture}[theorem]{Conjeture}
\newtheorem{question}[theorem]{Question}

\newcommand{\bb}{\mathbf{b}}
\newcommand{\cc}{\mathbf{c}}
\newcommand{\ee}{\mathbf{e}}

\newcommand{\oo}{\mathbf{0}}
\newcommand{\pp}{\mathbf{p}}
\newcommand{\qq}{\mathbf{q}}

\newcommand{\uu}{\mathbf{u}}
\newcommand{\vv}{\mathbf{v}}
\newcommand{\kk}{\mathbf{k}}
\newcommand{\ww}{\mathbf{w}}
\newcommand{\xx}{\mathbf{x}}
\renewcommand{\ss}{\mathbf{s}}
\newcommand{\yy}{\mathbf{y}}
\newcommand{\zz}{\mathbf{z}}
\newcommand{\half}{\mathbf{\tfrac12}}

\newcommand{\ZZ}{\mathbb{Z}}

\newcommand{\RR}{\mathbb{R}}

\DeclareMathOperator{\dist}{dist}

\newcommand{\R}{\mathbb R}
\newcommand{\N}{\mathbb N}
\newcommand{\Z}{\mathbb Z}

\usepackage[colorinlistoftodos, textwidth=3cm, textsize=small]{todonotes}

\usepackage{booktabs}

\begin{document}

\title[Coloopless zonotopes and the Lonely Runner Conjecture]{Coloopless zonotopes and counterexamples to the Shifted Lonely Runner Conjecture}

    \thanks{%
        Supported by grant PID2022-137283NB-C21 
        funded by MCIN/AEI/10.13039/ 501100011033.%
    }%

\author[M.~Blanco]{M\'onica Blanco}
\author[F.~Criado]{Francisco Criado}
\author[F.~Santos]{Francisco Santos}

\address[M.~Blanco, F.~Santos]{
    Departmento de Matem\'aticas, Estad\'istica y Computa\-ci\'on\\
    Universidad de Cantabria\\
         Santander\\
         Spain}
\email{monica.blancogomez@unican.es,
francisco.santos@unican.es}

\address[F.~Criado]{Departamento de Matem\'aticas \\
    CUNEF Universidad\\
        Madrid \\ 
        Spain}
\email{francisco.criado@cunef.edu}

\begin{abstract}
Henze and Malikiosis (2017) have shown that the Lonely Runner Conjecture (LRC) can be restated as a convex-geometric question on the so-called \emph{LR zonotopes}, lattice zonotopes with one more generator than their dimension. This relation naturally suggests a more general statement,  the \emph{shifted LRC},  the zonotopal version of which concerns a classical parameter, the covering radius. 

Theorems A and B in Malikiosis-Schymura-Santos (2025) use the zonotopal restatements of both the original and the shifted LRC to prove a linearly-exponential bound on the size of the (integer) speeds for which the conjectures need to be checked in order to establish them for each fixed number of runners; in the shifted version their statement and proof rely on a certain assumption on two-dimensional rational vector configurations, the so-called ``Lonely Vector Property''.

In this paper we do two things:

\begin{enumerate}
\item We push the analogies between the two versions of LRC and their zonotopal counterparts, in particular highlighting that the proofs of Theorems A and B in Malikiosis-Schymura-Santos are more transparent, and the statments more general, if regarded in terms of two quite general classes of lattice zonotopes: the coloopless zonotopes that we introduce here and the cosimple ones, already defined by them. These classes contain all primitive  zonotopes of widths at least two and at least three, respectively.

\item We show explicit counterexamples to both the shifted Lonely Runner Conjecture (starting at $n=5$) and to the Lonely Vector Property (starting at $n=12$).
\end{enumerate}
\end{abstract}
\maketitle
\setcounter{tocdepth}{1}
\tableofcontents

\section{Introduction}

\subsection*{The Lonely Runner Conjectures}

The Lonely Runner Conjecture is part (1) of the following statement. 
It was posed by Wills in 1968~\cite{willslrc}, although the underlying problem appears already in earlier work of his~\cite{willsthesis, wills65,wills67}.
 The shifted version stated in part (2) first appeared in print as~\cite[Conjecture 10]{sLRC} (2019), where the authors attribute it to a (recent) personal communication of Wills as well. Although we state both as ``conjectures'', in this paper we show explicit counterexamples to the shifted version.

\begin{conjecture}[Lonely runner conjecture (LRC)]
\label{conj:conj}
Let $v_1,\dots,v_n$ be non-zero real numbers. Then:
\begin{enumerate}[label=\upshape(\roman*),ref=\thetheorem .\roman*]
\item 
\label{conj:LRC}
(Lonely Runner Conjecture, LRC). There is a $t\in \R$ such that
\[
\dist(v_i t, \Z) \ge \frac1{n+1}, \qquad \forall i\in \{1,\dots,n\}.
\] 

\item 
\label{conj:sLRC}
(Shifted Lonely Runner Conjecture, sLRC). If the $|v_i|$ are all different, for every $s_1,\dots,s_n\in \R$
there is a  $t\in\R$ such that
\[
\dist(s_i + v_i t, \Z) \ge \frac1{n+1}, \qquad \forall i\in \{1,\dots,n\}.
\] 
\end{enumerate}
\end{conjecture}

The name of the conjectures comes from interpreting the numbers $v_i$ as the velocities of $n$ people running along a closed track of length one. In version (i) all runners start at $x=0$ and in the shifted version (ii) each runner starts at an initial position $x=s_i$, where $s_i$ is important only modulo $1$. In both cases the conjecture is that there is a time $t$ when an ``$(n+1)$-th runner'', who stayed at the origin, is ``lonely'', meaning that it is at distance at least $\frac1{n+1}$ from the rest.
In the shifted version the $v_i$ are assumed different since otherwise taking all $v_i$ equal to one another and $s_i = \tfrac{i}{n}$ is a trivial counterexample.
In the non-shifted version, in contrast, allowing for several $v_i$'s to be equal does not give more generality: an instance with repeated velocities  is equivalent to an instance with fewer runners.

\begin{remark}
Sometimes the conjectures are stated allowing for the extra runner to have her own velocity $v_{n+1}$; but since the question is clearly invariant under adding the same constant to all $v_i$ or to all $s_i$,  assuming $v_{n+1}=s_{n+1}=0$ is no loss of generality. Yet, because of the interpretation via this ``extra runner'', in the literature the conjecture as we stated it is considered to be the case of ``$n+1$ runners''. E.g., \cite{7lrc, Rosenfeld1, Rosenfeld2, Trakulthongchai} solve the cases $n=6,7,8,9$ of the LRC but their titles speak of seven, eight, nine and ten runners.
\end{remark}

One reason why the lonely runner conjecture has attracted attention is that it can be interpreted in various fashions in terms of \emph{view obstruction}, \emph{billiard trajectories} in a cubical pool, or rectilinear trajectories in the torus $\RR^n/\ZZ^n$, among others. See~\cite{NamingLRC, cusickviewob, Schoenberg76} or the recent survey~\cite{perarnauserra2024thelonely}. 
All these interpretations follow one way or another from the following definition and lemma.

\begin{definition}
\label{def:loneliness}
Let $\vv=(v_1,\dots, v_n)\in (\RR\setminus \{0\})^n$ be a velocity vector. 
\begin{enumerate}[label=\upshape(\roman*),ref=\thetheorem .\roman*]
\item 
\label{def:loneliness-LR}
The (unshifted) \emph{loneliness gap} of $\vv$ is 
\[
\gamma(\vv) := \sup_{t\in \RR} \min_{i\in [n]}\dist(v_i t, \Z).
\]
\item The \emph{loneliness gap} of $\vv$ \emph{shifted by} an $\ss =(s_1,\dots,s_n) \in [0,1]^n$ is 
\label{def:loneliness-sLR}
\[
\gamma(\vv; \ss):= \sup_{t\in \RR} \min_{i\in [n]}\dist(s_i + v_i t, \Z),
\]
and the \emph{shifted loneliness gap} of $\vv$ is $\gamma^{\min}(\vv):= \min_{\ss\in [0,1]^n}\,\gamma(\vv; \ss)$.
\end{enumerate}
\end{definition}

In this language, Conjectures~\ref{conj:LRC} and~\ref{conj:sLRC} say, respectively, that $\gamma(\vv)$ and $\gamma^{\min}(\vv)$ are at least 
$1/(n+1)$, the former for all $\vv$ with no zero entries and the latter for all $\vv$ with no zero or repeated (in absolute value) entries.

\begin{lemma}
\label{lemma:loneliness}
Let $\vv$ be as in Definition~\ref{def:loneliness}. Let $\half:=(\tfrac12,\dots,\tfrac12)\in \RR^n$, and let $\dist_\infty(\cdot, \cdot)$ denote the $L_\infty$ distance in $\RR^n$.
Then:
\begin{align*}
\gamma(\vv;\ss)=\tfrac12 - \dist_\infty( \ss + \RR \vv, \half + \ZZ^n).
\end{align*}
\end{lemma}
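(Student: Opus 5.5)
The plan is to reduce the identity to a pointwise statement for a single time $t$ and a single coordinate, and then pass to the supremum over $t$. The key elementary fact is that for every real $x$,
\[
\dist(x,\Z) = \tfrac12 - \dist\bigl(x, \tfrac12+\Z\bigr).
\]
To prove it, reduce $x$ modulo $1$ to a representative $x\in[0,1]$. There $\dist(x,\Z)=\min(x,1-x)$ and $\dist(x,\tfrac12+\Z)=|x-\tfrac12|$. The two add up to $\tfrac12$: if $x\le\tfrac12$ they are $x$ and $\tfrac12-x$, and symmetrically otherwise. Both sides are $1$-periodic, so the reduction loses nothing.

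Next fix $t$ and put $\xx=\ss+t\vv$. Applying the scalar identity coordinatewise gives
\[
\min_i \dist(s_i+v_it,\Z)=\tfrac12-\max_i \dist\bigl(s_i+v_it,\tfrac12+\Z\bigr).
\]
I then identify $\max_i \dist(x_i,\tfrac12+\Z)$ with $\dist_\infty(\xx,\half+\ZZ^n)$. The infimum over $\kk\in\ZZ^n$ of $\max_i|x_i-\tfrac12-k_i|$ decouples, because each $k_i$ can be chosen independently to minimise its own coordinate. Hence the infimum of the max equals the max of the coordinatewise infima, and each of these infima is attained.

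Finally, take the supremum over $t\in\RR$:
\[
\gamma(\vv;\ss)=\tfrac12-\inf_{t}\dist_\infty(\ss+t\vv,\half+\ZZ^n)=\tfrac12-\dist_\infty(\ss+\RR\vv,\half+\ZZ^n).
\]
Here the distance between two sets is read as the infimum of pointwise distances. No step is a real obstacle. The only point needing care is the interchange of infimum and maximum in the middle step, and that is justified by the independence of the coordinates $k_i$.
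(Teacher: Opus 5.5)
Your proof is correct and follows essentially the same route as the paper. The paper also applies the coordinatewise identity $\dist(x,\Z)=\tfrac12-\dist(x,\tfrac12+\Z)$, identifies $\max_i\dist(x_i,\tfrac12+\Z)$ with $\dist_\infty(\xx,\half+\ZZ^n)$, and then passes to the supremum over $t$; you simply write out the details it leaves implicit (the scalar identity and the decoupling of the $k_i$).
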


\begin{proof}
This follows from the fact that  $\forall \xx =(x_1,\dots, x_n) \in \RR^n$ one has
\[
\min_{i\in [n]} \dist(x_i, \Z) = \tfrac 12 - \max_{i\in [n]}  \dist(x_i, \tfrac12 + \Z)
= \tfrac 12 - \dist_\infty(\xx, \half + \Z^n).
\qedhere
\]
\end{proof}

In this formulation it is natural to quotient $\R^n$ by $\Z^n$ to obtain an $n$-dimensional torus. Since every irrational line $\RR \vv/\ZZ^n$ in the torus contains rational lines in its closure, when looking for the maximum loneliness gap among all velocity vectors there is no loss of generality in assuming $\vv$ to be rational. This was implicit in \cite{BetkeWills,willslrc} and was explicitly proved in \cite{sixrunners,zonorunners}. Since the problem is also invariant under changing signs of individual velocities or multiplying them all by the same non-zero factor, we can also assume that 
$
\vv\in \Z_{>0}^n$ and $\gcd(\vv)=1.
$
This gets us to the following reformulations of the two conjectures, present (with various phrasings) in several of the papers cited so far.

\begin{corollary}
\label{coro:loneliness}
\begin{enumerate}[label=\upshape(\roman*),ref=\thetheorem .\roman*]
\item 
\label{coro:loneliness-LR}
Conjecture~\ref{conj:LRC} is equivalent to the following statement: ``For any $\vv\in \Z_{>0}^n$ with $\gcd(\vv)=1$ the
{loneliness gap} of $\vv$ is at least $1/(n+1)$. Equivalently,
\begin{align*}
 \dist_\infty( \RR \vv, \half + \ZZ^n) \le \tfrac{n-1}{2(n+1)}.\text{''}
\end{align*}

\item 
\label{coro:loneliness-sLR}
Conjecture~\ref{conj:sLRC} is equivalent to the following statement: ``For any $\vv\in \Z_{>0}^n$ with with $\gcd(\vv)=1$ and no repeated entries the
{shifted loneliness gap} of $\vv$ is at least $1/(n+1)$. Equivalently,
\begin{align*}
\forall \ss\in \RR^n, \quad \dist_\infty( \ss + \RR \vv, \half + \ZZ^n) \le \tfrac{n-1}{2(n+1)}.\text{''}
\end{align*}
\end{enumerate}

In both cases there is no loss of generality in assuming $\gcd(\vv)=1$.
\end{corollary}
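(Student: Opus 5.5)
The corollary follows from Lemma~\ref{lemma:loneliness} once we reduce general velocity vectors to positive integer vectors with $\gcd 1$. The plan has three steps.

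\textbf{Step 1: normalize signs, scaling and the gcd.} The quantities $\gamma(\vv)$ and $\gamma(\vv;\ss)$ do not change under the following operations.
\begin{itemize}
\item Flipping the sign of $v_i$ together with $s_i$, since $\dist(-x,\Z)=\dist(x,\Z)$. This sends $\ss$ to another admissible shift, so $\gamma^{\min}$ is also unchanged.
\item Replacing $\vv$ by $\lambda\vv$ with $\lambda\neq 0$, via the substitution $t\mapsto t/\lambda$.
\end{itemize}
Both operations preserve the hypotheses: entries stay nonzero, and "no repeated absolute values" is kept. So any rational $\vv$ can be rescaled to lie in $\Z_{>0}^n$ with $\gcd(\vv)=1$. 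The same scaling also explains the final sentence of the corollary: any integer vector is a multiple of a primitive one with the same gap.

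\textbf{Step 2: reduce to rational $\vv$.} This is the main obstacle. For the unshifted case I would cite the literature the paper mentions (\cite{sixrunners,zonorunners}) or argue as follows. Take an irrational $\vv$. The closure of the line $\RR\vv$ in the torus $\RR^n/\Z^n$ is a subtorus $T$ of dimension $d\ge 2$. Then:
\begin{itemize}
\item $T$ contains rational lines $\RR\ww$ with $\ww$ an integer vector, and we may choose $\ww$ to have no zero coordinates. This is possible because no coordinate of $\vv$ is identically zero on $T$. In the shifted case we also choose $\ww$ so that no absolute values repeat: the coordinates of $\vv$ are distinct functionals on $T$, and a generic direction in the lattice of $T$ separates them.
\item The sup-over-$t$ gap of $\vv$ equals the sup over the closure, which is at least the gap of $\ww$, since $\RR\ww\subseteq T$. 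In the shifted case the same holds with $\ss+T$ in place of $T$.
\end{itemize}
So a counterexample with irrational $\vv$ would produce a rational counterexample $\ww$. Conversely, the rational case is a special case of the general one, which gives the equivalence.

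\textbf{Step 3: apply Lemma~\ref{lemma:loneliness}.} With $\vv\in\Z_{>0}^n$ primitive, the lemma gives $\gamma(\vv;\ss)=\tfrac12-\dist_\infty(\ss+\RR\vv,\half+\Z^n)$. Hence $\gamma\ge \tfrac1{n+1}$ is equivalent to
\[
\dist_\infty \le \tfrac12-\tfrac1{n+1}=\tfrac{n-1}{2(n+1)}.
\]
Taking $\ss=0$ gives part (i). Taking the condition for all $\ss$, i.e.\ the minimum over shifts, gives part (ii). Shifts in $\RR^n$ reduce to shifts in $[0,1]^n$ modulo $\Z^n$, and the minimum is attained by compactness and continuity of $\ss\mapsto\gamma(\vv;\ss)$.
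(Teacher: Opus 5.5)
This is essentially the paper's own argument. In the paragraph preceding the corollary, the paper justifies it by three ingredients: invariance under sign changes and scaling; the fact that the closure of an irrational line $\RR\vv/\Z^n$ is a subtorus containing rational lines, for which it cites \cite{sixrunners,zonorunners} (you instead spell out the choice of a generic integer $\ww$ with nonzero coordinates whose absolute values are pairwise distinct in the shifted case); and Lemma~\ref{lemma:loneliness}.

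The one point that both you and the paper pass over is that for irrational $\vv$ the supremum over $t$ need not be attained. So $\gamma(\vv)\ge\frac1{n+1}$ is formally weaker than the existence of a $t$ demanded in Conjecture~\ref{conj:conj}. The paper simply identifies the two (``In this language, Conjectures \dots\ say \dots''). Closing this rigorously would need strict inequality on some open subset of the closure subtorus, which the dense line then visits.
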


\subsection*{Lonely runner zonotopes}
Let $\vv\in \Z^n$ be an integer vector and
consider a projection $\pi_\vv: \R^n \to \R^{n-1}$ with $\ker(\pi_\vv)= \R\vv$ and $\pi(\Z^n) = \Z^{n-1}$, which always exists. Let $Z(\vv): = \pi([0,1]^n)$, which is an $(n-1)$-zonotope with $n$ generators $\uu_1,\dots,\uu_n$ satisfying $\sum_i v_i \uu_i =0$. We call $Z(\vv)$ a \emph{lonely runner zonotope} (or LR zonotope, for short) with \emph{volume vector} $\vv$, since the entries $|v_i|$ of $\vv$ are the volumes of the parallelepipeds into which $Z(\vv)$ naturally decomposes. See details in Section~\ref{sec:zonotopes}, more specifically Definition~\ref{def:LRZ}.
It is proved in~\cite{Malikiosis2024LinExpCheckingLRC} (see also \cite[Proposition 2.2]{ACS4slrz}) that, when $\gcd(\vv)=1$, $Z(\vv)$ is determined by $\vv$ modulo affine lattice equivalence.

Recall that every centrally symmetric convex body $K$ with center $\cc$ defines a \emph{Minkowski gauge} $\dist_K$ as follows:
\[
\dist_K(\pp,\qq) := \min\{\lambda \geq 0 :  \qq - \pp \in  \lambda(K-\cc)\}.
\]

Since the $L_\infty$ distance from Lemma~\ref{lemma:loneliness} and Corollary~\ref{coro:loneliness} is the Minkowski gauge associated to the unit cube $[0,1]^n$ centered at $\half$, and since 
\[
\dist_{K}(\pp,\qq) = \dist_{\pi(K)}(\pi(\pp), \pi(\qq)),
\]
Lemma~\ref{lemma:loneliness} and Corollary~\ref{coro:loneliness} 
 become statements about $\dist_{Z(\vv)}$.
 Namely:

\begin{lemma}
\label{lemma:loneliness-Z}
Let $\vv\in \Z_{>0}^n$ be an integer vector with $\gcd(\vv)=1$.
Let $Z\subset \RR^{n-1}$ be the lonely runner zonotope with volume vector $\vv$ and $\cc\in \tfrac12\Z^{n-1}$ the center of $Z$.
Then:
\begin{enumerate}[label=\upshape(\roman*),ref=\thetheorem .\roman*]
\item 
\label{lemma:loneliness-LRZ}
$
\gamma(\vv) =\tfrac12 - \tfrac12 \dist_Z( \cc,  \ZZ^{n-1}).
$
\medskip
\item 
\label{lemma:loneliness-sLRZ}
$
\gamma^{\min}(\vv) = \tfrac12 - \tfrac12 \sup_{\pp \in \RR^{n-1}} \dist_Z(\pp, \ZZ^{n-1}).
$\end{enumerate}
\end{lemma}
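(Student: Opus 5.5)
We start from Lemma~\ref{lemma:loneliness}, $\gamma(\vv;\ss)=\tfrac12-\dist_\infty(\ss+\RR\vv,\half+\ZZ^n)$, and push everything through $\pi=\pi_\vv$. Let $C=[0,1]^n$ with center $\half$, and $Z=\pi(C)$ with center $\cc=\pi(\half)$. Since $C-\half=\tfrac12[-1,1]^n$, the $L_\infty$ distance is \emph{twice} the cube gauge: $\dist_\infty(\xx,\yy)=\tfrac12\dist_C(\xx,\yy)$. This accounts for the factor $\tfrac12$ in front of $\dist_Z$.

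The first step is to prove that for a line $L=\xx+\RR\vv$ and a point $\yy$,
\[
\min_{\zz\in L}\dist_C(\zz,\yy)=\dist_Z(\pi(\xx),\pi(\yy)).
\]
This is the displayed identity $\dist_K(\pp,\qq)=\dist_{\pi(K)}(\pi(\pp),\pi(\qq))$, read correctly, i.e.\ with a minimum over the fiber. For ``$\ge$'': if $\yy-\zz\in\lambda(C-\half)$, applying $\pi$ gives $\pi(\yy)-\pi(\xx)\in\lambda(Z-\cc)$. For ``$\le$'': if $\pi(\yy)-\pi(\xx)\in\lambda(Z-\cc)=\pi(\lambda(C-\half))$, pick a preimage $\ww\in\lambda(C-\half)$. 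Then $\yy-\xx-\ww\in\ker\pi=\RR\vv$, so $\zz:=\yy-\ww\in L$ achieves $\lambda$. Minima are attained by compactness.

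Next, take the infimum over the lattice translates $\yy\in\half+\ZZ^n$. Since $\pi(\ZZ^n)=\ZZ^{n-1}$, we get $\pi(\half+\ZZ^n)=\cc+\ZZ^{n-1}$. Using the central symmetry of $Z$ (so $\dist_Z$ is translation invariant and symmetric), this gives
\[
\dist_\infty(\ss+\RR\vv,\half+\ZZ^n)=\tfrac12\,\dist_Z\bigl(\pi(\ss)-\cc,\ZZ^{n-1}\bigr).
\]

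For (i), set $\ss=\oo$. This gives $\tfrac12\dist_Z(-\cc,\ZZ^{n-1})=\tfrac12\dist_Z(\cc,\ZZ^{n-1})$, using $\cc\in\tfrac12\ZZ^{n-1}$, hence $-\cc\equiv\cc$ modulo $\ZZ^{n-1}$. Combined with Lemma~\ref{lemma:loneliness}, this yields (i).

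For (ii), $\gamma^{\min}(\vv)=\min_{\ss}\gamma(\vv;\ss)=\tfrac12-\tfrac12\sup_{\ss}\dist_Z(\pi(\ss)-\cc,\ZZ^{n-1})$. As $\ss$ ranges over $[0,1]^n$, the point $\pi(\ss)-\cc$ ranges over $Z-\cc$. By $\ZZ^n$-periodicity in $\ss$ this is the same as ranging over all of $\RR^{n-1}$ modulo $\ZZ^{n-1}$, because $\pi$ is surjective and $\dist_Z(\cdot,\ZZ^{n-1})$ is $\ZZ^{n-1}$-periodic. This gives (ii), with the sup attained.

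The main obstacle is the bookkeeping rather than any single deep step. One must interpret the projection identity as a minimum over fibers and justify that lattice points project onto the lattice. Beyond that, the care lies in tracking the factor $\tfrac12$ between $L_\infty$ and the gauge of $[0,1]^n$, and the shift by $\cc$, where $\cc\in\tfrac12\ZZ^{n-1}$ is what makes the sign irrelevant in (i).
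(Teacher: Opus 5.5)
Your proof is correct and follows the same route as the paper. The paper justifies this lemma only by the remark preceding it: the $L_\infty$ distance is the gauge of the cube centered at $\half$, and gauges are preserved under $\pi_\vv$. You supply the details that remark glosses over, namely the factor $\tfrac12$, reading $\dist_K(\pp,\qq)=\dist_{\pi(K)}(\pi(\pp),\pi(\qq))$ as a minimum over the fiber $\pp+\RR\vv$, the identity $\pi(\half+\ZZ^n)=\cc+\ZZ^{n-1}$, and the periodicity argument for (ii). Two cosmetic points: your phrase ``the $L_\infty$ distance is twice the cube gauge'' has the roles reversed, though the formula $\dist_\infty=\tfrac12\dist_C$ that you actually use is right; and the hypothesis $\gcd(\vv)=1$ is what lets you identify the LR zonotope with volume vector $\vv$ with $\pi_\vv([0,1]^n)$, which you do silently.
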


The number $\sup_{\pp \in \RR^{n-1}} \dist_Z(\pp, \ZZ^{n-1})$ that appears in part (ii) equals the smallest dilation $\lambda>0$ such that $\lambda Z$ contains a fundamental domain of $\Z^{n-1}$. That is, it coincides with the so-called \emph{covering radius} of $Z$, a common parameter associated to any convex body in the presence of a lattice~\cite[p.~ 381]{gruber2007convex}. We denote it $\mu(Z)$. The number $\dist_Z( \cc,  \ZZ^{n-1})$ in part (i) is reminiscent of the first successive minimum of Minkowski, so we call it the \emph{first $\cc$-minimum} of $Z$ and denote it $\kappa(Z)$.

That is, we have the following relations between the unshifted and shifted loneliness gaps of $\vv$ and the first $\cc$-minimum and covering radius of the associated lonely runner zonotope $Z$. (These results are essentially contained in \cite{zonorunners}):

\begin{proposition}Let $Z$ be an LR zonotope with volume vector $\vv$. Then:
\label{prop:gamma-kappa-mu}
\begin{align}
\gamma(\vv) =\tfrac12 - \tfrac12 \kappa(Z), 
\qquad
\gamma^{\min}(\vv) =\tfrac12 - \tfrac12 \mu(Z).
\end{align}
\end{proposition}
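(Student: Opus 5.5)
The plan is to derive both identities from Lemma~\ref{lemma:loneliness} by transporting the $L_\infty$ distance through the projection $\pi=\pi_\vv$. The target is the equality between Minkowski gauges, $\dist_{[0,1]^n}(\pp,\qq)=\dist_{Z}(\pi(\pp),\pi(\qq))$, taken in the form where the left-hand side is minimised over the line $\pp+\RR\vv$. Throughout I will use that $\dist_{[0,1]^n}$, the gauge of the unit cube centred at $\half$, equals $2\dist_\infty$. This factor of $2$ is exactly what produces the $\tfrac12$ in front of $\kappa$ and $\mu$.

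\textbf{Step 1: the gauge identity.} I will prove, for all $\xx,\yy\in\RR^n$,
\[
\min_{t\in\RR}\dist_{[0,1]^n}(\xx+t\vv,\yy)=\dist_Z(\pi(\xx),\pi(\yy)).
\]
\begin{itemize}
\item Since $\pi$ is linear with $\pi(\half)=\cc$, it maps $\lambda([0,1]^n-\half)$ onto $\lambda(Z-\cc)$.
\item Hence $\pi(\yy)-\pi(\xx)\in\lambda(Z-\cc)$ holds if and only if $\yy-\xx-t\vv\in\lambda([0,1]^n-\half)$ for some $t$, because $\ker\pi=\RR\vv$.
\item The minimum over $t$ is attained by compactness.
\end{itemize}

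\textbf{Step 2: passing to the lattice.} Because $\pi(\ZZ^n)=\ZZ^{n-1}$, I have $\pi(\half+\ZZ^n)=\cc+\ZZ^{n-1}$. Applying Step 1 and then minimising over lattice points gives
\[
2\dist_\infty(\ss+\RR\vv,\half+\ZZ^n)=\dist_Z(\pi(\ss),\cc+\ZZ^{n-1}).
\]

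\textbf{Step 3: the unshifted case.} Take $\ss=0$, so $\pi(\ss)=0$. By central symmetry of $Z$ about $\cc$ and translation invariance of the gauge, $\dist_Z(0,\cc+\ZZ^{n-1})=\dist_Z(\cc,\ZZ^{n-1})=\kappa(Z)$. Lemma~\ref{lemma:loneliness} then gives $\gamma(\vv)=\tfrac12-\tfrac12\kappa(Z)$.

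\textbf{Step 4: the shifted case.} Take the minimum of $\gamma(\vv;\ss)$ over $\ss$, which is the same as taking the supremum of the distance. As $\ss$ ranges over $[0,1]^n$, or equivalently over $\RR^n$ since everything is $\ZZ^n$-periodic, the point $\pp=\pi(\ss)-\cc$ ranges over all of $\RR^{n-1}$ by surjectivity of $\pi$. This yields
\[
\sup_{\pp\in\RR^{n-1}}\dist_Z(\pp,\ZZ^{n-1})=\mu(Z),
\]
so $\gamma^{\min}(\vv)=\tfrac12-\tfrac12\mu(Z)$. This also settles Lemma~\ref{lemma:loneliness-Z}.

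\textbf{Main obstacle.} The remaining point is to check that this supremum equals the covering radius, i.e.\ the least $\lambda$ such that $\lambda Z+\ZZ^{n-1}=\RR^{n-1}$. The inequality $\dist_Z(\pp,\ZZ^{n-1})\le\lambda$ for all $\pp$ holds if and only if every $\pp$ lies in $\zz+\lambda(Z-\cc)$ for some lattice point $\zz$. That is the covering condition for the translate $\lambda(Z-\cc)$, and covering radius is invariant under translation of the body. The supremum is a maximum by periodicity and compactness of a fundamental domain.

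Beyond that, the care needed is in the normalisations: the factor $2$ between $\dist_\infty$ and the cube gauge, and the fact that the chosen $\pi$ is well defined up to lattice equivalence, which holds when $\gcd(\vv)=1$. If $Z(\vv)$ is defined without $\gcd(\vv)=1$, one should first divide $\vv$ by its gcd, since this changes neither the line $\RR\vv$ nor the gaps.
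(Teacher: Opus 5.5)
Your proposal is correct and follows essentially the paper's route. The paper obtains the proposition from Lemma~\ref{lemma:loneliness} by viewing the $L_\infty$ distance as the gauge of the cube and passing it through the projection onto $Z=\pi([0,1]^n)$, then identifying $\dist_Z(\cc,\ZZ^{n-1})$ with $\kappa(Z)$ and $\sup_{\pp}\dist_Z(\pp,\ZZ^{n-1})$ with $\mu(Z)$; your write-up additionally makes explicit the factor $2$ between $\dist_\infty$ and the gauge of $[0,1]^n$ centred at $\half$, and the fact that the gauge identity under projection holds only after minimising over the fibre $\xx+\RR\vv$.
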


Summing up, Conjecture~\ref{conj:LRC} (resp.~\ref{conj:sLRC}) for a particular $n$ is equivalent to the following: for every $\vv\in \Z^n_{>0}$ with coprime entries (and no repeated entries, in the sLRC case), we have that $\gamma(\vv) \geq \tfrac1n$ (resp.~$\gamma^{\min}(\vv) \geq \tfrac1n$); calling $Z(\vv)$ the LR zonotope of $\vv$, of dimension $n-1$, this is in turn equivalent to $\kappa(Z(\vv))\le \tfrac{n-2}{n}$ (respectively, to $\mu(Z(\vv))\le \tfrac{n-2}{n}$).

Put differently, Conjectures~\ref{conj:LRC} and~\ref{conj:sLRC} are equivalent to saying that the answer to the following questions is $\tfrac{n-1}{n+1} = \tfrac{d}{d+2}$ in both cases.

\begin{question}
\label{q:covering}
Let $n\in \Z_{>0}$.
\begin{enumerate}[label=\upshape(\roman*),ref=\thetheorem .\roman*]
\item 
\label{q:covering-LRZ}
What is the maximum value of the first $\cc$-minimum among all lonely runner $d$-zonotopes? Let us denote this number $\kappa_d^\LR$.

\item 
\label{q:covering-sLRZ}
What is the maximum value of the  covering radius among all lonely runner $d$-zonotopes with no repeated entries in their volume vector? Let us denote this number $\mu_d^{\sLR}$.
\end{enumerate}
\end{question}

\begin{remark}
In both cases the answer is at least $d/(d+2)$, since $\mu(\vv) \geq \kappa(\vv) = (n-1)/(n+1)$ for $\vv=(1,\dots,n)$.
If the assumption that no entries are repeated is removed in part (ii) the answer to this part is at least $d/(d+1)$, attained by $\vv=(1,\dots,1)$ with equispaced starting points.
\end{remark}

\subsection*{Finitely many volume vectors are enough}
It has been proven that in order to verify Conjecture~\ref{conj:LRC} for a given $n$ it suffices to check finitely many volume vectors $\vv$. The first such result was attained by Tao~\cite{Tao} and the best one so far is the following statement from~\cite{Malikiosis2024LinExpCheckingLRC} (a close but worse bound is in \cite{girikravitz2023structurelonelyrunnerspectra}). 
In the following statement, for a given volume vector $\vv\in \Z^n$ and each subset $S\subset [n]$ we define
\[ 
\vv_S:=\gcd(|v_i|:i\in S). 
\]

\begin{theorem}[{\cite[Theorem A]{Malikiosis2024LinExpCheckingLRC}}] \ 
\label{thm:finite-LR}
If Conjecture~\ref{conj:LRC} holds for all velocity vectors of length $n-1$ and for integer velocity vectors of length $n$ satisfying 
$\sum_{S\subseteq [n]} \vv_S \le \binom{n+1}2^{n-1}$ then it holds for all velocity vectors of length $n$.
\end{theorem}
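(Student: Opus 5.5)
We plan to pass to the zonotopal side via Proposition~\ref{prop:gamma-kappa-mu}: the claim becomes that if $\kappa(Z(\vv))\le \tfrac{n-2}{n}$ fails for some $\vv\in\Z^n_{>0}$ (a counterexample of length $n$, with LR zonotope of dimension $d=n-1$), then there is a counterexample either of length $n-1$ or with $\sum_S \vv_S\le\binom{n+1}{2}^{n-1}$. We argue by contrapositive and treat separately the cases where $Z=Z(\vv)$ has a ``degenerate'' lattice structure and where it does not.

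\emph{Step 1 (reduction when the zonotope is degenerate).} The quantities $\vv_S$ encode lattice information about $Z$: if $\vv_S$ is large for some $S$, the lattice generated by the edge vectors $\uu_i$, $i\notin S$, has large index, or equivalently $Z$ contains a coloop-type direction along which it is thin. If some generator $\uu_i$ is such that $Z$ has lattice width less than $2$ in a direction where $\uu_i$ is essential (a ``coloop'' in the lattice sense), we project along that direction to an LR zonotope of one dimension less, whose velocity vector is obtained by deleting or merging runners. We then show $\kappa$ cannot increase beyond the bound under this operation: $\kappa(Z)\le$ an affine combination of $\kappa$ of the lower-dimensional zonotope and $1$, giving $\kappa(Z)\le \tfrac{n-2}{n}$ from the length-$(n-1)$ hypothesis $\kappa\le\tfrac{n-3}{n-1}$. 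This is the classical ``inductive'' reduction (as in Tao's argument) phrased via $\dist_Z$.

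\emph{Step 2 (volume bound for non-degenerate zonotopes).} In the remaining case every projection is wide, i.e.\ $Z$ is coloopless of width at least two. For such $Z$ we bound the number of lattice points or the volume: $\vol(Z)=\sum_i v_i$, and more generally the number of lattice points of $Z$ counts $\sum_{S}\vv_S$-type terms via the standard zonotope lattice-point decomposition into half-open parallelepipeds of faces (each face spanned by $\{\uu_i\}_{i\notin S}$ contributes its normalized volume, which is $\vv_S$ up to the lattice index). So $|Z\cap\Z^{d}|=\sum_S \vv_S$. On the other hand, if $\kappa(Z)>\tfrac{n-2}{n}$ then $\cc+\lambda(Z-\cc)$ contains no lattice point for $\lambda=\tfrac{n-2}{n}$; combining this with the width condition, a Blichfeldt/van der Corput-type argument (e.g.\ covering $Z$ by $n$-fold dilates around lattice-free region and an induction on flags of faces) bounds $|Z\cap\Z^d|\le \binom{n+1}{2}^{n-1}$, contradiction with largeness.

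The main obstacle is Step 2's counting bound: getting the exponent $n-1=d$ and the base $\binom{n+1}2$ requires a careful induction on dimension, where at each step one slices $Z$ by lattice hyperplanes in a direction of minimal width and uses that each slice is again (a Minkowski summand of) an LR-type zonotope of lower dimension with controlled $\kappa$; I would try to show the number of nonempty slices is at most $\binom{n+1}{2}$ using that a lattice-free dilate forces the width to be at most about $n(n+1)/2$, then multiply across $d$ levels.
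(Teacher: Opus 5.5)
There is a genuine gap, exactly at the step you call the main obstacle. First, your thresholds are shifted by one: for velocity vectors of length $n$ the zonotopal target is $\kappa(Z)\le\frac{n-1}{n+1}$ (Proposition~\ref{prop:zonorunners-LR}), and the length-$(n-1)$ hypothesis gives $\kappa\le\frac{n-2}{n}$ for LR zonotopes of dimension $n-2$.

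More seriously, your Step~1 is vacuous. Every LR zonotope is coloopless, so by Lemma~\ref{lemma:cowidth-LR} it always has lattice width at least two, whatever the values $\vv_S$. There is therefore no ``degenerate'' case, and the length-$(n-1)$ hypothesis could only enter through Step~2. There you would need to show that an LR zonotope violating the bound has at most $\binom{n+1}2^{n-1}$ lattice points, and the slicing scheme cannot do this:
\begin{itemize}
\item The slices $Z\cap\{f=k\}$ are in general neither zonotopes nor centered at $\cc$.
\item The slice of the lattice-free body $\cc+\lambda(Z-\cc)$ at level $k$ is a homothetic copy of a \emph{different} slice of $Z$. So the slices inherit no lattice-free dilate about their own centers, and no ``controlled $\kappa$'' to which an induction hypothesis could apply.
\item Bounding the number of slices via a lattice-free dilate is a flatness-type estimate unrelated to $\binom{n+1}2$.
\item ``Multiplying across $d$ levels'' presupposes a per-slice count you have no means of obtaining.
\end{itemize}

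The missing idea is to use the many lattice points constructively: produce a long segment through the center, then induct by projecting along it.
\begin{itemize}
\item If $Z$ (of dimension $d=n-1$) has more than $\ell^{d}$ lattice points, two of them are congruent modulo $\ell$. So $Z$ contains a lattice segment of length at least $\ell$ and, by central symmetry, a parallel one through $\cc$ (Proposition~\ref{prop:long-projection}).
\item Projecting unimodularly along this segment costs at most $\frac1\ell$: $\kappa(Z)\le\kappa(\pi(Z))+\frac1\ell$ (Proposition~\ref{prop:CSS-sum-bound}).
\item The projection $\pi(Z)$ is a coloopless $(d-1)$-zonotope (Proposition~\ref{prop:cosimple-coloopless-projection}). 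By iterated diagonals it contains an LR zonotope with the same center (Corollary~\ref{coro:contained-LRZ}).
\item Monotonicity of $\kappa$ (Proposition~\ref{prop:contained-easy-LR}) and the length-$(n-1)$ hypothesis then give $\kappa(\pi(Z))\le\frac{n-2}{n}$.
\item Taking $\ell=\binom{n+1}2$ and using $|Z\cap\Z^{n-1}|=\sum_S\vv_S$ (Proposition~\ref{prop:lrc-points}) yields $\kappa(Z)\le\frac{n-2}{n}+\frac{2}{n(n+1)}=\frac{n-1}{n+1}$.
\end{itemize}
This also explains the constant: $1/\binom{n+1}2$ is exactly the gap between the conjectured thresholds in dimensions $n-2$ and $n-1$. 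It is not a width or lattice-point counting bound.
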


Let $Z$ be the LR zonotope with volume vector $\vv$.
The quantity $\sum_{S\subseteq [n]} \vv_S$ that appears in this statement equals the number of lattice points in $Z$ and is larger than the  volume of $Z$:

\begin{proposition}[{\cite[Corollary 2.3]{Malikiosis2024LinExpCheckingLRC}}]
\label{prop:lrc-points}
Let $Z\subseteq \RR^{n-1}$ be an LRZ with volume vector $\vv=(v_1,\dots,v_n)\in \ZZ^n$.
Then,
\[
\displaystyle  |Z \cap \ZZ^{n-1}| = \sum_{S\subseteq [n]}  \vv_S
> \sum_{i\in [n]}  v_{i} = \vol(Z).
\]
\end{proposition}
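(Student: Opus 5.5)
We plan to count lattice points through a lattice-adapted zonotopal tiling, and to compute the volume directly from the parallelepiped decomposition.

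\emph{Volume.} $Z=\pi([0,1]^n)$ is an $(n-1)$-zonotope with $n$ generators $\uu_1,\dots,\uu_n$, where $\uu_i=\pi(\ee_i)$. It therefore decomposes into the $n$ parallelepipeds spanned by the $(n-1)$-subsets $\{\uu_j: j\neq i\}$. Since $\pi(\Z^n)=\Z^{n-1}$ and $\ker\pi=\R\vv$ with $\vv$ primitive, we have $|\det(\uu_j)_{j\ne i}|=|v_i|$. To see this, complete the basis $\{\ee_j\}_{j\ne i}$ of $\R^n$ by $\ee_i$ and compare index with the lattice $\Z^{n-1}\oplus\Z\vv$; this is the standard Cramer/Plücker relation. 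Hence $\vol(Z)=\sum_i v_i$ (with $v_i>0$).

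\emph{Lattice points.} We use the half-open decomposition of a zonotope (Shephard / Stanley): $Z$ is the disjoint union, over the independent subsets $T$ of generators, of half-open parallelepipeds $\Pi_T$ translated by lattice vectors (sums of generators). Every subset $T\subsetneq[n]$ is independent here, since any $n-1$ of the $\uu_i$ are independent because all $v_i\ne0$. For a half-open lattice parallelepiped $\Pi_T$ with $T$ independent, the number of lattice points equals the index of $\Z\langle \uu_j:j\in T\rangle$ in its saturation $\operatorname{lin}(T)\cap\Z^{n-1}$, i.e.\ the gcd of the maximal minors of the matrix with columns $\uu_j$, $j\in T$. The key computation is that for $T=[n]\setminus S$ with $S\neq\emptyset$ this gcd equals $\vv_S=\gcd(v_i:i\in S)$. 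To prove it, identify the lattice $\Z^{n-1}$ with $\Z^n/\Z\vv$. The index of $\langle \ee_j: j\in T\rangle$ in its saturation inside $\Z^n/\Z\vv$ is governed by how $\vv$ projects to the coordinates $S$: the saturation is generated by the $\ee_j$, $j\in T$, together with $(\vv|_S)/\vv_S$ reinterpreted modulo $\vv$. This gives torsion of order exactly $\vv_S$. For $S=[n]$ (the empty parallelepiped, i.e.\ a point) we get $\vv_{[n]}=1=\gcd(\vv)$, consistent with counting one point. Each subset $T$ appears exactly once in the tiling, so summing gives $|Z\cap\Z^{n-1}|=\sum_{S}\vv_S$, where the term $S=\emptyset$ is excluded or conventionally $0$. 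We should check the convention: the term $S=\emptyset$ corresponds to $T=[n]$, which is dependent and does not occur, and $\gcd(\emptyset)=0$ fits.

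\emph{Inequality.} The singletons $S=\{i\}$ contribute $\sum_i v_i=\vol(Z)$. The term $S=[n]$ contributes $1>0$ when $n\ge2$, so the inequality is strict.

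The main obstacle is the torsion computation identifying the lattice-point count of each half-open parallelepiped with $\vv_S$. We will handle it by writing the quotient $(\operatorname{lin}(T)\cap\Z^{n-1})/\Z\langle\uu_T\rangle$ as the torsion subgroup of $\Z^n/(\Z\vv+\Z^T)\cong\Z^S/\Z\vv|_S$, whose torsion is $\Z/\vv_S$.
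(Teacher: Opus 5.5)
Your proof is correct and is essentially the standard argument behind the cited \cite[Corollary 2.3]{Malikiosis2024LinExpCheckingLRC}: Stanley's half-open decomposition (equivalently, the zonotopal formula $|Z\cap\ZZ^{d}|=\sum_{T \text{ indep.}} h(T)$, with $h(T)$ the gcd of the maximal minors of $T$) combined with the index computation $h([n]\setminus S)=\vv_S$, which you handle cleanly through $\ZZ^{n-1}\cong\ZZ^n/\ZZ\vv$ and the torsion of $\ZZ^S/\ZZ\vv|_S$. The primitivity of $\vv$ that you assume is genuinely needed, and is implicit in the construction $Z=\pi([0,1]^n)$ with $\pi(\ZZ^n)=\ZZ^{n-1}$: for an LR zonotope whose generators do not span the lattice, such as generators $(2),(2)$ in $\ZZ^1$, the right-hand side gives $6$ while $Z=[0,4]$ has only $5$ lattice points.
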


When~\cite{Malikiosis2024LinExpCheckingLRC} was published, the bound of Theorem~\ref{thm:finite-LR}
did not seem good enough to prove the first open case of LRC, $n=7$. Indeed, the volume bound is $\binom{8}2^{6} \simeq 481,890,304$, clearly too large for an exhaustive proof since the number of positive integer vectors of length $7$ with sum bounded by that is in the order of $10^{50}$. However, Rosenfeld~\cite{Rosenfeld1} found a way to make the bound useful: he found sufficient conditions to guarantee, for a given prime number $p$, that any volume vector with $\gcd(\prod_{i\in [n]} v_i, p) =1$ satisfies Conjecture~\ref{conj:LRC}. Proving those conditions for a set of primes whose product exceeds the $n$-th power of the bound in Theorem~\ref{thm:finite-LR} establishes the Lonely Runner Conjecture for that $n$. This approach has been successfully implemented for $n=7,8,9$ in \cite{Rosenfeld1, Rosenfeld2, Trakulthongchai}.

The authors of~\cite{Malikiosis2024LinExpCheckingLRC}  also undertook a detailed analysis of covering radii of $3$-dimensional LR-zonotopes. This allowed them to prove that in order to establish the sLRC for $n=4$ only zonotopes of volume up to 200 needed to be checked. That bound was then used in~\cite{ACS4slrz} to establish the shifted Lonely Runner Conjecture for $n=4$.

These values ($n=9$ in the unshifted case and $n=4$ in the shifted one) are the largest values for which Conjectures~\ref{conj:LRC} and \ref{conj:sLRC} are proved, respectively.%
\footnote{T.~Trakulthongchai, together with T.~Sungkawichai, has just announced an extension of the proof in the unshifted case to $n=12$, that is, to thirteen runners. See~\url{https://users.ox.ac.uk/~sjoh6037/}}

\subsection*{This paper}

Our main contribution for the unshifted lonely runner conjecture is a reworking of the proof of Theorem~\ref{thm:finite-LR} present in \cite{Malikiosis2024LinExpCheckingLRC} which not only clarifies the ideas in it but also shows that  lonely runner zonotopes are part of a much larger class of zonotopes on which Question~\ref{q:covering-LRZ} has the same answer.

Indeed, in Section~\ref{sec:zonotopes} we introduce \emph{coloopless} zonotopes. These are the zonotopes that admit a set of generators with the following equivalent properties (see Definitions~\ref{def:co} and~\ref{def:cozono}):

\begin{proposition}
\label{prop:coloopless}
Let  $U=\{\uu_1,\dots,\uu_n\}\subset \ZZ^d$ be integer vectors and $Z$ be the zonotope they generate.
Then, the following conditions are equivalent:
\begin{enumerate}
\item There is a linear dependence $\sum_{i=1}^n \lambda_i \uu_i =0$ with all coefficients $\lambda_i$ different from zero.
\item No linear hyperplane contains all but one of the $\uu_i$.
\item The Gale transform $U^*$ of $U$ contains no zero vector.
\item $Z$ has width at least two with respect to the lattice generated by $U$.
\end{enumerate}
\end{proposition}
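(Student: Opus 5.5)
We will assume throughout, as is implicit in the statement, that $U$ spans $\RR^d$, so that $Z$ is full-dimensional and the lattice $\Lambda:=\ZZ U$ generated by $U$ has rank $d$. (Without this, every hyperplane containing $\operatorname{span}(U)$ would contain all the $\uu_i$, and (2) and (4) would fail trivially.) The plan is to organise everything around one notion: $\uu_j$ is a \emph{coloop} of $U$ if it is not in the linear span of $U\setminus\{\uu_j\}$. We will show that each of (1)–(4) is equivalent to ``$U$ has no coloop'', proving (1)$\Leftrightarrow$(3), (2)$\Leftrightarrow$(3) and (2)$\Leftrightarrow$(4) separately.

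For (1)$\Leftrightarrow$(3), recall that $U^*=\{\uu_1^*,\dots,\uu_n^*\}$ is given by the rows of a matrix whose columns form a basis of the space $D\subset\RR^n$ of linear dependences $\lambda$ with $\sum\lambda_i\uu_i=0$. Hence $\uu_i^*=0$ exactly when every $\lambda\in D$ has $\lambda_i=0$. If some dependence has all coordinates nonzero, then no $\uu_i^*$ vanishes. Conversely, if no $\uu_i^*$ vanishes, each coordinate hyperplane $\{\lambda_i=0\}$ meets $D$ in a proper subspace. A vector space over $\RR$ is not a finite union of proper subspaces, so some $\lambda\in D$ avoids all of them; that $\lambda$ is the required dependence. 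In the same language, $\uu_i^*=0$ iff $\uu_i$ is a coloop, since a dependence with $\lambda_i\ne0$ is the same thing as writing $\uu_i$ in terms of the others.

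For (2)$\Leftrightarrow$(3), first suppose $\uu_j$ is a coloop. Then $\operatorname{span}(U\setminus\{\uu_j\})$ is a proper subspace not containing $\uu_j$, and it extends to a linear hyperplane that still avoids $\uu_j$. Conversely, suppose a hyperplane $H$ contains all $\uu_i$ with $i\ne j$. Then $\uu_j\notin H$, because $U$ spans, so $\uu_j\notin\operatorname{span}(U\setminus\{\uu_j\})$ and $\uu_j$ is a coloop. (Read ``all but one'' as ``all but exactly one''.)

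For (2)$\Leftrightarrow$(4), the key computation is the lattice width. For a nonzero functional $f$ in the dual lattice $\Lambda^*$, the width of $Z=\sum_i[0,\uu_i]$ in direction $f$ is $\sum_i|f(\uu_i)|$, and each $f(\uu_i)$ is an integer.
\begin{itemize}
\item The width is never $0$, since $f\ne0$ and $U$ spans. So width $<2$ means some $f$ has $\sum_i|f(\uu_i)|=1$. Then $f$ vanishes on all $\uu_i$ but one, and $\ker f$ is a hyperplane violating (2).
\item Conversely, let $H$ contain all $\uu_i$ except $\uu_j$. Take $f\in\Lambda^*$ primitive with $\ker f=H$. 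Since $\Lambda$ is generated by $U$, we get $f(\Lambda)=f(\uu_j)\ZZ$. Primitivity forces $f(\Lambda)=\ZZ$, so $f(\uu_j)=\pm1$ and the width in direction $f$ is $1$.
\end{itemize}

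The only step needing care is this last one: it uses that the width is taken with respect to $\ZZ U$ rather than $\ZZ^d$. With respect to $\ZZ^d$ the implication (2)$\Rightarrow$(4) could fail, because $f(\uu_j)$ need not be $\pm1$. Everything else is routine linear algebra.
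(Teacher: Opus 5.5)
Your proof is correct and follows essentially the paper's route: the paper spreads these equivalences over Definition~\ref{def:co}, Propositions~\ref{prop:co} and~\ref{prop:Gale}, and Lemma~\ref{lemma:cowidth}(i), using the same width formula $\sum_i|f(\uu_i)|$ and the same primitive-functional argument for the converse, and your subspace-avoidance argument and the identity $f(\ZZ U)=f(\uu_j)\ZZ$ supply the details the paper leaves as ``easy''. One small slip in your closing remark: the step that genuinely needs the lattice $\ZZ U$ is your second bullet, which proves (4)$\Rightarrow$(2), whereas (2)$\Rightarrow$(4) also holds for width with respect to $\ZZ^d$, as in the paper's statement that every coloopless zonotope has width at least two.
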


Any of the first two conditions show that a zonotope with one more generator than its dimension is coloopless if and only if it is a lonely runner zonotope. More strongly, we show in Section~\ref{sec:coloopless} that:
\begin{corollary}[Corollary~\ref{coro:contained-LRZ}]
\label{coro:contained-LRZ-intro}
Every coloopless zonotope $Z$ contains a lonely runner zonotope $Z'$ of the same dimension and with the same center. 
\end{corollary}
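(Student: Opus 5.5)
We want: given a coloopless zonotope $Z=\sum_{i=1}^n[\oo,\uu_i]$ with $\uu_i\in\ZZ^d$ spanning $\RR^d$, find a sub-collection of generators $\uu_{i_1},\dots,\uu_{i_{d+1}}$ (possibly rescaled, or replaced by sums of generators) whose zonotope is an LR zonotope of dimension $d$ with the same center as $Z$. The natural plan is to select $d+1$ of the generators forming a circuit with full support, i.e.\ a set of $d+1$ vectors spanning $\RR^d$ with a linear dependence having all coefficients non-zero. By Proposition~\ref{prop:coloopless}(1)--(2), such a $(d+1)$-element zonotope is coloopless, hence (as remarked after that proposition) a lonely runner zonotope, provided we also handle the lattice: we view it with respect to the lattice generated by its generators, or note that the definition of LR zonotope via projection of a cube only needs integer generators with the right dependence.

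\textbf{Key steps.} First, choose a basis $B=\{\uu_1,\dots,\uu_d\}$ among the generators. Second, using condition (2) (no hyperplane contains all but one generator), find one more vector $\ww$, a signed sum of some of the remaining generators (or a single remaining generator), whose coordinates in basis $B$ are all non-zero. A single generator may not suffice, since each $\uu_j$, $j>d$, may have some zero coordinates in $B$. So I would take $\ww=\sum_{j>d}\epsilon_j\uu_j$ with signs $\epsilon_j\in\{0,\pm1\}$; since $Z$ is coloopless, each basis vector $\uu_i$ is not a coloop, so some $\uu_j$ with $j>d$ has non-zero $i$-th coordinate; a generic choice of integer coefficients then makes all coordinates non-zero. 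Allowing only coefficients in $\{0,1\}$ is what keeps $[\oo,\ww]$ a subsegment structure compatible with containment; so instead I would pass to the Gale dual/choose the basis greedily to make a $0/1$ combination work, or better: use the dependence $\sum\lambda_i\uu_i=0$ with all $\lambda_i\neq0$ from (1).

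Third, the containment: $Z'=\sum_{i\le d}[\oo,\uu_i]+[\oo,\ww]$ with $\ww=\sum_{j\in J}\uu_j$ is contained in $\sum_{i\le d}[\oo,\uu_i]+\sum_{j\in J}[\oo,\uu_j]\subseteq Z$ up to translation; to get the same center, translate $Z'$ by half the sum of unused generators, $\tfrac12\sum_{j\notin B\cup J}\uu_j$. This keeps $Z'$ inside $Z$ (adding the midpoint of each unused segment) and makes the centers agree. Signs are harmless since $[\oo,-\uu]$ is a translate of $[\oo,\uu]$, and flipping a generator keeps the zonotope's shape; so effectively $\ww=\sum_{j\in J}\epsilon_j\uu_j$ with $\epsilon_j=\pm1$ still yields a translate of a subzonotope, and center matching again works by translation.

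\textbf{Main obstacle.} The hard step is guaranteeing a $\pm1$ combination $\ww$ of non-basis generators with all $B$-coordinates non-zero. I would proceed by induction on the generators $\uu_{d+1},\dots,\uu_n$: maintain $\ww$ and add $\pm\uu_j$ (or skip it); for each coordinate $i$, the two choices $\ww\pm\uu_j$ cannot both have a zero in coordinate $i$ unless $\uu_j$'s coordinate is zero, but multiple coordinates interact, so one needs to choose the sign to avoid killing existing non-zero coordinates while creating new ones. If this fails, fallback: use the dependence from (1) and replace the basis choice (e.g., choose $B$ so that $\uu_{d+1}$ already has full support, which exists iff the dependence restricted to some circuit has full support on $d+1$ elements—true if the matroid is connected; otherwise handle each connected component separately and take a direct sum, noting a direct sum of LR zonotopes can be merged by summing two of their generators). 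Finally verify the resulting zonotope is full-dimensional with integer generators, hence an LR zonotope.
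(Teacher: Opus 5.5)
There is a genuine gap, and it appears as soon as you allow $\epsilon_j=0$, i.e.\ leave some generators unused.

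\textbf{The centering step.} The centers of the integral translates of $Z'=\sum_{i\le d}[\oo,\uu_i]+[\oo,\ww]$ form the coset $\tfrac12(\sum_{i\le d}\uu_i+\ww)+\ZZ^d$. This coset differs from the center of $Z$ by $\tfrac12\sum_{j\notin B\cup J}\uu_j$ modulo $\ZZ^d$, which is in general a half-integral vector. Translating by it gives a body with non-integral vertices, so it is not a lattice zonotope and hence not an LR zonotope.

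This is not cosmetic. The corollary is used through $\kappa(Z)\le\kappa(Z')$. For $Z'=Z''+\mathbf{t}$ with $\mathbf{t}\notin\ZZ^d$, the quantity $\kappa_{\cc}(Z')$ is the gauge distance from the center of $Z''$ to the shifted lattice $\ZZ^d-\mathbf{t}$, not $\kappa(Z'')$. So every generator must be used, with all $\epsilon_j=\pm1$.

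\textbf{The unresolved step.} Once all generators must be used, the choice of $B$ matters. Take $U=\{\ee_1,\ee_2,\ee_1+\ee_2,\ee_1-\ee_2\}$, which is coloopless, and $B=\{\ee_1,\ee_2\}$. Then $\pm(\ee_1+\ee_2)\pm(\ee_1-\ee_2)\in\{\pm2\ee_1,\pm2\ee_2\}$ never has full support. Dropping either remaining vector instead forces a shift by $\tfrac12(1,\pm1)\notin\ZZ^2$. Another basis, e.g.\ $\{\ee_1,\ee_1+\ee_2\}$ with $\ww=\ee_2-(\ee_1-\ee_2)$, does work, but you give no rule for choosing it.

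You flag this as the main obstacle and leave it open. Your fallback is false as stated: a connected matroid need not have a spanning circuit. The cycle matroid of $K_{2,3}$ is connected, coloop-free and of rank $4$, yet all its circuits have size $4$ because $K_{2,3}$ is not Hamiltonian. And even when a spanning circuit exists, discarding the other generators brings back the centering problem.

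\textbf{The missing idea.} Merge generators two at a time, and control colooplessness in the Gale dual.
\begin{itemize}
\item A diagonal never discards a generator. We have $Z^+_{\uu,\vv}\subseteq Z$ and $Z^-_{\uu,\vv}+\vv\subseteq Z$, both with the same center as $Z$ (Lemma~\ref{lemma:contained-diagonal}). After iterating, only an integral translation is needed.
\item By Lemma~\ref{lemma:Gale-diagonal}, the dual of $U^{+}_{\uu,\vv}$ (resp.\ $U^{-}_{\uu,\vv}$) is the contraction at $\uu^*-\vv^*$ (resp.\ $\uu^*+\vv^*$) of $U^*$ with $\uu^*$ replaced by that vector. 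So the diagonal is coloopless if and only if no element of $U^*$ is parallel to that vector.
\item Choosing two elements of $U^*\cup(-U^*)$ at minimal nonzero angle guarantees this, since an element parallel to their sum would form a smaller angle.
\item Iterating $n-d-1$ times reaches $d+1$ generators.
\end{itemize}
Your template (a basis of original generators plus one merged vector) is only a special case of iterated diagonals. The paper's argument never needs to keep $d$ original generators intact, which is why it avoids your obstacle.
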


The zonotope $Z'$ in the statement can always be obtained by combining together subsets of generators of $Z$. We call such zonotopes \emph{diagonals} of $Z$ (Definition~\ref{def:diagonal}).

This statement implies that the maximum value of the first $\cc$-minimum among all coloopless zonotopes of a given dimension $d$ is attained at a lonely runner zonotope. 
That is, it coincides with the parameter $\kappa_d^\LR$ defined in Question~\ref{q:covering}.
This has two important consequences.

\subsubsection*{1. It puts the lonely runner conjecture in the broader context of coloopless zonotopes, a class that is more natural than that of LR zonotopes.}
Indeed, Corollary~\ref{coro:contained-LRZ-intro} immediately implies that:

\begin{corollary}
\label{coro:corunners}
\label{coro:corunners-LR}
For each value of $n$, the following statements are equivalent:
\begin{enumerate}[label=\upshape(\arabic*),ref=\thetheorem .\arabic*]
\item Conjecture~\ref{conj:LRC} (lonely runner conjecture).
\item  Every LR zonotope $Z$ of dimension $n-1$ has $\kappa(Z) \le \tfrac{n-1}{n+1}$.
\item  Every coloopless zonotope $Z$ of dimension $n-1$ has $\kappa(Z) \le \tfrac{n-1}{n+1}$.
\item $\kappa_{n-1}^\LR = \tfrac{n-1}{n+1}$.
\end{enumerate}
\end{corollary}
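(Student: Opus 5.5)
We prove the cycle of implications $(1)\Leftrightarrow(2)\Leftrightarrow(4)$ and $(2)\Leftrightarrow(3)$. Only the equivalence between (2) and (3) needs Corollary~\ref{coro:contained-LRZ-intro}; the rest is a reformulation of what was already established.

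First we handle $(1)\Leftrightarrow(2)$. By Corollary~\ref{coro:loneliness-LR} we may restrict to $\vv\in\Z_{>0}^n$ with $\gcd(\vv)=1$. By Proposition~\ref{prop:gamma-kappa-mu}, $\gamma(\vv)=\tfrac12-\tfrac12\kappa(Z(\vv))$, and every LR zonotope of dimension $n-1$ arises as $Z(\vv)$ for such a $\vv$. Rescaling the volume vector or flipping signs only changes $Z$ by lattice equivalence, which preserves $\kappa$. Hence the inequality $\gamma(\vv)\ge\tfrac1{n+1}$ for all $\vv$ is the same as an upper bound on $\kappa$ for all LR $(n-1)$-zonotopes. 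We will keep the constant exactly as it is stated in the corollary and check it against this conversion; at this point the normalization of the constant is a matter of bookkeeping.

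Next we handle $(2)\Leftrightarrow(4)$. By definition, $\kappa^\LR_{n-1}$ is the maximum of $\kappa$ over LR $(n-1)$-zonotopes, so (2) says $\kappa^\LR_{n-1}\le\tfrac{n-1}{n+1}$. The reverse inequality is the lower bound in the Remark following Question~\ref{q:covering}, attained at $\vv=(1,\dots,n)$. Therefore (2) is equivalent to equality, which is (4).

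Finally we handle $(2)\Leftrightarrow(3)$. Every LR zonotope is coloopless, since its unique dependence $\sum v_i\uu_i=0$ has all $v_i\neq0$; this is condition (1) of Proposition~\ref{prop:coloopless}. So (3) implies (2). Conversely, let $Z$ be coloopless with center $\cc$. By Corollary~\ref{coro:contained-LRZ-intro}, $Z$ contains an LR zonotope $Z'$ of the same dimension with the same center. Containment with a common center gives $Z'-\cc\subseteq Z-\cc$, and hence $\dist_Z(\cc,\pp)\le\dist_{Z'}(\cc,\pp)$ for every $\pp$. Taking the minimum over $\pp\in\Z^{n-1}$ yields $\kappa(Z)\le\kappa(Z')\le\tfrac{n-1}{n+1}$.

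The main obstacle is to make sure that $\kappa(Z')$ is measured with respect to the same lattice $\Z^{n-1}$. The zonotope $Z'$ is a diagonal of $Z$, so its generators are integer vectors, but they may generate a sublattice. One then has to argue that the bound in (2) still applies to $Z'$. The plan is either to observe that (2) is meant for LR zonotopes up to lattice choice, where a coarser lattice only increases $\kappa$ relative to the finer one (which is the wrong direction), or better, to note that monotonicity is harmless. Here $\kappa$ is computed in $\Z^{n-1}$, which contains the lattice generated by $Z'$. A larger lattice has more points to be close to, so $\kappa_{\Z^{n-1}}(Z')\le\kappa_{\Lambda'}(Z')\le\tfrac{n-1}{n+1}$, where the second inequality is (2) applied to $Z'$ in its own lattice $\Lambda'$ after a linear identification with $\Z^{n-1}$. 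This identification also sends the center of $Z'$ to the center, so the argument is complete.
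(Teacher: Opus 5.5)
Your decomposition is the same as the paper's: (1)$\Leftrightarrow$(2) via Proposition~\ref{prop:gamma-kappa-mu}, (3)$\Rightarrow$(2) because LR zonotopes are coloopless, (2)$\Rightarrow$(3) via Corollary~\ref{coro:contained-LRZ-intro} plus monotonicity of $\kappa$ under containment with a common center, and (2)$\Leftrightarrow$(4) via $\vv=(1,\dots,n)$. However, your justification of (1)$\Rightarrow$(2) rests on a false claim. Not every LR zonotope of dimension $n-1$ is, up to lattice equivalence, some $Z(\vv)$ with $\gcd(\vv)=1$. Definition~\ref{def:LRZ} allows the generators to span a proper sublattice $\Lambda\subsetneq\Z^{n-1}$; then the gcd of the volume vector is $[\Z^{n-1}:\Lambda]>1$. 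For such zonotopes the identity of Proposition~\ref{prop:gamma-kappa-mu} fails. For example, $2Z(1,2,3)$ is an LR zonotope with volume vector $(4,8,12)$ and integer center, so $\kappa=0$, while $\gamma(4,8,12)=\tfrac14$.

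The repair is the lattice-refinement argument from your last paragraph, which belongs in (1)$\Rightarrow$(2). First translate by an integer vector so that a vertex is at the origin. Then $\kappa$ measured in $\Z^{n-1}$ is at most $\kappa$ measured in $\Lambda$. A linear identification of $\Lambda$ with $\Z^{n-1}$ turns $Z$ into an LR zonotope with spanning generators, i.e.\ some $Z(\vv)$ with $\gcd(\vv)=1$ up to lattice equivalence. Alternatively, cite Proposition~\ref{prop:zonorunners-LR}, which states (1)$\Leftrightarrow$(2) directly.

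Where you placed that argument, in (2)$\Rightarrow$(3), it is unnecessary. The integer translate $Z'+\sum_{\uu\in S}\uu$ of the diagonal has $n$ integer generators in dimension $n-1$. All of its $(n-1)$-subsets are bases because the diagonal configuration is coloopless. So it is an LR zonotope with respect to $\Z^{n-1}$ by Definition~\ref{def:LRZ}, and (2) applies to it directly. Read literally, your detour is also shaky: $\sum_{\uu\in S}\uu$ need not lie in $\Lambda'$, so the translated $Z'$ is not a lattice zonotope for $\Lambda'$.

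Finally, carry out the ``bookkeeping'' you deferred, since (2)$\Leftrightarrow$(4) depends on the exact constant. We have $\tfrac12-\tfrac12\kappa\ge\tfrac1{n+1}$ if and only if $\kappa\le\tfrac{n-1}{n+1}$, which is precisely the bound in (2).
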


\begin{proof}
The equivalence of (1) and (2) is essentially Proposition~\ref{prop:gamma-kappa-mu} (see also Proposition~\ref{prop:zonorunners-LR}). The implication (3)$\Rightarrow$(2) follows from the fact that every LR zonotope is coloopless and the converse from Corollary~\ref{coro:contained-LRZ-intro} and the fact that $\kappa$ monotonically decreases with respect to containment (Proposition~\ref{prop:contained-easy-LR}).
The equivalence of (2) and (4) comes from the well-known fact that $\gamma(1,\dots,n)= \tfrac1{n+1}$, that is, $\kappa(Z((1,\dots,n)) = \tfrac{n-1}{n+1}$.
\end{proof}

This raises the question of whether there is an even larger (but natural) class of lattice zonotopes for which the first $\cc$-minimum is still bounded by $\kappa_d^\LR$. To argue that the answer is no, in Section~\ref{sec:examples} we describe examples showing that:

\begin{theorem}
Let $d\in \N$ be a prime larger than $2$. Then:
\begin{enumerate}
\item There is a lattice $d$-zonotope $Z$ with $d+1$ generators, only one of which is a coloop, of width at least three (with respect to $\Z^d$) and with $\kappa (Z) = \tfrac{d-1}{d}$ (Proposition~\ref{prop:almost-coloopless}).
\item There are infinitely many $(d+1)$-dimensional lattice zonotopes with only one coloop, 
of width at least three and with $\kappa(Z) \ge \tfrac{d-1}{d}$ (Corollary~\ref{coro:many-wide2}).
\end{enumerate}
\end{theorem}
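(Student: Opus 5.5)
The plan is to build both families explicitly. The coloop will be "vertical" and will be combined with a lattice whose horizontal layers are shifted by a torsion point. Concretely, I would work in $\RR^d=H\times\RR$ with $H=\RR^{d-1}$, and let $Z'\subset H$ be a coloopless (in fact LR) zonotope of dimension $d-1$ with $d$ generators and center $\cc'$. The ambient lattice will not be $\ZZ^{d-1}\times\ZZ$ but
\[
\Lambda=\bigcup_{k\in\ZZ}\bigl((k\pp,k)+(\ZZ^{d-1}\times\{0\})\bigr),
\]
where $\pp\in\tfrac1d\ZZ^{d-1}$ has order exactly $d$ modulo $\ZZ^{d-1}$. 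I would take the $(d+1)$-th generator to be $\uu=(\oo,d)\in\Lambda$ and set $Z=Z'+[\oo,\uu]$. Then $\uu$ is the unique coloop, since the other $d$ generators satisfy a dependence with all coefficients nonzero inside $H$, and $\uu$ is the only generator outside $H$. After a unimodular change of coordinates sending $\Lambda$ to $\ZZ^d$, this is a lattice $d$-zonotope with $d+1$ generators and exactly one coloop.

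The key computation is $\kappa(Z)$. The center of $Z$ is $\cc=(\cc',d/2)$, and because $Z$ is a product, its gauge splits:
\[
\dist_Z\bigl(\cc,(\qq,k)\bigr)=\max\Bigl(\dist_{Z'}(\cc',\qq),\ \tfrac{|2k-d|}{d}\Bigr).
\]
Hence
\[
\kappa(Z)=\min_{k\in\ZZ}\max\Bigl(\dist_{Z'}\bigl(\cc',k\pp+\ZZ^{d-1}\bigr),\ \tfrac{|2k-d|}{d}\Bigr).
\]
Since $d$ is odd, the vertical term is at least $1/d$ and equals $(d-1)/d$ exactly at $k=1$ and $k=d-1$. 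So the target $\kappa(Z)=\tfrac{d-1}{d}$ reduces to two inequalities:
\begin{itemize}
\item $\dist_{Z'}(\cc',k\pp+\ZZ^{d-1})\ge \tfrac{d-1}{d}$ for every $k\not\equiv 0$ mod $d$;
\item this distance is $\le\tfrac{d-1}{d}$ for $k=1$ (or $k=d-1$).
\end{itemize}
For $k\equiv 0$ the vertical term is already at least $(d-1)/d$ when $|2k-d|\ge d-1$, which is what happens for $k=0$ and $k=d$. Primality of $d$ is used here: every $k\not\equiv 0$ makes $k\pp$ a generator of the cyclic group $\langle \pp\rangle$. The problem therefore becomes a statement about the $d$ cosets $j\pp+\ZZ^{d-1}$, which are permuted by multiplication by units mod $d$.

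For $Z'$ I would first try the LR zonotope of $\vv=(1,\dots,1)$, generated by $\ee_1,\dots,\ee_{d-1},-\sum\ee_i$, together with $\pp=\tfrac1d(1,2,\dots,d-1)$. The reason is that the points of $\ZZ^{d-1}+\langle\pp\rangle$ correspond, via Lemma~\ref{lemma:loneliness-Z}, to runners with equispaced starting positions. The remark after Question~\ref{q:covering} says these achieve a covering radius of $d/(d+1)$-type, and more precisely distance $(d-1)/d$ from the center in the relevant cosets. I would check this via the $L_\infty$ description in Lemma~\ref{lemma:loneliness}. If that choice fails the width condition, I would replace it by a suitable LR zonotope or dilate the horizontal part.

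The width condition is the main obstacle, and it is where the choice of $Z'$ and $\pp$ must be tuned. A primitive functional on $\Lambda$ is $f(\xx,t)=a\cdot\xx+bt$ with the constraint $a\cdot\pp+b\in\ZZ$ and $a\in\ZZ^{d-1}$. Its width on $Z$ is $\width_{Z'}(a)+d|b|$. If $b\ne 0$ this is at least $d\ge3$. If $b=0$, then $a\cdot\pp\in\ZZ$ forces $a$ into an index-$d$ sublattice of functionals, and I expect this to push $\width_{Z'}(a)$ from $\ge2$ (which holds by colooplessness, Proposition~\ref{prop:coloopless}) up to $\ge3$. This is the step I would verify carefully, and it is what constrains the choice of $\pp$.

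For part (2), I would build infinitely many $(d+1)$-dimensional examples from part (1) by adding one further generator $\ww$ that is not a coloop. I would take $\ww$ with a new vertical component and with varying horizontal components $\ww=(\mathbf{m},1)$, $\mathbf{m}\in\ZZ^{d-1}$ (or with varying lattice shifts), so that the new generator participates in a full-support dependence with the generators of $Z'$, and $\uu$ remains the only coloop. The bound $\kappa\ge\tfrac{d-1}{d}$ would follow by projecting along $\ww$ onto the zonotope of part (1). The identity $\dist_K(\pp,\qq)=\dist_{\pi(K)}(\pi(\pp),\pi(\qq))$ together with $\pi(\text{lattice})\subseteq\Lambda$ gives that $\kappa$ can only go up under such a lattice projection. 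Width at least three would be checked as in part (1): functionals vanishing on $\ww$ reduce to part (1), and the others pick up at least the contribution of $\ww$ plus that of $\uu$. Infinitely many non-equivalent examples would be distinguished by volume.
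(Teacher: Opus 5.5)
Part (1) is correct in substance but needs two repairs. Part (2) has a genuine gap.

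\textbf{Part (1).} This is the paper's construction. Rescaling the last coordinate by $1/d$ turns $\Lambda$ into $\Z^d+\langle\frac1d(1,\dots,d-1;1)\rangle$ and $(\oo,d)$ into $\ee_d$. Your product-gauge computation of $\kappa$ is essentially the paper's argument.

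The first repair is minor. At $k=1$ the vertical term is $\frac{d-2}{d}$, not $\frac{d-1}{d}$. This is harmless, since your reduction only needs it to be at most $\frac{d-1}{d}$. The lower bound you left unchecked is immediate. Here $\cc'=\oo$, and $\|\xx\|_{Z'}$ is the spread (max minus min) of $\{0,x_1,\dots,x_{d-1}\}$, because $Z'=\{|x_i|\le1,\ |x_i-x_j|\le1\}$. For $k\not\equiv 0$ these are $d$ elements of $\frac1d\Z$ in pairwise distinct classes mod $1$, so their spread is at least $\frac{d-1}{d}$.

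The second repair is to the width argument, which is wrong as written. From $(\oo,d)\in\Lambda$ you only get $db\in\Z$, so $b\in\frac1d\Z$, and ``$b\neq0\Rightarrow$ width $\ge d$'' fails. In fact a width-three functional is $a=\ee_1$, $b=-\frac1d$ (the paper's $x_1-y$). The correct case split is:
\begin{itemize}
\item If $a\ne0\ne b$, the width is $\width_{Z'}(a)+d|b|\ge 2+1$.
\item If $a=0$, then $b\in\Z$ and the width is at least $d$.
\item If $b=0$, then $\sum_i i a_i\equiv 0 \pmod d$. This rules out $\pm\ee_i$ and $\pm(\ee_i-\ee_j)$, which are the only $a$ with $\sum_i|a_i|+|\sum_i a_i|\le 2$.
\end{itemize}

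\textbf{Part (2).} The $d+1$ generators from part (1) span only $\R^d\times\{0\}$. So any single added generator $\ww$ with nonzero last coordinate is automatically a coloop: every linear dependence has coefficient $0$ on $\ww$, and then also on $\uu$. Hence ``$\uu$ remains the only coloop'' and ``$\ww$ participates in a full-support dependence'' are both impossible.

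There are two further failures. First, with last coordinate $1$ and lattice $\Lambda\times\Z$, the last-coordinate functional gives width $1$. Your claim that functionals nonzero on $\ww$ also pick up $\uu$ is unfounded. Second, write $\ww=(\mathbf m,1)$ with $\mathbf m\in\Lambda$. The unimodular shear $(\xx,t)\mapsto(\xx-t\,\mathbf m,t)$ maps $Z+[\oo,\ww]$ onto $Z\times[0,1]$. So all your examples are lattice-equivalent and have the same volume, and there is no infinite family.

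The paper instead uses $Z\times[0,k]$ with respect to $\Lambda\times\Z$ and $k\ge3$, exactly as in Corollary~\ref{coro:many-wide}. Its width is $\min\{\width(Z),k\}\ge3$. It satisfies $\kappa(Z\times[0,k])\ge\kappa(Z)$, because the gauge of a product is the maximum of the two gauges. The volumes $k\vol(Z)$ are distinct, giving infinitely many examples.

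Be aware of the coloop count there. If the new segment is generated by $k\ee_{d+1}$, it is a second coloop. It contributes no coloop only when written as $k$ parallel copies of the primitive vector $\ee_{d+1}$. So the ``only one coloop'' clause holds for that generating set, but not for the reduced one.
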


\subsubsection*{2. It provides a more transparent, and more general proof of Theorem~\ref{thm:finite-LR}.}
The following statement is implicitly proved in \cite{Malikiosis2024LinExpCheckingLRC} for LR zonotopes and for the case $\ell=\binom{n+1}2$, which is the key to the proof of Theorem~\ref{thm:finite-LR}.
 We here make it explicit for any value of the parameter $\ell$ since this generalized statement will be meaningful even if it turns out that  $\kappa_d^\LR \ne \frac{d}{d+2}$ (where $d=n-1$). Even if the statement is more general, its  proof is much simpler than the one in \cite{Malikiosis2024LinExpCheckingLRC} thanks to the (obvious, see Proposition~\ref{prop:cosimple-coloopless-projection}) fact that any projection of a coloopless zonotope is coloopless:

\begin{theorem}[Theorem~\ref{thm:finite-LR-new-body}]
\label{thm:finite-LR-new}
Let $Z\subset \R^d$ be a coloopless $d$-zonotope with at least $\ell^d$ lattice points
for a certain $\ell \in \Z_{>0}$. Then,
$
\kappa(Z) \le \kappa_{d-1}^\LR + \frac1{\ell}.
$
\end{theorem}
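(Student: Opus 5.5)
The plan follows the argument of \cite{Malikiosis2024LinExpCheckingLRC}, simplified by the fact that projections of coloopless zonotopes are coloopless (Proposition~\ref{prop:cosimple-coloopless-projection}). We project $Z$ along a short lattice direction, bound $\kappa$ of the projection by $\kappa_{d-1}^\LR$, and then lift back, paying an extra $\frac1\ell$ in the fibre direction.

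\textbf{Step 1: find a short lattice direction.} Since $|Z\cap\Z^d|\ge \ell^d$, a pigeonhole argument modulo $\ell$ gives two distinct lattice points $\pp,\qq\in Z$ with $\pp\equiv\qq \pmod{\ell\Z^d}$. Indeed, $\Z^d/\ell\Z^d$ has $\ell^d$ classes. If the count is exactly $\ell^d$ and all classes are distinct, we instead apply pigeonhole to $\ell^d$ points versus $(\ell-1)^d$ or similar classes. Alternatively, we use the symmetric form: take $\frac12(Z-Z)=Z-\cc$ and Minkowski-type counting to get a primitive $\ww\in\Z^d$ with $\ell\ww\in Z-Z$, i.e. $\ww\in \frac{2}{\ell}(Z-\cc)$. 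I expect the exact constant to need care, and this is where I expect the main obstacle: we need the segment $[\cc-\tfrac{\ell}{2}\ww\cdot\tfrac1\ell\ldots]$ normalised so that $\pm\frac{1}{2}\ww$ relative to the gauge costs at most $\frac1\ell$. Concretely, the target is a primitive $\ww$ with $\dist_Z(\oo,\ww)\le \frac{2}{\ell}$. The cleanest route I would try first is this: the $\ell^d$ lattice points of $Z$ cannot lie in fewer than $\ell^d$ residue classes mod $\ell$ without a collision. A collision gives $\pp-\qq=\ell\ww'$ with $\pp,\qq\in Z$, hence $\ell\ww'\in Z-Z=2(Z-\cc)$. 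If there is no collision, every class mod $\ell$ is hit, and we use a class-counting variant with $\ell^d+1$ after a small perturbation argument. We then replace $\ww'$ by its primitive part.

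\textbf{Step 2: project.} Let $\pi:\R^d\to\R^{d-1}$ be a lattice projection with kernel $\R\ww$ and $\pi(\Z^d)=\Z^{d-1}$. Then $\pi(Z)$ is a lattice zonotope, coloopless by Proposition~\ref{prop:cosimple-coloopless-projection}, full-dimensional, and centred at $\pi(\cc)$. By Corollary~\ref{coro:contained-LRZ-intro} together with monotonicity of $\kappa$ (Proposition~\ref{prop:contained-easy-LR}), we get $\kappa(\pi(Z))\le\kappa_{d-1}^\LR=:\kappa'$. Hence there is $\zz\in\Z^d$ with $\pi(\zz)-\pi(\cc)\in\kappa'(\pi(Z)-\pi(\cc))$, that is, $\zz+t\ww-\cc\in\kappa'(Z-\cc)$ for some real $t$.

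\textbf{Step 3: lift.} Choose an integer $m$ with $|t-m|\le\frac12$. Then
\[\zz+m\ww-\cc=(\zz+t\ww-\cc)+(m-t)\ww\in \kappa'(Z-\cc)+\tfrac12\cdot\tfrac{2}{\ell}(Z-\cc)=\bigl(\kappa'+\tfrac1\ell\bigr)(Z-\cc),\]
using convexity and central symmetry of $Z-\cc$. This gives a lattice point at gauge distance at most $\kappa'+\frac1\ell$ from $\cc$, which is the statement.

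The main obstacle is Step~1: getting the sharp constant $\ww\in\frac2\ell(Z-\cc)$ from exactly $\ell^d$ lattice points. If the exact pigeonhole fails in the equality case, I would handle that case by noting that a complete residue system inside $Z$ forces a lattice point in every translate. I would exploit this directly, or else use the strict inequality from Proposition~\ref{prop:lrc-points}-type counting as in \cite{Malikiosis2024LinExpCheckingLRC}.
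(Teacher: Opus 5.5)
Your three steps are essentially the paper's proof. Step~1 is Proposition~\ref{prop:long-projection}. Step~3, which rounds $t$ to the nearest integer using $\pm\frac12\ww\in\frac1\ell(Z-\cc)$, is the lifting argument of Proposition~\ref{prop:CSS-sum-bound}(i). Step~2 uses, as the paper does, Proposition~\ref{prop:cosimple-coloopless-projection}, Corollary~\ref{coro:contained-LRZ} and monotonicity of $\kappa$.

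The obstacle you flag is real, but the paper does not overcome it either. Its proof, like Proposition~\ref{prop:long-projection}, assumes \emph{more than} $\ell^d$ lattice points. That strict version is all that is needed to derive Theorem~\ref{thm:finite-LR}, where $\sum_S \vv_S > \binom{n+1}2^{n-1}$. So you should assume strictly more than $\ell^d$ lattice points and discard your patches for the equality case, none of which works:
\begin{enumerate}
\item Counting classes modulo $\ell-1$ only gives a segment of length $\ell-1$, hence the weaker bound $\frac1{\ell-1}$.
\item A perturbation creates no new lattice points.
\item No counting argument alone can produce a primitive $\ww$ with $\ell\ww\in Z-Z$ from exactly $\ell^d$ points. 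The cube $[0,\ell-1]^d$ has $\ell^d$ lattice points, pairwise incongruent modulo $\ell$, yet contains no segment of lattice length $\ell$.
\end{enumerate}
The cube is not coloopless, so it is not a counterexample to the statement as printed. It does show that any proof of the equality case would have to use colooplessness in an essential way.
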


Let us see that, indeed, the case $\ell= \binom{n+1}2$ of this implies Theorem~\ref{thm:finite-LR}:

\begin{proof}[Proof of Theorem~\ref{thm:finite-LR} assuming Theorem~\ref{thm:finite-LR-new}]
Let $Z$ be an LR-zonotope with volume vector $(v_1,\dots,v_n)$ satisfying $\sum_{S\subseteq [n]} \vv_S > \binom{n+1}2^{n-1}$. Since $\sum_{S\subseteq [n]} \vv_S$ equals the number of lattice points in $Z$ (Proposition~\ref{prop:lrc-points}),
taking $\ell=\binom{n+1}2$ in Theorem~\ref{thm:finite-LR-new} gives
\[
\kappa(Z) \leq \kappa_{n-2}^\LR + \frac1{\ell} = 
\frac{n-2}{n} + \frac1{\binom{n+1}2} = \frac{n-1}{n+1},
\] 
where the first equality follows from the inductive hypothesis in Theorem~\ref{thm:finite-LR},  equivalent to $\kappa_{n-2}^\LR=\tfrac{n-2}{n}$.
\end{proof}

\bigskip

Let us now look at  the shifted version of the LRC.
One goal of this paper was to further push the existing analogies between the unshifted and shifted versions. 
For example, Malikiosis et al.~\cite[Theorem B]{Malikiosis2024LinExpCheckingLRC}   establishes the exact analogue of Theorem~\ref{thm:finite-LR}, with the same volume bound for potential counterexamples, and with the role of coloopless zonotopes now being played by what they called cosimple ones (see definition below). However, their statement needed to assume  that every two-dimensional rational vector configuration with no zero or opposite vectors satisfies a certain property that they called the \emph{Lonely Vector Property} (Definition~\ref{def:LVP}) (or LVP, for short).

We hoped to be able to remove this LVP assumption, but what we found turned out to be counterexamples to both the shifted Lonely Runner Conjecture and the LVP:

\begin{theorem}[Propositions~\ref{prop:counters5}--\ref{prop:countersn}]
\label{thm:12345}
\begin{align*}
\gamma^{\min}(1,2,3,4,5) &\ = \frac{15}{94} < \frac16. \\
\gamma^{\min}(2,3,4,5,6,8) &\ =\frac{2}{15} \ <\  
\gamma^{\min}(1,2,3,4,5,6)\ =\ \frac{9}{67}\ <\ \frac17. \\
\gamma^{\min}(1,2, \dots, n) &\ <\ \frac1{n+1} \qquad \forall n\in\{7,...,17\}.
\end{align*}

In particular, Conjecture~\ref{conj:sLRC} is false for $n\in\{5,...,17\}$.
\end{theorem}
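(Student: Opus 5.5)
The statement consists of concrete inequalities and exact values for specific velocity vectors, so the plan is computational, guided by the definition $\gamma^{\min}(\vv)=\min_{\ss}\gamma(\vv;\ss)$. For each upper bound $\gamma^{\min}(\vv)<\tfrac1{n+1}$ it suffices to exhibit one explicit shift $\ss\in[0,1]^n$ and certify that $\gamma(\vv;\ss)<\tfrac1{n+1}$. For the exact values ($\tfrac{15}{94}$, $\tfrac{9}{67}$, $\tfrac{2}{15}$) we also need the matching lower bound $\gamma(\vv;\ss)\ge$ value for every $\ss$. Equivalently, via Lemma~\ref{lemma:loneliness-sLRZ} and Proposition~\ref{prop:gamma-kappa-mu}, this is the computation of the covering radius $\mu(Z(\vv))$ of the corresponding LR zonotope.

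\textbf{Step 1 (evaluating $\gamma(\vv;\ss)$ for fixed rational $\ss$).} Since $\vv$ is integral, $t\mapsto \min_i \dist(s_i+v_it,\Z)$ is $1$-periodic and piecewise linear. Its breakpoints lie among the points where some $s_i+v_it\in\tfrac12\Z$, or where two runners are at equal distance from $\Z$, that is, $s_i+v_it\equiv\pm(s_j+v_jt)\pmod 1$. All of these are rational and finite in number in $[0,1)$. The maximum is therefore attained at one of them and is computable exactly in rational arithmetic. This certifies the upper bounds, for example a shift $\ss$ with $\gamma((1,\dots,5);\ss)=\tfrac{15}{94}$, and similarly for the vectors $(1,\dots,n)$ with $n\le 17$.

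\textbf{Step 2 (finding the witness shifts).} The shift $\ss$ should minimise a maximum of piecewise linear functions, which is a min-max problem. I would locate the optimum numerically, for instance by a grid search in the torus refined by local optimisation. Then I would guess the exact rational $\ss$ from the active constraints: at the optimum, several runner-gaps tie at the local maxima. This gives a linear system with rational solution, and denominators like $94$ and $67$ arise naturally from such systems.

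\textbf{Step 3 (lower bounds, the main obstacle).} Showing $\gamma(\vv;\ss)\ge\tfrac{15}{94}$ for all $\ss$ requires controlling a continuum of shifts. I would work in the zonotopal picture: $\mu(Z)\le\lambda$ iff $\lambda Z+\Z^{d}=\R^{d}$, where $\lambda=1-2\gamma$. The covering radius of a lattice polytope is attained at a vertex of the arrangement formed by the facet hyperplanes of the translates $\lambda Z+\zz$ that meet a fundamental domain. So I would reduce to the finitely many such critical points, which are solutions of linear systems in facet normals of $Z(\vv)$ (a projection of the cube, with explicitly known facets). I would then verify the covering at $\lambda=1-2\cdot\tfrac{15}{94}$, for example by checking that each cell of a decomposition of the fundamental domain lies in some translate, using exact rational arithmetic.

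Alternatively, one can use a branch-and-bound over boxes of shifts. On each box, a time $t$ with guaranteed gap at least the target (uniformly over the box, using Lipschitz constant $1$ in $\ss$) certifies the bound, and boxes are subdivided until the whole torus is covered. Boxes shrinking near the optimal $\ss$ are the obstacle. I would handle them by the exact local analysis of Step 2, proving that the optimal $\ss$ is a strict local min-max via the active linear constraints.
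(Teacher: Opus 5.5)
Your plan is correct and essentially the paper's. Upper bounds come from an explicit rational shift whose gap is evaluated exactly; the paper gives $\ss=\tfrac1{94}(0,46,38,47,72)$ and analogous shifts, certified by an interval-covering check. Lower bounds come from an exact finite verification that lattice translates of $(1-2\gamma)Z(\vv)$ cover a fundamental domain, which the paper carries out in the coordinates $x_i=-s_i/v_i$ where these translates are the certificate polytropes $\overline T_{\kk,\gamma}$, subdividing the uncovered region adaptively rather than via a global arrangement, and getting witnesses as uncovered points rather than by numerical search.

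Of your two versions of Step 3, rely on the cell-covering one. A single fixed time $t$ can never certify a box with a minimiser in its interior. Moreover, minimisers are never strict on the torus of shifts, since $\gamma(\vv;\ss+\tau\vv)=\gamma(\vv;\ss)$. So the box route would need a quotient (e.g.\ $s_1=0$) and regions certified by a round vector $\kk$ instead of a fixed $t$, which is exactly what the certificate polytropes do.
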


These values and bounds for $\gamma^{\min}$ were found computationally, via an algorithm to compute $\gamma^{\min}(\vv)$ for arbitrary $\vv$. See Section~\ref{sec:counter} for more details, both on the algorithm and on the counterexamples.
Observe, however, that to verify for example that $\gamma^{\min}(1,2,3,4,5) \leq \frac{15}{94}$ the reader only needs to check the correctness of Figure~\ref{fig:12345}, where we plot the distance to the origin of the five runners for an $\ss$ giving exactly that loneliness gap. For the more computationally inclined reader, the companion repository \cite{code_git} contains a script \lstinline|scripts/verify_solution.py| which certifies a $\gamma^{\min}$ with exact rational arithmetic and is optimized for readibility.

\begin{figure}[htb]
\includegraphics[scale=0.2]{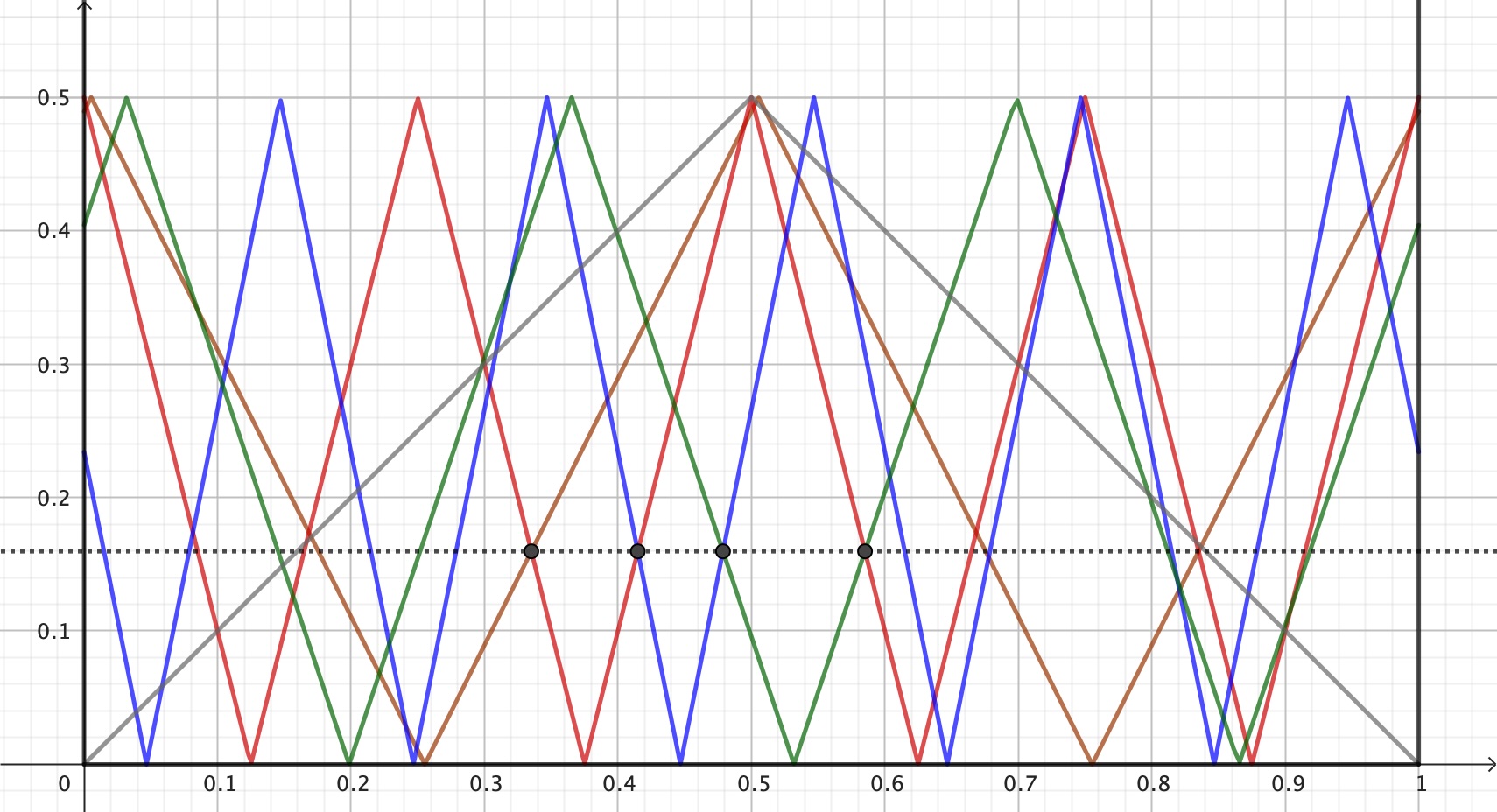}
\label{fig:12345}
\caption{Distance to the origin (vertical axis) in terms of $t$ (horizontal axis) of five runners with $\vv=(1,2,3,4,5)$ and $\ss=\tfrac1{94}(0,46,38,47,72)$. The dotted horizontal line is $\{y=\tfrac{15}{94}\}$, and the fact that for every $t\in [0,1]$ there is some runner on or below that line shows that $\gamma^{\min}(1,2,3,4,5) \leq \tfrac{15}{94}$. The four dots along this line are the instants when the minimum distance from the runners to the origin \emph{equals} $\tfrac{15}{94}$.
}
\end{figure}

Concerning the Lonely Vector Property we have:

\begin{theorem}[Corollaries~\ref{coro:LVP-rectangle} and~\ref{coro:noLVP12}]
\label{thm:noLVPC}
Not all rational vector configurations have the Lonely Vector Property. For example, the property fails for a certain 2-dimensional rational vector configuration with 12 elements.
\end{theorem}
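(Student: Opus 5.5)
The plan is to prove Theorem~\ref{thm:noLVPC} by contradiction, via the covering radius, rather than by checking the Lonely Vector Property directly on a configuration. The underlying idea is that in \cite{Malikiosis2024LinExpCheckingLRC} the LVP is used as the key lemma behind Theorem~B. If every two-dimensional rational configuration with no zero or opposite vectors had the LVP, then some explicit covering-radius bound would follow for a concrete family of zonotopes. Our counterexamples to the sLRC in Theorem~\ref{thm:12345} should violate such a bound. So the first step is to isolate the exact implication. For a two-dimensional rational configuration $V$, let $Z$ be a lattice zonotope whose generators have Gale transform $V$ (no zero vectors, so $Z$ is coloopless; no opposite vectors, which should correspond to cosimplicity). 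We want to show that the LVP for $V$ forces $\mu(Z)$ to be at most the corresponding covering radius of a lower-dimensional projection, plus a controlled error. This mirrors the role of the lonely vector in the inductive step of Theorem~B. We expect this to be the content of Corollary~\ref{coro:LVP-rectangle}: the LVP, tested on a suitable ``rectangle-like'' configuration, is equivalent to an inequality between covering radii.

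Second, we choose the configuration. We take $Z$ to be a zonotope of dimension $10$ with $12$ generators; its Gale dual is then a planar configuration of $12$ vectors. We would build $Z$ from one of the sLRC counterexamples. The natural candidates are the LR zonotopes of $(1,\dots,5)$ and $(1,\dots,6)$ and $(2,3,4,5,6,8)$, combined by a direct-sum or lifting construction so as to obtain exactly corank $2$. The point is that the inequality implied by the LVP compares $\mu(Z)$ with that of a projection which is known, or computable, to have smaller covering radius. Meanwhile $\mu(Z)\ge\mu(Z(\vv))$ is inherited from the counterexample $\vv$, because covering radius is monotone under the relevant projections and containments (as in Proposition~\ref{prop:contained-easy-LR} for $\kappa$). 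It remains to check that the Gale dual has no zero and no opposite vectors, which is a direct computation on the explicit $12$ vectors.

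Third, we certify the violated inequality with exact rational arithmetic. The lower bound on $\mu(Z)$ comes from the explicit shift $\ss$ that realizes the value of $\gamma^{\min}$, as in Figure~\ref{fig:12345}. The upper bound on the competing quantity comes from an explicit covering, and both are checked with the verification script style of \cite{code_git}. The main obstacle is the first step: stating precisely which consequence of the LVP is quantitatively sharp enough to be contradicted. The slack $1/\ell$ in the Theorem~B-type argument is too large to be useful, so we need the ``local'' form of the LVP, with no error term. If a direct reduction from the sLRC counterexamples fails, the fallback is a computational search over small planar rational configurations of $12$ vectors, with exact certification of the failure for the configuration found.
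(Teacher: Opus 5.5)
Your approach has a genuine gap, and it lies in the first step itself, not only in how sharp the bound is.

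\textbf{Why covering radii cannot refute the LVP.} The Lonely Vector Property touches covering radii only through Theorem~\ref{thm:minor-LVP}. If $U^*$ has the LVP, then $U$ has a cosimple deletion or diagonal $Z'$, and Lemma~\ref{lemma:contained-minor} with Proposition~\ref{prop:contained-easy} gives $\mu(Z)\le\mu(Z')$. But that inequality holds for \emph{every} full-dimensional deletion or diagonal, cosimple or not. So no covering-radius computation on $Z$ and its minors can detect whether some minor is cosimple.

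What a universal LVP would actually imply is the following. By induction, every cosimple $d$-zonotope would contain a translate of an sLR $d$-zonotope, so $\mucosimple_d=\mu^{\sLR}_d$. There is also the bound $\mu(Z)\le\mu^{\sLR}_{d-1}+\frac1\ell$ of Theorem~\ref{thm:finite-sLR-new}. Both are phrased in terms of the \emph{true} maxima, never the conjectured value $\frac{d}{d+2}$. The sLRC counterexamples only show $\mu^{\sLR}_d>\frac{d}{d+2}$, which is perfectly consistent with every configuration having the LVP. A contradiction would appear only if you also assumed the sLRC in lower dimension, which is already false.

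To refute the LVP this way you would need a cosimple $Z$ whose covering radius exceeds that of every cosimple zonotope of the same dimension with fewer generators. That is a statement about infinitely many zonotopes, which no explicit shift $\ss$ certifies. Moreover, your sLR counterexamples have the minimal number of generators, so they sit on the $Z'$ side of that comparison, not the $Z$ side.

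\textbf{Your concrete object also fails.} A direct sum $Z(\vv)\times Z(\ww)$ has Gale dual $V=\{(v_i,0)\}\cup\{(0,w_j)\}$. (Incidentally, $Z(1,\dots,5)\times Z(1,\dots,6)$ has dimension $9$ and $11$ generators, not $10$ and $12$.) This $V$ \emph{has} the LVP: $(5,0)+(0,6)=(5,6)\in\double{V}$ is proportional to no other element. Also, Corollary~\ref{coro:LVP-rectangle} is not a covering-radius statement at all.

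\textbf{The missing idea.} The LVP is a purely combinatorial property of a planar configuration, and the paper refutes it directly.
\begin{enumerate}
\item It first symmetrizes (Lemma~\ref{lemma:LVP-symmetric}). With $\overline S=S\cup(-S)\cup\{0\}$, $S$ fails the LVP iff every element of $\overline S+\overline S$ is a positive multiple of a \emph{different} pairwise sum.
\item It then takes $\overline S$ to be all lattice points of $[-a,a]\times[-b,b]$ (Proposition~\ref{prop:LVP-rectangle}). A sum $\uu+\vv$ can be rewritten using other lattice points of the box with corners $\uu,\vv$, or by shifting to $(\uu-\ee_i)+(\vv+\ee_i)$.
\item A doubled point $2\uu$ is rewritten as $(\uu+\ee_i)+(\uu-\ee_i)$, or, at a corner, as a positive multiple of $(a,0)+(0,b)$.
\item The only sums that resist are the near-corner ones, $(\pm(2a-1),2b)$ and $(2a,\pm(2b-1))$. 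These are handled by making them non-primitive: $a=3$, $b=5$ gives $(5,10)$ and $(6,9)$, hence a $38$-element counterexample.
\item The $12$-element example of Corollary~\ref{coro:noLVP12} is a centrally symmetric set in the triangular grid, checked by inspection.
\end{enumerate}
Your fallback ``computational search'' might stumble on such an example. As written, though, it contains neither the symmetric reformulation nor any reason to expect a counterexample, so it is not a proof.
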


Even if the LVP is not always satisfied, in Section~\ref{sec:cosimple} we still perform the same analysis of the proof 
of~\cite[Theorem B]{Malikiosis2024LinExpCheckingLRC}  as we have for their Theorem A (our Theorem~\ref{thm:finite-LR}). The relevant class of zonotopes is now that of cosimple polytopes instead of coloopless ones (see again Definitions~\ref{def:co} and~\ref{def:cozono}). These polytopes were introduced in~\cite{Malikiosis2024LinExpCheckingLRC} and are characterized by any of the following properties, reminiscent to those that we mentioned for coloopless zonotopes in Proposition~\ref{prop:coloopless}.

\begin{proposition}
\label{prop:cosimple}
Let  $U=\{\uu_1,\dots,\uu_n\}\subset \ZZ^d$ be integer vectors and $Z$ be the zonotope they generate.
Then, the following conditions are equivalent:
\begin{enumerate}
\item There is a linear dependence $\sum_{i=1}^n \lambda_i \uu_i =0$ with all coefficients $\lambda_i$ different from zero and different in absolute value (i.e., $U$ is cosimple).
\item No linear hyperplane contains all but one of the $\uu_i$, and if a linear hyperplane $H$ contains all but two of them, $\uu_i, \uu_j$, then none of $\uu_i+ \uu_j$ or $\uu_i - \uu_j$  is in $H$.
\item The Gale transform $U^*$ of $U$ contains no zero vector and no two equal or opposite vectors.
\item $Z$ has width at least three with respect to the lattice generated by $U$.
\end{enumerate}
\end{proposition}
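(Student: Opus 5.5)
The plan is to prove the cycle (1)$\Leftrightarrow$(3), (1)$\Leftrightarrow$(2) and (1)$\Leftrightarrow$(4). Throughout we work in the lattice $\Lambda$ generated by $U$, and we may assume $U$ spans $\R^d$ (otherwise restrict to the span). We also use the standard Gale duality: the space of linear dependences of $U$ is the row space of the Gale transform matrix, so the dependences are exactly $\lambda_i=\langle \mathbf{y},\uu_i^*\rangle$ for $\mathbf{y}$ ranging over the dual space.

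\emph{(1)$\Leftrightarrow$(3).} Suppose $U^*$ has no zero vector and no two equal or opposite vectors. Then the finitely many hyperplanes $\{\mathbf{y}:\langle \mathbf{y},\uu_i^*\rangle=0\}$ and $\{\mathbf{y}:\langle \mathbf{y},\uu_i^*\pm\uu_j^*\rangle=0\}$ are all proper. A generic $\mathbf{y}$ avoids all of them, so it gives a dependence with all $\lambda_i\neq0$ and $|\lambda_i|\neq|\lambda_j|$ for $i\ne j$. Conversely, if $\uu_i^*=0$ then $\lambda_i=0$ for every dependence, and if $\uu_i^*=\pm\uu_j^*$ then $\lambda_i=\pm\lambda_j$ for every dependence.

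\emph{(3)$\Leftrightarrow$(2).} This is the usual Gale correspondence between cocircuits of $U$ and circuits of $U^*$. A hyperplane containing all elements except $\uu_i$ is the same as a linear functional $f$ vanishing on $U\setminus\{\uu_i\}$ with $f(\uu_i)\neq0$. Its value vector $(f(\uu_k))_k$ lies in the row space of $U$, which is orthogonal to the dependence space of $U$, i.e.\ to the row space of the Gale matrix. A vector supported on $\{i\}$ is orthogonal to that space exactly when $\uu_i^*=0$, so such a hyperplane exists iff $\uu_i^*=0$. Similarly, a value vector supported on $\{i,j\}$ means $f(\uu_i)\uu_i^*+f(\uu_j)\uu_j^*=0$. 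When $\uu_i^*,\uu_j^*\ne0$, this forces $\uu_i^*=\pm\uu_j^*$ exactly when $f(\uu_i)=\mp f(\uu_j)$, that is, exactly when $\uu_i\pm\uu_j\in H$. The "none of $\uu_i+\uu_j$, $\uu_i-\uu_j$ in $H$" clause thus matches the sign cases precisely.

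\emph{(2)$\Leftrightarrow$(4).} The lattice width of $Z$ in direction $f\in\Lambda^*\setminus\{0\}$ is $\sum_i|f(\uu_i)|$, and each $f(\uu_i)$ is an integer. Since $U$ generates $\Lambda$ and $f\ne0$, some $f(\uu_i)\neq0$. The width is $1$ iff exactly one $f(\uu_i)$ equals $\pm1$ and the rest are zero, i.e.\ $\ker f$ contains all but one generator. Conversely, if a hyperplane $H$ contains $U\setminus\{\uu_i\}$, then $H\cap\Lambda$ has rank $d-1$. Take the primitive $f\in\Lambda^*$ with kernel $H$; because $\Lambda$ is generated by $\ker f\cap U$ together with $\uu_i$, we get $f(\uu_i)=\pm1$, so the width is $1$.

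Width $2$ arises in two ways. Either exactly two generators $\uu_i,\uu_j$ satisfy $f(\uu_i),f(\uu_j)\in\{\pm1\}$ with the rest in $H$ (and then $\uu_i\pm\uu_j\in H$ for one sign), or one generator has $f(\uu_i)=\pm2$. The latter is impossible, since then $f(\Lambda)\subseteq2\ZZ$, contradicting that $f$ may be taken primitive. For the converse, given $H\supseteq U\setminus\{\uu_i,\uu_j\}$ with $\uu_i\pm\uu_j\in H$, let $f$ be primitive on $\Lambda$ with kernel $H$. Then $f(\uu_i)=\mp f(\uu_j)$, and $f(\Lambda)=\gcd(f(\uu_i))\ZZ=\ZZ$ forces $|f(\uu_i)|=1$, giving width $2$.

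The main obstacle is this last equivalence with (4). One must handle primitivity carefully: "width with respect to the lattice generated by $U$" is what rules out $f(\uu_i)=\pm2$. One must also check that every direction achieving width $\le2$ arises from a hyperplane spanned by lattice vectors of $U$, which requires $U$ to span.
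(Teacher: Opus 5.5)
Your proof is correct and essentially follows the paper. For (1)$\Leftrightarrow$(3) you use the generic-functional argument behind Proposition~\ref{prop:Gale}, and for the width condition you use, as in Lemma~\ref{lemma:cowidth}, the formula $\width(Z,f)=\sum_{\uu\in U}|f(\uu)|$ together with primitivity of $f$ on the lattice generated by $U$. The paper cites \cite[Lemma 5.3]{Malikiosis2024LinExpCheckingLRC} for (1)$\Leftrightarrow$(2), whereas you prove (2)$\Leftrightarrow$(3) directly by orthogonality of row spaces and link (4) to (2) rather than to (1). So you actually establish the chain (1)$\Leftrightarrow$(3)$\Leftrightarrow$(2)$\Leftrightarrow$(4) rather than the cycle announced in your first sentence, which is harmless.
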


Condition (1) implies that a zonotope with one more generator than its dimension is cosimple if and only if it is an sLR zonotope.

One would expect that, analogously to Corollary~\ref{coro:contained-LRZ-intro}, every cosimple zonotope contains an sLR-zonotope. We do not know whether this is true, but we have partial results.
Together with the \emph{diagonal} operation used in the proof of  Corollary~\ref{coro:contained-LRZ-intro} we consider 
 \emph{deletions of generators}, and we show that:

\begin{enumerate}
\item Every cosimple zonotope properly contains one diagonal zonotope of width at least three with respect to the ambient lattice (Corollary~\ref{coro:contained-wide}).
\item A cosimple zonotope $Z$ with generators $U$ has a deletion or a diagonal that is also cosimple if and only if the Gale dual $U^*$ has the Lonely Vector Property. (Theorem~\ref{thm:minor-LVP})
\end{enumerate}

Part (1) gives some hope that all cosimple zonotopes may still contain sLR ones, but Part (2) and Theorem~\ref{thm:noLVPC} imply that deletion and diagonal alone are not enough to prove that.

In analogous fashion to that of Theorem \ref{thm:finite-LR-new}, we include a statement about cosimple zonotopes with many points (depending on an arbitrary parameter $\ell$) that allows to apply induction on the dimension and prove \emph{some} upper bound for the covering radius of cosimple zonotopes, either restricted to cases where the LVP holds, or with a bound worse than the conjectured one.

\begin{theorem}[see Theorem \ref{thm:finite-sLR-new}]
Let $Z$ be a cosimple $d$-zonotope with more than $\ell^d$ lattice points, for a positive integer $\ell$. 
Then:
\[
\mu(Z) \le \mucosimple_{d-1} + \frac1{\ell},
\]
where $\mucosimple_{d-1}$ is the maximum covering radius of a cosimple $(d-1)$-zonotope.
\end{theorem}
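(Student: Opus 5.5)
We follow the proof of Theorem~\ref{thm:finite-LR-new}, replacing the first $\cc$-minimum by the covering radius and coloopless by cosimple. The plan is to find a primitive lattice direction in which $Z$ is long, project along it, and combine covering in the quotient with covering along the fibres. Three ingredients are needed.

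\textbf{Step 1 (a long lattice segment).} We produce a primitive $\ww\in\Z^d$ such that $Z$ contains a translate of the segment $[0,\ell\ww]$ (or of a slightly shorter one, $\ell-1$ steps). Since $|Z\cap\Z^d|>\ell^d$, pigeonhole on residues modulo $\ell$ gives two distinct lattice points $\pp,\qq\in Z$ with $\qq-\pp\in\ell\Z^d$. Write $\qq-\pp=k\ww$ with $\ww$ primitive and $k\ge\ell$. By convexity the segment $[\pp,\qq]$ lies in $Z$, and it has lattice length at least $\ell$ in direction $\ww$. The strict inequality in the hypothesis (more than $\ell^d$ points) is what makes pigeonhole give two points in the same class.

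\textbf{Step 2 (the projection stays cosimple).} Let $\pi:\R^d\to\R^{d-1}$ be a lattice projection with kernel $\R\ww$ and $\pi(\Z^d)=\Z^{d-1}$. Then $\pi(Z)$ is a lattice zonotope generated by the vectors $\pi(\uu_i)$. By Proposition~\ref{prop:cosimple-coloopless-projection}, a projection of a cosimple zonotope is cosimple. The reason is that a linear dependence among the $\uu_i$ with nonzero coefficients of pairwise distinct absolute values maps to such a dependence among the $\pi(\uu_i)$. One caveat: some $\pi(\uu_i)$ may vanish and should be discarded. I would handle this either through characterization~(4) of Proposition~\ref{prop:cosimple}, since width at least three is inherited by projections because lattice widths of $\pi(Z)$ are widths of $Z$ along functionals vanishing on $\ww$, or by absorbing zero generators into the dependence. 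Also, the relevant lattice is the one generated by the $\pi(\uu_i)$, which may differ from $\Z^{d-1}$. I would first reduce to the case where $U$ generates $\Z^d$, which I expect is the convention of the paper's definition of cosimple zonotope. Either way, $\mu(\pi(Z))\le\mucosimple_{d-1}$.

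\textbf{Step 3 (covering argument).} This is the standard fibration inequality, as in Kannan--Lov\'asz:
\[
\mu(Z)\le\mu(\pi(Z))+\mu(Z\cap\text{segment}),
\]
where the last term is at most $1/\ell$. Concretely, set $\mu'=\mucosimple_{d-1}$ and take $\xx\in\R^d$. Choose $\zz\in\Z^d$ with $\pi(\xx)\in\pi(\zz)+\mu'\pi(Z)$. Lifting gives $\xx\in\zz+\mu'Z+\R\ww$. The segment $S=[\pp,\qq]\subset Z$ satisfies $\tfrac1\ell S\supseteq$ a translate of $[0,\ww]$, which tiles $\R\ww$ modulo $\Z\ww$. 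Hence
\[
\R\ww\subseteq \Z\ww+\tfrac1\ell S-\tfrac1\ell\pp,
\]
so $\xx\in\Z^d+\mu'Z+\tfrac1\ell Z+\text{const}$. Since $\mu'Z+\tfrac1\ell Z=(\mu'+\tfrac1\ell)Z$ by convexity, $\Z^d+(\mu'+\tfrac1\ell)Z$ covers $\R^d$ up to a fixed translation. A translation does not affect the covering property, so $\mu(Z)\le\mu'+\tfrac1\ell$.

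I expect the main obstacle to be Step 2, namely ensuring that $\pi(Z)$ is genuinely a cosimple $(d-1)$-zonotope with respect to the correct lattice, so that $\mucosimple_{d-1}$ applies. Steps 1 and 3 are routine.
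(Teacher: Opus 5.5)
Your proposal is correct and follows essentially the paper's route. Pigeonhole gives a long lattice segment (Proposition~\ref{prop:long-projection}), projecting along it costs at most $\tfrac1\ell$ in covering radius (Proposition~\ref{prop:CSS-sum-bound}(ii), which your Step~3 reproves directly, rightly without recentring the segment, which is only needed for $\kappa$), and $\pi(Z)$ is cosimple by Proposition~\ref{prop:cosimple-coloopless-projection}. Your lattice worry in Step~2 is unnecessary: cosimplicity is defined purely by linear dependences, so your pushforward-of-the-dependence argument already gives a cosimple lattice zonotope with respect to $\Z^{d-1}$; the suggested reduction to $U$ spanning $\Z^d$ should be dropped, since changing the lattice alters both $\mu$ and the lattice-point count.
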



\medskip

The values of $\gamma^{\min}$ stated  in Theorem~\ref{thm:12345} have been computed via a novel algorithm for the covering radius of lonely runner zonotopes, which is another contribution of this paper. 
Section~\ref{sec:counter} is devoted to explaining it, and the  source code used  is available in the repository \cite{code_git}. The algorithm uses similar ideas to the ones we introduced in \cite{ACS4slrz}, except that one was a general-purpose algorithm for the covering radii of arbitrary rational polytopes and here we take advantage of  particular features of lonely runner zonotopes. Most notably, we regard them as \emph{polytropes} and 
introduce tropical geometry ideas to reduce the impact of numerical issues by orders of magnitude, resulting in a much better practical performance. 

Note that prior to \cite{ACS4slrz},  covering radii of  LR zonotopes were only  computed up to $n=3$~\cite{cslovjecsekmalikiosisnaszodischymura2022computing}.

\section{Preliminaries}
\label{sec:prelim}

\subsection{Covering and central radii of convex bodies}
\label{sec:covering}

A \emph{convex body} in $\R^d$ is any closed convex set. Unless otherwise specified we assume that our convex bodies are  \emph{proper}; that is, they have non-empty interior or,  equivalently, they are not contained in any affine hyperplane.

\begin{definition}
\label{def:minima}
Let $K\subset \R^d$ be a proper convex body.
\begin{enumerate}[label=\upshape(\roman*),ref=\thetheorem .\roman*]
\item For each point  $\cc\in \R^d$ in the interior of $K$ we call  \emph{first $\cc$-minimum of $K$}, denoted $\kappa_\cc(K)$, the minimum $\lambda\ge 0$ such that $\cc +\lambda(K-\cc)$ intersects $\Z^d$.

\item The \emph{covering radius of $K$}, denoted $\mu(K)$ is the minimum $\lambda>0$ such that $\lambda K + \Z^d$ covers $\R^d$.
\end{enumerate}
\end{definition}

That is, $\kappa_\cc$ is the smallest dilation factor so that the dilation (centered at $\cc$) of $K$ with this factor contains a lattice point. (In particular, $\kappa_\cc(K)=0$ if and only if $\cc\in \Z^d$).
In turn, $\mu$ is the smallest dilation factor so that every translation of $\mu K$ contains a lattice point. 

Both parameters have an interpretation via the \emph{Minkowski distance} (also called the \emph{gauge}) defined by the convex body $K$ and the point $\cc$. These are the following quasi-norm and corresponding quasi-distance in $\R^d$; for each $\xx,\yy\in \R^d$:
\[
||\xx||_{K,\cc} := \min\big\{\lambda\ge0: \xx \in \lambda\cdot (K-\cc)\big\},
\qquad
\dist_{K,\cc} (\xx,\yy):=||\yy-\xx||_{K,\cc}.
\] 

The quasi-distance is symmetric (that is, a distance) if and only if $K$ is \emph{$\cc$-symmetric}; that is, if $K-\cc=-(K-\cc)$.
We clearly have%
\begin{proposition} If $K$ is a $\cc$-symmetric convex body:
\begin{enumerate}[label=\upshape(\roman*),ref=\thetheorem .\roman*]
\label{prop:distance}
\item $\kappa_\cc(K) = \dist_{K,\cc} (\cc,\Z^d),$
\item $\mu(K) = \max_{\pp\in \R^d} \dist_{K,\cc} (\pp,\Z^d).$
\qed
\end{enumerate}
\end{proposition}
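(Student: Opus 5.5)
Both identities follow by unwinding the definitions of $\kappa_\cc$, $\mu$ and the gauge $\|\cdot\|_{K,\cc}$, using that $K$ is closed and convex with $\cc$ in its interior. Since $K$ is $\cc$-symmetric, $\dist_{K,\cc}(\xx,\yy)=\|\yy-\xx\|_{K,\cc}=\|\xx-\yy\|_{K,\cc}$. Throughout, "$\dist$ to $\Z^d$" means the infimum over lattice points, and I will check that this infimum is attained.

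For (i), note that $\cc+\lambda(K-\cc)$ contains $\zz\in\Z^d$ if and only if $\zz-\cc\in\lambda(K-\cc)$. Because the sets $\lambda(K-\cc)$ increase with $\lambda$ (they are convex and contain $\oo$), this holds if and only if $\|\zz-\cc\|_{K,\cc}\le\lambda$. Hence the set of admissible $\lambda$ in Definition~\ref{def:minima}(i) is $\{\lambda\ge 0: \lambda\ge \dist_{K,\cc}(\cc,\zz)\text{ for some }\zz\in\Z^d\}$. Its minimum is $\min_{\zz}\dist_{K,\cc}(\cc,\zz)$. This minimum is attained because the gauge is coercive: $\cc$ is an interior point, so $K-\cc$ contains a ball $B(\oo,r)$, which gives $\|\xx\|_{K,\cc}\le |\xx|/r$. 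Since $K$ is bounded, we also have $\|\xx\|_{K,\cc}\ge|\xx|/R$. Therefore only finitely many lattice points have gauge distance below any given bound.

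For (ii), the condition $\lambda K+\Z^d=\R^d$ means that every $\pp$ lies in $\lambda K+\zz$ for some $\zz$. I rewrite this as $\pp-\zz-\lambda\cc\in\lambda(K-\cc)$. By symmetry, $\lambda K+\Z^d=\R^d$ holds if and only if, for every $\qq$ (substituting $\qq=\pp-\lambda\cc$, a bijection of $\R^d$), there is a lattice point within gauge distance $\lambda$. That is, $\sup_\qq \dist_{K,\cc}(\qq,\Z^d)\le\lambda$. So $\mu(K)$ equals this supremum, provided the minimum in the definition exists.

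Two points remain. First, the supremum is a maximum: the function $\qq\mapsto\dist_{K,\cc}(\qq,\Z^d)$ is $\Z^d$-periodic and continuous, indeed Lipschitz since the gauge is a norm. So it attains its maximum on $[0,1]^d$. Second, the minimum in Definition~\ref{def:minima}(ii) exists, because the admissible set of $\lambda$ is exactly $[\max,\infty)$.

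The only real subtlety is the boundedness assumption behind coercivity in (i). If $K$ were allowed to be unbounded, the minimum in (i) still exists whenever some lattice point has finite gauge distance, because the distance values $\dist_{K,\cc}(\cc,\zz)$ below any given bound still form a finite set. For a proper $\cc$-symmetric $K$ I would simply assume compactness, as the paper's zonotopes are compact.
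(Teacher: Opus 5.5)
Your proof is correct and is the same direct unwinding of the definitions that the paper has in mind; the paper states the proposition as clear and gives no proof. The equivalences $\zz\in\cc+\lambda(K-\cc)\iff\|\zz-\cc\|_{K,\cc}\le\lambda$ and $\lambda K+\Z^d=\R^d\iff\sup_{\qq}\dist_{K,\cc}(\qq,\Z^d)\le\lambda$, plus coercivity and periodicity/continuity of the gauge distance to get the extrema attained, are all that is needed.

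You should retract your closing aside on unbounded $K$. Take the irrationally sloped slab $K=\{|x_2-\alpha x_1-\tfrac12|\le 1\}\subset\R^2$ with $\cc=(0,\tfrac12)$ and $\alpha\notin\mathbb{Q}$. The gauge distances $|z_2-\alpha z_1-\tfrac12|$ from $\cc$ to lattice points are all finite and nonzero, yet dense in $[0,\infty)$, so the minimum in (i) does not exist and compactness is genuinely needed rather than merely convenient.
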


This makes it obvious  that $\kappa_\cc(K) \le \mu(K)$. In fact, when $K$ is sufficiently big,%
\footnote{A sufficient condition is that the interior of $K$ contains representatives of all classes in $\R^d/\Z^d$, which is equivalent to $\mu(K) <1$.}
$\mu(K)$ equals the maximum of $\kappa_\cc$ regarded as a function of $\cc$.
\bigskip

\begin{remark}
The covering radius in part (ii) of Definition~\ref{def:minima} and Proposition~\ref{prop:distance} is a classical parameter in convex geometry (see, e.g., \cite{KannanLovasz}, or~\cite[p.~ 381]{gruber2007convex}). 

The first $\cc$-minimum is introduced here,  but it is closely related to the following:
the \emph{coefficient of asymmetry} of a point $\pp \in \int(K)$ is defined as the largest ratio $\tfrac{||\qq_1-\pp|| }{ ||\qq_2-\pp||}$ where $[\qq_1,\qq_2]$ is a segment containing $\pp$ and with $\qq_1,\qq_2 \in \partial K$. This is always at least $1$, with equality if and only if $K$ is $\pp$-symmetric.
 If $\operatorname{ca}(K)$ denotes the smallest coefficient of asymmetry among all lattice points in $K$, then 
 \[
 \operatorname{ca}(K)= \tfrac{1+\kappa(K)}{1-\kappa(K)}
 \quad\text{ or, equivalently, }\quad
\kappa(K) =\tfrac{\operatorname{ca}(K)-1}{\operatorname{ca}(K)+1}.
\]
These formulas are essentially Proposition 4 in \cite{BeckSchymura}, a paper devoted to bounding the coefficient of asymmetry of zonotopes with motivation coming  from the LRC. Via these formulas our Proposition~\ref{prop:zonorunners-LR} is essentially the same as the zonotopal restatements of LRC in \cite{BeckSchymura}.

\end{remark}

We are interested in how these parameters behave under containment and projection. Dependence under containment is obvious.

\begin{proposition}
\label{prop:contained-easy}
Let $K'\subset K$ be convex bodies contained in one another.
\begin{enumerate}[label=\upshape(\roman*),ref=\thetheorem .\roman*]
\item $\kappa_\cc(K) \le \kappa_\cc(K')$ for every $\cc$ in the interior of $K'$.
\label{prop:contained-easy-LR}
\item $\mu(K) \le \mu(K')$.
\label{prop:contained-easy-sLR}
\qed
\end{enumerate}
\end{proposition}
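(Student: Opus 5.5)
Both parts follow by unwinding Definition~\ref{def:minima}: each parameter is the smallest dilation factor at which a certain lattice-point condition becomes true, and that condition is monotone under inclusion of bodies. The plan is to show that any dilation factor that works for $K'$ also works for $K$, and then take infima.

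For part (i), fix $\cc$ in the interior of $K'$. Then $\cc$ is also in the interior of $K$, so $\kappa_\cc(K)$ is defined. Let $\lambda=\kappa_\cc(K')$. By definition, $\cc+\lambda(K'-\cc)$ contains a lattice point. Since $K'\subset K$, we have $K'-\cc\subset K-\cc$, and hence
\[
\cc+\lambda(K'-\cc)\subset \cc+\lambda(K-\cc).
\]
So the dilation of $K$ by $\lambda$ centered at $\cc$ also meets $\Z^d$. This gives $\kappa_\cc(K)\le\lambda$.

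We should also check that the minimum in the definition is attained, so that $\kappa_\cc$ is a genuine minimum. The set of admissible $\lambda$ is closed: $K$ is closed, and only finitely many lattice points lie in bounded dilates of a bounded $K$. In any case, the inclusion argument gives the inequality between infima whether or not the minimum is attained. If $\cc\in\Z^d$, both values are $0$ and the claim is trivial.

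For part (ii), let $\lambda=\mu(K')$, so that $\lambda K'+\Z^d=\R^d$. Since $\lambda K'\subset \lambda K$, we get $\lambda K+\Z^d\supseteq\lambda K'+\Z^d=\R^d$, hence $\mu(K)\le\lambda$. Equivalently, in the language of Proposition~\ref{prop:distance} when the bodies are symmetric about a common center: the gauge of a larger body is pointwise smaller, so both $\dist_{K,\cc}(\cc,\Z^d)$ and the maximum over $\pp$ of $\dist_{K,\cc}(\pp,\Z^d)$ can only decrease.

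There is no real obstacle here. The only point needing care is that the interior condition on $\cc$ is inherited by $K$, which holds because $\int K'\subset\int K$.
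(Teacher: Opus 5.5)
Your proof is correct and is exactly the definitional monotonicity the paper has in mind: the paper gives no written proof and only remarks that dependence under containment is obvious. Your side remark on attainment assumes $K$ is bounded, whereas the paper's convex bodies need only be closed, but as you say, comparing infima makes that point irrelevant.
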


To analyze the dependence under projection, in the following statement and in the rest of the paper we call the \emph{length} of a segment $[\pp,\qq]$ with rational direction its length with respect to the lattice. That is, 
\[
\length([\pp,\qq]) := \frac{||\qq-\pp||}{||\uu||}
\]
where $\uu\in \Z^d$ is a primitive vector parallel to $[\pp,\qq]$, where primitive means that only its endpoints are lattice points.

\begin{proposition}
\label{prop:CSS-sum-bound}
Let $K \subseteq \RR^d$ be a convex body and $\cc$ be an interior point in it.
Let $\pi : \RR^d \to \RR^{d-1}$ be a linear projection with $\pi(\ZZ^d)= \ZZ^{d-1}$. Finally, let $\ell$ be the length of the segment $\pi^{-1}(\pi(\cc)) \cap K$.
Then,
\begin{enumerate}[label=\upshape(\roman*),ref=\thetheorem .\roman*]
\item $\kappa_\cc(K) \leq \kappa_{\pi(\cc)}(\pi(K) ) + \tfrac1\ell $, and
\item $\mu(K) \leq \mu(\pi(K) ) + \tfrac1\ell .$
\end{enumerate}
\end{proposition}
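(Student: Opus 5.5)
Both parts rest on one lifting principle. Take a point $\pp\in\R^d$ whose projection $\pi(\pp)$ is covered by a scaled copy of $\pi(K)$ containing a lattice point of $\Z^{d-1}$. We lift that lattice point to a lattice line in $\R^d$ parallel to $\ker\pi$, and then use the segment of length $\ell$ in direction $\ker\pi$ to reach a lattice point on that line at small extra cost.

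Fix a primitive vector $\uu\in\Z^d$ spanning $\ker\pi$; it is primitive because $\pi(\Z^d)=\Z^{d-1}$ forces $\ker\pi\cap\Z^d=\Z\uu$. Then $\pi^{-1}(\zz')\cap\Z^d$ is a translate $\zz+\Z\uu$ for every $\zz'\in\Z^{d-1}$. Let $\sigma:=\pi^{-1}(\pi(\cc))\cap K$, a segment containing $\cc$ in direction $\uu$ of lattice length $\ell$, say $\sigma=[\cc-a\uu,\cc+b\uu]$ with $a,b\ge0$ and $a+b=\ell$.

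For (i), set $\lambda:=\kappa_{\pi(\cc)}(\pi(K))$, and pick $\zz'\in\Z^{d-1}$ with $\zz'\in\pi(\cc)+\lambda(\pi(K)-\pi(\cc))$. Choose $\yy\in K$ with $\pi(\cc)+\lambda(\pi(\yy)-\pi(\cc))=\zz'$, so that the point $\xx:=\cc+\lambda(\yy-\cc)$ lies in $\cc+\lambda(K-\cc)$ and projects to $\zz'$. The lattice points of the line $\xx+\R\uu$ are spaced by $\uu$. We want to show that
\[
\cc+(\lambda+\tfrac1\ell)(K-\cc)\supseteq \xx+\tfrac1\ell(\sigma-\cc)=[\xx-\tfrac a\ell\uu,\xx+\tfrac b\ell\uu].
\]
This segment has lattice length $1$, so it contains a lattice point. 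The containment follows from convexity. Indeed,
\[
\xx+\tfrac1\ell(\mathbf{s}-\cc)=\cc+\lambda(\yy-\cc)+\tfrac1\ell(\mathbf{s}-\cc),
\]
and since $\lambda(K-\cc)+\mu(K-\cc)=(\lambda+\mu)(K-\cc)$ for $\lambda,\mu\ge0$ and convex $K$, this lies in $\cc+(\lambda+\frac1\ell)(K-\cc)$. Hence $\kappa_\cc(K)\le\lambda+\frac1\ell$. The minimum defining $\kappa$ is attained by compactness, or we can argue with $\lambda+\varepsilon$ and let $\varepsilon\to0$.

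For (ii) we argue the same way with arbitrary translates. Let $\lambda:=\mu(\pi(K))$ and take any $\pp\in\R^d$; we want a lattice point in $\pp+(\lambda+\frac1\ell)(K-\cc)$. This shift by $\cc$ is harmless, since $\mu$ is translation invariant. The set $\pi(\pp)+\lambda(\pi(K)-\pi(\cc))$ is a translate of $\lambda\pi(K)$, so by the definition of the covering radius it contains some $\zz'\in\Z^{d-1}$. As before, write $\zz'=\pi(\pp)+\lambda(\pi(\yy)-\pi(\cc))$ with $\yy\in K$, and set $\xx:=\pp+\lambda(\yy-\cc)$. Adding $\frac1\ell(\sigma-\cc)$ gives a segment of lattice length $1$ on the lattice line $\pi^{-1}(\zz')$, and this segment contains a lattice point. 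By the Minkowski additivity of dilates of convex sets, it lies inside $\pp+(\lambda+\frac1\ell)(K-\cc)$.

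The main point to get right is that a closed segment of lattice length exactly $1$ on a line $\zz+\R\uu$ always contains a lattice point. This holds because the lattice points on that line form exactly $\zz+\Z\uu$, which uses $\pi(\Z^d)=\Z^{d-1}$, and a closed interval of length $1$ in $\R$ meets every translate of $\Z$. Everything else is the identity $\alpha(K-\cc)+\beta(K-\cc)=(\alpha+\beta)(K-\cc)$ for convex $K$.
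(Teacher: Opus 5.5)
Your proof is correct and follows the paper's argument for (i) essentially step by step: you lift the lattice point of the dilated projection to a point of $\cc+\kappa_{\pi(\cc)}(\pi(K))\,(K-\cc)$, translate the fiber segment $\tfrac1\ell(K-\cc)\cap\pi^{-1}(0)$ (of lattice length one) there to catch a lattice point on the lattice line $\pi^{-1}(\zz')$, and conclude with $\alpha(K-\cc)+\beta(K-\cc)=(\alpha+\beta)(K-\cc)$. For (ii) the paper omits the details and cites Codenotti--Santos--Schymura; your version, which runs the same argument for an arbitrary translate $\pp+(\lambda+\tfrac1\ell)(K-\cc)$, is the natural analogue and is also correct.
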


\begin{proof}
To simplify notation, let $\cc'= \pi(\cc)$ and $K'=\pi(K)$. 

By definition of $\kappa_{\cc'}$ there is a point $\pp'\in \Z^{d-1}$ such that
\[
\pp' \in \cc' + \kappa_{\cc'}(K')\cdot( K' - \cc') = \pi\left( \cc + \kappa_{\cc'}(K')\cdot( K - \cc)\right).
\]
Hence, there is a $\pp\in \pi^{-1}(\pp')$ (not necessarily in $K$) such that 
\[
\pp \in \cc + \kappa_{\cc'}(K')\cdot( K - \cc).
\]

Now, by definition of $\ell$ we have that the segment $\tfrac1\ell \cdot ( K - \cc) \cap \pi^{-1}(0)$ has length one. Since this segment is parallel to and of the same length as the segment $\left(\pp + \tfrac1\ell \cdot ( K - \cc) \right)\cap \pi^{-1}(\pp')$ and since $\pi^{-1}(\pp')$ is a lattice line, we conclude that there is a lattice point $\pp_0$ in it. Then, we have
\begin{align*}
\pp_0 \in \pp + \tfrac1\ell \cdot ( K - \cc) \subseteq &\ 
\cc + \kappa_{\cc'}(K')\cdot( K - \cc) + \tfrac1\ell \cdot ( K - \cc) \\
= &\  
\cc + \left(\kappa_{\cc'}(K') + \tfrac1\ell\right) \cdot ( K - \cc),
\end{align*}
which shows that $\kappa_\cc(K) \le \kappa_{\cc'}(K') + \tfrac1\ell$ and finishes the proof of part (i).

Part (ii) is proved with similar arguments. We omit details since it is also a special case of~\cite[Lemma~2.1]{codenottisantosschymura2019the} (see also~\cite[Prop. 2.7]{Malikiosis2024LinExpCheckingLRC}).
\end{proof}

In the case of interest to us $K$ is a lattice zonotope and $\cc$ its center. This implies that 
both $K$ and $\Z^d$ are $\cc$-symmetric, in particular $\cc\in \tfrac12\Z^d$.
With the convention that in this case we omit $\cc$ and $\pi(\cc)$ from the notation, part one of the statement simplifies to
\begin{align}
\kappa(K) \leq \kappa(\pi(K) ) + \tfrac1\ell.
\label{eq:kappa-projection}
\end{align}

To make Proposition~\ref{prop:CSS-sum-bound} useful we need our convex body $K$ to contain long segments. 
One way to guarantee this is via the number of lattice points:

\begin{proposition}[{\cite[Proposition 2.5]{Malikiosis2024LinExpCheckingLRC}}, see also {\cite[Theorem 2.1]{betkehenkwills1993successive}}]
\label{prop:long-projection}
Let $Z\subset \RR^d$ be a  zonotope with more than $\ell^{d}$ lattice points. Then, there is a linear projection $\pi: \RR^d \to \RR^{d-1}$ with $\pi(\ZZ^d) = \ZZ^{d-1}$ and such that the fiber of the center $\cc'=\pi(\cc)$ of $\pi(Z)$ has $\length(\pi^{-1}(\cc'))\ge \ell$.
\end{proposition}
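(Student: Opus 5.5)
The plan is to find a lattice direction in which $Z$ is long, and then project along it. Since $Z$ has more than $\ell^d$ lattice points, the count should force a lattice line $L$ through the center $\cc$ such that $Z\cap L$ has lattice length at least $\ell$. Given such a line, we choose a primitive vector $\uu$ in its direction and complete $\uu$ to a lattice basis. The projection $\pi$ that kills $\uu$ then satisfies $\ker\pi=\RR\uu$ and $\pi(\ZZ^d)=\ZZ^{d-1}$. Because $\pi$ maps the center of $Z$ to the center of the zonotope $\pi(Z)$, the fiber over $\cc'=\pi(\cc)$ is exactly $Z\cap L$.

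To find $L$ we would use a pigeonhole or difference-body argument.

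\begin{enumerate}
\item Take the lattice points of $Z$ and reduce them modulo $\ell\ZZ^d$. There are only $\ell^d$ residue classes, so two distinct lattice points $\pp,\qq\in Z$ satisfy $\qq-\pp\in \ell\ZZ^d\setminus\{0\}$.
\item Write $\qq-\pp=\ell\ww$ with $\ww\in\ZZ^d\setminus\{0\}$. Then the segment $[\pp,\qq]\subset Z$ has lattice length at least $\ell$ in the direction of $\ww$.
\item Transfer this segment to the center. By central symmetry, $2\cc-\pp$ and $2\cc-\qq$ also lie in $Z$. Convexity then gives that the segment $\tfrac12\big([\pp,\qq]-[\pp,\qq]\big)+\cc = \cc+\tfrac12[\pp-\qq,\qq-\pp]$ lies in $Z$. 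This segment has the same direction and the same length $\ell\,\length([\oo,\ww])\ge \ell$.
\item Concretely, the endpoints are $\cc\pm\tfrac12(\qq-\pp)$. Each is the average of a point of $Z$ and a reflected point of $Z$: for example, $\cc+\tfrac12(\qq-\pp)=\tfrac12\big(\qq+(2\cc-\pp)\big)$.
\end{enumerate}

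So the fiber through $\cc$ in the direction of $\ww$ has length at least $\ell$. This step only uses that $Z$ is centrally symmetric and convex, so it holds for any centrally symmetric convex body.

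The main thing to check is that the projection $\pi$ along the primitive vector $\uu$ parallel to $\ww$ really surjects $\ZZ^d$ onto $\ZZ^{d-1}$. This is standard: a primitive vector extends to a basis of $\ZZ^d$. We also need to confirm that the lattice length measured in the source matches the paper's definition of $\length$, which normalizes by the primitive vector $\uu$. Beyond that, the argument is a routine pigeonhole. The only subtlety is that we need strictly more than $\ell^d$ points in order to obtain two distinct points in one residue class, and this is exactly the hypothesis.
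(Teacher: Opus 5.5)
Your proposal is correct and follows essentially the same route as the paper. Both arguments pigeonhole modulo $\ell\ZZ^d$ to get two lattice points $\pp,\qq\in Z$ with $\qq-\pp\in\ell\ZZ^d\setminus\{0\}$, move the segment to the center by writing its endpoints $\cc\pm\tfrac12(\qq-\pp)$ as midpoints of a point of $Z$ and a reflected point $2\cc-\pp$ or $2\cc-\qq$, and then project along the primitive vector in that direction after completing it to a lattice basis.
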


\begin{proof}
The number of lattice points implies that $Z$ has two lattice points $\pp, \qq$ in the same class modulo $\ell \ZZ^{d}$, so that the segment $[\pp,\qq]$ has length at least $\ell$. Observe that the segment $s= [\cc + \tfrac12(\pp-\qq), \cc - \tfrac12(\qq-\pp)]$ is parallel to it, of the same length.
This segment is still contained in $Z$ since its end-points are the mid-points of $[\pp, 2\cc-\qq]$ and $[\qq, 2\cc-\pp]$, respectively. Hence $\length(s) = \length([\pp,\qq]) \ge \ell$. 

Let $\bb_1$ be the primitive vector proportional to $\pp-\qq$ and complete it to a basis $\{\bb_0,\dots, \bb_{d-1}\}$. The projection in the statement is the one with $\pi(\bb_0) = 0$ and $\pi(\bb_i) = \ee_i$ for $i=1,\dots, d-1$, since it has $s\subset \pi^{-1}(\cc')\cap Z$.
\end{proof}

\bigskip

\subsubsection{Relation to successive minima}

In this symmetric case we can relate $\kappa$ and $\mu$  to the so-called \emph{successive minima} of $K$. Recall that the successive minima of a $0$-symmetric convex body $C$ are 
\[
\lambda_i(C):= \min\{\lambda >0: 
\dim\,\langle\lambda C \cap \Z^d\rangle_{\R}  \ge i\}.
\]

\begin{proposition}
If $K$ is a $\cc$-symmetric convex body for a $\cc \in \tfrac12\Z^d$ then
\begin{enumerate}[label=\upshape(\roman*),ref=\thetheorem .\roman*]
\item If $\cc\in \Z^d$ then $\kappa(K)=0$ and otherwise $\lambda_1(K-K) \le \kappa(K)$.
\item $\lambda_d(K-K) \le \mu(K) \le \sum_{i=1}^d \lambda_i(K-K)$. 
\qed
\end{enumerate}
\end{proposition}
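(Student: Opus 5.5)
We work throughout with $C := K-K$, which is $0$-symmetric. Since $K$ is $\cc$-symmetric, $K-\cc = -(K-\cc)$, and therefore
\[
C = (K-\cc)-(K-\cc) = 2(K-\cc).
\]
It follows that for every $\xx$ we have $\|\xx\|_{K,\cc} = 2\|\xx\|_{C}$, where $\|\cdot\|_C$ is the gauge of $C$. Consequently $\lambda_i(C)$ equals $\tfrac12$ times the corresponding successive minimum of the body $K-\cc$. The plan is to translate each of the three inequalities into statements about lattice vectors measured in this norm and then invoke standard facts.

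\textbf{Part (i).} If $\cc\in\Z^d$, then $\kappa(K)=0$ directly from the definition, since $\cc$ itself is a lattice point. Otherwise, let $\pp\in\Z^d$ attain $\kappa(K)=\|\pp-\cc\|_{K,\cc}$. Because $\cc\in\tfrac12\Z^d$, the vector $2\cc$ is a lattice point, and hence $2\cc-\pp$ is also a lattice point. Moreover, $2\cc - \pp$ is at the same $K$-distance from $\cc$ as $\pp$, by the central symmetry of $K$ about $\cc$. The difference
\[
\pp-(2\cc-\pp) = 2(\pp-\cc)
\]
is then a lattice vector, and it is nonzero because $\pp\neq \cc$ (as $\cc\notin\Z^d$). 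Its norm is
\[
\|2(\pp-\cc)\|_C = \|\pp-\cc\|_{K,\cc} = \kappa(K).
\]
So $\kappa(K)\,C$ contains a nonzero lattice point, which gives $\lambda_1(C)\le\kappa(K)$.

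\textbf{Part (ii), lower bound.} This is the classical inequality $\lambda_d(C)\le \mu(K)$, but I would prove it directly. Suppose $\mu:=\mu(K)<\lambda_d(C)$. Then the lattice vectors in $\mu\, C$ span a proper subspace, so there is a nonzero linear functional $f$ vanishing on all of them; we may take $f$ primitive with $f(\Z^d)=\Z$. Since $\mu K + \Z^d$ covers $\R^d$, the translates $\mu K+\zz$ for $\zz\in\Z^d$ cover the whole space. The key step is a connectedness argument. Two translates $\mu K+\zz$ and $\mu K+\zz'$ intersect if and only if
\[
\zz-\zz'\in \mu K - \mu K = \mu C.
\]
Since $\R^d$ is connected and the covering is locally finite, the intersection graph of the translates is connected. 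Hence every lattice vector is a sum of lattice vectors lying in $\mu C$, all of which lie in $\ker f$. This forces $\Z^d\subseteq\ker f$, a contradiction. Therefore $\lambda_d(C)\le \mu(K)$. The connectedness-of-the-cover step is the only one needing care: one uses that the translates form a locally finite cover of $\R^d$ by closed sets, so the union of the translates in any single connected component of the intersection graph is both closed and open.

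\textbf{Part (ii), upper bound.} This is the standard Kannan–Lovász type argument $\mu(K)\le\sum_i\lambda_i(C)$, which I would cite from \cite{KannanLovasz} or \cite[p.~381]{gruber2007convex}, adapted to the normalization. Choose linearly independent lattice vectors $\bb_i\in\lambda_i(C)\, C$. Write
\[
C = 2(K-\cc) = (K-\cc)+(K-\cc),
\]
so that each segment $[0,\bb_i]$ lies in $\lambda_i(C)(K-\cc) - \lambda_i(C)(K-\cc)$. The half-open parallelepiped spanned by the $\bb_i$ tiles $\R^d$ under translation by the sublattice $\Lambda=\langle \bb_i\rangle$, and $\Lambda\subseteq\Z^d$. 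It therefore suffices to show that this parallelepiped, up to a translation, is contained in
\[
\Big(\sum_i\lambda_i(C)\Big)(K-\cc).
\]
The natural move is to use the centred parallelepiped $\sum_i[-\tfrac12\bb_i,\tfrac12\bb_i]$. Each segment $[-\tfrac12\bb_i,\tfrac12\bb_i]$ lies in $\tfrac12\lambda_i(C)\,C=\lambda_i(C)(K-\cc)$, and convexity lets us add the dilates, so the centred parallelepiped lies in $\big(\sum_i\lambda_i(C)\big)(K-\cc)$. This would give $\mu(K)\le\sum_i\lambda_i(C)$, exactly as stated.
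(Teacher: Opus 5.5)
Your proof is correct. For part (i) it is the paper's argument in different words. The paper uses $K-K=2(K-\cc)$ and $\cc+\tfrac12\Z^d=\tfrac12\Z^d$ to rewrite $\lambda_1(K-K)$ as the minimum of $\dist_K(\cc,\cdot)$ over $\tfrac12\Z^d\setminus\{\cc\}$, and notes that restricting to $\Z^d$ can only increase it. You instead exhibit the nonzero lattice vector $2(\pp-\cc)\in\kappa(K)\,C$ directly. The point $2\cc-\pp$ is superfluous, since $2\cc\in\Z^d$ already makes $2(\pp-\cc)$ integral.

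Where you genuinely differ is part (ii). The paper gives no argument there and only cites Kannan--Lov\'asz (Lemma 2.4) and Jarn\'ik. You give self-contained proofs. The lower bound comes from connectedness of the intersection graph of the locally finite closed cover $\{\mu K+\zz\}_{\zz\in\Z^d}$, whose edges are exactly the lattice vectors in $\mu C$. The upper bound comes from fitting the centred fundamental parallelepiped of $\langle\bb_1,\dots,\bb_d\rangle$ inside $(\sum_i\lambda_i(C))(K-\cc)$. Both are sound, and they make the proposition independent of the literature at the cost of a few lines.

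Neither half of (ii) actually needs the symmetry hypothesis. The lower bound never uses it. For the upper bound, write $\bb_i=\xx_i-\yy_i$ with $\xx_i,\yy_i\in\lambda_i(C)K$; then $\sum_i[0,\bb_i]\subseteq(\sum_i\lambda_i(C))K-\sum_i\yy_i$ for any convex body $K$. This is presumably where your unused sentence about $[0,\bb_i]$ was heading, so either complete it or delete it. Also, since the centred-parallelepiped argument is complete, say it ``gives'' the bound rather than ``would give'' it.
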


Part (ii) can be found in \cite[Lemma 2.4]{KannanLovasz} and is sometimes called Jarn\'ik's inequality since it appears in \cite{Jarnik1,Jarnik2},
although Jarn\'ik says it follows from Minkowski's {\em Geometrie der Zahlen}, p. 226.

\begin{proof}[Proof of part (i)]
Observe that for any $0$-symmetric body $C$ one has
$\lambda_i(C) = \dist_C(0, \pp_i)$ for a certain (perhaps not unique) $\pp_i \in \Z^d\setminus\{0\}$. The points $\pp_i$ where each $\lambda_i$ are attained cannot belong to $(2\Z)^d$. Hence, we have that
\begin{align*}
\lambda_1(C) &= \dist_C(0, \Z^d\setminus\{0\})=\min_{\pp\in \Z^d\setminus\{0\}} \dist_C(0,\pp) \\ 
\end{align*}

Now let $C=K-K$ be the \emph{difference body} of $K$, which is $0$-symmetric. 
Except for a translation,  which does not affect the distances defined by $K$ and $C$,  $C$ is the second dilation of $K$.
Since scaling a convex body by a factor $\lambda$ divides the distances by that same factor, the above inequalities imply:
\begin{align*}
\lambda_1(C) &=\min_{\pp\in \tfrac12\Z^d\setminus\{\cc\}} \dist_K(\cc,\pp) \le
\min_{\pp\in \Z^d\setminus\{\cc\}} \dist_K(\cc,\pp) = \kappa(K).
\qedhere
\end{align*}
\end{proof}

That the three inequalities may be strict or non-strict, and that $\kappa(K)$ may be both smaller or bigger than $\lambda_d(K-K)$ is shown in the following table for lattice $2$-polytopes $P$. In the table,  $Z_{v_1,v_2,v_3}$ denotes the LR-zonotope with velocity vector $(v_1,v_2,v_3)$ (Definition~\ref{def:LRZ}).
 
\begin{table}
\[
\begin{array}{l|cc|cc}
\text{polytope } P& \lambda_1(P-P) & \lambda_2(P-P) & \kappa(P) & \mu(P) \\
\hline
C:=[0,1]^2 & {1} & {1} &{1} & {1} \\ 
R:=(\tfrac12,\tfrac12)+\conv\{\pm e_1, \pm e_2\} & {1/2} & {1/2} &{1} & {1} \\ 
Z_{1,2,3} & {1/3} & {2/5} &{1/2} & {1/2} \\ 
Z_{1,2,4} & {1/3} & {1/3} &{1/3} & {3/7} \\ 
\end{array}
\]
\label{table:kappas}
\caption{Some $2$-zonotopes illustrating $\lambda_1$, $\lambda_2$, $\kappa$ and $\mu$.}
\end{table}

\begin{figure}[htb]
\centerline{\includegraphics[scale=.7]{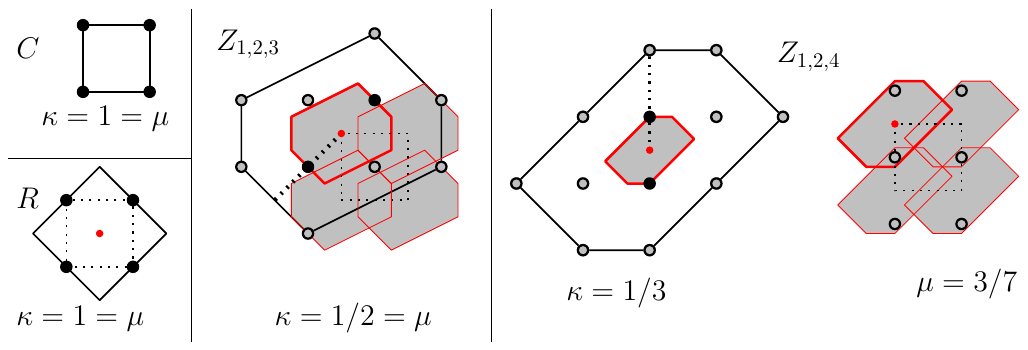}}
\caption{The parameters $\kappa$ and $\mu$ of the zonotopes $P$ in Table~\ref{table:kappas}}
\end{figure}

\begin{figure}[htb]
\centerline{\includegraphics[scale=.7]{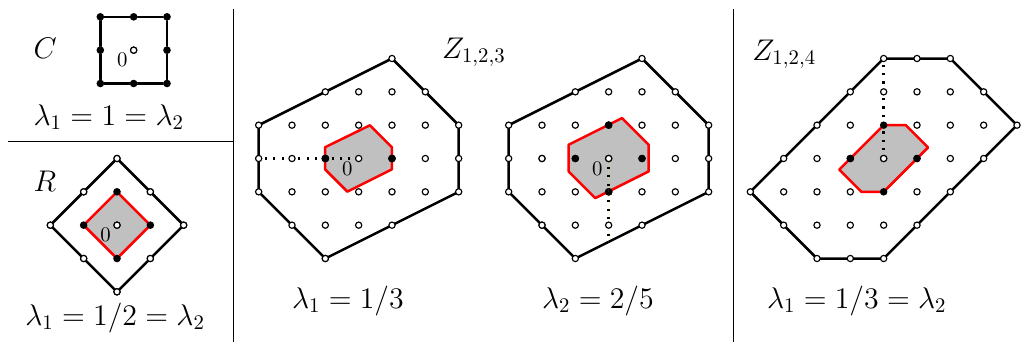}}
\caption{The parameters $\lambda_1$ and $\lambda_2$ of the zonotopes $P-P$ in Table~\ref{table:kappas}.
Observe that changing from $P$ to $P-P$ for a centrally symmetric $P$ is equivalent to refining the lattice by a factor of two and centering $P$ at the origin.
}
\end{figure}


\subsection{Zonotopal restatements of LRC}
\label{sec:zonotopes}
For a finite set of vectors  $U=\{\uu_1,\dots, \uu_n\}\subset \R^d$ the \emph{zonotope generated by $U$} is
\[
\zono[U]:=\sum_{i=1}^{n} [\oo, \uu_i] = \Big\{\sum_{i=1}^{n}\lambda_i \uu_i : \lambda_i \in [0,1] \ \forall i\in\{1,\dots,n\}\Big\}.
\]
Every zonotope, that is, every Minkowski sum of segments, can be put in the form $Z[U]$ by translating it to have a vertex at the origin. 
The translation involved is not important for us, since we are only interested in \emph{lattice zonotopes}, that is, zonotopes with integer vertices, and everything we do is invariant under integer translation.

That is, from now on $U\subset \Z^d$. Observe that lattice zonotopes are $\cc$-symmetric with respect to the point $\sum_i \tfrac12 \uu_i \in \frac12 \Z^d$.

For each subset $S\subset U$ of size $d$, the number $|\det(S)|$ equals the volume of the  \emph{parallelepiped} generated by $S$, which is non-zero if and only if $S$ is a linear basis and $\pm 1$ if and only if it is a lattice basis.
The \emph{volume vector} (or \emph{ Pl\"ucker vector}) of $Z$ (or of $U)$ is the vector in $\R^{\binom{n}{d}}$ consisting of these numbers. It is well-known that the sum of absolute values of the entries in the volume vector equals the volume of $Z$~(see, e.g., \cite[Lemma 9.1]{BeckRobins}).

Malikiosis and Schymura showed that the following classes of lattice zonotopes are closely related to the lonely runner conjecture:

\begin{definition}[\cite{Malikiosis2024LinExpCheckingLRC, ACS4slrz}]
\label{def:LRZ}
\begin{enumerate}[label=\upshape(\roman*),ref=\thetheorem .\roman*]
\item A \emph{lonely runner zonotope} (or LR zonotope, for short) of dimension $d$ is any $d$-dimensional lattice zonotope $Z$ with $d+1$ generators whose volume vector has no zero entries. 
\item The LR zonotope is called \emph{strong} (sLR zonotope) if its volume vector entries are all different.
\end{enumerate}
\end{definition}

\begin{proposition}[Malikiosis-Schymura \cite{zonorunners}]
\label{prop:zonorunners}
For each value of $n$ Conjectures~\ref{conj:LRC} and~\ref{conj:sLRC} are, respectively, equivalent to:
\begin{enumerate}[label=\upshape(\roman*),ref=\thetheorem .\roman*]
\item 
\label{prop:zonorunners-LR}
Every LR zonotope $Z$ of dimension $n-1$ has $\kappa(Z) \le \tfrac{n-1}{n+1}$.
\item 
\label{prop:zonorunners-sLR}
Every sLR zonotope $Z$ of dimension $n-1$ has $\mu(Z) \le \tfrac{n-1}{n+1}$.
\end{enumerate}
\end{proposition}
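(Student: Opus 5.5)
The plan is to derive both parts from the earlier restatements, Lemma~\ref{lemma:loneliness-Z} and Proposition~\ref{prop:gamma-kappa-mu}, combined with the rationality reduction of Corollary~\ref{coro:loneliness}. One point needs care: the indexing. A lonely runner zonotope of dimension $n-1$ has volume vector $\vv\in\Z^n$, so it encodes $n$ runners and not $n+1$. The threshold $\tfrac{n-1}{n+1}$ in the statement therefore has to be read with the convention that matches the index in Conjecture~\ref{conj:conj}. Concretely, Conjecture~\ref{conj:LRC} for velocity vectors of length $n$ asks for $\gamma(\vv)\ge \tfrac1{n+1}$.

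\textbf{Reduction to integer vectors.} By Corollary~\ref{coro:loneliness}, it suffices to consider $\vv\in\Z^n_{>0}$ with $\gcd(\vv)=1$. In the shifted case we additionally require no repeated entries. This step relies on two facts: rational lines are dense in the closure of irrational ones, and the gap is invariant under sign changes and rescaling. Both are already cited in the paper.

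\textbf{Bijection between vectors and zonotopes.} For such $\vv$, fix $\pi_\vv$ and the zonotope $Z(\vv)$.
\begin{itemize}
\item The generators $\uu_i=\pi_\vv(\ee_i)$ satisfy $\sum v_i\uu_i=0$.
\item The $d\times d$ minors of $U$ are, up to sign, the $v_i$. This holds because $\pi_\vv$ is unimodular onto $\Z^{n-1}$, and the maximal minors of the projection of the standard basis equal the entries of the primitive kernel vector $\vv$.
\item Hence $Z(\vv)$ is an LR zonotope. It is an sLR zonotope exactly when the $|v_i|$ are distinct.
\end{itemize}
Conversely, take any LR zonotope $Z[U]$ with $d+1$ generators. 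Its unique (up to scale) linear dependence has coefficients equal to the signed maximal minors, all nonzero. After changing the signs of the generators, which only translates $Z$ by a lattice vector, and dividing by the gcd, we obtain $\vv\in\Z^n_{>0}$ with $Z[U]$ lattice-equivalent to $Z(\vv)$. This uses the uniqueness statement from~\cite{Malikiosis2024LinExpCheckingLRC}. The one caveat is that $U$ might span a proper sublattice; in that case $\vv$ is not primitive. Dividing by the gcd, or restricting to the lattice spanned by $U$, only enlarges $Z$ relative to the lattice, and it makes $\kappa$ and $\mu$ smaller or equal. So checking the primitive case suffices for an upper bound.

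\textbf{Translating the inequality.} Proposition~\ref{prop:gamma-kappa-mu} gives $\gamma(\vv)=\tfrac12-\tfrac12\kappa(Z)$ and $\gamma^{\min}(\vv)=\tfrac12-\tfrac12\mu(Z)$. Therefore a lower bound $\gamma\ge g$ is equivalent to the upper bound $\kappa\le 1-2g$, and the same holds for $\mu$. With $g=\tfrac1{n+1}$, this yields $1-\tfrac{2}{n+1}=\tfrac{n-1}{n+1}$, exactly the threshold in the statement. The two parts follow by quantifying over all $\vv$ of the appropriate type and using the bijection above.

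The main obstacle is the middle step. One must verify carefully that every LR zonotope, including those whose generators span a sublattice of $\Z^d$, is controlled by a primitive volume vector. One must also check that sLR corresponds exactly to distinct $|v_i|$. The rest is bookkeeping via the cited lemma and proposition.
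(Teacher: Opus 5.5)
Your proposal is correct and takes the same route as the paper: the paper only cites the result, but its content is exactly Corollary~\ref{coro:loneliness} combined with Lemma~\ref{lemma:loneliness-Z}/Proposition~\ref{prop:gamma-kappa-mu} and the correspondence $\vv\leftrightarrow Z(\vv)$ for primitive $\vv$. You are also right to flag generators spanning a proper sublattice $\Lambda$, which the paper covers only through the hypothesis $\gcd(\vv)=1$, but you state the monotonicity there backwards: relative to the coarser lattice $\Lambda$ the zonotope is smaller, so $\kappa_{\Z^d}(Z)\le\kappa_\Lambda(Z)$ and $\mu_{\Z^d}(Z)\le\mu_\Lambda(Z)$, where the right-hand sides are the values for the primitive volume vector, and it is this inequality (not the one you wrote) that makes checking the primitive case sufficient.
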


\subsection{Coloopless and cosimple zonotopes}

In the following definition, by a \emph{vector configuration} we mean a finite multiset of vectors in $\R^d$.
We consider the vectors labeled, and when a configuration $U'$ is obtained from another one $U$ by operations such as deletion, diagonal, linear map, or Gale duality, we  implicitly (or sometimes explicitly) keep their labellings.

\begin{definition}Let $U\subset \Z^d$ be an integer vector configuration of rank $d$. We say that:
\label{def:co}
\begin{enumerate}[label=\upshape(\roman*),ref=\thetheorem .\roman*]
\item $U$ is \emph{coloopless}  if  there is a linear dependence $\sum_{\uu\in U} \lambda_\uu \uu =0$ with $\lambda_\uu\ne 0$ for every $\uu \in U$. Equivalently, if $U\setminus\{\uu\}$ still has rank $d$, for every $\uu\in U$.
\label{def:coloopless}
\item  $U$ is \emph{cosimple} if  there is a linear dependence $\sum_{\uu\in U} \lambda_\uu \uu =0$ with $\lambda_\uu\ne 0$ for every $\uu \in U$ and  $|\lambda_\uu| \ne |\lambda_\vv|$ for all $\uu,\vv\in U$.
\label{def:cosimple}
\end{enumerate}
\end{definition}

Clearly, every cosimple configuration is also coloopless.
The following is a perhaps more intuitive characterization:

\begin{proposition} For every $U\subset \Z^d$  we have that:
\label{prop:co}
\begin{enumerate}[label=\upshape(\roman*),ref=\thetheorem .\roman*]
\item $U$ is coloopless if and only if there is no $\uu\in U$ such that $U\setminus \{\uu\}$ is contained in some linear hyperplane. (We call such a $\uu$ a \emph{coloop}).
\label{prop:coloopless-hyperplane}

\item $U$ is cosimple if and only if  it  has neither a coloop nor two elements  $\uu_1,\uu_2 \in U$ such that $U\setminus \{\uu_1,\uu_2\}$ and one of $\uu_1\pm \uu_2$ are contained in some linear hyperplane.
\label{prop:cosimple-hyperplane}
\end{enumerate}
\end{proposition}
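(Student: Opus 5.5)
Both parts reduce to elementary linear algebra on the space of linear dependences of $U$. Write $D:=\{\lambda\in\R^U : \sum_{\uu\in U}\lambda_\uu\uu=0\}$, a linear subspace of dimension $n-d$. Since $U$ is integral, $D$ is defined over $\mathbb{Q}$, so rational points are dense in it. This density is not really needed here, but it lets us say the definitions could be stated with integer coefficients.

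For part (i), first note that $\uu$ is a coloop in the hyperplane sense if and only if $\operatorname{rank}(U\setminus\{\uu\})<d$, because $U$ has rank $d$. Next, $U\setminus\{\uu\}$ has rank $d$ if and only if $\uu$ lies in the span of the other vectors. That in turn holds if and only if there is a $\lambda\in D$ with $\lambda_\uu\neq 0$. So the hyperplane condition says exactly that none of the coordinate functionals $\lambda\mapsto\lambda_\uu$ vanishes identically on $D$. Then $D$ is not contained in the finite union of the hyperplanes $\{\lambda_\uu=0\}$, since a vector space over an infinite field is not a finite union of proper subspaces. Hence some $\lambda\in D$ has all coordinates nonzero. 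The converse direction is immediate.

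For part (ii) I would use the same idea with more forbidden hyperplanes. $U$ is cosimple if and only if $D$ avoids the finite union of the hyperplanes $\{\lambda_\uu=0\}$, $\{\lambda_\uu=\lambda_\vv\}$ and $\{\lambda_\uu=-\lambda_\vv\}$ for $\uu\ne\vv$. By the same finite-union argument, this happens if and only if $D$ is contained in none of them. So the work is to interpret each containment geometrically.

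\begin{itemize}
\item $D\subseteq\{\lambda_\uu=0\}$ means $\uu$ is a coloop, as in part (i).
\item Suppose $D\subseteq\{\lambda_{\uu_1}=\epsilon\lambda_{\uu_2}\}$ with $\epsilon=\pm1$. Consider the configuration $U'$ obtained by replacing $\uu_1,\uu_2$ by the single vector $\ww=\uu_1+\epsilon\uu_2$. Its dependences $\sum_{\uu\ne\uu_1,\uu_2}\mu_\uu\uu+\mu_\ww\ww=0$ are exactly the elements of $D\cap\{\lambda_{\uu_1}=\epsilon\lambda_{\uu_2}\}$, via $\lambda_{\uu_1}=\mu_\ww$ and $\lambda_{\uu_2}=\epsilon\mu_\ww$. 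So the containment means $D$ has the same dimension as the dependence space of $U'$. Since $U'$ has one fewer vector, this says $\operatorname{rank}(U')=\operatorname{rank}(U)-1=d-1$.
\end{itemize}

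In the second case $U\setminus\{\uu_1,\uu_2\}$ and $\uu_1+\epsilon\uu_2$ together span only a hyperplane, which is the stated condition. Conversely, if they lie in a hyperplane, then $\operatorname{rank}(U')\le d-1$. But adding back $\uu_2$ recovers the span of $U$, so the rank is exactly $d-1$, and the dimension count reverses.

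The only delicate point is this rank bookkeeping for the diagonal configuration $U'$, namely checking that $\dim D(U')=\dim D\cap\{\lambda_{\uu_1}=\epsilon\lambda_{\uu_2}\}$ and that $\operatorname{rank}(U')\in\{d-1,d\}$. Both follow from the identification of dependences written out above.
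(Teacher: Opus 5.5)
Your proof is correct and complete. The paper does not argue this statement itself: it calls part (i) easy and cites \cite[Lemma 5.3]{Malikiosis2024LinExpCheckingLRC} for part (ii). Your argument is therefore a self-contained replacement, and it fits naturally with tools the paper uses elsewhere.

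Your reduction (``$U$ is cosimple iff the dependence space $D$ lies in none of the hyperplanes $\lambda_\uu=0$, $\lambda_\uu=\pm\lambda_\vv$'') is the content of Proposition~\ref{prop:Gale}. Indeed, $D$ is the space of value vectors of linear functionals on $U^*$, so $D\subseteq\{\lambda_{\uu_1}=\epsilon\lambda_{\uu_2}\}$ holds exactly when $\uu_1^*=\epsilon\uu_2^*$. Your identification of the dependences of $U'$ with $D\cap\{\lambda_{\uu_1}=\epsilon\lambda_{\uu_2}\}$ is the same computation the paper performs in the proof of Lemma~\ref{lemma:Gale-diagonal}. There, dependences of $U^{\pm}_{\uu,\vv}$ are matched with dependences of $U$ having equal or opposite coefficients on $\uu,\vv$. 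Your $U'$ is just $U^{+}_{\uu_1,\uu_2}$ or $U^{-}_{\uu_1,\uu_2}$.

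Your route buys two things:
\begin{itemize}
\item Uniformity: (i) and (ii) become the same ``no vector space is a finite union of proper subspaces'' argument with different forbidden hyperplanes.
\item Explicit rank bookkeeping: $\operatorname{rank}(U')\in\{d-1,d\}$ because $\uu_1=\ww-\epsilon\uu_2$. This is exactly what converts ``$D$ is contained in a hyperplane of $\R^U$'' into ``$U'$ is contained in a linear hyperplane of $\R^d$''. It also yields the clean reformulation that $U$ is cosimple iff it has no coloop and no diagonal $U^{\pm}_{\uu_1,\uu_2}$ drops rank.
\end{itemize}
The rational-density remark is not needed. Still, running the same finite-union argument over $\mathbb{Q}$ does show the dependence can be chosen with integer coefficients, which is convenient in the lattice setting.
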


\begin{proof}
Part (i) is easy and part (ii) is \cite[Lemma 5.3]{Malikiosis2024LinExpCheckingLRC}.
\end{proof}

Recall that a \emph{Gale transform} or \emph{Gale dual} of a vector configuration $U$ of rank $d$ and size $n$ is any vector configuration $U^*$ of size $n$ and rank $n-d$ with the property that the coefficient vectors of linear dependences in $U$ coincide with the vectors of values of linear functionals in $U^*$, and vice-versa (see, e.g., \cite[Chapter 4]{DLRS2010triangulations}).
If $U$ and $U^*$ are the multisets of columns of respective matrices $M\in \R^{d\times n}$ and $M^*\in \R^{(n-d)\times n}$, $U$ and $U^*$ being Gale duals  is equivalent to the row spaces of $M$ and $M^*$ being orthogonal complements.  The Gale dual of $U$ is unique modulo linear transformation, and the dual of an integer vector configuration can be chosen to be integer too.

It is easy to verify that:

\begin{proposition}
\label{prop:Gale}
Let $U$ and $U^*$ be Gale duals of one another. Then:
\begin{enumerate}[label=\upshape(\roman*),ref=\thetheorem .\roman*]
\item $U$ is coloopless if and only if $U^*$ does not contain the zero vector.
\label{prop:Gale-LR}
\item $U$ is cosimple if and only if $U^*$ does not contain the zero vector nor two vectors that are equal or opposite to one another. 
\label{prop:Gale-sLR}
\end{enumerate}
\end{proposition}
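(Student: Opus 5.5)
The plan is to argue directly from the defining property of a Gale transform: the coefficient vectors of linear dependences of $U$ are exactly the value vectors $(f(\uu^*_1),\dots,f(\uu^*_n))$ of linear functionals $f$ on the span of $U^*$. The two parts are handled in the same way. In each case we translate the condition on dependences of $U$ into a condition on a linear functional on $U^*$ avoiding finitely many hyperplanes.

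For part (i), suppose first that $U$ is coloopless. Then there is a dependence with all $\lambda_\uu\ne0$. It is of the form $\lambda_i=f(\uu_i^*)$, so no $\uu_i^*$ can be zero. Conversely, suppose no $\uu_i^*$ is zero. Each set $\{f: f(\uu_i^*)=0\}$ is then a proper linear hyperplane in the dual space $(\R^{n-d})^*$. A finite union of proper hyperplanes cannot cover a real vector space, so some $f$ avoids all of them. The corresponding dependence $\lambda_i=f(\uu_i^*)$ has all coefficients nonzero, so $U$ is coloopless.

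Part (ii) is identical in structure. The condition $|\lambda_i|\ne|\lambda_j|$ for $i\ne j$ reads $f(\uu_i^*-\uu_j^*)\ne0$ and $f(\uu_i^*+\uu_j^*)\ne0$. If $U$ is cosimple, the witnessing $f$ shows that no $\uu_i^*$ is zero and that $\uu_i^*\pm\uu_j^*\ne0$; that is, $U^*$ contains no zero vector and no two equal or opposite vectors. Conversely, under these assumptions each of the finitely many vectors $\uu_i^*$ and $\uu_i^*\pm\uu_j^*$ (with $i\ne j$) is nonzero. Hence each defines a proper hyperplane of functionals, and a functional $f$ avoiding all of them gives a cosimple dependence.

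The only step needing care is the avoidance argument. We use that a real vector space is not a finite union of proper subspaces, which holds because $\R$ is infinite; a generic $f$ works. One should also note that the condition is independent of the chosen Gale transform, since Gale duals are unique up to linear isomorphism. If integrality of the coefficients $\lambda$ were desired, a rational generic $f$ can be taken, but Definition~\ref{def:co} does not require this. Nothing else is an obstacle.
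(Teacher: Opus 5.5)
Your proof is correct. The paper gives no proof of this proposition (it only says it is ``easy to verify''), and your argument supplies the intended routine verification: you translate dependences of $U$ into value vectors $(f(\uu_i^*))_i$ of functionals on $U^*$, then choose $f$ outside the finitely many hyperplanes $\{f(\uu_i^*)=0\}$ and $\{f(\uu_i^*\pm\uu_j^*)=0\}$.
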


\begin{remark}
In matroid theory, a \emph{loop} is a zero vector, and a configuration (or its matroid) is called \emph{simple} if it has neither loops nor pairs of parallel elements (elements that are multiples of one another). Hence, we are using the word (co)loopless exactly in the matroid sense (``the Gale dual has no loops''), and the word (co)simple in a weaker sense where only multiples with factor $\pm 1$ are forbidden in the Gale dual.
\end{remark}

The same lattice zonotope $Z$ can be generated by different vector configurations, but there are two extremal choices: we can require the generators to be primitive, which produces the generating set with the maximum number of generators, or require them to be not parallel to one another, which produces the minimum.
We call the latter choice the \emph{reduced} generating set of $Z$,  because it can be obtained from any other  generating set by combining each parallel class of generators into a single one.

It is quite obvious via Proposition~\ref{prop:co} that if a given set of generators for a zonotope $Z$  is \emph{not} coloopless (respectively, cosimple), reducing it by combining parallel generators  cannot make it coloopless (respectively, cosimple). 
Thus, it makes sense to define coloopless and cosimple zonotopes as follows.

\begin{definition}
\label{def:cozono}
A lattice zonotope is  \emph{coloopless} (respectively, \emph{cosimple}) if its reduced set of generators is coloopless (respectively, cosimple). Equivalently, if it admits a coloopless (respectively, cosimple) set of generators.
\end{definition}

Observe that LR (respectively sLR) zonotopes are exactly the coloopless (respectively, cosimple)  zonotopes with one more generator than their dimension.


To further show that the definitions of cosimple and coloopless are natural, we relate them with the notion of width.
%
%
Recall that the \emph{width} of a convex body $C\subset \R^d$ with respect to a linear functional $f: \R^d\to \R$ is the length of the interval $f(C)$; put differently:
\[
\width(C,f):=\max_{\pp\in C}(f(\pp)) -\min_{\pp\in C}(f(\pp)).
\]
The \emph{lattice width} of $C$ is the minimum width with respect to non-zero lattice functionals (those that map $\Z^d$ to $\Z$):
\[
\width(C) := \min_{f \in (\Z^d)^*\setminus \{0\}} \width(C,f).
\]
If $Z$ is a zonotope with generators $U$, then
\begin{align}
\width(Z,f) = \sum_{\uu\in U} |f(\uu)|.
\label{eq:width}
\end{align}

\begin{lemma}
\label{lemma:cowidth}
\begin{enumerate}[label=\upshape(\roman*),ref=\thetheorem .\roman*]
\item 
\label{lemma:cowidth-LR}
Every \,coloopless \,zonotope \,has \,width \,at \,least \,two. Conversely, if a rational zonotope $Z$ has width at least two with respect to the lattice spanned by its generators then $Z$ is coloopless.
\item 
\label{lemma:cowidth-sLR}
\cite[Corollary 5.6]{Malikiosis2024LinExpCheckingLRC}
Every cosimple zonotope has width at least three. Conversely, if a rational zonotope $Z$ has width at least three with respect to the lattice spanned by its generators then $Z$ is cosimple.
\end{enumerate}
\end{lemma}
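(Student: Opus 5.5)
The plan is to use formula~\eqref{eq:width}, $\width(Z,f)=\sum_{\uu\in U}|f(\uu)|$, and to translate coloops and ``cosimple obstructions'' into linear functionals of small width. Part (ii) is cited from \cite[Corollary 5.6]{Malikiosis2024LinExpCheckingLRC}, so the work is part (i); I will still sketch (ii) along the same lines.

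\emph{Part (i), forward direction.} Let $Z$ be coloopless with generating set $U\subset\Z^d$ of rank $d$, and let $f$ be a nonzero lattice functional.
\begin{itemize}
\item Since $U$ spans $\R^d$, some $\uu\in U$ has $f(\uu)\ne 0$, and then $|f(\uu)|\ge 1$ because $f(\uu)\in\Z$.
\item Suppose it were the only one. Then $U\setminus\{\uu\}$ lies in the hyperplane $\ker f$, so $\uu$ is a coloop, contradicting Proposition~\ref{prop:coloopless-hyperplane}.
\item Hence at least two generators have $|f(\uu)|\ge1$, and $\width(Z,f)\ge 2$.
\end{itemize}
If $Z$ is coloopless only via its reduced generating set, the same argument applies to that set, since the width does not depend on the choice of generators.

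\emph{Part (i), converse.} Let $\Lambda$ be the lattice spanned by $U$, and assume $Z$ is not coloopless. Then some $\uu_0$ is a coloop: $U\setminus\{\uu_0\}$ lies in a hyperplane $H$ not containing $\uu_0$. Take $f$ with kernel $H$ that is primitive with respect to $\Lambda^*$, i.e.\ $f(\Lambda)=\Z$.
\begin{itemize}
\item The image $f(\Lambda)$ is generated by the values $f(\uu)$ for $\uu\in U$, and all of these vanish except $f(\uu_0)$.
\item So $f(\uu_0)=\pm1$, and $\width(Z,f)=1<2$.
\end{itemize}
One subtlety is that ``not coloopless'' should be taken for the reduced generating set, as in Definition~\ref{def:cozono}. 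Collapsing parallel generators does not change $Z$ and keeps the coloop argument valid. It also does not change $\Lambda$ in any way that matters, because only the generators lying outside $H$ contribute to $f(\Lambda)$.

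\emph{Part (ii).} The argument is analogous, using Proposition~\ref{prop:cosimple-hyperplane}.
\begin{itemize}
\item \emph{Forward.} If $\width(Z,f)\le 2$ while $Z$ is coloopless, then exactly two generators $\uu_1,\uu_2$ have $f(\uu_i)=\pm1$, and all others lie in $\ker f$. Then one of $\uu_1\pm\uu_2$ also lies in $\ker f$, which violates cosimplicity.
\item \emph{Converse.} A violating pair $\uu_1,\uu_2$ with hyperplane $H$ gives a $\Lambda$-primitive $f$ vanishing on $U\setminus\{\uu_1,\uu_2\}$ and with $f(\uu_1)=\mp f(\uu_2)$. These two values generate $\Z$, so $f(\uu_1)=\pm1$ and the width is $2$.
\end{itemize}

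The main obstacle is the bookkeeping between the lattice $\Z^d$ and the lattice $\Lambda$ spanned by the generators. In the forward direction integrality with respect to $\Z^d$ suffices. In the converse, the normalization $f(\Lambda)=\Z$ is exactly what forces the small width.
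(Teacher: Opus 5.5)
Your core argument is the paper's. In the forward directions, a functional of width at most two is nonzero on at most two generators, which exhibits a coloop or a forbidden pair; the paper reads this off the coefficients of a dependence and you read it off Proposition~\ref{prop:co}, which is only a cosmetic difference. In the converses, the $\Lambda$-primitive functional with kernel $H$ takes values $\pm1$ on the one or two generators outside $H$, because $f(U)$ generates $f(\Lambda)=\Z$.

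\textbf{The remark about reduced generating sets is false.} Collapsing parallel generators can change $\Lambda$ in a way that matters.
\begin{itemize}
\item Take $U=\{\ee_1,\ee_1,\ee_2,\ee_2\}$. Then $Z=[0,2]^2$ and $\Lambda=\Z^2$, and every nonzero $f=(a,b)$ has $\width(Z,f)=2|a|+2|b|\ge 2$.
\item Yet the reduced generating set $\{2\ee_1,2\ee_2\}$ consists of two coloops.
\item With $H=\R\ee_2$ and $f=(1,0)$, the generators outside $H$ are the two copies of $\ee_1$. Their $f$-values have gcd $1$, as you say, but the width counts each of them and equals $2$.
\item The width drops to $1$ only with respect to $2\Z^2$, the lattice of the reduced set.
\end{itemize}
So under your reading (coloopless refers to the reduced set, while $\Lambda$ is spanned by the unreduced $U$), the converse is simply false and cannot be patched. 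The same failure occurs in part (ii), for example with $U=\{1,1,1\}\subset\Z$, which is cosimple via $1+2-3=0$ and has width $3$, while its reduced set $\{3\}$ is a coloop.

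\textbf{The fix} is to delete that remark and read the converse as the paper's proof does, for the configuration $U$ whose lattice is used. Width at least two (resp.\ three) with respect to the lattice spanned by $U$ forces $U$ itself to be coloopless (resp.\ cosimple), so $Z$ admits such a generating set. If you insist on the reduced set, then $\Lambda$ must be the lattice spanned by the reduced set. With either reading your argument is complete.
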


\begin{proof}
Let $Z$ be a lattice zonotope and
let $f\in (\Z^d)^*\setminus \{0\}$ be a functional with $\width(Z,f)=\width(Z)$. 
By \eqref{eq:width} we have that:
\begin{enumerate}[label=\upshape(\roman*),ref=\thetheorem .\roman*]
\item $\width(Z,f)=1$ implies there is a $\uu\in U$ with $f(\vv)=0$ for every $\vv\in U\setminus \{\uu\}$. Hence $\lambda_\uu =0$ for every linear dependence in $U$.

\item Similarly, $\width(Z,f)\le 2$ implies that either there is a $\uu\in U$ with $f(\vv)=0$ for every $\vv\in U\setminus \{\uu\}$ (with the same conclusion as before), or there are $\uu_1,\uu_2$ such that $|f(\uu_1)|=|f(\uu_2)|=1$ and $f(\vv)=0$ for every $\vv\in U\setminus \{\uu_1,\uu_2\}$. In this case $|\lambda_{\uu_1}|= |\lambda_{\uu_2}|$ for every dependence.
\end{enumerate}

For the converses, assume that $U$ spans $\Z^d$. This implies that any coloop or any pair of elements $\uu_1$ and $\uu_2$ as in part (ii) of Proposition~\ref{prop:co} must be primitive. Then:

\begin{enumerate}[label=\upshape(\roman*),ref=\thetheorem .\roman*]
\item If $\uu \in U$ is a coloop then $Z$ has width one with respect to the primitive functional vanishing in $U\setminus \{\uu\}$. 

\item If $\uu_1, \uu_2\in U$ are as in part (ii) of Proposition~\ref{prop:co} then the primitive  functional $f$ vanishing on $U\setminus \{\uu_1,\uu_2\}$ 
has $|f(\uu_1)|=|f(\uu_2)| =1$, so $\width(Z,f)=2$. 
\qedhere
\end{enumerate}
\end{proof}

One last property that we need is that the classes of coloopless and cosimple zonotopes  are closed under projection:

\begin{proposition}
\label{prop:cosimple-coloopless-projection}
Let $Z$ be a coloopless (resp. cosimple) $d$-zonotope, and let $\pi:\R^d \to \R^k$, for $k<d$. Then $\pi(Z)$ is also coloopless (resp. cosimple).
\end{proposition}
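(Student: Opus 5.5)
The plan is to work directly with the linear-dependence characterizations in Definition~\ref{def:co}, applied to the image generators. Let $U=\{\uu_1,\dots,\uu_n\}$ be a coloopless (resp.\ cosimple) generating set of $Z$. Here $\pi$ is a surjective linear map, and for lattice statements we take $\pi(\Z^d)=\Z^k$. Then $\pi(Z)=\sum_i[\oo,\pi(\uu_i)]$, so $\pi(Z)$ is the lattice zonotope generated by the configuration $\pi(U)=\{\pi(\uu_1),\dots,\pi(\uu_n)\}$, kept as a labelled multiset. This configuration has rank $k$ because $\pi$ is surjective and $U$ spans $\R^d$. Definition~\ref{def:cozono} says a zonotope is coloopless (resp.\ cosimple) as soon as it \emph{admits} such a generating set. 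It therefore suffices to show that $\pi(U)$ itself is coloopless (resp.\ cosimple) as a configuration.

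The key step is that linear dependences push forward. If $\sum_i\lambda_i\uu_i=0$, then applying $\pi$ gives $\sum_i\lambda_i\pi(\uu_i)=0$ with exactly the same coefficients. Take the dependence on $U$ with all $\lambda_i\neq0$ (and with pairwise distinct $|\lambda_i|$ in the cosimple case). Its image is a dependence on $\pi(U)$ with the same properties, which is precisely Definition~\ref{def:co}.

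Equivalently, one can argue through Proposition~\ref{prop:co}. If some $\pi(\uu)$ were a coloop of $\pi(U)$, there would be a hyperplane $H\subset\R^k$ containing $\pi(U\setminus\{\uu\})$. Then $\pi^{-1}(H)$ would be a hyperplane of $\R^d$ containing $U\setminus\{\uu\}$, contradicting colooplessness of $U$. The cosimple condition pulls back the same way, including the requirement on $\uu_1\pm\uu_2$.

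The only point needing care is a technicality rather than a real obstacle. A generator $\uu_i$ might map to $0$, or two generators might become parallel, so $\pi(U)$ need not be reduced. There are two ways to handle this, and I would use the first:
\begin{itemize}
\item Rely on the ``admits a generating set'' form of Definition~\ref{def:cozono}, which is justified by the remark preceding it: combining parallel generators preserves the coloopless/cosimple property.
\item Alternatively, check directly that zero generators can be discarded, since they never appear in a hyperplane obstruction, and that merging a parallel class keeps a dependence with nonzero coefficients.
\end{itemize}
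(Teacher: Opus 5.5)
Your argument is correct and is essentially the paper's. The paper takes the reduced generating set $U$ of $Z$ and observes that a hyperplane of $\R^k$ witnessing a coloop of $\pi(U)$ pulls back to a hyperplane with the same property for $U$. In the cosimple case, that hyperplane is one containing $\pi(U)\setminus\{\pi(\uu_1),\pi(\uu_2)\}$ together with one of $\pi(\uu_1)\pm\pi(\uu_2)$. This is exactly your alternative argument via Proposition~\ref{prop:co}, and your push-forward of a dependence is the same fact read through Definition~\ref{def:co}.

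The one thing to fix is the justification in your last paragraph, which is backwards. Combining parallel generators does \emph{not} preserve colooplessness. For example, $\{1,1\}\subset\R$ is coloopless, with dependence $1\cdot 1-1\cdot 1=0$. Its reduction $\{2\}$ is a coloop and has no nontrivial dependence at all. So your second bullet (``merging a parallel class keeps a dependence with nonzero coefficients'') fails too. The remark before Definition~\ref{def:cozono} asserts the converse direction: combining cannot \emph{create} the property.

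In fact, if coloopless is read via the reduced generating set, the proposition is false. Take the LR zonotope generated by $(1,0),(1,1),(1,-1)$, with dependence $2(1,0)-(1,1)-(1,-1)=0$. Under $(x,y)\mapsto x$ it projects onto $[0,3]$, whose reduced generator $\{3\}$ is a coloop. Note that $[0,3]$ is nevertheless generated by the coloopless, even cosimple, configuration $\{1,1,1\}$.

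So the statement must be understood with the ``admits a coloopless (resp.\ cosimple) set of generators'' clause of Definition~\ref{def:cozono}. That is also what the paper's proof silently uses, since $\pi(U)$ need not be reduced even when $U$ is. With that reading your push-forward argument is complete as it stands; zero images are harmless, as you note. Simply take that clause as the definition and drop the claimed justification.
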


\begin{proof}
Let $U$ be the reduced set of generators of $Z$ and let $Z'=\pi(Z)$,  $U':=\pi(U)$. If $U'$ was not coloopless, there would be a hyperplane in $\R^k$ containing all but one of its generators. This would lift to a hyperplane in $\R^d$ containing all but one of the generators of $U$.

Analogously, if $U'$ was not  cosimple, a hyperplane in $\R^k$ containing all but two of the generators of $U'$ and containing the sum or difference of the other two would lift to a hyperplane with the same property for $U$.
\end{proof}


\section{LRC and coloopless zonotopes. Revisiting the volume upper bound}
\label{sec:LRC}

\subsection{Every coloopless zonotope contains an LR-zonotope.}
\label{sec:coloopless}

We here show that LR zonotopes are the minimal (under containment) coloopless polytopes. In particular, the maximum $\kappa(Z)$ among all coloopless zonotopes of a given dimension is always attained (perhaps not uniquely) at an LR zonotope. 

To prove this we introduce the following operation that combines two generators of a zonotope into one, hence decreasing the number of them.

\begin{definition}[Diagonal subzonotopes]
\label{def:diagonal}
Let $U\subset \Z^d$ be a vector configuration, generating a zonotope $Z$.
For each $\uu, \vv\in U$, the \emph{positive diagonal} of $U$ at $\uu,\vv$ is the configuration $U \setminus \{\uu,\vv\} \cup \{\uu+\vv\}$. Similarly, the \emph{negative diagonal} is $U \setminus \{\uu,\vv\} \cup \{\uu-\vv\}$. We denote them $U_{\uu,\vv}^+$ and $U_{\uu,\vv}^-$, respectively.

We call \emph{diagonal subzonotope} of a zonotope $Z$ any zonotope of the same dimension as $Z$ and obtained by iterating the diagonal operation on the generators of $Z$.
\end{definition}

\begin{remark}[Notation]
When $U$ generates a zonotope $Z$ we use the notations $Z_{\uu,\vv}^\pm$ for the zonotopes generated by $U_{\uu,\vv}^\pm$. Moreover,
if we have the elements of $U$ labeled as $\uu_1,\dots, \uu_n$ we abbreviate $Z_{\uu_i \uu_j}^{\pm}$ to just  $Z_{i,j}^{\pm}$.
\end{remark}

\begin{lemma}
\label{lemma:contained-diagonal}
Let $Z'$ be a diagonal subzonotope of $Z$. There is a subset $S$ of generators of $Z$ such that $Z' + \sum_{\uu\in S} \uu$ is contained in $Z$ and has the same center.
\end{lemma}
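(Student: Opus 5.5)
The plan is to handle a single diagonal step first and then iterate by induction on the number of steps.

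\emph{Positive diagonal.} Let $Z'=Z_{\uu,\vv}^+$. Since $[\oo,\uu+\vv]\subseteq[\oo,\uu]+[\oo,\vv]$ (the diagonal of the parallelogram), we get $Z'\subseteq Z$ directly. The center of $Z'$ is $\tfrac12\sum_{\ww\in U\setminus\{\uu,\vv\}}\ww+\tfrac12(\uu+\vv)$, which equals the center of $Z$. So we may take $S=\emptyset$.

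\emph{Negative diagonal.} Let $Z'=Z_{\uu,\vv}^-$. Now $[\oo,\uu-\vv]+\vv=[\vv,\uu]$, which is the other diagonal of the parallelogram $[\oo,\uu]+[\oo,\vv]$, and hence is contained in it. Therefore $Z'+\vv\subseteq Z$. Its center is $\tfrac12\sum_{\ww\ne\uu,\vv}\ww+\tfrac12(\uu-\vv)+\vv$, which again equals the center of $Z$. So we take $S=\{\vv\}$.

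\emph{Iteration.} Suppose $Z''$ is obtained from $Z'$ by one more diagonal operation, and assume inductively that $Z'+\sum_{\ww\in S'}\ww\subseteq Z$ with the same center, where $S'$ is a set of generators of $Z$. The one-step case gives a generator $\vv'$ of $Z'$, or none, with $Z''+\vv'\subseteq Z'$ and the same center. Translating gives
\[
Z''+\vv'+\textstyle\sum_{S'}\ww\subseteq Z,
\]
and the centers still agree.

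\emph{Main obstacle.} The translation $\vv'$ is a generator of $Z'$, which may itself be a signed sum of original generators, for instance $\uu_i-\uu_j$. So the accumulated translation vector is an integer combination of original generators, not literally a subset sum.

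To fix this, I would track each current generator as $\sum_{i\in A}\epsilon_i\uu_i$, where the $A$'s are disjoint subsets of the original labels and $\epsilon_i=\pm1$. For each such block, the segment from $\sum_{\epsilon_i=-1}\uu_i$ to $\sum_{\epsilon_i=+1}\uu_i$ lies in $\sum_{i\in A}[\oo,\uu_i]$ and is centered at $\tfrac12\sum_{A}\uu_i$. This segment is $[\oo,\sum_A\epsilon_i\uu_i]$ translated by $\sum_{\epsilon_i=-1}\uu_i$.

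Summing over blocks gives a proof that avoids the induction on translations. We take $S=\{\uu_i:\epsilon_i=-1\}$, using the convention that the generator is oriented as produced by the operations. Then $Z'+\sum_{S}\uu\subseteq Z$. The two zonotopes share a center because each block's segment is centered at the center of its own sub-zonotope $\sum_{i\in A}[\oo,\uu_i]$. The containment holds because the blocks are disjoint, so summing the block containments over all blocks gives containment in $Z$.
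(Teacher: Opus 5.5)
Your proof is correct. The single-step analysis is the same as the paper's: both use that $[\oo,\uu+\vv]$ and $[\vv,\uu]=[\oo,\uu-\vv]+\vv$ are the two diagonals of the parallelogram $[\oo,\uu]+[\oo,\vv]$. The paper phrases the negative case as the positive case applied to $Z-\vv$, whose generators are $U$ with $\vv$ replaced by $-\vv$.

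You differ from the paper in passing to several steps. The paper only says that, by induction on the number of diagonal steps, it suffices to treat one step. It does not address the issue you raise: the accumulated translation is a sum of generators of the intermediate configurations, so a priori it is only a signed integer combination of generators of $Z$.

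Your block decomposition fixes this. Each current generator is $\sum_{i\in A}\epsilon_i\uu_i$, and the blocks $A$ partition the original labels, since diagonals merge generators but never discard them. This proves the literal statement directly and gives the explicit choice $S=\{\uu_i:\epsilon_i=-1\}$.

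The paper's route is shorter. It is also enough for its only application, Corollary~\ref{coro:contained-LRZ}, where an integer translation of $Z'$ inside $Z$ with the same center suffices. The paper's induction can be repaired cheaply as well. The center condition forces the translation to be $\cc_Z-\cc_{Z'}$, and by your computation this equals $\sum_{\epsilon_i=-1}\uu_i$. So whatever the induction produces is in fact a subset sum.
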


\begin{proof}
By induction on the number of diagonal steps needed to go from $Z$ to $Z'$, we only need to show the statement for the case of a single step.

When the diagonal is chosen positive, that is, $Z'=Z^+_{\uu\vv}$ for some generators $\uu, \vv\in U$, the result is obvious with $S=\emptyset$, since the segment generated by $\uu+\vv$ is contained in $Z$ and contributes to the center $\frac{1}{2}(\uu + \vv)=\frac{1}{2}\uu + \frac{1}{2}\vv$.

In the case of the negative diagonal, 
let $U^-:=U \setminus \{\vv\}  \cup \{-\vv\}$ and the corresponding zonotope $Z^-$. We clearly have that $Z^-= Z -\vv$. If we apply to $Z^-$ the same diagonal process that produced $Z'$ from $Z$, we get the same zonotope $Z'$ but now using a positive diagonal of $Z^-$. Hence, the positive case gives that $Z'$ is contained in and has the same center as $Z -\vv$, as we wanted to show.
\end{proof}

\begin{figure}[htb]
\centerline{\includegraphics[scale=.5]{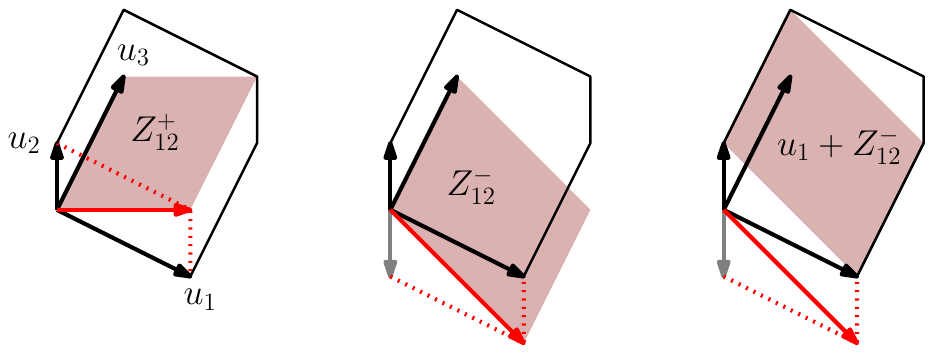}}
\caption{Illustration of the proof of Lemma~\ref{lemma:contained-diagonal}}
\end{figure}

Recall that in matroid theory the \emph{contraction} of a vector configuration $V$ at an element $\vv\in V$ is the vector configuration of rank one less obtained projecting along the direction of $\vv$, and forgetting the element $\vv$.
We now relate diagonals of a configuration with contractions of its Gale dual.

\begin{lemma}
\label{lemma:Gale-diagonal}
Let $U$ and $U^*$ be Gale dual to one another, let $\uu, \vv\in U$, and let $\uu^*, \vv^*$ be the corresponding elements in $U^*$. Then:
\begin{enumerate}
\item The Gale dual of $U_{\uu,\vv}^+$ is the contraction of $U^* \setminus\{\uu^*\}\cup \{\uu^*-\vv^*\}$ at $\uu^*-\vv^*$, with the contracted $\vv^*$ being the element dual to $\uu + \vv$.%
\item The Gale dual of $U_{\uu,\vv}^-$ is the contraction of $U^* \setminus\{\uu^*\}\cup \{\uu^*+\vv^*\}$ at $\uu^*+\vv^*$, with the contracted $\vv^*$ being the element dual to $\uu - \vv$.%
\end{enumerate}
\end{lemma}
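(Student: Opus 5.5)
The plan is to work directly with the defining property of Gale duality: the space of linear dependences of a configuration equals the space of value vectors of linear functionals on its dual. It suffices to prove part (1). Part (2) then follows by applying part (1) to the configuration $U\setminus\{\vv\}\cup\{-\vv\}$, whose Gale dual is $U^*\setminus\{\vv^*\}\cup\{-\vv^*\}$, since flipping the sign of one element flips the corresponding coordinate of every dependence. Under this substitution $\uu+(-\vv)=\uu-\vv$, and $\uu^*-(-\vv^*)=\uu^*+\vv^*$. The contracted element $-\vv^*$ agrees with $\vv^*$ up to sign, which is immaterial for a Gale dual up to the usual sign conventions (or one flips the sign back afterwards).

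For part (1), label $U=\{\uu_1,\dots,\uu_n\}$ with $\uu=\uu_1$ and $\vv=\uu_2$, and let $W:=U^+_{\uu,\vv}=\{\uu_1+\uu_2,\uu_3,\dots,\uu_n\}$. I first compute the dependence space of $W$. The vector $(\mu_{12},\mu_3,\dots,\mu_n)$ is a dependence of $W$ exactly when $(\mu_{12},\mu_{12},\mu_3,\dots,\mu_n)$ is a dependence of $U$. So the dependence space of $W$ is the intersection of the dependence space of $U$ with the hyperplane $\{\lambda_1=\lambda_2\}$, with the (equal) first two coordinates merged into one.

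Next I translate this to $U^*$. Dependences of $U$ are exactly the vectors $(f(\uu_1^*),\dots,f(\uu_n^*))$ for linear functionals $f$ on the span of $U^*$. The condition $\lambda_1=\lambda_2$ reads $f(\uu_1^*-\uu_2^*)=0$. Consider the configuration $V:=\{\uu_1^*-\uu_2^*,\uu_2^*,\uu_3^*,\dots,\uu_n^*\}$, and contract it at $\uu_1^*-\uu_2^*$ by projecting along that vector. Functionals on the contraction are precisely the functionals $f$ vanishing on $\uu_1^*-\uu_2^*$. Their value vectors on the remaining elements $\uu_2^*,\uu_3^*,\dots,\uu_n^*$ are $(f(\uu_2^*),f(\uu_3^*),\dots)=(\lambda_2,\lambda_3,\dots)=(\mu_{12},\mu_3,\dots)$. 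These are exactly the dependences of $W$, with the image of $\vv^*=\uu_2^*$ playing the role of the element dual to $\uu+\vv$. This proves one of the two required inclusions of spaces.

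The remaining check is that Gale duality really holds, and is not just a one-sided containment. The value-vector space of the contraction coincides with the dependence space of $W$ by the equivalence above. I also need the rank count $(n-1)-\operatorname{rank} W=\operatorname{rank}(\text{contraction})$, together with the dual statement that dependences of the contraction are the functionals on $W$. Because row spaces are orthogonal complements, the dual statement follows from the first. The main subtlety is degenerate cases. If $\uu_1^*=\uu_2^*$, the contraction is at a zero vector. This corresponds to $\uu+\vv$ being a coloop-free merge that drops rank in $W$. In that case "contraction" of a loop means simply deleting it, and the rank bookkeeping $\operatorname{rank}W=\operatorname{rank}U$ or $\operatorname{rank}U-1$ must match. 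I would handle this by noting that the orthogonal-complement characterization is insensitive to it.
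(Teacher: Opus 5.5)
Your proposal is correct and is essentially the paper's proof: dependences of $U^+_{\uu,\vv}$ are the dependences of $U$ with equal coefficients at $\uu$ and $\vv$, i.e.\ value vectors of functionals on $U^*$ vanishing on $\uu^*-\vv^*$, which are exactly the functionals on the contraction, and part (2) comes from replacing $\vv$ by $-\vv$. Your extra remarks on the reverse Gale condition via orthogonal complements and on the degenerate case $\uu^*=\vv^*$ add care the paper omits. You are also right that this substitution makes the image of $-\vv^*$ (equivalently of $\uu^*$), not of $\vv^*$, dual to $\uu-\vv$. The statement as written is off by this sign, which is harmless: the image of $\vv^*$ is then dual to $\vv-\uu$, and the later applications only care about zero, equal, or opposite vectors.
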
 

\begin{remark}
In part (1) of the statement, $\uu$ and $\vv$ can be interchanged, since $\uu^*$ and $\vv^*$ have the same projection along the direction of $\uu^*-\vv^*$ and projecting along $\uu^*-\vv^*$ is the same as projecting along $\vv^*-\uu^*$. 

In part (2), changing the roles of $\uu$ and $\vv$ produces a sign reversal in the new element of the Gale dual, as corresponds to the fact that $U_{\uu,\vv}^-$ and $U_{\vv,\uu}^-$ have the new element reversed.

\end{remark}

\begin{proof}[Proof of Lemma~\ref{lemma:Gale-diagonal}]
Let $U=\{\uu_1, \dots, \uu_{n-2}, \uu, \vv\}\subset \Z^d$ and let $U^*=\{\uu^*_1, \dots, \uu^*_{n-2}, \uu^*, \vv^*\}\subset \Z^{n-d}$ be its dual configuration.

Let now $V:=U^* \setminus\{\uu^*\}\cup \{\uu^*-\vv^*\}=\{\uu^*_1, \dots, \uu^*_{n-2}, \vv^*,\uu^*-\vv^*\}\subset \Z^{n-d}$ be the configuration as in the statement, $\pi: \R^{n-d} \to \R^{n-d-1}$ be the linear projection in the direction of $\uu^*-\vv^*$, and $\tilde V:=\{\ww_1, \dots, \ww_{n-2}, \ww\}\subset \R^{n-d-1}$, where $\ww_i:=\pi(\uu_i^*)$ for all $i=1,\dots, n-2$, and $\ww:=\pi(\vv^*)$. Notice that $\pi(\uu^*-\vv^*)=\textbf{0}$.

By definition, $\tilde V$ is the contraction of $V$ at $\uu^*-\vv^*$. Let us see that $ \tilde V$ is the Gale dual configuration of $U_{\uu, \vv}^+$.
\bigskip

Let $\tilde f:\R^{n-d-1} \to \R$ be a linear functional and let $(\lambda_1,\dots, \lambda_{n-2}, \lambda)$ be the list of values of $\tilde f$ on $\tilde V$. That is, $\lambda_i=\tilde f(\ww_i)$, $\lambda=\tilde f(\ww)$.
This can be extended to a linear functional $f: \R^{n-d} \to \R$ that is constant in each fiber of $\pi$. In particular $f(\uu^*-\vv^*)=0$ and $f(\uu^*)=f(\vv^*)=\lambda$, and $f(\uu_i^*)=\lambda_i$.

Then, $(\lambda_1,\dots, \lambda_{n-2}, \lambda,\lambda)$ is the list of values of $f$ on $U^*=\{\uu^*_1, \dots, \uu^*_{n-2}$, $\uu^*, \vv^*\}$. By Gale duality, $(\lambda_1,\dots, \lambda_{n-2}, \lambda,\lambda)$ is a linear dependence in $U$:

\[
\textbf{0}=\lambda_1\uu_1 + \dots + \lambda_{n-2}\uu_{n-2}+ \lambda \uu + \lambda \vv= \lambda_1\uu_1 + \dots + \lambda_{n-2}\uu_{n-2}+ \lambda (\uu +  \vv)
\]
That is, $(\lambda_1,\dots, \lambda_{n-2}, \lambda)$ is a linear dependence on $U_{\uu,\vv}^+$. 

Conversely, any linear dependence in $U_{\uu,\vv}^+$ will determine a linear dependence in $U$ with same coefficient for $\uu$ and $\vv$, which in turns determines (via linear projection) a linear functional in $\tilde V$ with that valuation vector. 

Part (2) of the statement follows by replacing $\vv$ with $-\vv$ in part (1).
\end{proof}

\begin{corollary}
\label{coro:contained-LR}
If $U$ is a coloopless configuration with at least two more elements than its rank then there are  $\uu,\vv\in U$ such that one of $U_{\uu,\vv}^+$ or $U_{\uu,\vv}^-$ is coloopless.
\end{corollary}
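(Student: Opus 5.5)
We pass to the Gale dual and use Lemma~\ref{lemma:Gale-diagonal}. Let $U$ have size $n$ and rank $d$, so $U^*$ has rank $r:=n-d\ge 2$. By Proposition~\ref{prop:Gale-LR}, $U^*$ contains no zero vector, and it suffices to find $\uu,\vv\in U$ and a sign such that the corresponding contraction described in Lemma~\ref{lemma:Gale-diagonal} has no zero vector. We must also check that the diagonal still has rank $d$. This follows from its Gale dual having rank $r-1$, which holds because the contracted vector is nonzero: we will only contract at a nonzero vector $\uu^*\mp\vv^*$.

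Consider the positive diagonal at $\uu,\vv$. Its Gale dual is the projection of $\{\uu_i^*\}_{i}\cup\{\vv^*\}$ along $\ww:=\uu^*-\vv^*$. An element of this contraction is zero exactly when the corresponding vector of $U^*\setminus\{\uu^*\}$ is parallel to $\ww$, or is zero; the latter does not happen. The negative diagonal behaves the same way with $\ww:=\uu^*+\vv^*$. So we need two elements $\uu^*,\vv^*$ of $U^*$ and a sign such that $\ww=\uu^*\mp\vv^*$ is nonzero and no element of $U^*$ other than $\uu^*$ is parallel to $\ww$. (The element $\vv^*$ itself is never parallel to $\ww$ unless $\uu^*\parallel\vv^*$.)

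Since $U^*$ has rank $r\ge2$, pick $\uu^*,\vv^*\in U^*$ that are linearly independent. Then both $\uu^*+\vv^*$ and $\uu^*-\vv^*$ are nonzero, and neither is parallel to $\uu^*$ or $\vv^*$. The obstacle is that other elements of $U^*$ might be parallel to $\uu^*+\vv^*$ or $\uu^*-\vv^*$. To handle this, we do not fix $\uu^*,\vv^*$ arbitrarily but choose them by an extremal argument.

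Project $U^*$ to a generic $2$-dimensional quotient so that the pair stays independent, or simply work in the plane they span. Among the finitely many directions (lines) occupied by elements of $U^*$, there are at least two distinct lines. Choose a $2$-plane $P$ containing at least two directions of $U^*$ and order the lines of $U^*\cap P$ cyclically. Take $\uu^*,\vv^*$ on two cyclically adjacent lines, so no element of $U^*$ in $P$ lies strictly inside the cone spanned by $\uu^*$ and $-\vv^*$, suitably signed. More precisely, choose the signs $\pm\vv^*$ so that the open angular sector between $\uu^*$ and $\mp\vv^*$ contains no line of $U^*$. Then $\uu^*\mp\vv^*$ lies in that sector, hence is parallel to no element of $U^*$.

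Elements of $U^*$ outside $P$ are never parallel to a vector in $P$. Hence the corresponding diagonal, positive or negative according to the sign, has a loopless Gale dual of rank $r-1$, and is therefore coloopless. The main point to verify carefully is the adjacency and sign choice in the plane, which is elementary.
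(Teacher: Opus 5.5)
Your proof is correct and is essentially the paper's argument: you pass to the Gale dual via Lemma~\ref{lemma:Gale-diagonal} and choose $\uu^*$ and $\pm\vv^*$ extremally, so that the open angular sector between them contains no element of $U^*\cup(-U^*)$; hence $\uu^*\mp\vv^*$ is parallel to no element of $U^*$. The paper makes this choice globally, taking two vectors of $U^*\cup(-U^*)$ that form the smallest nonzero angle, whereas you restrict to a plane and take cyclically adjacent lines, which is the same idea; your check that the contracted vector is nonzero, so the diagonal keeps rank $d$, is a detail the paper leaves implicit.
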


\begin{proof}
By hypothesis, the Gale dual $U^*$ has rank at least two, and it has no loop. We want to find two elements such that one of the contractions of Lemma~\ref{lemma:Gale-diagonal} still has no loops. That is, two elements $\uu^*, \vv^*\in U^*\cup (-U^*)$  such that no element of $U^*$ is parallel  to $\uu^* + \vv^*$. One such choice is to take  two vectors of $U^*\cup (-U^*)$ forming the smallest (but non-zero) angle.
\end{proof}

\begin{corollary}
\label{coro:contained-LRZ}
Every coloopless zonotope contains an LR zonotope of the same dimension and with the same center.
\end{corollary}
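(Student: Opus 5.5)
We argue by induction on $n-d$, where $n$ is the number of generators in a reduced (or any coloopless) generating set $U\subset\Z^d$ of the coloopless $d$-zonotope $Z$. If $n=d+1$ then $U$ is coloopless with one more element than its rank. Condition (1) of Proposition~\ref{prop:coloopless} then says that the unique (up to scaling) linear dependence has all coefficients nonzero. Equivalently, all $d\times d$ minors of $U$ are nonzero, so $Z$ is itself an LR zonotope and there is nothing to prove. (Note that $n=d$ is impossible, since a basis has coloops.)

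If $n\ge d+2$, we apply Corollary~\ref{coro:contained-LR} to obtain $\uu,\vv\in U$ such that one of $U^+_{\uu,\vv}$, $U^-_{\uu,\vv}$ is coloopless. Call it $U'$ and let $Z'$ be the zonotope it generates. We first check that $U'$ still has rank $d$. It spans the same space as $U\setminus\{\uu,\vv\}$ together with $\uu\pm\vv$, and a coloopless configuration of rank $r$ in $\R^d$ could a priori have $r<d$, so this needs an argument. Since $U'$ is coloopless, its Gale dual has no loops, and by Lemma~\ref{lemma:Gale-diagonal} that Gale dual is a contraction of rank $n-d-1$ on $n-1$ elements. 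Hence $U'$ has rank $(n-1)-(n-d-1)=d$, and $Z'$ is a diagonal subzonotope of $Z$ with one fewer generator. By Lemma~\ref{lemma:contained-diagonal}, there is a subset $S\subseteq U$ such that $Z'+\sum_{\ww\in S}\ww$ is contained in $Z$ and has the same center. This translate is again a lattice zonotope, generated by the coloopless configuration $U'$, since translating by an integer vector does not change the generators.

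We now apply the induction hypothesis to $Z'+\sum_{\ww\in S}\ww$. It yields an LR zonotope of dimension $d$ contained in it and with its center, which is the center of $Z$. By transitivity of containment this LR zonotope lies in $Z$ and is concentric with it. Iterating in this way amounts to a sequence of $n-d-1$ diagonal steps; composing the translations of Lemma~\ref{lemma:contained-diagonal} at each step is harmless, because each step preserves both containment and the center.

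The only delicate point is the rank bookkeeping in the inductive step: we must ensure that the diagonal produced by Corollary~\ref{coro:contained-LR} is a full-dimensional zonotope, so that it has the same dimension as $Z$. We handle this through the Gale-dual rank count above. Alternatively, one can observe directly that a coloopless diagonal cannot drop rank. A drop in rank would mean $\uu\pm\vv$ lies in the span of $U\setminus\{\uu,\vv\}$ while that span has dimension $d-1$. In that case a functional vanishing on $U'$ but not on $\uu$ would give $\uu,\vv$ forming a cocircuit of size two. This contradicts neither assumption, but it does force the rank of $U'$ to be $d-1$, which the Gale count shows is incompatible with $U'$ being coloopless of size $n-1$ and corank $n-d-1$.
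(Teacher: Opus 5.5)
Your proof is correct and is essentially the paper's argument. You iterate Corollary~\ref{coro:contained-LR} until $d+1$ generators remain and use Lemma~\ref{lemma:contained-diagonal} for the concentric containment; your induction just interleaves these two steps. The explicit rank check is a fair addition that the paper leaves implicit. Note, though, that it rests on the contraction vector $\uu^*\mp\vv^*$ being nonzero, which the non-parallel (smallest-angle) choice in the proof of Corollary~\ref{coro:contained-LR} guarantees. Your ``alternative'' last paragraph is not an independent argument, since it ends by invoking the same Gale count (corank $n-d-1$ already means rank $d$).
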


\begin{proof}
Let $Z$ be a coloopless zonotope. Iterating the previous corollary we find a coloopless diagonal $Z'$ of $Z$ with one more generator than its dimension, that is, a diagonal that is an LR zonotope.
By Lemma~\ref{lemma:contained-diagonal}, there is an integer translation of $Z'$ that is contained in $Z$ and with the same center.
\end{proof}

We are now ready to prove Theorem~\ref{thm:finite-LR-new}, which we state again:

\begin{theorem}[Theorem~\ref{thm:finite-LR-new}]
\label{thm:finite-LR-new-body}
Let $Z\subset \R^d$ be a coloopless $d$-zonotope with at least $\ell^d$ lattice points
for a certain $\ell \in \Z_{>0}$. Then,
$
\kappa(Z) \le \kappa_{d-1}^\LR + \frac1{\ell}.
$
\end{theorem}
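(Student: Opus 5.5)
The plan is the same projection-and-induction scheme that derives Theorem~\ref{thm:finite-LR} from Proposition~\ref{prop:CSS-sum-bound}, with extra care because the hypothesis is ``at least'' $\ell^d$ lattice points rather than ``more than''. Let $\cc$ be the center of $Z$, so $\cc\in\tfrac12\Z^d$. We may assume $\cc\notin\Z^d$, since otherwise $\kappa(Z)=0$. The case $\ell=1$ is trivial: $Z$ contains its lattice vertices, so $\kappa(Z)\le 1\le \kappa_{d-1}^\LR+1$.

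\textbf{Main step (two lattice points congruent mod $\ell$).} Suppose two distinct lattice points $\pp,\qq\in Z$ satisfy $\pp\equiv\qq \pmod{\ell\Z^d}$. The proof of Proposition~\ref{prop:long-projection} then applies verbatim, since it only uses the existence of such a pair. The segment $[\cc+\tfrac12(\pp-\qq),\cc-\tfrac12(\pp-\qq)]$ lies in $Z$ and has lattice length at least $\ell$. Completing the primitive direction of $\pp-\qq$ to a lattice basis gives a projection $\pi:\R^d\to\R^{d-1}$ with $\pi(\Z^d)=\Z^{d-1}$ whose fiber through $\cc$ has length at least $\ell$. By \eqref{eq:kappa-projection}, $\kappa(Z)\le \kappa(\pi(Z))+\tfrac1\ell$. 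By Proposition~\ref{prop:cosimple-coloopless-projection}, $\pi(Z)$ is a coloopless $(d-1)$-zonotope. By Corollary~\ref{coro:contained-LRZ} it contains an LR zonotope with the same center. By Proposition~\ref{prop:contained-easy-LR}, $\kappa(\pi(Z))\le\kappa_{d-1}^\LR$, which gives the claimed bound.

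\textbf{Remaining case (complete residue system).} If no such pair exists, then $Z$ has exactly $\ell^d$ lattice points, one in each class of $\Z^d/\ell\Z^d$. Since $2\cc\in\Z^d$, the set $Z\cap\Z^d$ is invariant under $\pp\mapsto 2\cc-\pp$. Hence this involution induces the map $r\mapsto 2\cc-r$ on residues, and every residue class is represented exactly once. Suppose some residue $r$ satisfies $2r\equiv 2\cc \pmod \ell$. Then its unique representative $\pp$ and the point $2\cc-\pp$ are both lattice points of $Z$ in the same class, so $\pp=2\cc-\pp$, that is, $\cc=\pp\in\Z^d$, which we excluded. When $\ell$ is odd, $2$ is invertible mod $\ell$, so such an $r$ always exists. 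Therefore for odd $\ell$ this case cannot occur.

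\textbf{Expected obstacle: $\ell$ even and $\cc\notin\Z^d$.} Here the fixed-residue argument fails. My first attempt would be to exploit the extra half-integer point $\cc$ by working in the refined lattice $\Z^d\cup(\cc+\Z^d)$, in which $\cc$ is a lattice point. I would apply the pigeonhole argument of Proposition~\ref{prop:long-projection} there to the $\ell^d+1$ points $(Z\cap\Z^d)\cup\{\cc\}$, reduced modulo $\ell\Z^d$ in a way that still produces a long segment in a primitive direction of $\Z^d$. If that fails, I would try a continuity/perturbation argument. Proposition~\ref{prop:CSS-sum-bound} and the segment construction only use that $Z$ is a $\cc$-symmetric convex body. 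So one could look for a slightly modified symmetric body close to $Z$ that gains a lattice point congruent to an existing one, and then pass to the limit. I expect this parity case to be the only genuinely new difficulty compared with the ``more than $\ell^d$'' version.
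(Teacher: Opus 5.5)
Your main step is exactly the paper's proof. It uses the projection from Proposition~\ref{prop:long-projection} and the bound \eqref{eq:kappa-projection}. Then $\pi(Z)$ is coloopless by Proposition~\ref{prop:cosimple-coloopless-projection}, so it contains a same-centered LR zonotope by Corollary~\ref{coro:contained-LRZ}, and monotonicity of $\kappa$ finishes. Note, however, that the paper's proof explicitly assumes \emph{more than} $\ell^d$ lattice points, which is what the pigeonhole in Proposition~\ref{prop:long-projection} needs. It never treats the case of exactly $\ell^d$ points, and the strict version is also the only one used to derive Theorem~\ref{thm:finite-LR}. Your extra argument for odd $\ell$ is correct and goes slightly beyond the paper: a complete residue system mod $\ell$ that is invariant under $\pp\mapsto 2\cc-\pp$ forces $\cc\in\Z^d$, hence $\kappa(Z)=0$.

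The case you leave open ($\ell$ even, exactly $\ell^d$ lattice points forming a complete residue system, $\cc\notin\Z^d$) is a genuine gap, and neither of your repairs closes it.

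\textbf{Refined lattice.} Let $\Lambda:=\Z^d+\Z\cc$. The $\ell^d+1$ points of $(Z\cap\Z^d)\cup\{\cc\}$ lie in $2\ell^d$ classes of $\Lambda/\ell\Z^d$, and here they are in fact pairwise non-congruent. Modulo $\ell\Lambda$ a collision is forced. But for even $\ell$ one has $\ell\Lambda\subseteq\Z^d$, so $\cc$ collides with nothing. Two lattice points congruent mod $\ell\Lambda$ but not mod $\ell\Z^d$ differ by $\tfrac{\ell}{2}\ww$ with $\ww\equiv 2\cc \pmod 2$. This only yields a segment of lattice length $\ell/2$.

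\textbf{Perturbation.} This idea does not even start. Every body sufficiently close to the compact set $Z$ has its lattice points among those of $Z$, so no new congruent pair can appear.

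\textbf{The statement fails for $d=1$.} Both of your ideas are dimension-free, and in dimension one the claim is false. The LR zonotope $Z(1,2)=[0,3]$ is coloopless in the paper's sense. For $\ell=4$ it has exactly $4=\ell^d$ lattice points, one in each class mod $4$, and its center is $\tfrac32$. Yet $\kappa(Z)=\tfrac13>\tfrac14=\kappa_0^{\LR}+\tfrac1\ell$, where $\kappa_0^{\LR}=0$ because a single runner has loneliness gap $\tfrac12$. So the ``at least $\ell^d$'' reading can only be rescued, if at all, by an argument that uses $d\ge 2$ essentially.

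The right fix is to assume more than $\ell^d$ lattice points, as the paper's proof does. Under that hypothesis your main step is already a complete proof.
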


\begin{proof}
Let $Z\subset \RR^{n-1}$ be an LR zonotope with more than $\ell^d$ lattice points.
Then, Proposition~\ref{prop:long-projection} gives us a linear projection in the hypotheses of 
Proposition~\ref{prop:CSS-sum-bound}(i) (in the symmetric version of \eqref{eq:kappa-projection}) which in turn says that
\[
\kappa(Z) \leq \kappa(\pi(Z) ) + \frac1{\ell}.
\]
Now, the zonotope $\pi(Z)$ is coloopless, since it is a linear projection of a coloopless zonotope (see Proposition~\ref{prop:cosimple-coloopless-projection}),
and by Corollary~\ref{coro:contained-LR} there is an LR zonotope $Z'$ contained in $\pi(Z)$ and with the same center. By Proposition~\ref{prop:contained-easy} and inductive hypothesis  we have
\[
\kappa(Z) \leq \kappa(\pi(Z) ) + \frac1{\ell} \le 
\kappa(Z' ) + \frac1{\ell}  \le 
\kappa_{d-2}^\LR + \frac1{\ell}.
\qedhere
\] 
\end{proof}

\subsection{Some non-coloopless (counter)-examples}
\label{sec:examples}
It follows from our results in Section~\ref{sec:coloopless} that, although the LRC apparently only deals with LR zonotopes, it is equivalent to the same statement for all coloopless zonotopes (Corollary~\ref{coro:corunners-LR}). One may ask whether this can be extended further. 

For example, Lemma~\ref{lemma:cowidth} says that the class of coloopless zonotopes is very close to that of lattice zonotopes of width larger than one; that is, lattice zonotopes that are not the lattice Cartesian product of a lattice segment with a lower-dimensional zonotope. 

In this section we show examples indicating that coloopless zonotopes may indeed be the widest natural class to be considered.

We first look at parallelepipeds.
The problem of maximizing $\kappa$ among all lattice parallelepipeds of a given dimension is called the ``Lonely Rabbit Problem'' (see \cite{BeckSchymura}). The following example  was essentially known to Wills  in 1968 (see~\cite[Lemma 11]{willslrc2}), although expressed as a question in diophantine approximation.

\begin{proposition}
\label{prop:parallelepiped}
Suppose that $2d+1$ is a prime number. Let $Z$ be the $d$-dimensional parallelepiped represented as the unit $d$-cube with respect to the lattice $\Lambda:= \Z^d + \frac1{2d+1} \langle(1,\dots,d)\rangle$.

Then, $\width(Z)=3$ and $\kappa(Z)= \frac{2d-1}{2d+1}$. In particular, $\kappa(Z) > \frac{d}{d+2}$.
\end{proposition}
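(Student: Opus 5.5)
The plan is to compute both quantities directly in the coordinates where $Z=[0,1]^d$ and the lattice is $\Lambda=\Z^d+\frac1p\langle\ww\rangle$, with $p:=2d+1$ and $\ww:=(1,\dots,d)$. The vertices of $Z$ lie in $\Z^d\subset\Lambda$, so $Z$ is a lattice parallelepiped. Its center is $\cc=\half$. The gauge of $Z$ centered at $\cc$ is twice the $L_\infty$ distance, so by Proposition~\ref{prop:distance}
\[
\kappa(Z)=2\min_{\xx\in\Lambda}\ \max_{i\in[d]}\big|x_i-\tfrac12\big|.
\]
Every point of $\Lambda$ has the form $\tfrac kp\ww+\zz$ with $k\in\{0,\dots,p-1\}$ and $\zz\in\Z^d$. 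We minimize over $\zz$ coordinatewise, which replaces each coordinate $ki/p$ by its fractional part.

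\emph{Computation of $\kappa$.} For $k=0$ the point is in $\Z^d$ and its distance is $1$. For $k\not\equiv 0\pmod p$, use that $p$ is prime, so multiplication by $k$ permutes the nonzero residues mod $p$. The residues $\{1,\dots,d\}$ together with their negatives form all nonzero residues. Hence the set $\{\pm ki: i\in[d]\}$ also covers all nonzero residues, and in particular $ki\equiv\pm1\pmod p$ for some $i\in[d]$. For that coordinate $|\{ki/p\}-\tfrac12|=\tfrac12-\tfrac1p$. This is the largest value any coordinate can reach, since every $\{ki/p\}$ lies in $[\tfrac1p,1-\tfrac1p]$. Therefore the max over $i$ equals $\tfrac12-\tfrac1p$ for every $k\ne0$. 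This gives
\[
\kappa(Z)=1-\tfrac2p=\tfrac{2d-1}{2d+1}.
\]

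\emph{Computation of the width.} The lattice functionals for $\Lambda$ form the dual lattice $\Lambda^*=\{\ff\in\Z^d: \sum_i i f_i\equiv 0 \pmod p\}$. By \eqref{eq:width}, $\width(Z,\ff)=\sum_i|f_i|=\|\ff\|_1$, so we must show $\min_{\ff\in\Lambda^*\setminus\{0\}}\|\ff\|_1=3$.

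\begin{itemize}
\item $\|\ff\|_1=1$ is impossible: then $\ff=\pm\ee_i$, which would need $i\equiv0\pmod p$.
\item $\|\ff\|_1=2$ is impossible. The candidates are $\pm2\ee_i$, which need $2i\equiv0$; $\pm(\ee_i+\ee_j)$ with $i\neq j$, which need $i+j\equiv0$; and $\pm(\ee_i-\ee_j)$ with $i\neq j$, which need $i-j\equiv0$. For $i,j\in[d]$ all of $2i$, $i+j$, $|i-j|$ lie in $[1,2d]$, and they are nonzero, so none is a multiple of $p=2d+1$.
\item $\|\ff\|_1=3$ is attained. For $d\ge3$ take $\ee_1+\ee_2-\ee_3$. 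For $d=2$ take $(1,2)$, since $1+4=5=p$. For $d=1$ take $3\ee_1$.
\end{itemize}

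The final inequality is $(2d-1)(d+2)>d(2d+1)$, which is equivalent to $2d>2$. The main (mild) obstacle is exactly this edge case. For $d=1$ the two sides are equal, so the strict inequality holds only for $d\ge2$, and I would state that restriction explicitly. The only substantive step is the residue argument in the computation of $\kappa$, which uses the primality of $p$.
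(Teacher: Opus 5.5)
Your proof is correct and follows essentially the same route as the paper. Your residue-permutation argument (multiplication by $k$ permutes the nonzero classes mod $p$, so every non-integer point of $\Lambda$ has a coordinate reducing to $\tfrac1p$ or $1-\tfrac1p$) is the paper's observation that the coordinates of each $\pp_i$ run through the pairs $\tfrac{j}{2d+1},\tfrac{2d+1-j}{2d+1}$, and your width computation is the same dual-lattice check on integer functionals of $\ell_1$-norm at most two. Your version is in fact slightly more careful: it includes the functionals $\pm 2\ee_i$ that the paper's list omits, it gives a width-three functional also for $d=1$ (where the paper's $2x_1-x_2$ does not exist), and it correctly notes that the strict inequality $\kappa(Z)>\tfrac{d}{d+2}$ fails for $d=1$.
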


\begin{proof}
Let $\pp_1=\frac1{2d+1} (1,\dots,d)$ and, for each $i\in \Z$, $\pp_i = \fractional{i\, \pp_1}$, by which we mean the fractional part of $i\, \pp_1$: each coordinate of $i \pp_1$ is reduced modulo $\Z$ to lie in $[0,1)$. Observe that the index $i$ is important only modulo $2d+1$, and that $\pp_0=0$. Primality of $2d+1$ makes the other $2d$ points $\pp_1,\dots,\pp_{2d}$ to have all their coordinates in $(0,1)$, hence to lie in the interior of $Z$. In fact, these $2d$ points are the only interior lattice points in $Z$.

Moreover, for every $i$, the $d$ coordinates of $\pp_{i}$ cover the $d$ non-zero pairs of opposite 
values $\tfrac{j}{2d+1}$ and $\tfrac{2d+1-j}{2d+1}$, $j\in \{1,\dots,d\}$, in a cyclic manner. In particular, the $2d$ $\pp_i$ for $i\in \{1,\ldots, 2d\}$ interior points are all at the same distance from the center of $Z$, namely at distance 
\[
\frac{\tfrac12 - \tfrac1{2d+1}}{\tfrac12} = 
{1 - \frac2{2d+1}} = 
\frac{2d-1}{2d+1},
\]
where distance is defined via the gauge of $Z$, which in our setting coincides with the $L_{\infty}$ distance. This shows $\kappa(Z)= \frac{2d-1}{2d+1}$.

For the width:
\begin{itemize}
\item The functional $2x_1-x_2$ takes only integer values in $\Lambda$, since it is a lattice functional and it vanishes at $\pp_1$. It gives width three to the unit cube, which shows $\width_\Lambda(Z) \le 3$.

\item No nonzero functional in $\Lambda^*$ gives width two or one to the unit cube, because $\Lambda^* \subset (\Z^d)^*$ and the only integer functionals giving width one or two to the unit cube are $e_i$ and $e_i \pm e_j$; none of them takes an integer value at $\pp_1$.
\qedhere
\end{itemize}
\end{proof}

\begin{example}
As the first cases of this construction we have:
\begin{itemize}
\item  The lattice parallelogram represented as the unit square with respect to the lattice $\Z^2 + \frac15 \langle(1,2)\rangle$. It has area $5$ and $\kappa=\frac35> \frac 12$. See \cite[Figure 3]{Malikiosis2024LinExpCheckingLRC}.

It is worth mentioning that this parallelogram is in fact the \emph{only} $2$-zonotope of width greater than one and $\kappa>\tfrac12$. Indeed, 
the lattice $2$-zonotopes with $\mu>\tfrac12$ (which is weaker than $\kappa>\tfrac12$) have been completely classified in \cite[Theorem 6.3]{Malikiosis2024LinExpCheckingLRC}. Besides those of width one and  this particular parallelogram there are only:
\begin{enumerate}
\item The parallelograms generated by $(1,0)$ and $(1,k)$ ($k\ge 2$), of width two. Their $\kappa$ equals $0$ if $k$ is even and $\tfrac1{k}\le \tfrac13$ if $k$ is odd. 
\item The LR zonotopes with $\vv=(1,1,k)$, of width two. Their $\kappa$ equals $0$ if $k$ is odd and $\tfrac1{k+1}\le \tfrac13$ if $k$ is even. (They contain the parallelograms in part (1) as diagonal subzonotopes)
\end{enumerate}
\item  The lattice $3$-parallel\-e\-piped represented as the unit cube with respect to the lattice $\Z^3 + \frac17 \langle(1,2,3)\rangle$. It has volume $7$ and $\kappa=\frac57 > \frac23$.
\end{itemize}
\end{example}

\begin{corollary}
\label{coro:many-wide}
Let $2d-1$ be prime. 
Then, there are infinitely many $d$-dimensional lattice parallelepipeds of width at least three and with $\kappa \ge \tfrac{2d-3}{2d-1}$.
\end{corollary}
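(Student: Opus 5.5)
The plan is to build the examples as lattice products of the parallelepiped from Proposition~\ref{prop:parallelepiped} with a segment. Since $2d-1 = 2(d-1)+1$ is prime, Proposition~\ref{prop:parallelepiped} applied in dimension $d-1$ gives a $(d-1)$-dimensional lattice parallelepiped $P$. It lives in the unit cube with respect to the lattice $\Lambda=\Z^{d-1}+\frac1{2d-1}\langle(1,\dots,d-1)\rangle$, and satisfies $\width_\Lambda(P)=3$ and $\kappa(P)=\frac{2d-3}{2d-1}$. For each even integer $k\ge 4$ I would take $Z_k:=P\times[0,k]$ with respect to the lattice $\Lambda\times\Z$.

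\textbf{Step 1: $Z_k$ is a lattice parallelepiped.} It is generated by the $d-1$ generators of $P$ together with $k\ee_d$, so it is a $d$-dimensional lattice parallelepiped. The $Z_k$ are pairwise non-equivalent, since their volumes $k\cdot\vol(P)$ are all different. This gives infinitely many examples.

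\textbf{Step 2: width.} A nonzero lattice functional on $\Lambda\times\Z$ has the form $(f,g)$ with $f\in\Lambda^*$ and $g\in\Z$, not both zero. Since $Z_k$ is a product,
\[
\width(Z_k,(f,g))=\width(P,f)+|g|\,k .
\]
If $f\ne0$ this is at least $\width_\Lambda(P)=3$. If $f=0$ then $g\ne 0$ and this is at least $k\ge 4$. Hence $\width(Z_k)\ge 3$.

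\textbf{Step 3: $\kappa$.} The center of $Z_k$ is $(\cc,k/2)$, where $\cc$ is the center of $P$. The gauge of a product is the maximum of the gauges of the factors:
\[
\dist_{Z_k}\big((\cc,k/2),(\pp,m)\big)=\max\big(\dist_P(\cc,\pp),\ \dist_{[0,k]}(k/2,m)\big).
\]
This maximum is always at least $\dist_P(\cc,\pp)\ge\kappa(P)$. Conversely, because $k$ is even, $m=k/2$ is an integer and makes the second term zero. Choosing $\pp\in\Lambda$ attaining $\kappa(P)$ then gives equality. Therefore $\kappa(Z_k)=\kappa(P)=\frac{2d-3}{2d-1}$, which proves the corollary.

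The only step needing care is the width computation. It is the reason $k$ must be at least $3$; I take $k\ge4$ even so that the center's last coordinate is a lattice value, which is what makes the $\kappa$ computation in Step 3 work. Should non-product examples be desired, one could additionally apply lattice shears mixing the last coordinate into the others, but that is not required by the statement.
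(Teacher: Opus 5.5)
Your proof is correct and is essentially the paper's argument. The paper also takes the parallelepiped of Proposition~\ref{prop:parallelepiped} in dimension $d-1$ and forms $Z\times[0,k]$ for large $k$, noting that the width stays at least three and that $\kappa$ does not drop. You fill in the details the paper leaves implicit: the product formulas for width and gauge, and non-equivalence of the $Z_k$ via volume. Your choice of even $k$ also gives the exact value $\kappa(Z_k)=\tfrac{2d-3}{2d-1}$, where the paper only states the inequality.
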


\begin{proof}
Consider the example $Z$ of Proposition~\ref{prop:parallelepiped} in dimension $d-1$. (This works since $2(d-1)+1 = 2d-1$ is a prime).
Let $Z_k = Z \times [0,k] \subset \R^d$. Then, for sufficiently large $k$ we have that $\width(Z_k)= \width(Z) \ge 3$ and
\[
\kappa(Z_k) \ge \kappa(Z)= \frac{2(d-1)-1}{2(d-1)+1} = \frac{2d-3}{2d-1} .
\qedhere
\]
\end{proof}

\begin{remark}
\label{rem:cusick}
The example  of Proposition~\ref{prop:parallelepiped} was generalized by Cusick~\cite{cusicksimultaneous} as follows. Instead of assuming $2d+1$ to be prime, consider any positive integer $q$.

If $q$ is prime, let $d=\tfrac{q-1}2 = \tfrac{\phi(q)}2$ and use the construction above.

If $q$ is composite, let 
\[
d=\tfrac{\phi(q)}2 + h(q),
\]
where $\phi$ denotes Euler's totient function (which is always even) and $h(q)$ is the number of prime factors of $q$. Consider $\R^{d} = \R^{\tfrac{\phi(q)}2} \times \R^{h(q)}$ and modify the construction  taking 
\[
\pp_1 =(\tfrac{a_1} q, \dots, \tfrac{a_{{\phi(q)}/2 }} q, \tfrac1{p_1}, \dots,  \tfrac1{p_{h(q)}} ),
\]
where the $a_i\in [1,q]$ are representatives for the $\tfrac{\phi(q)}2$ pairs of opposite primitive classes modulo $q$, and  ${p_1}, \dots,  p_{h(q)}$ are the primes dividing $q$. 
This makes every $\lfloor i \pp_i \rfloor$ to have either a coordinate $\tfrac{ia_j} q \in \big\{\tfrac1q, 1-\tfrac1q\big\}$ 
or a coordinate $\tfrac{i}{p_j}=0$; the former happens if $\gcd(i,q)=1$ and the latter if not.
The proof goes through to show that the parallelepiped $Z$ obtained has
\[
\kappa(Z) = 1 - \frac2q.
\]

Asymptotically, Cusick conjectured and Schark proved that  the minimum $1-\kappa$ among all parallelepipeds of a given dimension $d$ is the one obtained by this construction with $q$ being the product of the smallest primes.
This minimum $1-\kappa$ grows as  $\Theta(\tfrac1{d \log\log d})$ rather than the $\frac{1}{d+2}$ conjectured for LR zonotopes.
\end{remark}

Our second example is original, although inspired by the previous one. It is \emph{almost coloopless} in the sense that only one of its generators is a loop. 

\begin{proposition}
\label{prop:almost-coloopless}
Assume that $d$ is a prime greater than $2$ and consider the  $d$-zonotope $Z$ with the $d+1$
generators
\[
\uu_i= e_i, \ i\in\{1,\dots,d-1\},
\quad
\uu_d= -\sum_{i=1}^{d-1} e_i,
\quad
\vv = e_d,
\]
of which only $\vv$ is a coloop.

When considered with respect to the lattice
\[
\Lambda := \Z^d +   \frac{1}{d}\langle(1,2,\ldots,d-1,1)\rangle
\]
 $Z$ has width at least three and $\kappa(Z) = \tfrac{d-1}{d} >\frac{d}{d+2}$.
\end{proposition}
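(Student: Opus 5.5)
The plan is to exploit the product structure $Z=Z'\times[0,1]$. Here $Z'\subset\R^{d-1}$ is the zonotope generated by $e_1,\dots,e_{d-1},-\sum_{i<d}e_i$ (the LR zonotope with volume vector $(1,\dots,1)$), and the last factor comes from $\vv=e_d$. With this we write down the gauge of $Z$ explicitly and then evaluate it on the lattice points of $\Lambda$. The center is $\cc=(0,\dots,0,\tfrac12)$, because the generators $\uu_1,\dots,\uu_d$ sum to zero.

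\emph{Step 1 (gauge).} A point $\xx'\in\R^{d-1}$ lies in $Z'$ iff $\xx'=\yy-t(1,\dots,1)$ with $\yy\in[0,1]^{d-1}$ and $t\in[0,1]$. Equivalently, the extended tuple $(x_0,x_1,\dots,x_{d-1})$ with $x_0:=0$ has spread $\max_i x_i-\min_i x_i\le 1$. Since $Z'$ is $0$-symmetric, $\|\xx'\|_{Z'}$ equals this spread. Hence
\[
\dist_{Z}(\cc,\xx)=\max\bigl(\operatorname{spread}(0,x_1,\dots,x_{d-1}),\ 2|x_d-\tfrac12|\bigr).
\]

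\emph{Step 2 (computing $\kappa$).} Put $\pp_1=\tfrac1d(1,2,\dots,d-1,1)$, so every point of $\Lambda$ has the form $k\pp_1+\zz$ with $k\in\{0,\dots,d-1\}$ and $\zz\in\Z^d$.

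For $k=0$ the point is integral. Its last coordinate is then an integer, which forces $2|x_d-\tfrac12|\ge1$.

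For $k\ne0$, primality of $d$ makes the residues $ki \bmod d$, for $i=0,\dots,d-1$, run over all of $\Z/d$. So the first $d$ extended coordinates $(0,x_1,\dots,x_{d-1})$ hit every class of $\tfrac1d\Z$ modulo $\Z$. Any such $d$-tuple has spread at least $\tfrac{d-1}d$: the classes are cyclically spaced by $\tfrac1d$, and we may drop at most one gap. Spread exactly $\tfrac{d-1}{d}$ is attained by choosing the representatives $0,\tfrac1d,\dots,\tfrac{d-1}d$. We then choose the last coordinate $x_d\equiv k/d$ to be the representative in $[0,1]$, which gives $2|x_d-\tfrac12|\le\tfrac{d-2}{d}<\tfrac{d-1}d$.

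Hence $\kappa(Z)=\tfrac{d-1}d$. Finally, $\tfrac{d-1}d>\tfrac d{d+2}$ because $(d-1)(d+2)=d^2+d-2>d^2$.

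\emph{Step 3 (width).} The dual lattice $\Lambda^*$ consists of the integer functionals $\aa=(a_1,\dots,a_d)$ with
\[
\sum_{i<d} i\,a_i + a_d\equiv 0 \pmod d .
\]
By \eqref{eq:width}, the width of $Z$ with respect to $\aa$ is
\[
\sum_{i<d}|a_i|+\Bigl|\sum_{i<d}a_i\Bigr|+|a_d| .
\]
The first two terms together are even, and they vanish only if $a_1=\dots=a_{d-1}=0$. So width at most $2$ leaves two cases.

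If $a_d=0$, then $\aa=\pm e_i$ or $\aa=\pm(e_i-e_j)$ with $i,j<d$. The congruence would then require $i\equiv0$ or $i\equiv j \pmod d$, which is impossible for $1\le i\ne j\le d-1$.

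If $|a_d|=1$, then all other $a_i$ vanish, and the congruence becomes $\pm1\equiv0 \pmod d$, which is false.

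Therefore $\width_\Lambda(Z)\ge3$. The statement that only $\vv$ is a coloop is immediate, since $\uu_1,\dots,\uu_d$ satisfy a dependence with all coefficients nonzero and $\vv$ lies outside their span.

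The main obstacle is Step 2: proving the gauge formula for $Z'$ cleanly and making the cyclic-spread lower bound rigorous. I would handle the bound by sorting the $d$ representatives modulo $1$. Consecutive sorted representatives differ by at least $\tfrac1d$, since all the classes are distinct, so the $d-1$ gaps between the smallest and largest representative sum to at least $\tfrac{d-1}d$.
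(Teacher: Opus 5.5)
Your Step 3 is missing a case. Write the width as $E+|a_d|$ with $E=\sum_{i<d}|a_i|+\bigl|\sum_{i<d}a_i\bigr|$, which is even. Then width at most $2$ leaves three possibilities, not two:
\begin{enumerate}
\item $E=2$ and $a_d=0$;
\item $E=0$ and $|a_d|=1$;
\item $E=0$ and $|a_d|=2$.
\end{enumerate}
You never treat the third one. That is the functional $f=\pm 2x_d$ (with $a_1=\dots=a_{d-1}=0$), and it gives $Z$ width exactly $2$. The congruence excludes it only because $\pm 2\not\equiv 0 \pmod d$, which uses that $d$ is odd. This is the one place where the width argument needs the hypothesis $d>2$.

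To see that the argument as written is incomplete, run it with $d=2$. Both of your cases go through verbatim, so it would ``prove'' width at least three. But $f=2x_2$ takes the value $1$ on $\pp_1=\tfrac12(1,1)$, so it lies in $\Lambda^*$, and it gives $Z=[-1,1]\times[0,1]$ width $2$. Adding the missing case (one line) repairs Step 3.

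The rest of your proof is correct, and for $\kappa$ it takes a different route from the paper.
\begin{itemize}
\item \emph{Your route.} You compute the gauge in closed form, $\dist_Z(\cc,\xx)=\max\bigl(\operatorname{spread}(0,x_1,\dots,x_{d-1}),\,2|x_d-\tfrac12|\bigr)$. You then bound it on every point of $\Lambda$ at once with the sorted-representatives argument.
\item \emph{The paper's route.} It looks only at the $d-1$ points $\fractional{i\,\pp_1}$ in the unit cube. Each has a coordinate equal to $\tfrac{d-1}{d}$, so the facet inequalities $-1\le x_j\le 1$ put it at distance at least $\tfrac{d-1}{d}$. The paper then transfers this to the other $d-1$ parallelepipeds of $Z$ via the cyclic symmetry permuting $\uu_1,\dots,\uu_d$.
\end{itemize}
Your version is self-contained and needs neither the tiling of $Z$ nor the symmetry; the paper's is shorter once the symmetry is available. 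For the width, both arguments rest on the congruence defining $\Lambda^*$. You compute the width on $Z$ exactly, while the paper bounds it below by the width of the unit cube.
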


\begin{proof}
With respect to the integer lattice $\Z^d$, $Z$ is the cartesian product of a unit segment and the LR $(d-1)$-zonotope with volume vector $(1,\dots,1)$. 
The last generator $\vv$ corresponds to the segment factor and we give a different notation to it since it plays a completely different role in the construction. 
For the same reason, when writing coordinates of points in this example we do so in the form  $(x_1,\dots, x_{d-1}; y)$, separating the last coordinate from the rest and using for it the letter $y$ instead of $x_{d}$.
With this convention, $Z$  can be defined by the following $2\big(\binom{d}{2} +1\big)$ inequalities:
\begin{align*}
|x_i| &\le 1, \ i\in\{1,\dots,d-1\}, \\
|x_i-x_j| &\le 1, \ i,j\in\{1,\dots,d-1\}, \\
0\le y &\le 1.
\end{align*}

The symmetry group of $Z$ contains the whole group of permutations of the first $d$ generators $\uu_i$, but we are interested in a smaller group, cyclic of order $d$ and generated by the following linear map which cyclically permutes $\uu_1 , \uu_2 , \dots , \uu_d $:
\begin{align}
(x_1,\dots,  x_{d-1}; y) \longmapsto (-x_{d-1}, x_1-x_{d-1},\dots,x_{d-2}-x_{d-1}; y).
\label{eq:symmetry}
\end{align}
Observe that  $Z$ decomposes into  $d$ parallelepipeds: the unit cube $[0,1]^d$ (generated by $\uu_1,\dots,\uu_{d-1}, \vv$) and its $d-1$ images under the action of this symmetry.

With respect to the standard lattice $\Z^d$ our zonotope has width one, hence it has $\kappa=1$. But we consider it with respect to the finer lattice
\[
\Lambda := \Z^d +  \langle \pp_1\rangle,
\]
with $\pp_1:=  \tfrac1d (1,2,\dots,d-1; 1)$. 
Observe that $\Lambda$ is still invariant under the cyclic symmetry~\eqref{eq:symmetry}, since the image of $\pp_1$ under this symmetry is 
\[
\tfrac1d(1-d, 2-d,\dots, -1; 1) = \pp_1 - (1,\dots,1;0).
\]

$\Lambda$ is a superlattice of $\Z^d$ of index $d$, and the $d-1$ lattice points inside the unit cube are  (with the notation of the previous example) the points
\[
\pp_i : = \fractional{i\, \pp_1}, \ i=1,\dots,d-1. 
\]
The fact that $d$ is a prime implies that the coordinates of each $\pp_i$ are a permutation of those of $\pp_1$. In particular, every $\pp_i$ has one coordinate $x_{\sigma(i)}$ equal to $\tfrac{d-1}{d}$ which, taking into account the facet inequalities $-1\le x_{\sigma(i)}\le 1$ in the definition of $Z$, implies that this point does not meet the interior of $\cc + \tfrac{d-1}{d}(Z-\cc)$, where $\cc=(0,\dots,0; \tfrac12)$ is the center of $Z$. By symmetry, the same happens for the lattice points in the other $d-1$ parallelepipeds making up $Z$. This implies that
\[
\kappa(Z) \ge \frac{d-1}{d},
\]
as claimed. The fact that one of the coordinates equals $\frac{d-1}{d}$ (e.g., in $\pp_1$) implies that this is an equality. 

The proof that $\width_{\Lambda}(Z)=3$ is similar to the one in Proposition~\ref{prop:parallelepiped}:
\begin{itemize}
\item The functional $x_1-y$ takes only integer values on $\Lambda$, since it is a lattice functional and it vanishes at $\pp_1$. By \eqref{eq:width} its total width on $Z$ is three, since it takes value $\pm 1$ at $\uu_1$, $\uu_d$ and $\vv$, and it vanishes on the rest of generators of $Z$.

\item A functional  giving width one or two to $Z$ would do the same to the unit cube. 
Since  $\Lambda^* \subset (\Z^d)^*$ (because the generators of $Z$ span $\ZZ^d$), such functional needs to be  either a single coordinate or a sum or difference of two coordinates. Among these, the only one with integer value on $\pp_1$ is $x_1-y$ which, as we have seen, gives width three to $Z$.
\qedhere
\end{itemize}
\end{proof}

\begin{remark}
The example of Proposition~\ref{prop:almost-coloopless} can be generalized in a way similar to Remark~\ref{rem:cusick}. The main difference is that we now use the construction for $q$ composite also when $q$ is a prime, with $h(q)=1$ in this case.
That is, for an arbitrary $q$ let
$d=\phi(q) + h(q)$,
divide $\R^{d} = \R^{\phi(q)} \times \R^{h(q)}$, and modify the construction above by using 
\[
\pp_1 =(\tfrac{a_1} q, \dots, \tfrac{a_{\phi(q) }} q, \tfrac1{p_1}, \dots,  \tfrac1{p_{h(q)}} ),
\]
where the $a_i\in [1,q]$ are representatives for the ${\phi(q)}$ primitive classes modulo $q$ and  ${p_1}, \dots,  p_{h(q)}$ are the primes dividing $q$. 
This makes every $\lfloor i \pp_i \rfloor$ to have either a coordinate $x_j = \tfrac{q-1}q$ or a coordinate $y_j=0$, and the proof goes through to show that the zonotope $Z$ obtained has
\[
\kappa(Z) = 1 - \frac1q.
\]
\end{remark}

With the same proof as in Corollary~\ref{coro:many-wide}, this example implies:

\begin{corollary}
\label{coro:many-wide2}
Let $p=d-1$ be prime. Then, there are infinitely many $d$-dimensional lattice zonotopes with only one coloop, 
of width at least three and with $\kappa \ge \tfrac{d-2}{d-1}$.
\end{corollary}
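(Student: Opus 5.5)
The plan is to mimic the proof of Corollary~\ref{coro:many-wide}, taking the $(d-1)$-dimensional example of Proposition~\ref{prop:almost-coloopless} as the base. Since $p=d-1$ is a prime, and we may assume $p>2$ as that proposition requires, Proposition~\ref{prop:almost-coloopless} in dimension $p$ gives a lattice $p$-zonotope $Z$. It has $p+1$ generators, of which only $\vv$ is a coloop, width at least three with respect to $\Lambda$, and $\kappa(Z)=\tfrac{p-1}{p}=\tfrac{d-2}{d-1}$. For each $k\in\Z_{>0}$ we set
\[
Z_k := Z\times[0,k]\subset \R^{d-1}\times\R=\R^d,
\]
considered with respect to the lattice $\Lambda\times\Z$.

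The key steps, in order, are the following.

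\emph{Coloops.} The generators of $Z_k$ are those of $Z$ (with last coordinate $0$) together with $k\,e_d$. To respect the statement's ``only one coloop'', I would argue that $Z_k$ should be regarded via a non-product generating set, rather than via the literal product, which would introduce a second coloop $k e_d$. Concretely, replace $\vv$ and $k e_d$ by the coloopless-compatible pair $\vv$ and $\vv+k e_d$ (or tilt the new generator, e.g.\ add $k e_d + \uu_1$). This keeps a single coloop while preserving containment of $Z$ (up to lattice translation) and the width bound. A more modest reading, that the base factor contributes only one coloop, makes this step trivial.

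\emph{Width.} By \eqref{eq:width}, for a lattice functional $f=(g,c)$ we have $\width(Z_k,f)=\width(Z,g)+|c|k$ in the product case. If $g\neq0$ this is at least $\width_\Lambda(Z)\ge 3$, and if $g=0$ it is $|c|k\ge k\ge3$ for $k\ge3$.

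\emph{$\kappa$.} The center of $Z_k$ is $(\cc,k/2)$, and scaling $Z_k$ about its center gives the product of the scaled factors. Hence a lattice point $(\pp,t)$ lies in the $\lambda$-scaled copy only if $\pp$ lies in the $\lambda$-scaled $Z$, which gives $\kappa(Z_k)\ge\kappa(Z)=\tfrac{d-2}{d-1}$. Equivalently, one can project along $e_d$ and use that the projection maps $\Lambda\times\Z$ onto $\Lambda$.

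\emph{Infinitude.} The volumes $k\cdot\vol(Z)$ are pairwise distinct, so the $Z_k$ are pairwise lattice-inequivalent.

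The main obstacle is the coloop bookkeeping. The naive product adds a second coloop, so I would first try the tilted generator $k e_d+\uu_1$. This makes the new generator participate in no dependence only if $\vv$ stays the unique coloop, which must be checked, and adjusting it may require verifying the $\kappa$ bound directly from the product-like structure rather than from a literal product. The case $p=2$ (i.e.\ $d=3$) falls outside Proposition~\ref{prop:almost-coloopless} and would be excluded or handled separately.
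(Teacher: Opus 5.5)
Your core argument is exactly the paper's proof, which consists only of the remark ``with the same proof as in Corollary~\ref{coro:many-wide}''. That argument is: take $Z_k=Z\times[0,k]$ over the lattice $\Lambda\times\Z$; the width is $\width(Z,g)+|c|k\ge 3$ once $k\ge 3$; $\kappa(Z_k)\ge\kappa(Z)=\tfrac{d-2}{d-1}$ by projecting along $\ee_d$; distinct volumes give infinitely many examples; and $p>2$ is needed. Those steps are all fine.

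The step that fails is your coloop repair. Both suggested modifications produce a configuration $\{\uu_1,\dots,\uu_p,\vv,\ww\}$, with $\ww=\vv+k\ee_d$ or $\ww=\uu_1+k\ee_d$, of $p+2$ vectors of rank $p+1$. Its space of linear dependences is one-dimensional and is spanned by $\uu_1+\dots+\uu_p=0$. So every dependence vanishes on both $\vv$ and $\ww$, and you still have two coloops, exactly as in the product. The same happens for any single new generator $\ww\notin\R^p\times\{0\}$. These tilts would also break the product structure that your width and $\kappa$ computations use.

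The paper does not do better here. Its proof is the plain product, whose coloops are $\vv$ and $k\ee_d$. So what that argument actually establishes is your ``modest reading'' (the base coloop plus the new segment), not literally one coloop.

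A configuration with exactly one coloop $\ww$ must be $U'\cup\{\ww\}$ with $U'$ coloopless of rank $d-1$. That is, $Z=Z''+[0,\ww]$ with $Z''$ a coloopless $(d-1)$-zonotope, so it needs a genuinely different base rather than a tweak of $Z\times[0,k]$.

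Nor can you overshoot to zero coloops. A coloopless $d$-zonotope has $\kappa\le\kappa_d^{\LR}$ by Corollary~\ref{coro:contained-LRZ} and Proposition~\ref{prop:contained-easy-LR}. For $d=6$, the known case $n=7$ of Conjecture~\ref{conj:LRC} gives $\kappa_6^{\LR}=\tfrac34<\tfrac45=\tfrac{d-2}{d-1}$.
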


\section{Shifted LRC, cosimple zonotopes, and the Lonely Vector Property}
\label{sec:sLRC}

\subsection{Minimal cosimple polytopes and the Lonely Vector Property}
\label{sec:cosimple}

We have proved in Section 2 that every \emph{minimal} coloopless polytope is an LR zonotope. The natural notion of containment related to the LRC is center-preserving containment, but we saw that a restricted version where only ``diagonal containment'' is considered suffices  (Corollary~\ref{coro:contained-LRZ}).

In the light of the parallelism between coloopless zonotopes in relation to the LR and cosimple ones in relation to the sLR, one could expect every minimal cosimple zonotope to be an sLR zonotope. We do not know whether that holds, but we can prove that if containment is restricted to (a slight extension of) the concept of diagonals that we used for coloopless zonotopes then the answer is no. 

The extension is that, since in the context of the sLRC it is not a problem to change the center of the zonotope, besides taking diagonals we consider the operation of deleting generators.

\begin{definition}[Deletion and minors]
\label{def:deletion}
Let $U\subset \Z^d$ be a vector configuration, generating a zonotope $Z=\zono[U]$.
For each $\uu\in U$, the \emph{deletion} of $\uu$ in $U$ is the configuration $U \setminus \{\uu\}$. We denote it $U_\uu$, and denote $Z_{\uu} = \zono[U_\uu]$.

We call \emph{minor} of a zonotope $Z$ any zonotope of the same dimension as $Z$ that is obtained by iterating the diagonal or deletion operations on the generators of $Z$.
\end{definition}

Observe that our definition of minor is not the same as the one in matroid theory. Our deletion is the same as the matroid theoretic one, but our diagonals do not have a clear matroidal counterpart. (For example: all LR zonotopes have the same matroid, the uniform matroid of corank one. However, taking a diagonal in an LR zonotope with repeated volume entries may  produce a lower-dimensional zonotope, while in an sLR zonotope every diagonal is a full-dimensional parallelepiped).

\begin{lemma}
\label{lemma:contained-minor}
If $Z'$ is a minor of $Z$ then there is a subset $S$ of generators of $Z$ such that $Z' + \sum_{\uu\in S} \uu$ is contained in $Z$.
\end{lemma}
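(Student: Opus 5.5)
The plan is to induct on the number of elementary operations (diagonals and deletions) needed to pass from $Z$ to $Z'$. The case of diagonals alone is exactly Lemma~\ref{lemma:contained-diagonal}; what is new is the deletion step, and the fact that the two kinds of steps are now interleaved.

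For a single step there are three cases, with $U$ the generators of $Z$.

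\begin{itemize}
\item Positive diagonal $Z^+_{\uu,\vv}$: take $S=\emptyset$. The segment $[\oo,\uu+\vv]$ lies in $[\oo,\uu]+[\oo,\vv]$, so $Z^+_{\uu,\vv}\subseteq Z$.
\item Negative diagonal: the proof of Lemma~\ref{lemma:contained-diagonal} gives $Z^-_{\uu,\vv}\subseteq Z-\vv$. This is equivalent to $Z^-_{\uu,\vv}+\vv\subseteq Z$, so take $S=\{\vv\}$.
\item Deletion of $\uu$: take $S=\emptyset$. Since $\oo\in[\oo,\uu]$, we have $Z_\uu=\sum_{\ww\ne\uu}[\oo,\ww]\subseteq Z$. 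Here the center is not preserved, which is why the statement drops the center condition.
\end{itemize}

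For the induction step, write $Z'$ as a single operation applied to an intermediate minor $Z''$ with generator set $U''$. By the base case there is a set $S'$ of generators of $Z''$ with $Z'+\sum_{\ww\in S'}\ww\subseteq Z''$. By induction there is a set $S''$ of generators of $Z$ with $Z''+\sum_{\ww\in S''}\ww\subseteq Z$. Combining,
\[
Z'+\sum_{\ww\in S'}\ww+\sum_{\ww\in S''}\ww\subseteq Z.
\]

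The main obstacle is that the translation vector $\sum_{\ww\in S'}\ww$ is a sum of generators of $Z''$, not of $Z$. I handle this with a bookkeeping claim proved alongside the induction: every generator of a minor has the form $\sum_{\uu\in A}\pm\uu$ for a subset $A\subseteq U$, and these subsets are pairwise disjoint across the generators of the minor. This holds because diagonals merge disjoint blocks and deletions discard blocks.

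Using the claim, a generator $\ww\in S'$ of $Z''$ with block $A$ corresponds to the sign pattern of $A$. In the only case where $S'\neq\emptyset$ (the negative diagonal), I would instead re-run the argument of Lemma~\ref{lemma:contained-diagonal} directly at the level of $U$. I would flip the signs of the original generators in the block of $\vv$; each flipped $\uu_i$ satisfies $[\oo,-\uu_i]=[\oo,\uu_i]-\uu_i$, so the flip is a translation by a sum of original generators. After flipping all blocks to positive sign, $Z'$ becomes a zonotope built from positive diagonals and deletions of $\pm$-signed original generators. That zonotope is contained in $Z$ translated by $-\sum_{\uu\in S}\uu$, where $S$ is the set of original generators carrying a minus sign.

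Equivalently, and more cleanly, I can present the whole proof this way. Replace each $\uu\in S$ by $-\uu$; this changes $Z$ to $Z-\sum_{\uu\in S}\uu$. Every generator of $Z'$ is then a positive sum of a disjoint block of the new generators, so $Z'$ is contained in that translate by the positive-diagonal and deletion cases. This gives the required $S$ directly.
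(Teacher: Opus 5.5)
Your proof is correct and follows the paper's route. Positive diagonals and deletions need no translation, and a negative diagonal is handled by the sign-flip identity $[\oo,-\vv]=[\oo,\vv]-\vv$, exactly as in Lemma~\ref{lemma:contained-diagonal}. Your final global version is more careful than the paper at one point: you flip every original generator that occurs with a minus sign in its block, so that $Z$ becomes $Z-\sum_{\uu\in S}\uu$ and $Z'$ is built from positive block sums of its generators. The paper's step-by-step induction leaves implicit why translations by generators of intermediate minors (which are signed block sums) add up to a sum over a subset of the original generators, and your argument makes this explicit.
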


\begin{proof}
For the diagonal operation we proved this in Lemma~\ref{lemma:contained-diagonal}. For the deletion it is obvious (and no translation is needed).
\end{proof}

In Lemma~\ref{lemma:Gale-diagonal} we related the diagonal operation to Gale duality. Doing the same for deletions is trivial:

\begin{lemma}
\label{lemma:Gale-deletion}
Let $U$ and $U^*$ be Gale dual to one another, let $\uu\in U$, and let $\uu^*$ be the corresponding element in $U^*$. Then, the Gale dual of $U_{\uu}$ is $U^*/\uu^*$, the contraction of $U^*$ at $\uu^*$.
\end{lemma}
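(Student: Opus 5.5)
The proof is a direct check of the defining property of Gale duality, in the same way as in the proof of Lemma~\ref{lemma:Gale-diagonal}. We show that the linear dependences of $U_\uu$ are exactly the value vectors of linear functionals on $U^*/\uu^*$. We will also check that $U^*/\uu^*$ has the right rank, $(n-1)-d$, whenever $U_\uu$ still has rank $d$; this is the only case relevant for minors.

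Label $U=\{\uu_1,\dots,\uu_{n-1},\uu\}$ and $U^*=\{\uu_1^*,\dots,\uu_{n-1}^*,\uu^*\}$. Let $\pi$ be the linear projection along $\uu^*$, so that $U^*/\uu^*=\{\pi(\uu_1^*),\dots,\pi(\uu_{n-1}^*)\}$. We first show that every functional on the contraction gives a dependence of $U_\uu$. Take a linear functional $\tilde f$ on the target of $\pi$ and set $f=\tilde f\circ\pi$. Then $f(\uu^*)=0$ and $f(\uu_i^*)=\tilde f(\pi(\uu_i^*))$. By Gale duality the value vector $(\tilde f(\pi(\uu_1^*)),\dots,\tilde f(\pi(\uu_{n-1}^*)),0)$ is a linear dependence of $U$. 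Since its coefficient on $\uu$ is zero, dropping that coordinate gives a linear dependence of $U_\uu$.

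Conversely, let $(\lambda_1,\dots,\lambda_{n-1})$ be a linear dependence of $U_\uu$. Appending a zero gives a dependence of $U$. By Gale duality this is the value vector of some functional $f$ on $U^*$, and $f(\uu^*)=0$. So $f$ is constant on the fibers of $\pi$ and factors as $f=\tilde f\circ\pi$. Then $\tilde f$ has value vector $(\lambda_1,\dots,\lambda_{n-1})$ on $U^*/\uu^*$. Together with the previous paragraph, the dependence space of $U_\uu$ equals the space of value vectors of functionals on $U^*/\uu^*$.

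The remaining step is rank, which is the only real point of care. If $\uu^*\neq 0$, that is, $\uu$ is not a coloop, then $U^*/\uu^*$ has rank $n-d-1$ and $U_\uu$ has rank $d$. The matching of one space with the orthogonal complement of the other then follows by dimension count: the dependence space of $U_\uu$ has dimension $(n-1)-d$, which equals the rank of $U^*/\uu^*$. If instead $\uu^*=0$, deletion drops the rank of $U$, and contraction at a zero vector is the same as deletion in the dual. The duality statement still holds in that case, but it is excluded anyway by the requirement in Definition~\ref{def:deletion} that minors keep full dimension.
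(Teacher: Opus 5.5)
Your proof is correct and is essentially the argument the paper has in mind. The paper calls this lemma trivial and omits its proof, and your functional-factoring argument is the deletion analogue of its proof of Lemma~\ref{lemma:Gale-diagonal}. The rank discussion is harmless but not needed: once the dependence space of $U_\uu$ equals the space of value vectors on $U^*/\uu^*$, the ``vice versa'' direction follows by taking orthogonal complements (row space versus kernel), and the ranks then match automatically.
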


In what follows, if $S$ is a vector configuration we denote by $\double{S}$ the following configuration of the same rank:
\begin{itemize}
\item For each $\uu \in S$, we include $2\uu$ in $\double{S}$.
\item For each $\{\uu, \vv\} \in \binom{S}{2}$, we include $\uu+ \vv$ and one of $\{\uu-\vv, \vv-\uu\}$ in $\double{S}$.
\end{itemize}
$\double{S}$ is considered as a multiset and it has $|S|^2$ elements.

\begin{proposition}
\label{prop:minor-LVP}
Let $U$ be a vector configuration, with Gale dual $U^*$. Then:
\begin{enumerate}
\item $U_\uu$ is cosimple if and only if $\double{(U^*)}$ does not contain any element proportional  to $\uu^*$.
\item $U^+_{\uu,\vv}$ is cosimple if and only if $\double{(U^*)}$ does not contain any element proportional  to $\uu^*-\vv^*$.
\item $U^-_{\uu,\vv}$ is cosimple if and only if $\double{(U^*)}$ does not contain any element proportional  to $\uu^*+\vv^*$.
\end{enumerate}
\end{proposition}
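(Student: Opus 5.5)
The plan is to reduce all three parts to one statement about contractions, using Proposition~\ref{prop:Gale} together with Lemmas~\ref{lemma:Gale-deletion} and~\ref{lemma:Gale-diagonal}.

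\textbf{The general lemma.} Let $V$ be a configuration, $\ww$ a nonzero vector, and $\pi$ the projection along $\ww$. Then the contraction $\pi(V\setminus\{\text{element }\ww\})$, or $\pi(V)$ when $\ww$ is not itself an element, fails the condition of Proposition~\ref{prop:Gale-sLR} exactly in three situations:
\begin{itemize}
\item some remaining element $\xx$ has $\pi(\xx)=0$, that is, $\xx\in\R\ww$;
\item two remaining elements $\xx,\yy$ have $\pi(\xx)=\pi(\yy)$, that is, $\xx-\yy\in\R\ww$;
\item two remaining elements $\xx,\yy$ have $\pi(\xx)=-\pi(\yy)$, that is, $\xx+\yy\in\R\ww$.
\end{itemize}
Equivalently, these are exactly the cases where one of $2\xx$, $\xx+\yy$, $\xx-\yy$ (with $\xx,\yy$ remaining elements) is proportional to $\ww$; proportionality allows the zero vector, which is proportional to everything. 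So the whole proof comes down to comparing the ``remaining elements'' in each case with the elements of $\double{(U^*)}$.

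\textbf{Part (1).} By Lemma~\ref{lemma:Gale-deletion}, the dual of $U_\uu$ is $U^*/\uu^*$, and the remaining elements are $U^*\setminus\{\uu^*\}$. Let $\ww=\uu^*$. If $\uu^*=0$, everything is proportional to it and $U_\uu$ has a coloop, since $\uu$ was already a coloop. The lemma then gives the condition with $\double{}$ of $U^*\setminus\{\uu^*\}$. The extra elements of $\double{(U^*)}$ involving $\uu^*$ are $2\uu^*$ and $\uu^*\pm\xx$. The first is always proportional to $\uu^*$. I will handle this by reading the statement as ranging over elements other than the trivial $2\uu^*$; presumably the intended reading excludes the element built from $\uu^*$ alone. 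The elements $\uu^*\pm\xx$ are proportional to $\uu^*$ iff $\xx\in\R\uu^*$, which is already covered by $2\xx$. So the conditions match.

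\textbf{Parts (2) and (3).} By Lemma~\ref{lemma:Gale-diagonal}, the dual of $U^+_{\uu,\vv}$ is obtained by projecting along $\ww=\uu^*-\vv^*$, and the remaining elements are $\uu_i^*$ for $i\neq\uu,\vv$ together with $\vv^*$, whose image equals that of $\uu^*$. The lemma therefore gives the condition ``no $2\xx$ or $\xx\pm\yy$, with $\xx,\yy$ in $(U^*\setminus\{\uu^*,\vv^*\})\cup\{\vv^*\}$, is proportional to $\ww$''. Because $\uu^*\equiv\vv^*$ modulo $\ww$, replacing $\vv^*$ by $\uu^*$ in any such combination does not change proportionality to $\ww$. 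So these conditions cover all elements of $\double{(U^*)}$ except those pairing $\uu^*$ with $\vv^*$:
\begin{itemize}
\item $\uu^*-\vv^*=\ww$ itself, which is trivially proportional and must be excluded, as in part (1);
\item $\uu^*+\vv^*\equiv 2\vv^*$, which is already counted.
\end{itemize}
Part (3) is the same argument with $\vv^*$ replaced by $-\vv^*$.

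\textbf{Main obstacle.} The main difficulty is bookkeeping the trivial elements: $2\uu^*$ in part (1) and $\uu^*\mp\vv^*$ in parts (2) and (3) are always proportional to $\ww$. The statement must be read so that these are excluded, or so that ``proportional'' is meant for the remaining combinations. I will make this explicit and verify that all the other cross terms reduce to ones already counted.
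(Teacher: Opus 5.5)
Your proposal is correct and follows the same route as the paper. In both, the Gale dual of the deletion or diagonal is a contraction (Lemmas~\ref{lemma:Gale-deletion} and~\ref{lemma:Gale-diagonal}), and a zero, equal, or opposite pair in that contraction corresponds exactly to an element, difference, or sum parallel to the contraction direction. Your explicit bookkeeping is more careful than the paper's proof, which glosses over it: you exclude the always-proportional elements $2\uu^*$ and $\uu^*\mp\vv^*$, which is the reading consistent with Definition~\ref{def:LVP} (``not proportional to any other'') and with how the proposition is used in Theorem~\ref{thm:minor-LVP}, and you check that the cross terms involving $\uu^*$ reduce to terms already counted.
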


\begin{proof}
Recall that a configuration is cosimple if and only if its Gale dual does not contain a zero vector or two vectors with zero sum or difference.
Then:
\begin{enumerate}
\item By Lemma~\ref{lemma:Gale-deletion},  $(U_\uu)^* = U^*/\uu^*$. Now, elements of $U^*$ proportional to $\uu^*$ are precisely those that become zero in $U^*/\uu^*$. Hence, $\double{(U^*)}$ having an element proportional to $\uu^*$ is equivalent to $(U_\uu)^*$ having a zero element or two elements with zero sum or difference, which in turn is equivalent to $U_\uu$ not being cosimple. This proves part (1).

\item By Lemma~\ref{lemma:Gale-diagonal},  $(U_{\uu,\vv}^+)^*$ equals the contraction of $U^* \setminus\{\uu^*\}\cup \{\uu^*-\vv^*\}$ at $\uu^*-\vv^*$, with contracted $\uu^*$ as the element dual to $\uu + \vv$. Hence, $(U_{\uu,\vv}^+)^*$ has a zero element if and only if $U^*$ has a $\ww^*$ parallel to $\uu^*-\vv^*$, and $(U_{\uu,\vv}^+)^*$ has two elements with sum or difference equal to $0$ if and only if $U^*$ has a $\ww_1^* \pm \ww_2^*$ parallel to $\uu^*-\vv^*$. This proves part (2).

\item Part (3) is analogous to (2).\qedhere
\end{enumerate}
\end{proof}

This suggests the following definition and gives the next corollary:

\begin{definition}[{\cite[
Definition 1.2]{Malikiosis2024LinExpCheckingLRC}}]
\label{def:LVP}
We say that a vector configuration $V$ has the \emph{Lonely Vector Property} if $\double{V}$ contains an element that is not proportional to any other.
\end{definition}

\begin{theorem}
\label{thm:minor-LVP}
For a  vector configuration $U$ with Gale dual $U^*$ the following are equivalent:
\begin{enumerate}
\item $U^*$ satisfies the Lonely Vector Property.

\item Some deletion $U_\uu$ or diagonal $U_{\uu, \vv}^{\pm}$ of $U$ is cosimple.
\end{enumerate}
\end{theorem}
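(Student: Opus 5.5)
This is essentially a reformulation of Proposition~\ref{prop:minor-LVP}. By that proposition, some deletion or diagonal of $U$ is cosimple if and only if there is a vector $\ww$ of one of the forms $\uu^*$, $\uu^*-\vv^*$ or $\uu^*+\vv^*$ (with $\uu,\vv\in U$ distinct) such that no element of $\double{(U^*)}$ is proportional to $\ww$. The plan is to match these candidate vectors $\ww$ with the elements of $\double{(U^*)}$ themselves, up to scaling. Up to a nonzero scalar, the elements of $\double{(U^*)}$ are exactly $2\uu^*\sim \uu^*$, $\uu^*+\vv^*$, and $\pm(\uu^*-\vv^*)$. So the candidate set $\{\ww\}$ and $\double{(U^*)}$ coincide modulo proportionality.

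The subtle point is the phrase ``not proportional to any other'' in Definition~\ref{def:LVP}. The element $\ww$ we test is itself a member of $\double{(U^*)}$, so in Proposition~\ref{prop:minor-LVP} ``does not contain any element proportional to $\ww$'' has to be read as ``any element other than the one corresponding to $\ww$ itself''. I will check this reading against the proof of that proposition. In part (1), the contraction $U^*/\uu^*$ forgets $\uu^*$, so $2\uu^*$ does not count. The elements that become zero, or form a zero sum or difference, are the other $\ww^*\in U^*$ parallel to $\uu^*$, or $\ww_1^*\pm\ww_2^*$ parallel to $\uu^*$ with $\{\ww_1,\ww_2\}\ne$ anything involving $\uu^*$ alone. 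In particular, sums $\uu^*\pm\ww^*$ with $\ww^*$ parallel to $\uu^*$ only arise if another element is already parallel. In parts (2),(3), the contraction of $U^*\setminus\{\uu^*\}\cup\{\uu^*\mp\vv^*\}$ at $\uu^*\mp\vv^*$ removes the pair $\uu^*,\vv^*$ and keeps one of them. So the element $\uu^*\mp\vv^*$ of $\double{(U^*)}$ itself is excluded, while $2\uu^*$, $2\vv^*$, $\uu^*\pm\ww^*$, etc.\ are genuinely tested. I will verify carefully that this bookkeeping is exact. For instance, in (2) the surviving element $\vv^*$ must not be parallel to $\uu^*-\vv^*$; this is equivalent to $\uu^*\parallel\vv^*$, which gives $2\vv^*$ or $\uu^*+\vv^*$ parallel to $\uu^*-\vv^*$ unless degenerate. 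This is the main technical step: showing that ``some other element of $\double{(U^*)}$ is proportional to $\ww$'' matches exactly the failure of cosimplicity of the minor.

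Given that matching, the two directions are immediate. For (1)$\Rightarrow$(2), take the lonely element of $\double{(U^*)}$. If it is $2\uu^*$, apply part (1) of Proposition~\ref{prop:minor-LVP} to get $U_\uu$ cosimple. If it is $\uu^*-\vv^*$ (or its negative), part (2) gives $U^+_{\uu,\vv}$ cosimple. If it is $\uu^*+\vv^*$, part (3) gives $U^-_{\uu,\vv}$ cosimple. For (2)$\Rightarrow$(1), run the same correspondence backwards: the cosimple minor yields an element of $\double{(U^*)}$ with no other element proportional to it, which is exactly the Lonely Vector Property for $U^*$.
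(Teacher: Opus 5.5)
Your proposal is correct and follows the paper's route. The paper's proof is simply the observation that the theorem restates Proposition~\ref{prop:minor-LVP}, matching the elements $2\uu^*$, $\pm(\uu^*-\vv^*)$ and $\uu^*+\vv^*$ of $\double{(U^*)}$ with $U_\uu$, $U^+_{\uu,\vv}$ and $U^-_{\uu,\vv}$. Your extra bookkeeping checks out and is what the paper leaves implicit: you read ``proportional to $\ww$'' as ``proportional to $\ww$ and distinct from the element of $\double{(U^*)}$ indexed by $\ww$ itself'', and you check that elements such as $\uu^*\pm\ww^*$ or $\uu^*+\vv^*$ create no spurious obstructions.
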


\begin{proof}
The equivalence of the two properties is Proposition~\ref{prop:minor-LVP}. 
\end{proof}

\begin{corollary}
\label{coro:contained-sLRZ}
Let $Z$ be a cosimple zonotope with generating set $U$ and suppose $Z$ does not contain any other cosimple zonotope of the same dimension. Then, either $Z$ is an sLR zonotope or $U^*$ fails to have the Lonely Vector Property.
\qed
\end{corollary}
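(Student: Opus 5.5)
The plan is to argue by contrapositive using Theorem~\ref{thm:minor-LVP} together with Lemma~\ref{lemma:contained-minor}. Let $Z$ be cosimple with generating set $U$, of rank $d$ and size $n$. We may assume $Z$ is not an sLR zonotope and that $U^*$ has the Lonely Vector Property, and aim to produce another cosimple $d$-zonotope inside $Z$.

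\textbf{Reducing to $n\ge d+2$.} If $n=d+1$, then $Z$ is a cosimple zonotope with one more generator than its dimension. By the observation after Definition~\ref{def:cozono}, it is then an sLR zonotope, which we have excluded. A cosimple configuration is coloopless, so $n\ge d+1$, and hence $n\ge d+2$. The Gale dual $U^*$ then has rank $n-d\ge 2$.

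\textbf{Obtaining a cosimple minor.} Since $U^*$ has the LVP, Theorem~\ref{thm:minor-LVP} gives a deletion $U_\uu$ or a diagonal $U^\pm_{\uu,\vv}$ that is cosimple. Call it $U'$. We must check that $Z'=\zono[U']$ still has dimension $d$, i.e.\ that $U'$ has rank $d$. Cosimple configurations are coloopless, and this rank is part of the definition of cosimple for the rank-$d$ configuration.

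For a deletion, rank is preserved because $U$ is coloopless: by Definition~\ref{def:coloopless}, $U\setminus\{\uu\}$ still has rank $d$.

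For a diagonal the check is the main obstacle, since a diagonal can drop rank (as the remark after Definition~\ref{def:deletion} warns). I would argue via Gale duality. The diagonal $U^+_{\uu,\vv}$ has $n-1$ elements, and its Gale dual is the contraction at $\uu^*-\vv^*$, which has rank $n-d-1$ whenever $\uu^*\neq\vv^*$. The dual rank being $(n-1)-d$ forces the primal rank to be $d$. The condition $\uu^*\ne\vv^*$ holds because $U$ is cosimple, so by Proposition~\ref{prop:Gale-sLR} $U^*$ has no equal or opposite pairs. The same argument with $\uu^*+\vv^*\neq 0$ handles the negative diagonal.

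\textbf{Concluding.} In either case $Z'$ is a full-dimensional minor of $Z$. By Lemma~\ref{lemma:contained-minor}, some lattice translate $Z'+\sum_{\uu\in S}\uu$ lies in $Z$. This translate is again cosimple, since it has the same generators, and it is distinct from $Z$ because its reduced generating set differs in size (or, at worst, it is a proper subset). This contradicts the hypothesis that $Z$ contains no other cosimple zonotope of its dimension.

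The only subtle point remaining is whether ``other'' could fail because $Z'$ coincides with $Z$. I expect this to be ruled out because deleting or merging generators strictly decreases the volume, as volume is the sum of the volume-vector entries. With $n\ge d+2$ and all maximal minors involved being nonzero or generic, this decrease is strict.
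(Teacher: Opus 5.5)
Your argument is the paper's. The corollary is stated there as an immediate consequence of Theorem~\ref{thm:minor-LVP} and Lemma~\ref{lemma:contained-minor}, combined exactly as you combine them. Your extra check that a cosimple diagonal still has rank $d$ is correct. It can also be read off Proposition~\ref{prop:cosimple-hyperplane}: a hyperplane containing $U^\pm_{\uu,\vv}$ would contain $U\setminus\{\uu,\vv\}$ together with $\uu\pm\vv$.

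The step that fails as written is the last one, that the translate of $Z'$ is \emph{another} zonotope. Your two justifications, ``its reduced generating set differs in size'' and ``merging generators strictly decreases the volume'', are false without an assumption on $U$. If $U$ contains two positively parallel vectors $\uu,\vv$ (for instance two copies of the same vector), then $[\oo,\uu]+[\oo,\vv]=[\oo,\uu+\vv]$. So $Z^+_{\uu,\vv}=Z$, with the same volume and the same reduced generating set. By Proposition~\ref{prop:minor-LVP}, this trivial merge can be exactly the cosimple minor that the LVP provides. Similarly, deleting a zero generator returns $Z$. Your appeal to ``generic'' minors does not rule either case out, and the paper's one-line proof tacitly needs the same point.

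The fix is to take $U$ to be the reduced generating set of $Z$, with no zero and no two parallel elements; this is how Definition~\ref{def:cozono} defines cosimple zonotopes. Then $U'$ has $n-1$ elements, so $Z'$ has at most $n-1$ edge directions while $Z$ has exactly $n$. Since translation preserves edge directions, no translate of $Z'$ equals $Z$.

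If you prefer a volume argument, it also works once $U$ is reduced:
\begin{itemize}
\item A deletion loses every term $|\det B|$ with $\uu\in B$. There is at least one, since $\uu\neq\oo$ lies in some basis from $U$.
\item A diagonal loses every term $|\det B|$ with $\uu,\vv\in B$. There is at least one, since non-parallel $\uu,\vv$ extend to a basis inside $U$.
\item The remaining terms can only shrink, because $|\det(B',\uu\pm\vv)|\le|\det(B',\uu)|+|\det(B',\vv)|$ for every $(d-1)$-subset $B'\subseteq U\setminus\{\uu,\vv\}$.
\end{itemize}
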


This property easily implies Theorems B and C in  \cite{Malikiosis2024LinExpCheckingLRC}.
In fact, as we did with Theorem A from that same paper we are going to give stronger statements from where 
\cite[Theorems B and C]{Malikiosis2024LinExpCheckingLRC} easily follow in much the same way as we derived \cite[Theorem A]{Malikiosis2024LinExpCheckingLRC} from our Theorem~\ref{thm:finite-LR-new}.

Recall that in Question~\ref{q:covering} we define $\mu_d^{\sLR}$ as the maximum covering radius of sLR $d$-zonotopes. We now denote $\mucosimple_d$ the same maximum over all cosimple $d$-zonotopes. Since sLR implies cosimple, $\mucosimple_d \ge \mu_d^\sLR$.

\begin{theorem}[Theorems B and C, \cite{Malikiosis2024LinExpCheckingLRC}]
\label{thm:finite-sLR-new}

Let $Z$ be a cosimple $d$-zonotope with more than $\ell^d$ lattice points, for a positive integer $\ell$. Then:
\[
\mu(Z) \le \mucosimple_{d-1} + \frac1{\ell}.
\]
Moreover, if $Z$ is an sLR zonotope with generators $U$ (that is, $|U|=d+1$),
$\pi$ is the projection from Proposition~\ref{prop:long-projection},  and the Gale transform of $(\pi(U))^*$ has the Lonely Vector Property, then 
\[
\mu(Z) \le \mu_{d-1}^{\sLR} + \frac1{\ell}.
\]
\end{theorem}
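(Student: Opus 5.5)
The plan follows the proof of Theorem~\ref{thm:finite-LR-new-body}, with the covering radius in place of the first $\cc$-minimum and part (ii) of Proposition~\ref{prop:CSS-sum-bound} in place of part (i).

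\textbf{First part.} Since $Z$ has more than $\ell^d$ lattice points, Proposition~\ref{prop:long-projection} gives a linear projection $\pi:\R^d\to\R^{d-1}$ with $\pi(\Z^d)=\Z^{d-1}$. Its fiber through the center of $Z$ meets $Z$ in a segment of lattice length at least $\ell$. Proposition~\ref{prop:CSS-sum-bound}(ii), applied with $K=Z$ and $\cc$ its center, then yields
\[
\mu(Z)\le \mu(\pi(Z))+\tfrac1\ell .
\]
Two facts finish this part. By Proposition~\ref{prop:cosimple-coloopless-projection}, $\pi(Z)$ is cosimple. It is also full-dimensional in $\R^{d-1}$, since $\pi$ is surjective and $Z$ is $d$-dimensional. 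Hence $\pi(Z)$ is a cosimple $(d-1)$-zonotope, so $\mu(\pi(Z))\le\mucosimple_{d-1}$ by the definition of $\mucosimple_{d-1}$, and the bound follows. Unlike the unshifted case, no center-preserving containment is needed here: the covering radius does not refer to a center.

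\textbf{Second part.} Now $Z$ is an sLR zonotope with generators $U$ and $|U|=d+1$. Then $\pi(U)$ consists of $d+1$ vectors in $\R^{d-1}$, spanning the lattice, and it is cosimple by Proposition~\ref{prop:cosimple-coloopless-projection}. Its Gale dual $(\pi(U))^*$ has rank $2$. I read the hypothesis as saying that this rank-$2$ Gale dual has the Lonely Vector Property. Theorem~\ref{thm:minor-LVP} then gives a deletion $\pi(U)_\uu$ or a diagonal $\pi(U)^{\pm}_{\uu,\vv}$ that is cosimple, with $d$ elements in rank $d-1$.

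Such a configuration with one more element than its rank generates an sLR $(d-1)$-zonotope $Z'$. This is the observation after Definition~\ref{def:cozono}, and full-dimensionality is part of being cosimple, since cosimple configurations have full rank by definition. By Lemma~\ref{lemma:contained-minor}, a lattice translate of $Z'$ is contained in $\pi(Z)$. The covering radius is invariant under lattice translation and monotone under containment (Proposition~\ref{prop:contained-easy-sLR}), so
\[
\mu(Z)\le\mu(\pi(Z))+\tfrac1\ell\le\mu(Z')+\tfrac1\ell\le\mu^{\sLR}_{d-1}+\tfrac1\ell .
\]

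\textbf{Main obstacle.} I expect the delicate point to be the bookkeeping at the projection step. One issue is whether $\pi(U)$ should be reduced, since images of generators may become parallel; Definition~\ref{def:cozono} allows any generating set, and Proposition~\ref{prop:cosimple-coloopless-projection} already handles this. The other is confirming that the minor produced by Theorem~\ref{thm:minor-LVP} is really $(d-1)$-dimensional, so that it qualifies as an sLR zonotope of that dimension. Both points follow from the definitions, so the remaining argument is a direct adaptation of the proof of Theorem~\ref{thm:finite-LR-new-body}.
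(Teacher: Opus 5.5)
Your proposal is correct and follows the paper's proof essentially step for step. You project via Proposition~\ref{prop:long-projection}, apply Proposition~\ref{prop:CSS-sum-bound}(ii), use that $\pi(Z)$ is cosimple for the first bound, and for the sLR case apply Theorem~\ref{thm:minor-LVP} to $(\pi(U))^*$ to obtain a cosimple minor with $d$ generators in rank $d-1$. Your explicit appeal to Lemma~\ref{lemma:contained-minor} together with monotonicity of $\mu$ under containment fills in the step $\mu(\pi(Z))\le\mu^{\sLR}_{d-1}$, which the paper's proof leaves implicit.
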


\begin{proof}
In similar fashion to the proof of Theorem~\ref{thm:finite-LR-new-body}, if $Z$ has more than $\ell^{d}$ lattice points, the linear projection $\pi$ of Proposition~\ref{prop:long-projection} gives us
\[
\mu(Z) \leq \mu(\pi(Z) ) + \frac1{\ell}.
\]
Here, the zonotope $\pi(Z)$ is cosimple (Proposition~\ref{prop:cosimple-coloopless-projection}) of one dimension less, hence
\[
\mu(Z) \leq \mu(\pi(Z) ) + \frac1{\ell} \le 
\mucosimple_{d-1} + \frac1{\ell}.
\] 

If $Z$ is an sLR zonotope, let $U$ be its set of $d+1$ generators, and let $\pi(U)\subset \R^{d-1}$ be the set of generators of the projection. If $(\pi(U))^*$ has the LVP then  Theorem~\ref{thm:minor-LVP} guarantees that some deletion or diagonal of $\pi(U)$ is cosimple. Since this minor has $d$ generators in dimension $d-1$, it is an sLR $(d-1)$-zonotope, hence its covering radius is $\mu(\pi(Z) )\le \mu_{d-1}^{\sLR}$ and
\[
\mu(Z) \leq \mu(\pi(Z) ) + \frac1{\ell} \le 
\mu_{d-1}^{\sLR} + \frac1{\ell}.
\qedhere
\] 
\end{proof}

Observe that, in the conditions of Theorem~\ref{thm:finite-sLR-new}, if $\mu(\pi(Z)) \le \frac{d-1}{d+1}$ for such a projection 
and $\ell\ge \binom{n+1}2$ then $\mu(Z) \le \frac{d}{d+2}$. Indeed:
\[
\mu(Z) \leq \mu(\pi(Z) ) + \frac1{\binom{d+2}2} \le 
\frac{d-1}{d+1}  + \frac1{\binom{d+2}2}   = \frac{d}{d+2}.
\]

\subsection{Vector configurations without the Lonely Vector Property}
\label{sec:noLVP}

To construct counterexamples to LVP, we first rephrase it in a more symmetric way. In the following statement, for a configuration $\overline S$ we denote by $\overline S+\overline S$ the configuration of pairwise sums of elements of $\overline S$. We consider it a configuration of size $\binom{|\overline S|+1}{2}$ since we allow the sum $\uu+\uu$ of an element with itself, and since we identify $\uu+\vv = \vv +\uu$, but we do not identify other sums that may happen to give the same result.

\begin{lemma}
\label{lemma:LVP-symmetric}
Let $S$ be a vector configuration containing neither zero $0$ nor two elements with $\uu\pm\vv=0$. Let $\overline S := S \cup (-S) \cup\{0\}$ (of size $2|S|+1$).
Then, the following are equivalent:
\begin{enumerate}
\item $S$ has the Lonely Vector Property.
\item In the configuration $\overline S + \overline S$ (of size $(|S|+1)(2|S|+1)$) some vector is not a positive multiple of any other.
\end{enumerate}
\end{lemma}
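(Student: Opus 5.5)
The plan is to compare the two multisets directly, element by element, and check that the relation ``proportional'' in $\double{S}$ becomes ``positive multiple'' in $\overline S+\overline S$. First I would list the elements of $\overline S+\overline S$. Besides the zero sum $0+0$, they are:
\begin{itemize}
\item the vectors $\pm\uu$ coming from $\pm\uu+0$;
\item the vectors $\pm 2\uu$ coming from $\pm\uu\pm\uu$;
\item the vector $0$ coming from $\uu+(-\uu)$, once for each $\uu\in S$;
\item the vectors $\pm(\uu+\vv)$ and $\pm(\uu-\vv)$ for each pair $\{\uu,\vv\}\in\binom S2$.
\end{itemize}
The count matches: $1+2|S|+2|S|+|S|+4\binom{|S|}{2}=(|S|+1)(2|S|+1)$.

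So, apart from zeros, $\overline S+\overline S$ is the symmetric closure $\double{S}\cup(-\double{S})$, with each element $2\uu$ replaced by the pair $\pm 2\uu$ together with the extra pair $\pm\uu$. The vectors $\pm\uu$ and $\pm2\uu$ are positive multiples of each other.

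Next I would interpret (2). The zero vectors are trivially ``positive multiples'' of one another, since $0+0$ and $\uu+(-\uu)$ are different elements, $|S|\ge1$, and $0=\lambda\cdot 0$. Hence a vector $\ww$ witnessing (2) is nonzero, and it is not a positive multiple of any other element of $\overline S+\overline S$. In particular it cannot be of the form $\pm\uu$ or $\pm2\uu$, because $\pm\uu$ and $\pm2\uu$ are positive multiples of each other. So $\ww=\pm(\uu\pm\vv)$ for some pair.

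Now I would translate ``not a positive multiple of any other element of $\overline S+\overline S$''. Since $\overline S+\overline S$ is centrally symmetric (as a multiset, away from the pairs $\pm$ of the same sum), this condition is the same as: no other element of $\double{S}$ is proportional to $\ww$, with any sign. The subtle point is the element $-\ww$, which is a negative multiple of $\ww$ and so is harmless. Conversely, a lonely element of $\double{S}$ cannot be of the form $2\uu$, because otherwise it would force... Here is the main obstacle: a lonely element $2\uu$ of $\double{S}$ does not give a lonely element in (2), since $\uu$ and $2\uu$ are parallel in $\overline S+\overline S$.

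To handle this obstacle I would argue that if $2\uu$ is lonely in $\double{S}$, then some difference or sum $\uu\pm\vv$ is also lonely. In dimension rank two, I would look at the angularly adjacent directions. The cleanest route is to reprove the implication (1)$\Rightarrow$(2) by choosing the lonely element carefully. Concretely, take the extremal direction in $\double{S}$ that is not parallel to any $\uu\in S$; this is a pairwise sum or difference, and it exists when $|S|\ge2$. The case $|S|=1$ I would dispose of directly.

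I expect this reduction, which shows that loneliness can always be realized by a pair element, to be the main difficulty; the rest is bookkeeping. The converse direction, (2)$\Rightarrow$(1), follows directly from the earlier steps.
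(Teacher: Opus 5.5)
You have found a genuine problem, and the paper's own proof does not handle it. There the forward direction simply asserts that a lonely $2\uu\in\double S$ is ``not positively proportional to any other element of $\overline S+\overline S$''. That overlooks the element $\uu+0=\uu$, of which $2\uu$ is a positive multiple.

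Your description of the possible witnesses of (2) is correct: only nonzero $\pm(\uu\pm\vv)$ with $\uu\neq\vv$ can serve. Your argument for (2)$\Rightarrow$(1) is also correct and agrees with the paper.

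The gap is in (1)$\Rightarrow$(2), and it has two parts.

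First, the case $|S|=1$ cannot be ``disposed of directly'', because it is a counterexample. For $S=\{\uu\}$, the configuration $\double S=\{2\uu\}$ trivially has a lonely element. But $\overline S+\overline S=\{2\uu,\uu,0,-\uu,-2\uu,0\}$ has none, since every element is a positive multiple of another one. So the equivalence as stated is false, and at the very least $|S|\ge 2$ must be assumed.

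Second, for $|S|\ge 2$ the entire content of (1)$\Rightarrow$(2) is the claim that a lonely $2\uu$ forces some lonely $\ww_1\pm\ww_2$. ``Take the extremal direction'' does not prove this, because being angularly closest to $\uu$ does not prevent several pair elements from sharing that direction. Take $S=\{(1,0),(0,1),(10,2),(20,3)\}$:
\begin{itemize}
\item the element $(2,0)$ is lonely in $\double S$;
\item on one side of the line $\R(1,0)$, the closest direction in $\double S$ is that of $(10,1)$;
\item that direction is realized by all three differences of the collinear points $(0,1),(10,2),(20,3)$, so it is not lonely.
\end{itemize}
A lonely pair element does exist here, e.g.\ $(1,1)$, but your recipe does not locate it.

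So this step needs a real argument. One possible route is to reduce to rank two by a generic projection and normalize $\uu=(1,0)$. Loneliness of $2\uu$ then forces the other vectors of $S$ to have pairwise distinct nonzero $|y|$-coordinates, and you would have to show that the resulting coincidences cannot make every pair element non-lonely.

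Alternatively, replace (1) by ``some element of the form $\uu\pm\vv$ of $\double S$ is lonely''. For that version your bookkeeping already gives a complete proof. Combined with a direct check that no $2\uu$ is lonely, it is all that the applications in Section~\ref{sec:noLVP} need.
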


\begin{proof}
Suppose $S$ has the Lonely Vector Property, so there is either an $\uu$ or an $\uu\pm \vv$ that is not proportional to any other element of $\double{S}$. Then the element $2\uu$ or $\uu\pm \vv$ is not positively proportional to any other element of  $\overline S + \overline S$. (Observe we need $\uu\pm \vv\ne 0$, since otherwise $\uu\pm \vv$ is proportional to $\uu' - \uu'=0$). 

Conversely, suppose that there is a non-zero element $\zz = \pm \uu \pm \vv \in \overline S + \overline S$ (with $\uu,\vv \in S$ and perhaps equal to one another) that is not positively proportional to any other. There are three cases:
\begin{itemize}
\item If $\uu=\vv$ then $\zz=\pm (\uu+\uu)$ (since $\zz=\pm (\uu-\uu)$ gives $\zz=0$, and $0$ appears multiple times in $\overline S + \overline S$). Then $\tfrac12\zz =\uu$ is in $\double{S}$ and is not proportional to any other element of $\double{S}$.
\item If $\uu\ne\vv$ and $\zz = \pm(\uu+\vv)$ then $\uu+\vv\in \double{S}$ is not proportional to any other element of $\double{S}$.
\item If $\uu\ne\vv$ and $\zz = \pm(\uu-\vv)$ then one of $\uu-\vv$ or $\vv-\uu$ is in $\double{S}$ and is not proportional to any other element of $\double{S}$.
\qedhere
\end{itemize}
\end{proof}

That is, finding a counterexample to LVP is the same as finding a centrally symmetric configuration $\overline S$ such that every vector of $\overline S + \overline S$ is positively proportional to some other.
With this we can give our first family of examples:

\begin{proposition}
\label{prop:LVP-rectangle}
Let $\overline S$ be the set of lattice points in the rectangle $[-a,a]\times[-b,b]$ for some $a,b\in \Z_{>0}$. 
Let $S$ be obtained by removing the vector $0$ and taking only one copy of each $\pm \vv$ in $\overline S$. Then, the following are equivalent:
\begin{enumerate}
\item $S$ satisfies the LVP.
\item At least one of the vectors $(2a-1,2b)$ or $(2a,2b-1)$ is primitive.
\end{enumerate}
\end{proposition}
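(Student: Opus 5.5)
By Lemma~\ref{lemma:LVP-symmetric}, and since $\overline S$ here is exactly the set $R:=[-a,a]\times[-b,b]\cap\Z^2$, the LVP for $S$ is equivalent to the following: some nonzero element of $R+R$ is not a positive multiple of any other element of $R+R$, with multiplicities counted. The plan is to understand $R+R$ concretely. As a set it is the lattice points of $[-2a,2a]\times[-2b,2b]$. Each such point $\zz$ appears with multiplicity equal to the number of unordered pairs $\{\pp,\qq\}\subset R$ with $\pp+\qq=\zz$. This multiplicity is at least $2$ unless the representation is unique. Uniqueness forces $\zz=2\pp$ with $\pp$ a corner, i.e.\ $\zz=(\pm2a,\pm2b)$, or forces $\zz$ to lie next to a corner in one of the forms $(\pm(2a-1),\pm2b)$ or $(\pm2a,\pm(2b-1))$. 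So the proof reduces to a count of representations.

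\textbf{Step 1 (points with multiplicity at least two).} Let $\zz=(x,y)\ne 0$ be a point with at least two representations. Then $\zz$ is automatically proportional, with factor $1$, to another element of the multiset. These points are never lonely, so only the few uniquely represented points need attention. I will check that the uniquely represented points are exactly the corners $(\pm2a,\pm2b)$ and the points $(\pm(2a-1),\pm 2b)$, $(\pm 2a,\pm(2b-1))$. Near-corner points have representations $\{(a,b),(a-1,b)\}$ only. Any other point has slack in some coordinate and hence admits at least two decompositions; this needs a short case check on the coordinate ranges.

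\textbf{Step 2 (the candidates).} A corner $(2a,2b)=2\cdot(a,b)$ is a positive multiple of $(a,b)=(a,b)+(0,0)\in R+R$, so it is not lonely. A near-corner point $(2a-1,2b)$ is lonely exactly when no other element of $R+R$ is a positive multiple of it. The candidates are the points $\lambda(2a-1,2b)$ with $\lambda\ne1$ inside $[-2a,2a]\times[-2b,2b]$. Since $2b$ is the maximal second coordinate, only $\lambda<1$ is possible. Such a lattice point exists iff $g=\gcd(2a-1,2b)>1$, in which case $(2a-1,2b)/g$ lies in $R+R$. So this point is lonely iff it is primitive. The same holds for $(2a,2b-1)$, and the sign variants follow by symmetry.

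\textbf{Step 3 (conclusion).} The LVP holds iff some element of $R+R$ is lonely. By Steps 1 and 2 this happens iff one of $(2a-1,2b)$, $(2a,2b-1)$ is primitive. The main obstacle is Step 1: making sure no non-corner point has a unique representation. Boundary points like $(2a,y)$ with $|y|<2b-1$ have the representations $(a,y_1)+(a,y_2)$ with several splits, and interior points similarly. I also need to handle the degenerate cases $a=1$ or $b=1$ carefully.
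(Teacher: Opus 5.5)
Your proposal is correct and is essentially the paper's argument: the paper also works in $\overline S+\overline S$ via Lemma~\ref{lemma:LVP-symmetric}, shows that every sum other than a corner $(\pm 2a,\pm 2b)$ or a near-corner $(\pm(2a-1),\pm 2b)$, $(\pm 2a,\pm(2b-1))$ has a second representation, disposes of the corners via a half-size sum, and observes that a near-corner is lonely exactly when it is primitive. Your Step~1 case check closes at once by counting: $\zz=(x,y)$ has $(2a+1-|x|)(2b+1-|y|)$ ordered representations. Its unordered representation is therefore unique exactly when this product is at most $2$, which happens precisely at the corners and near-corners, uniformly in $a,b\ge 1$ (so $a=1$ or $b=1$ needs no separate treatment).
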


\begin{proof}
Since $(a,b)$, $(a,b-1)$ and $(a-1,b)$ are in $S$, $(2a-1,2b)$ and $(2a,2b-1)$ are in $\double{S}$. If one of them is primitive, $\double{S}$ cannot contain a vector parallel to it. This proves the implication (2)$\Rightarrow$(1).

To prove (1)$\Rightarrow$(2) suppose that $(2a-1,2b)$ and $(2a,2b-1)$ are not primitive. By Lemma~\ref{lemma:LVP-symmetric} we need to show, for any $\uu,\vv\in \overline{S}$, that $\uu +\vv$ is positively proportional to $\uu'+\vv'$ for some $\uu',\vv'\in \overline S$ with $\{\uu,\vv\} \ne \{\uu',\vv'\}$.

If $\vv \not\in \{\uu, \uu\pm\ee_1 , \uu\pm\ee_2\}$ then the rectangle with corners $\uu$ and $\vv$ contains additional points $\uu'$ and $\vv':=\uu+\vv-\uu'$ that do the job. This happens even if the rectangle degenerates to a segment.

If $\uu = \vv$, we distinguish two cases: if $\uu$ is a corner of $[-a,a]\times [-b,b]$, say $\uu=(a,b)$, then we have $\uu'+\vv'=\uu=\frac{1}{2}(\uu+\vv)$ with $\uu':=(a,0)$ and $\vv':=(0,b)$.  If $\uu$ is not a corner then for one of $e_1$ or $e_2$ we have that 
$\uu':=\uu+e_i, \vv':=\uu-e_i\in\overline{S}$  and these vectors are such that $\uu'+\vv'=2\uu=\uu+\vv$.

So, for the rest we can assume that $\vv=\uu\pm \ee_i$ and, without loss of generality, that $\vv=\uu + \ee_1$.
More explicitly, let $\uu=(c,d)$ and $\vv=(c+1,d)$.

We have only a few cases to consider.
\begin{itemize}
\item If $-a < c$ and $c+1 < a$ then $\uu'=\uu-\ee_1$ and $\vv'=\vv+\ee_1$ do the job. Hence, for the rest we assume $c \in \{-a,a-1\}$.

\item If $|d| \ne b$ then take $\uu'=\uu+\ee_2$ and $\vv'=\vv-\ee_2$. Hence, for the rest we assume without loss of generality that $d=b$.

\item With this, the only remaining cases are
\[
\{\uu,\vv\}=\{(-a, b),(-a+1,b)\}, \qquad  \{\uu,\vv\}=\{(a-1, b),(a,b))\}
\]
The first case gives $\uu+\vv=(-2a+1,2b)$ and the second $\uu+\vv=(2a-1,2b)$. Since these are not primitive by hypothesis, they can be written in a different way as the sum of two points in $\overline S$.
\qedhere
\end{itemize}
\end{proof}

\begin{corollary}
\label{coro:LVP-rectangle}
Letting $a=3$ and $b=5$ in the previous statement provides a set $S$ of $(7\cdot 11-1)/2 = 38$ integer points with no zero elements, no element equal or opposite to another one, and without the Lonely Vector Property.
\end{corollary}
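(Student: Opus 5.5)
We apply Proposition~\ref{prop:LVP-rectangle} with $a=3$, $b=5$. Three things need checking: the size count, the standing hypotheses on $S$ (no zero, no two equal or opposite vectors), and that neither of the two test vectors in condition (2) of that proposition is primitive. The proposition then gives the failure of the LVP directly.

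\emph{Size and hypotheses.} The rectangle $[-3,3]\times[-5,5]$ contains $7\cdot 11=77$ lattice points. This set is centrally symmetric and the only point fixed by $\xx\mapsto -\xx$ is the origin. So removing $0$ leaves $76$ points, which split into $38$ pairs $\{\vv,-\vv\}$. Keeping one point from each pair gives $|S|=38$. By construction $S$ has no zero vector and no two elements that are equal or opposite. This places $S$ in the setting of Lemma~\ref{lemma:LVP-symmetric}, which the proof of Proposition~\ref{prop:LVP-rectangle} uses.

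\emph{Primitivity check.} The two vectors in condition (2) are
\[
(2a-1,2b)=(5,10), \qquad (2a,2b-1)=(6,9).
\]
We have $\gcd(5,10)=5$ and $\gcd(6,9)=3$, so neither vector is primitive. Condition (2) of Proposition~\ref{prop:LVP-rectangle} therefore fails, and by the equivalence (1)$\Leftrightarrow$(2) so does condition (1), that is, $S$ does not have the Lonely Vector Property.

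No step here is a real obstacle: the substance lies in Proposition~\ref{prop:LVP-rectangle}. The only thing to watch is a parity constraint. The pair $(2a-1,2b)$ has mixed parity, so its gcd must be odd, and the same holds for $(2a,2b-1)$. Hence both vectors have to be killed by odd common divisors. The values $a=3$, $b=5$ do this, with divisors $5$ and $3$ respectively.
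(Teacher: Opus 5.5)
Your proof is correct and is the paper's argument: the paper's proof consists only of the observation that $(5,10)$ and $(6,9)$ are non-primitive, so Proposition~\ref{prop:LVP-rectangle} forces the failure of the Lonely Vector Property. Your extra checks of $|S|=38$ and of the standing hypotheses on $S$ are accurate, and they make explicit what the paper leaves implicit.
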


\begin{proof}
$(2a-1,2b)=(5,10)$ and $(2a,2b-1)=(6,9)$ are both non-primitive.
\end{proof}

Figure~\ref{fig:noLVP12} shows smaller examples. The pictures represent centrally symmetric sets $\overline S$ that fail to satisfy condition (ii) of Lemma~\ref{lemma:LVP-symmetric}. For added symmetry the first picture is in a regular triangular grid, but the example can obviously be linearly transformed to an integer one. Hence:

\begin{corollary}
\label{coro:noLVP12}
There is a set $S$ of $12$ lattice vectors, none of them zero and no two equal or opposite, that fails to have the Lonely Vector Property.
\end{corollary}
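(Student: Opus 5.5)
The plan is to use Lemma~\ref{lemma:LVP-symmetric} and look for a centrally symmetric configuration $\overline S = S\cup(-S)\cup\{0\}$ with $|S|=12$, hence $25$ points. It should have no zero vector and no two elements equal or opposite. We need every vector of $\overline S+\overline S$ to be a positive multiple of some other element of that multiset. By the lemma, this is exactly the failure of the LVP for $S$.

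Following the hint of the figure, I would work in the regular triangular lattice. Its $6$-fold rotational and reflective symmetry cuts down the number of cases. At the end I would apply a linear map sending the triangular lattice to $\Z^2$; proportionality and sums are preserved, so integrality is free. The candidate is a hexagonal-looking convex centrally symmetric patch of the triangular lattice with $25$ points. I would start from the hexagon of radius $2$ (its $19$ points) and add one symmetric orbit of $6$ points just outside it. That orbit should be chosen so that the "extreme'' pair-sums, which have no obvious second decomposition, come out non-primitive.

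The verification mimics the proof of Proposition~\ref{prop:LVP-rectangle}. First, take $\uu,\vv\in\overline S$ that are neither equal nor adjacent in one of the three lattice directions. The lattice parallelogram with opposite corners $\uu$ and $\vv$ (sides along two lattice directions) then contains another point $\uu'$. By convexity and central symmetry of the patch, $\vv'=\uu+\vv-\uu'$ also lies in $\overline S$, so $\uu+\vv$ has a second decomposition. For $\uu=\vv$ we use $2\uu=(\uu+\ww)+(\uu-\ww)$ for a lattice direction $\ww$ keeping both points in the patch. Corners need a separate treatment: write $2\uu$ as a sum of two other points, exactly as $(a,b)=(a,0)+(0,b)$ was used in the rectangle case. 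Adjacent pairs $\vv=\uu+\ww$ are handled by sliding, taking $(\uu-\ww)+(\vv+\ww)$, or by rotating the pair about its midpoint, whenever those points stay inside. This leaves only pairs on the boundary of the patch.

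The main obstacle is these boundary adjacent pairs, together with the corner doubles. For each of them the sum $\uu+\vv$ must be shown proportional to some other sum. This can happen either because it is non-primitive and half of it, or some other multiple of it, is a sum of two patch points, or because a different pair happens to give a parallel sum. Up to the dihedral symmetry there are only a handful of such pairs, so I would list them explicitly and exhibit a partner sum for each. If the first choice of the extra orbit fails for some pair, I would adjust which $6$ points are added and rerun this finite check; the check is small enough to do by hand or with a short script over the $325$ pair-sums. Once it succeeds, taking one representative from each pair $\pm\vv$ of nonzero points gives the required $12$ vectors.
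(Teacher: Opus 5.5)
\paragraph{The missing configuration.} Your overall shape matches the paper's: a centrally symmetric $25$-point set $\overline S$ in the triangular lattice, checked via Lemma~\ref{lemma:LVP-symmetric}. The paper's proof consists of exhibiting such a set in its figure. Your proposal, however, never fixes the twelve vectors, and for this corollary that is the whole content: the statement is an existence claim, proved by one explicit configuration plus a finite check. ``Add an orbit, and if some pair fails, change it and rerun'' is a search strategy, not a proof.

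\paragraph{The proposed family fails.} The candidates your plan describes all have the LVP. Write lattice points as $(a,b)\mapsto a\mathbf{f}_1+b\mathbf{f}_2$ with $\mathbf{f}_1,\mathbf{f}_2$ unit vectors at $60^\circ$. The hexagonal norm is then $h(a,b)=\max\{|a|,|b|,|a+b|\}$, and your starting set is $H_2=\{h\le 2\}$.

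A $6$-fold symmetric lattice-convex $25$-point patch containing $H_2$ must be $H_2$ plus the rotation orbit of $(2,1)$ (or of its mirror $(1,2)$). There the endpoints of the hull edge $[(2,1),(-1,3)]$ give the sum $(1,4)$. It is primitive and has no other representation, being the only non-double sum on the top level of $2a+3b$, so it is lonely.

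Dropping convexity does not help: no six extra points at hexagonal distance $3$ work. Note that all sums then have norm at most $6$, so a primitive sum of norm at least $4$ with a unique representation is lonely.
\begin{itemize}
\item If only the vertices $\pm(3,0),\pm(0,3),\pm(3,-3)$ are added, then $(3,0)+(1,1)=(4,1)$ is such a sum.
\item If a non-vertex such as $(2,1)$ is added, its new sums $(4,1),(3,2),(2,3)$ can be re-represented only if the other two extra pairs are $\pm(3,0)$ together with one of $\pm(0,3),\pm(1,2),\pm(-1,3)$, or else $\pm(1,2)$ together with $\pm(3,-1)$. In every such case $(5,-1)$ has a single representation.
\end{itemize}
The six-fold orbits at distance $4$ fail similarly, e.g.\ via $(4,0)+(1,1)$ or $(2,2)+(2,-1)$.

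\paragraph{A configuration that works.} Drop the unit vectors instead of keeping $H_2$. Let $\overline S$ consist of $\oo$, the $12$ points with $h=2$, and the $12$ points with $h=3$ other than the vertices $\pm(3,0),\pm(0,3),\pm(3,-3)$. This set has $25$ points and is invariant under the $12$ point symmetries of the lattice. It is not lattice-convex, so your parallelogram argument would not apply to it.

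Up to symmetry, every sum of two distinct, non-opposite, nonzero points of $\overline S$ lies on a ray through some $\ww\in\overline S$, so $\ww+\ww$ is a partner. The exceptions are the following, each with the displayed second representation:
$(2,1)+(2,0)=(1,2)+(3,-1)$, $(2,1)+(1,1)=(1,2)+(2,0)$, $(2,1)+(0,2)=(1,2)+(1,1)$, $(2,1)+(-1,2)=(1,1)+(0,2)$, $(2,1)+(-1,3)=(1,2)+(0,2)$, $(2,1)+(3,-2)=(3,-1)+(2,0)$, $(2,1)+(2,-2)=(2,0)+(2,-1)$, and $(2,0)+(1,1)=(3,-1)+(0,2)$.

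For a double $\uu+\uu$ the partner must be a pair other than $\{\uu,\oo\}$. Read literally, your criterion is satisfied trivially by $\uu+\oo$, though your plan does treat doubles correctly. Here $(2,0)+(0,2)$ is parallel to $(1,1)$, $(1,1)+(2,-1)$ is parallel to $(2,0)$, and $(2,-1)+(0,2)=(2,1)$. Taking one vector from each pair $\pm\vv$ of nonzero points gives the required $12$ vectors.
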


\begin{proof}
By inspection of the left part of Figure~\ref{fig:noLVP12}. Left to the reader.
\end{proof}

\begin{figure}
\raisebox{.5cm}{\includegraphics[scale=0.25]{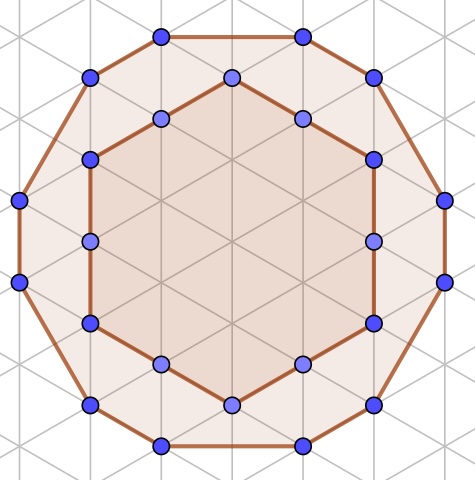}}\quad
\includegraphics[scale=0.25]{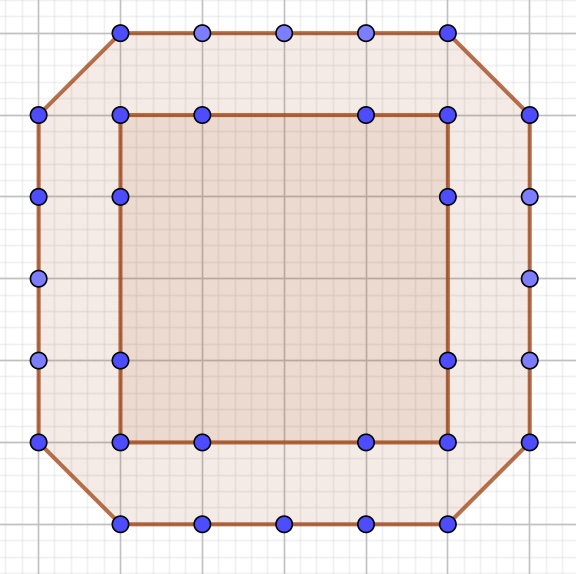}
\label{fig:noLVP12}
\caption{Left: the symmetrized configuration $\overline S$ of a counterexample $S$ to LVP with $|S|=12$. Right: same with $|S|=16$}
\end{figure}

\subsection{All cosimple zonotopes properly contain zonotopes of width at least three}
\label{sec:width3}

We know that some cosimple zonotopes do not contain any cosimple diagonal or deletion, because that is equivalent to the Gale dual satisfying the Lonely Vector Property, and some integer configurations do not have that property. 
We also know that cosimple zonotopes are almost the same as lattice zonotopes of width at least three (Lemma~\ref{lemma:cowidth-sLR}). 
It is then surprising that, as we now show, every cosimple zonotope 
contains a diagonal of width at least three (Corollary~\ref{coro:contained-wide}). Observe that this includes sLR zonotopes, a case in which all diagonals are necessarily parallelepipeds.

\begin{lemma}
\label{lemma:contained-wide}
Let $Z$ be the zonotope generated by a configuration $U=\{\uu_1,\dots,\uu_n\}$ and assume there is 
a linear dependence $\sum_{i=1}^n v_i \uu_i=0$ with $0 < v_1 < \ldots < v_n$. Then, $Z_{n-1,n}^-$ has width $\ge 3$.
\end{lemma}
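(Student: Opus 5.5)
The plan is to argue directly from formula \eqref{eq:width}. The generators of $Z_{n-1,n}^-$ are $\uu_1,\dots,\uu_{n-2}$ and $\uu_{n-1}-\uu_n$. Fix a nonzero lattice functional $f$ and put $a_i:=f(\uu_i)\in\Z$. Then
\[
\width(Z_{n-1,n}^-,f)=\sum_{i=1}^{n-2}|a_i| + |a_{n-1}-a_n|.
\]
Set $S:=\sum_{i\le n-2}|a_i|$ and $\delta:=a_n-a_{n-1}$, so the width is $W=S+|\delta|$. I will assume $W\le 2$ and derive a contradiction.

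The only constraint available is obtained by applying $f$ to the dependence:
\[
(v_{n-1}+v_n)\,a_{n-1}+v_n\delta \;=\; -\sum_{i\le n-2} v_i a_i ,
\]
and the right-hand side has absolute value at most $v_{n-2}S$. Note also that $W=0$ forces all $a_i=0$ for $i\le n-2$ and $a_{n-1}=a_n$. The displayed identity then gives $(v_{n-1}+v_n)a_{n-1}=0$, so $f$ vanishes on all of $U$. This is impossible, since $U$ spans (it generates a full-dimensional zonotope, and the diagonal keeps the dimension). Hence $W\ge1$.

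For the case analysis, I would split according to $S$.

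\emph{Case $S=0$.} Then $1\le|\delta|\le2$ and $(v_{n-1}+v_n)a_{n-1}=-v_n\delta$. Here $a_{n-1}=0$ is impossible because $\delta\neq 0$. If $a_{n-1}\neq0$, then $v_{n-1}+v_n\le 2v_n$, with equality only when $v_{n-1}=v_n$. The strict ordering $v_{n-1}<v_n$ rules this out, and for $|\delta|=1$ we would even need $v_{n-1}+v_n\le v_n$.

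\emph{Case $S\ge1$, $\delta\ne0$.} Then $S=|\delta|=1$. If $a_{n-1}=0$, the identity gives $v_n\le v_{n-2}$. If $a_{n-1}\neq0$, the left side has absolute value at least $v_{n-1}+v_n-v_n=v_{n-1}>v_{n-2}\ge$ the right side. Both are contradictions.

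\emph{Case $S\ge1$, $\delta=0$.} If $a_{n-1}\ne0$, the left side is at least $v_{n-1}+v_n>2v_{n-2}\ge v_{n-2}S$, a contradiction. If $a_{n-1}=a_n=0$, then $\sum_{i\le n-2}v_ia_i=0$ with $1\le S\le2$. This would need either a single $\pm1$ or $\pm2$ entry, which is impossible since $v_i>0$, or two entries $\pm1$ at distinct indices with $v_i=v_j$, which is impossible by distinctness.

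So the whole proof is a finite case check; the main point needing care is using \emph{both} the positivity and the strict ordering $v_1<\dots<v_n$. In particular $v_{n-2}<v_{n-1}<v_n$ is what makes the inequalities strict. I would also state explicitly that width is measured with respect to $\Z^d$, or the lattice spanned by $U$, and that $U$ has full rank.
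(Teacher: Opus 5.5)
Your proof is correct and is essentially the paper's argument: the same case split according to which summands of $\width(Z_{n-1,n}^-,f)=\sum_{i\le n-2}|f(\uu_i)|+|f(\uu_{n-1}-\uu_n)|$ vanish (your three cases regroup the paper's four), each case being ruled out by applying $f$ to the dependence and using $0<v_1<\dots<v_n$; you are in fact slightly more careful than the paper, since you exclude $W=0$ and the possibility $f(\uu_{n-1})=f(\uu_n)\neq0$ when $\delta=0$, both of which the paper leaves implicit. The only step to tighten is Case $S=0$, where you should say that $(v_{n-1}+v_n)|a_{n-1}|=v_n|\delta|\le 2v_n<2(v_{n-1}+v_n)$ forces $|a_{n-1}|=1$ and $|\delta|=2$, so the exact equality $v_{n-1}+v_n=2v_n$ would be needed (or, as the paper does, write $\delta=\tfrac{v_{n-1}+v_n}{v_{n-1}}\,a_n$ with $a_n\neq0$, whose absolute value exceeds $2$).
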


\begin{proof}
Let $Z':=Z_{n-1,n}^-$ and let $f\not\equiv 0$ be an integer functional. We need to show that  the integer sum
\begin{align}
\label{eq:sum-cosimple}
 \width(Z',f) = \sum_{i=1}^{n-2} |f(\uu_i)| + |f(\uu_{n-1}-\uu_n)| 
\end{align}
is greater than two. 
If that is not the case then at most two of the values in the sum are non-zero, and they add up to at most two. We consider four cases, depending on whether one or two values are non-zero, and whether $|f(\uu_{n-1}-\uu_n)| $ is one of them.
In all the cases we use that
\[
\sum_{i=1}^n v_i f(\uu_i) = f\left(\sum_{i=1}^n v_i \uu_i \right)=0,
\]
together with the hypothesis that $0<v_1 < \dots < v_{n-1} < v_n$.

\begin{enumerate}
\item If the only non-zero summand in \eqref{eq:sum-cosimple} is $|f(\uu_i)|$ for $i\in\{1,\dots,n-2\}$ we get the contradiction $v_i \ne 0 \ne f(\uu_i)$ but $v_i f(\uu_i) = 0$.

\item If the only non-zero summand in \eqref{eq:sum-cosimple} is $|f(\uu_{n-1}-\uu_n)|$ we have that
\[
v_{n-1} f(\uu_{n-1}) + v_{n} f(\uu_{n}) =0.
\]
This together with $0< v_{n-1} < v_n$ implies that both $f(\uu_{n-1})$ and $f(\uu_{n})$ are non-zero. Then
\[
f(\uu_{n}-\uu_{n-1}) = f(\uu_{n})-f(\uu_{n-1}) =  \frac{v_n + v_{n-1}} {v_{n-1}} f(\uu_n),
\]
which implies $|f(\uu_{n}-\uu_{n-1})| > 2$ since $ \frac{v_n + v_{n-1}} {v_{n-1}} >2$.

\item If there are two non-zero summands but none of them is $|f(\uu_{n-1}-\uu_n)|$, let $f(\uu_i)$ and $f(\uu_j)$ be the non-zero ones, with $i<j<n-1$ and hence $v_i <v_j$.
Then
\[
v_{i} f(\uu_{i}) + v_{j} f(\uu_{j}) =0
\]
implies that $|f(\uu_{i})| \ne |f(\uu_{j})|$ and they are both positive integers, so their sum is at least three.

\item Finally, if there are two non-zero summands and they are $|f(\uu_{n-1}-\uu_n)|$ and a certain $|f(\uu_{i})|$, $i<n-1$, we want to show that these summands cannot both be equal to $1$.
The equation
\[
v_i f(\uu_{i}) + v_{n-1}f(\uu_{n-1}) + v_n f(\uu_{n})=0
\]
 implies
\[
 -f(\uu_{n})= \frac{  f(\uu_{i}) v_{i} + ( f(\uu_{n-1})-f(\uu_{n})) v_{n-1}}{v_{n-1}+v_n}.
\]
Since $0 < v_i < v_{n-1}< v_n$, the assumption $|f(\uu_{n-1}-\uu_n)|= |f(\uu_{i})| = 1$ leads to 
$ |f(\uu_{n})|<1$, hence $f(\uu_{n})=0$ and
\[
v_i f(\uu_{i}) + v_{n-1}f(\uu_{n-1}) =0.
\]
As in the previous case, we then have $|f(\uu_{i})| \ne 0 \ne |f(\uu_{n-1})|$ so their sum is at least three.
\qedhere\end{enumerate}
\end{proof}

\begin{corollary}
\label{coro:contained-wide}
Every cosimple zonotope $Z$  contains a diagonal of width at least three.
\end{corollary}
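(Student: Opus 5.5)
The plan is to reduce Corollary~\ref{coro:contained-wide} to Lemma~\ref{lemma:contained-wide} and Lemma~\ref{lemma:contained-diagonal}. Let $Z$ be cosimple and take a cosimple generating set $U=\{\uu_1,\dots,\uu_n\}$; by Definition~\ref{def:cozono} the reduced set of generators works. By Definition~\ref{def:cosimple} there is a linear dependence $\sum_i \lambda_i \uu_i=0$ with all $\lambda_i\neq 0$ and pairwise distinct absolute values.

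\textbf{Normalization.} Replace each $\uu_i$ with $\lambda_i<0$ by $-\uu_i$. The zonotope generated by the new configuration $U'$ is an integer translate of $Z$: we have $\zono[U'] = Z - \sum_{\lambda_i<0}\uu_i$. This translation affects neither containment up to translation nor width. In $U'$ the dependence has coefficients $|\lambda_i|>0$, and these are pairwise distinct. Relabel the elements so that $0<v_1<\dots<v_n$ with $v_i=|\lambda_i|$. Then Lemma~\ref{lemma:contained-wide} applies and shows that the negative diagonal $\zono[U']^-_{n-1,n}$ has width at least three.

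\textbf{Containment.} By Lemma~\ref{lemma:contained-diagonal}, some integer translate of this diagonal is contained in $\zono[U']$, with the same center. Translating back, a translate of it is contained in $Z$. Width is invariant under integer translation, so $Z$ contains a diagonal of width at least three. The word ``properly'' in the section title is immediate, since the diagonal has one fewer generator. Its volume also drops: the two parallelepiped volumes involving the removed generators are replaced by the single determinant of $\uu_{n-1}-\uu_n$ against the remaining generators, and this is strictly smaller by the triangle inequality. I expect this to need at most a one-line remark.

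\textbf{Possible obstacle.} The only delicate point is that the diagonal must still be full-dimensional, so that it counts as a diagonal subzonotope in the sense of Definition~\ref{def:diagonal}. I would check this as follows. Width at least three with respect to every nonzero integer functional forbids a functional vanishing on all generators. Such a functional would give width $0$, so the rank is preserved. The proof of Lemma~\ref{lemma:contained-wide} already excludes this case, since its sum \eqref{eq:sum-cosimple} would be zero, which falls under the ``at most two'' cases it rules out. I therefore expect no real obstacle: the content is entirely in Lemma~\ref{lemma:contained-wide}, and the corollary is bookkeeping about signs and translations.
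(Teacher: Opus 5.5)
Your proof is correct and follows the paper's argument: normalize signs and order the coefficients so that $0<v_1<\dots<v_n$, then apply Lemma~\ref{lemma:contained-wide}; you additionally spell out the translation step via Lemma~\ref{lemma:contained-diagonal} and the full-dimensionality check, which the paper leaves implicit. The only slip is in your aside on properness, which the corollary does not assert: having one fewer generator is not a reason, and the triangle inequality only gives $\le$. The strict volume drop comes from the terms $|\det S|$ with $S\supseteq\{\uu_{n-1},\uu_n\}$, which disappear and are not all zero precisely because these two generators are non-parallel in the reduced set. For a non-reduced cosimple set such as $\{(1,0),(0,1),(1,2),(-1,-2)\}$, with dependence coefficients $1,2,4,5$, the diagonal $Z^-_{3,4}$ is a translate of $Z$ itself.
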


\begin{proof}
By definition of cosimple, there is a linear dependence  $\sum_{i=1}^n v_i \uu_i=0$ among the generators of $Z$  such that all the $v_i$ are non-zero and have different absolute values. By reordering the generators and changing them to their opposites if needed, there is no loss of generality in assuming 
$0 < v_1 < \ldots < v_n$. Then, Lemma~\ref{lemma:contained-wide} gives the result.
\end{proof}

The following example shows that the diagonal constructed in the proof of Lemma~\ref{lemma:contained-wide} may not be cosimple, even having width at least three:

\begin{example}
Let $Z$ be the $2$-zonotope with generators:
\[
\uu_1 = (1,2), \uu_2 = (2,1), \uu_3 = (1,-2),  \uu_4 = (-2,1).
\]
It is cosimple because, in particular, there is the following linear dependence among the generators:
\[
\uu_1 + 4 \uu_2 + 7 \uu_3 + 8 \uu_4 =\mathbf{0},
\]
and the generators are ordered as in the hypothesis of Lemma~\ref{lemma:contained-wide}.

The subzonotope $Z':=Z_{3,4}^-\subset Z$, with generators $\uu_1 = (1,2)$, $\uu_2 = (2,1)$ and  $\uu_3' = \uu_3-\uu_4 = (3,-3)$ which is NOT cosimple, since the only non-trivial dependence among its generators is $3 \uu_1 - 3 \uu_2 + \uu'_3 = 0$, in which $\uu_1$ and $\uu_2$ have coefficients of equal absolute value.

$Z'$ has width $\ge 3$ but its generators span the lattice $x + y \equiv 0 \pmod{3} \subsetneq \Z^2$.
\end{example}\vspace{.5cm}

\section{Counterexamples to the shifted LRC}
\label{sec:counter}

Theorem~\ref{thm:12345} (counterexamples of the sLRC) is proved via an algorithm to compute the shifted loneliness gap $\gamma^{\min}(\vv)$ of an input vector $\vv \in \Z^n_{>0}$. 
Recall that this is equivalent to computing the covering radius $\mu(Z)$ of the associated LR zonotope, via the formula 
\[
\gamma^{\min}(\vv) =\tfrac12 - \tfrac12 \mu(Z)
\]
 of Proposition~\ref{prop:gamma-kappa-mu}. A general-purpose algorithm for the covering radius of an arbitrary rational polytope was described in~\cite{ACS4slrz}, and used by the authors of that paper to prove the shifted LRC for $n=4$. 
The algorithm that we now propose is more specific, based on regarding LR zonotopes as \emph{polytropes}, objects that are polytopes both in the usual sense and in the tropical-geometric sense. Although we do not need a lot of background in tropical geometry, for the general theory of polytropes the reader can consult, e.g.,  Sections 3.4 and 6.5 in~\cite{Joswig-etc}. 

In this section we describe the algorithm and the  counterexamples to the shifted LRC that we generated with it. 
Both the code we used and the counterexamples are available at \cite{code_git}.

\subsection{The algorithm}
\label{sec:algorithm}

The main routine in the algorithm has as input both the vector $\vv \in \Z^n_{>0}$ and a candidate value $\gamma \in(0, 1/2)$, and decides between the  three possibilities
\[
\gamma^{\min}(\vv) > \gamma,
\quad
\gamma^{\min}(\vv) = \gamma
\quad \text{ or } \quad
\gamma^{\min}(\vv) < \gamma.
\]
To find the exact loneliness gap $\gamma^{\min}(\vv)$ we combine this test with a binary search on the Stern-Brocot tree of rational numbers in the interval $(0, 1/2)$. The search finishes thanks to the fact that the denominator of the covering radius of $Z$ can be bounded in terms of the entries of $\vv$. See, e.g., \cite{ACS4slrz} for that part.
In what follows we only describe the decision algorithm that compares an input $\gamma$ with $\gamma^{\min}(\vv)$.

Recall that
\[
\gamma^{\min}(\vv):= \min_{\ss\in [0,1]^n}\,\gamma(\vv; \ss).
\]
That is, we have the configuration space of possible values $\ss$, which are important only modulo the integers.
For convenience, we change from the parameters $\ss$ to new parameters $\xx=(x_1,\dots,x_n)\in \R^n_{\ge 0}$ defined by $x_i=-s_i /v_i$. 
Since the initial $\ss$ is only important modulo the integers, we assume without loss of generality that 
 $s_i\in[-1,0]$, hence $0\leq x_i \leq 1/v_i$ for all $i$. If  $v_i$ is interpreted as a velocity and $s_i$ as a starting position,  $x_i$ equals the first time when the $i$-th runner hits the origin. 

Since, moreover, there is no loss of generality in assuming that $x_1=0$, our initial configuration space is 
 \begin{align}
\label{eq:starting}
X=\{0\} \times \prod_{i=2}^n [0, 1/v_i].
\end{align} 
(For added performance we can further restrict this space exploiting symmetry; see Lemma~\ref{lem:startingdomain} below).
\bigskip

The inequality $\gamma^{\min}(\vv) > \gamma$ is equivalent to the following:  for each starting value $\xx\in X$, a time $t$ exists when all  the runners are at distance strictly greater than $\gamma$ from the integers. In other words, a $t\in (0,\infty)$ that satisfies 
\begin{align}
\label{eq:certificate}
(t-x_i) v_i \in(k_i+\gamma, (k_i+1)-\gamma) \quad \forall i,
\end{align}
for some choice of integers $k_1,\dots, k_n\in \Z$. (Each $k_i$ represents which loop the $i$-th runner is in when loneliness is achieved). 
The following lemma describes the subset of $X$ for which such a $t$ exists for each choice of $k_1,\dots, k_n\in \Z$, and shows that without loss of generality we can assume $k_i\in [-1,v_i-1]$.

\begin{lemma}
\label{lemma:polytrope}
Let $\xx=(x_1,\dots, x_n) \in X$. Then, 
\begin{enumerate}
\item A time $t\in \R$ and a $\kk=(k_1,\dots, k_n)\in \Z^n$ satisfying \eqref{eq:certificate} exist for $\xx$ if, and only if  the following inequalities, involving only $\xx$ and $\kk$ are met: 
\begin{align}
\label{eq:polytrope}
 x_j-x_i <  \frac{(k_i-\gamma+1)}{v_i} -  \frac{(k_j+\gamma)}{v_j}, \quad \forall i,j\in [n].
 \end{align}

\item If this happens for some $\kk \in \Z^n$ then it also happens for one having  $-1\leq k_i \leq v_i-1$ for all $i=1,\dots,n$.
\end{enumerate}
\end{lemma}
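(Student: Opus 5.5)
Part (1) is an interval-intersection argument. Rewrite \eqref{eq:certificate} for each $i$ as a condition on $t$ alone:
\[
t \in I_i(\kk) := \Big( x_i + \frac{k_i+\gamma}{v_i},\; x_i + \frac{k_i+1-\gamma}{v_i}\Big),
\]
using $v_i>0$ to divide without changing the direction of the inequalities. Since $\gamma<1/2$, each $I_i(\kk)$ is a nonempty open interval. A $t$ satisfying \eqref{eq:certificate} exists for this $\kk$ exactly when $\bigcap_i I_i(\kk)\neq\emptyset$.

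For finitely many open intervals $(a_i,b_i)$ with $a_i<b_i$, the intersection is nonempty if and only if $\max_j a_j<\min_i b_i$. That is, $a_j<b_i$ must hold for every pair $i,j$. The case $i=j$ holds automatically, and for $i\neq j$ the condition is precisely
\[
x_j+\frac{k_j+\gamma}{v_j} < x_i+\frac{k_i+1-\gamma}{v_i},
\]
which rearranges to \eqref{eq:polytrope}.

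Part (2) is a reduction argument. Given a valid pair $(t,\kk)$, first shift $t$ by an integer $m$ and each $k_i$ by $mv_i$. This preserves \eqref{eq:certificate}, because $(t+m-x_i)v_i=(t-x_i)v_i+mv_i$. We can therefore assume $t\in[0,1)$. Now use $x_i\in[0,1/v_i]$, which follows from $\xx\in X$. Then $(t-x_i)v_i\in[-1,v_i)$. This quantity lies in the open interval $(k_i+\gamma,k_i+1-\gamma)$, a subset of $(k_i,k_i+1)$, so $k_i=\lfloor (t-x_i)v_i\rfloor\in[-1,v_i-1]$.

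The only delicate point is the boundary behaviour when $(t-x_i)v_i$ is close to $-1$ or to $v_i$, and the strict inequalities handle it. The value $(t-x_i)v_i$ is strictly inside $(k_i,k_i+1)$, so the floor is well defined. From $(t-x_i)v_i\ge -1$ we get $k_i+1>-1$, so $k_i\geq -1$. From $(t-x_i)v_i<v_i$ we get $k_i<v_i$, so $k_i\le v_i-1$. With these checks the argument is otherwise routine.
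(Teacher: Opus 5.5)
Your proof is correct and follows essentially the paper's argument. Part (1) uses the same criterion, that every lower endpoint lies below every upper endpoint. Part (2) uses the same normalization of $t$ to the unit interval, obtained by shifting $t$ by an integer $m$ and $\kk$ by $m\vv$, together with $x_i\in[0,1/v_i]$; you read off $k_i=\lfloor (t-x_i)v_i\rfloor$ directly, whereas the paper derives $t\in(k_i/v_i,(k_i+2)/v_i)$ and requires this to meet $[0,1]$, and both give $-1\le k_i\le v_i-1$.
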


\begin{proof}
Equations  \eqref{eq:certificate} are equivalent to
\begin{align}
\label{eq:certificate-t}
t \in \left(\frac{k_i+\gamma}{v_i} + x_i, \frac{(k_i+1)-\gamma}{v_i}+x_i \right) ,
\end{align}
For fixed $\kk$,  a $t$ satisfying all these inequalities exists if and only if every lower bound is smaller than every upper bound. That is, if and only if
\[
 \frac{(k_j+\gamma)}{v_j} +x_j<  \frac{(k_i+1-\gamma)}{v_i} + x_i \ \ \forall i,j\in [n],
\]
which is equivalent to the statement.
(The case where $i=j$ is never an issue, since $\gamma<1/2$.) This proves part (1).

For part (2), observe that adding an integer $t_0$ to $t$ and the vector $t_0 \vv$ to $\kk$ leaves equations~\eqref{eq:certificate} invariant. Hence, if~\eqref{eq:certificate} is feasible for the given $\xx$, it has a solution with $t\in [0,1]$. 
On the other hand,  \eqref{eq:certificate-t} together with $x_i\in [0,1/v_i]$ and $\gamma\ge 0$, implies
\[
t \in \left(\frac{k_i}{v_i} , \frac{k_i+2}{v_i}  \right).
\]
Since $k_i$ is an integer, for this to  intersect $[0,1]$ we need $k_i \in [-1,v_i-1]$.
\end{proof}

Our algorithm is based on (trying to) cover $X$ by the polytopes defined by the inequalities \eqref{eq:polytrope} in part (1) of Lemma~\ref{lemma:polytrope}; part (2) tells us that 
the number of  polytopes that the algorithm needs to try is finite, bounded by $\prod_{i}(v_i +1)$.
Let us give a name to these polytopes, both in the version of the lemma and in a 
weak version of loneliness, in which runners are required to be in the \emph{closed} interval $[k_i+\gamma, (k_i+1)-\gamma]$. 
By continuity and compactness, existence of a $t$ for given $k_1,\dots, k_n\in \Z$ in this weak version is equivalent to feasibility of the closed version of the system~\eqref{eq:polytrope}.

\begin{definition}
For each $\kk \in \Z^n$ and $\gamma\in (0,1/2)$ we call the set of $\xx\in \R^n$ satisfying  \eqref{eq:polytrope} (resp. its closure) the open (resp. closed) \emph{certificate polytrope} for $\gamma$ in round $\kk$. We denote $T_{\kk,\gamma}$ the open one and $\overline T_{\kk,\gamma}$ its closure.
\end{definition}

\newcommand{\dsone}{\mathbf 1}

The reason for this name is that polytopes defined by inequalities of the form $x_i-x_j \le b_{ij}$ are called \emph{polytropes} in tropical geometry (in Coxeter combinatorics they are called \emph{alcoved polytopes}). They contain $\R \dsone$ in their linearity space so we consider them as living in the quotient space $\R^n/\dsone\R$, where they are bounded (if all the inequalities $x_i-x_j \le b_{ij}$ are present in their definition, as is our case).

\begin{corollary}
Let $\vv \in \Z_{>0}^n$ and let $\gamma\in (0, 1/2)$.
\label{coro:polytrope}
\begin{enumerate}
\item $\gamma^{\min}(\vv) > \gamma$ if, and only if, 
\begin{align*}
& X \subset \bigcup_{k_i\in [-1,v_i-1]}  T_{\kk,\gamma}.&\quad
\end{align*}
\item $\gamma^{\min}(\vv) \geq \gamma$ if, and only if, 
\begin{align*}
 &X  \subset \bigcup_{k_i\in [-1,v_i-1]}  \overline T_{\kk,\gamma}.& \qed
\end{align*}
\end{enumerate}
\end{corollary}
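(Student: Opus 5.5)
The statement is Corollary~\ref{coro:polytrope}, and it follows by combining the definition of $\gamma^{\min}$ with Lemma~\ref{lemma:polytrope}. We treat the two parts separately: part (1) uses the open polytropes $T_{\kk,\gamma}$, and part (2) is the closed analogue.

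For part (1), we first reduce $\gamma^{\min}(\vv)>\gamma$ to a pointwise statement. The function $\ss\mapsto\gamma(\vv;\ss)$ is a supremum of continuous functions, so it is lower semicontinuous. It is also $\Z^n$-periodic, so the minimum over $[0,1]^n$ is attained. Hence $\gamma^{\min}(\vv)>\gamma$ holds if and only if $\gamma(\vv;\ss)>\gamma$ for every $\ss$.

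Next we show this is equivalent to the existence, for every $\ss$, of a time $t$ with $\dist(s_i+v_it,\Z)>\gamma$ for all $i$. One direction is immediate. For the other, note that for rational $\vv$ the function $t\mapsto\min_i\dist(s_i+v_it,\Z)$ is periodic and continuous, so its supremum is attained.

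We then pass to the $\xx$-coordinates via $x_i=-s_i/v_i$, normalizing $x_1=0$ by translating $t$. This identifies the configuration space with $X$. The condition $\dist(v_i(t-x_i),\Z)>\gamma$ is exactly \eqref{eq:certificate} for some integer $k_i$. By Lemma~\ref{lemma:polytrope}(1), the existence of such $t$ and $\kk$ is equivalent to $\xx\in T_{\kk,\gamma}$, and by part (2) of that lemma we may restrict to $k_i\in[-1,v_i-1]$. Part (1) of the corollary follows.

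For part (2), the argument is the same with closed inequalities, and it has two steps.

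First, $\gamma(\vv;\ss)\ge\gamma$ holds if and only if some $t$ satisfies $\dist(s_i+v_it,\Z)\ge\gamma$ for all $i$. This again uses that the supremum over $t$ is attained.

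Second, the existence of $t$ and $\kk$ with the closed version of \eqref{eq:certificate} is equivalent to $\xx\in\overline T_{\kk,\gamma}$. The proof of Lemma~\ref{lemma:polytrope}(1) goes through verbatim with non-strict inequalities: closed intervals intersect exactly when every lower bound is at most every upper bound, and the case $i=j$ is harmless since $\gamma<1/2$.

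The only point needing care is that the closed polytrope defined by the non-strict version of \eqref{eq:polytrope} coincides with the closure of the open one. This holds because the open set is nonempty whenever the closed one is: all the right-hand sides can be increased slightly by decreasing $\gamma$, and the closure of a nonempty open polyhedron given by strict inequalities is the corresponding closed polyhedron. Alternatively, one can simply define $\overline T_{\kk,\gamma}$ by the closed system, as the paper's remark before the definition suggests.

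Finally, the reduction $k_i\in[-1,v_i-1]$ in the closed case follows from the same argument as Lemma~\ref{lemma:polytrope}(2): we can take $t\in[0,1]$, and the closed constraint on $t$ is contained in $[k_i/v_i,(k_i+2)/v_i]$. Endpoint cases might appear to allow $k_i=v_i$ or $k_i=-2$, but these correspond to $t$ at $0$ or $1$ and can be shifted by the periodicity $t\mapsto t\pm1$.

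I expect this bookkeeping of attainment and closures to be the main, though routine, obstacle.
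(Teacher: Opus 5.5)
Your route is the paper's. The corollary is stated there without a separate proof, as an immediate consequence of the definition of $\gamma^{\min}$ and Lemma~\ref{lemma:polytrope}; the closed case rests on the remark that, by continuity and compactness, weak loneliness corresponds to the closed version of \eqref{eq:polytrope}. Your handling of attainment (the minimum over $\ss$, and the supremum over $t$ in part (2)) is correct and more careful than the paper. In part (1) attainment over $t$ is not even needed, since $\sup>\gamma$ already gives a witness $t$.

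Two side justifications do not work as written, though both claims are true and easy to repair.

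\emph{Closure versus closed system.} You argue that $T_{\kk,\gamma}$ is nonempty whenever the closed system is feasible because ``the right-hand sides can be increased slightly by decreasing $\gamma$''. That only shows that a solution of the closed system at $\gamma$ lies in $T_{\kk,\gamma'}$ for $\gamma'<\gamma$. It says nothing about $T_{\kk,\gamma}$ itself, which is what you need in order to identify its closure $\overline T_{\kk,\gamma}$ with the closed system. The correct reason is that $T_{\kk,\gamma}$ is \emph{always} nonempty for $\gamma<1/2$. Take $x_i=-(k_i+\tfrac12)/v_i$. Then each interval in \eqref{eq:certificate-t} is centered at $t=0$ and has length $(1-2\gamma)/v_i>0$, so $t=0$ satisfies all strict inequalities. (This is also why $\overline T_{\kk,\gamma}$ is full-dimensional; compare Proposition~\ref{prop:T_vs_Z}.)

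\emph{Endpoint cases for $\kk$.} You dismiss $k_i=v_i$ or $k_i=-2$ by shifting $t\mapsto t\pm1$. That shift changes every $k_j$ by $\pm v_j$ at once, and the coarse bound $t\in[k_j/v_j,(k_j+2)/v_j]$ you use does not prevent some other $k_j$ from leaving the range. The cases simply never occur. Since $\gamma>0$ and $0\le x_i\le 1/v_i$, the closed constraint already gives
\[
t\in\Big[\tfrac{k_i+\gamma}{v_i},\ \tfrac{k_i+2-\gamma}{v_i}\Big]\subset\Big(\tfrac{k_i}{v_i},\ \tfrac{k_i+2}{v_i}\Big).
\]
So choosing $t\in[0,1]$ forces $k_i\in[-1,v_i-1]$, exactly as in the open case.

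With these two repairs your proof is complete.
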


The (closed) certificate polytropes $\overline T_{\kk,\gamma}$ are nothing but affine transforms of the LR zonotope corresponding to the velocity vector $\vv$. To see this, first observe that all the $T_{\kk,\gamma}$ are translations of one another: adding $\epsilon$ to a particular $k_i$ is equivalent to subtracting $\epsilon/v_i$ from $x_i$. Hence, without loss of generality assume $\kk=0$.

Now write (the closed version of) equations \eqref{eq:polytrope} back in the coordinates $s_i = -x_i v_i$ that represent initial positions. The equations become
\begin{align}
\label{eq:closed_polytrope}
 \frac{s_i -  (1-\gamma)}{v_i}  \leq  \frac{s_j-\gamma}{v_j}, \quad \forall i,j\in [n].
 \end{align}
These are the same as the defining inequalities of
$[\gamma,1-\gamma]^n + \vv\R$. 
Since the LR zonotope with volume $\vv$ equals the projection of the unit cube $[0,1]^n$ along the direction of $\vv$ we have:

\begin{proposition}
\label{prop:T_vs_Z}
The linear isomorphism 
\begin{align*}
\pi: \R^n /\dsone \R \quad&\to \quad \R^n/ \vv\R \cong \R^{n-1} \\
\xx =(x_i)_i \quad&\mapsto \quad \ss:= - (x_i v_i)_i
\end{align*}
sends 
$\overline T_{0,\gamma}/ \dsone \R$ to the projection  of the cube $[\gamma,1-\gamma]^n$ along the direction of $\vv$.
\qed
\end{proposition}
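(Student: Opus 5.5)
The plan is to compute the image under $\pi$ of the defining inequalities of $\overline T_{0,\gamma}$ and compare them with a description of the projected cube. First, $\pi$ is well defined and a linear isomorphism. The map $\xx\mapsto -(x_iv_i)_i$ is a linear automorphism of $\R^n$, since all $v_i\neq 0$, and it sends $\dsone$ to $-\vv$. It therefore induces an isomorphism $\R^n/\dsone\R\to\R^n/\vv\R$. Moreover $\overline T_{0,\gamma}$ is invariant under adding multiples of $\dsone$, since its inequalities involve only the differences $x_j-x_i$. Hence its image is a well-defined subset of $\R^n/\vv\R$, and it equals $\pi(\overline T_{0,\gamma})+\vv\R$ read back in $\R^n$.

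Next, substitute $x_i=-s_i/v_i$ into the closed version of \eqref{eq:polytrope} with $\kk=0$. The inequality $x_j-x_i\le \frac{1-\gamma}{v_i}-\frac{\gamma}{v_j}$ becomes $\frac{s_i-(1-\gamma)}{v_i}\le\frac{s_j-\gamma}{v_j}$ for all $i,j$. This is exactly \eqref{eq:closed_polytrope}, a routine rearrangement.

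The key step is to show that \eqref{eq:closed_polytrope} describes precisely $[\gamma,1-\gamma]^n+\vv\R$. A point $\ss$ lies in that set if and only if there is $t\in\R$ with $\gamma\le s_i-tv_i\le 1-\gamma$ for all $i$. Since $v_i>0$, this says $t\in\big[\frac{s_i-(1-\gamma)}{v_i},\frac{s_i-\gamma}{v_i}\big]$ for every $i$. A family of closed intervals in $\R$ has a common point if and only if every left endpoint is at most every right endpoint, which is Helly's theorem in dimension one. That condition is \eqref{eq:closed_polytrope}, with the case $i=j$ automatic because $\gamma<1/2$. This is the same argument as in Lemma~\ref{lemma:polytrope}(1), so I would just cite it.

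Finally, $[\gamma,1-\gamma]^n+\vv\R$ is by definition the preimage of the projection of the cube $[\gamma,1-\gamma]^n$ along $\vv$. So $\pi(\overline T_{0,\gamma}/\dsone\R)$ equals that projection, and identifying $\R^n/\vv\R\cong\R^{n-1}$ via the projection $\pi_\vv$ of the introduction gives the claim. The only step needing any care is the interval-intersection equivalence. Beyond that, the only points to check are the positivity of the $v_i$ and the sign convention ($\dsone\mapsto-\vv$ rather than $\vv$), which does not matter since only the line $\vv\R$ is quotiented.
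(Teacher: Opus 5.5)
Your proposal is correct and follows the paper's argument. You substitute $x_i=-s_i/v_i$ into the closed version of \eqref{eq:polytrope} with $\kk=0$ to get \eqref{eq:closed_polytrope}, and then recognize these as the defining inequalities of $[\gamma,1-\gamma]^n+\vv\R$; your interval-intersection justification of that last step (the same argument as in Lemma~\ref{lemma:polytrope}(1), using $v_i>0$ and $\gamma<1/2$) is exactly what the paper leaves implicit.
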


In particular, since the lonely runner zonotope of $\vv$ is nothing but the projection of $[0,1]^n$ along the direction $\vv$, 
$\pi(\overline T_{0,\gamma}/ \dsone \R)$ equals the LR zonotope of $\vv$ contracted by a factor of $\mu:=1-2\gamma$ from its center. The family $\{\pi(\overline T_{\kk,\gamma}/ \dsone \R)\}_{\kk\in \Z^n}$ are its translations under the lattice $\pi(\Z^n)$.
Hence, part (2) of Corollary~\ref{coro:polytrope} is a rephrasing of the formula 
\[
\mu(Z) =1 - 2\,\gamma^{\min}(\vv)
\]
from Proposition~\ref{prop:gamma-kappa-mu}. Indeed, $\mu(Z)\le 1-2\gamma$
is equivalent to being able to cover a fundamental domain, the polytrope $X=\pi([0,1]^n)$, by lattice-translated copies of $(1-2\gamma) Z$. 

The reason for the change of coordinates $\ss$ to the coordinates $\xx$ is precisely that it allows us to take algorithmic advantage of some particular properties of polytropes.

One property that our algorithm uses is that the exterior region of a certificate polytrope $\overline T_{\kk,\gamma}$ can be naturally subdivided into polytropal (unbounded) regions as follows. Consider $\overline T_{\kk,\gamma}$ as defined by the $n(n-1)$ inequalities $x_i-x_j \le b_{ij}$. That is, let 
\[
b_{ij} =  \frac{(k_i-\gamma+1)}{v_i} -  \frac{(k_j+\gamma)}{v_j}
\]
 be the maximum over $P$ of the functional $x_i-x_j$, as in Eq.~\eqref{eq:polytrope}. 

\begin{definition}
Let $\overline T_{\kk,\gamma} = \{\xx \in \R^n/\dsone \R : x_i-x_j \le b_{ij}\}$ be a certificate polytrope.
The \emph{exterior polytropal region} of $\overline T_{\kk,\gamma}$ corresponding to indices $(i,j)\in [n]^2$, $i\ne j$, is the unbounded polyhedron
\[
\big\{\xx \in \R^n/\dsone \R : x_k-x_i \le b_{kj}- b_{ij}, \ x_k-x_j \le b_{ij}- b_{ik} , \ k\in[n]\setminus \{i,j\}\big\}.
\]
We call the polyhedral decomposition of $\R^n$ into $P$ and these $n(n-1)$ regions the \emph{polytropal tiling} induced by $\overline T_{\kk,\gamma}$.
See Figure~\ref{fig:tiling}.
\end{definition}

\begin{remark}
The polytropal tiling induced by a certificate polytrope $\overline T_{\kk,\gamma}$ can be understood as a Voronoi diagram in two ways:
\begin{itemize}
\item It is (the intersection of $\R^n/\dsone \R\setminus \overline T_{\kk,\gamma}$ with) the usual Voronoi diagram of the facets of $\overline T_{\kk,\gamma}$. Indeed, every two adjacent facets (facets sharing a ridge)  of $\overline T_{\kk,\gamma}$ are either of  the form $\{(i,j), (k,j)\}$ or of the form  $\{(j,i), (j,k)\}$. The bisector of such facets is contained in the hyperplane  $\{x_k-x_i = b_{kj}- b_{ij}\}$ or   $\{x_k-x_i = b_{ji}- b_{jk}\}$, respectively.

\item Each point $\yy=(y_i)_{i=1}^n$ in the exterior of $\overline T_{\kk,\gamma}$ is assigned to the pair $(i,j)$ maximizing the amount $y_i-y_j - b_{ij}$ by which $\yy$ violates the corresponding facet-inequality of $\overline T_{\kk,\gamma}$. Hence, the tiling is the \emph{farthest site Voronoi diagram} of the collection of half-spaces $\{x_i-x_j \le b_{ij}\}_{(i,j)\in [n]^2}$. 
\end{itemize}
\end{remark}

\begin{figure}[htb]
\raisebox{.2cm}{\includegraphics[height=3.8cm]{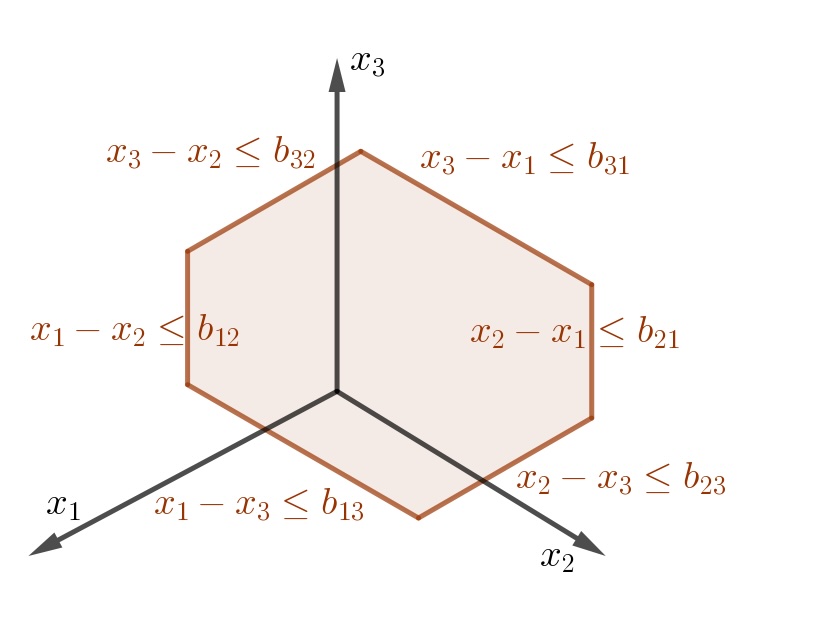}}\quad
\includegraphics[height=4cm]{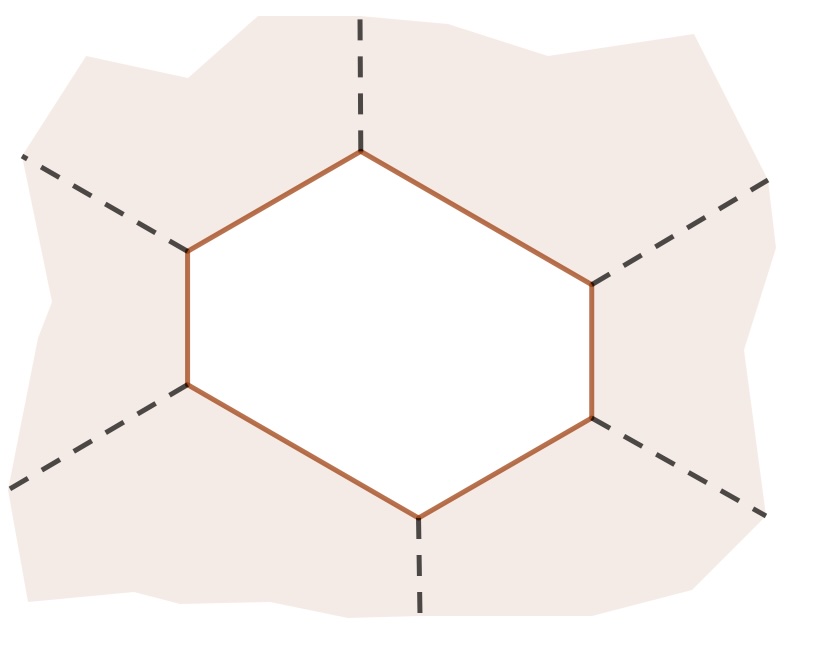}
\caption{Left: A certificate polytrope $\overline T_{\kk,\gamma}$ in $\R^3/\dsone\R^3$. Right: the exterior of $\overline T_{\kk,\gamma}$ is subdivided into  the polytropal regions of the facets of $\overline T_{\kk,\gamma}$.}
\label{fig:tiling}
\end{figure}

Observe that $X/\dsone\R$ is a polytrope too. Indeed, since $x_1=0$ over $X$, the definition of $X$ in \eqref{eq:starting} is equivalent to
\[
X= \{x_1=0\} \cap \{ \xx\in \R^n : 0 \le x_i - x_1 \le 1/v_i, \quad i=2,\dots, n\} .
\]

We now describe our algorithm, in the version using the open certificates (that is, the one that tests part (1) of Corollary~\ref{coro:polytrope}).
See Remark~\ref{rem:closed_certificates} for the main differences that need to be taken into account for the version with closed certificates.

The algorithm consists in computing several certificate polytropes one after another and keep track of the uncovered areas of the parameter space $X$.
Throughout the algorithm, the uncovered region is updated as a list $L$ of closed non-empty polytropes, 
which  at the beginning consists of the single polytrope $X$. In each iteration, while $L$ is not empty, the algorithm selects a polytrope from it, call it $Y$, computes a point $\xx$ in the relative interior of $Y$, and checks whether a certificate polytrope $T_{\kk,\gamma}$ containing $\xx$ exists.
Three things can happen:
\begin{enumerate}
\item[(a)] Such a $T_{\kk,\gamma}$ does not exist (that is,  the system~\eqref{eq:polytrope} is infeasible for every $\kk$ in $\prod_{i=1}^{n} \{-1,\dots, v_i-1\}$). Then we have found starting positions $\ss = -({v_1}{x_1}, \dots,{v_n}{x_n})$  that prove $\gamma^{\min}(\vv) \le \gamma$.

\item[(b)] A polytrope $T_{\kk,\gamma}$ containing $\xx$ exists, and it covers $Y$. Then we remove $Y$ from the list $L$. If the list becomes empty then Corollary~\ref{coro:polytrope} implies that $\gamma^{\min}(\vv) > \gamma$; if the list is still not empty we iterate.

\item[(c)] If the new certificate polytrope $T_{\kk,\gamma}$ containing $\xx$ covers only part of $Y$ then we remove $Y$ from the list $L$, use the polytropal tiling of $T_{\kk,\gamma}$ to split $Y\setminus T_{\kk,\gamma}$ into polytropes, and insert these polytropes in $L$
 (see again Figure~\ref{fig:tiling}). 
Observe that we only want to remove the \emph{open} polytrope $Y$, so the new polytropes inserted are closed.
\end{enumerate}

In each iteration, both $\xx$ and $\kk$ admit a priori several solutions. We have picked fast heuristics that satisfy our geometric intuition of the problem; we describe them in Section~\ref{sec:heuristics}.

\begin{lemma}
This algorithm terminates. That is, the third case above can happen only a finite number of times.
\end{lemma}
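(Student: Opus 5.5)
The plan is to show that every polytrope ever inserted into $L$ belongs to a fixed finite family, and that the case (c) split applied to a given $Y$ produces only polytropes strictly smaller than $Y$ in a well-founded sense. Termination then follows because the list cannot grow indefinitely.

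First, I would identify the finite family of right-hand sides. Every polytrope appearing in $L$ has the form $\{\xx : x_i - x_j \le c_{ij}\}$, where the $c_{ij}$ come from a fixed finite set $B$. The starting polytrope $X$ has right-hand sides in $\{0, \pm 1/v_i\}$, and each certificate polytrope $T_{\kk,\gamma}$ has right-hand sides $b_{ij}$ drawn from finitely many values, since $\kk$ ranges over $\prod_i\{-1,\dots,v_i-1\}$. When we intersect $Y$ with an exterior polytropal region of $T_{\kk,\gamma}$, the new inequalities have right-hand sides that are differences $b_{kj}-b_{ij}$ or $b_{ij}-b_{ik}$, or that are equal to $b_{ij}$ itself (reversed, for the violated facet). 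Taking the canonical (tropically closed) form of each polytrope means replacing $c_{ij}$ by the shortest-path value $\min$ over paths of sums of $c$'s. Because the entries are shortest paths, only simple paths are needed, so there are finitely many such sums. The first step is to check that all these values lie in the finite set $B$ generated from the finitely many $b$'s and the bounds of $X$ by sums and differences of bounded length. In practice, all numbers lie in $\frac{1}{N}\Z$ for $N=\operatorname{lcm}(v_i)\cdot\operatorname{denom}(\gamma)$ and are bounded in absolute value, which gives finiteness directly. Consequently only finitely many distinct polytropes can ever appear in $L$.

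Second, I would show that no polytrope is processed twice in an infinite loop. When $Y$ is split in case (c), each piece is $Y$ intersected with a closed exterior region of $T_{\kk,\gamma}$. That piece is a proper subset of $Y$, because it misses the nonempty open set $T_{\kk,\gamma}\cap \operatorname{relint}Y$, which contains $\xx$. Hence along any chain of descendants the polytropes strictly decrease under inclusion. Since they come from a finite family, every chain is finite. Each split produces at most $n(n-1)$ children, so the tree of generated polytropes is finitely branching with finite depth, and by König's lemma it is finite. Therefore case (c) occurs only finitely often.

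The main obstacle is the first step: verifying that the canonical right-hand sides stay in a bounded lattice $\frac1N\Z$. Lattice membership is clear because every operation is an integer combination of such numbers. Boundedness follows because each piece lies inside $X$, so $|x_i-x_j|\le \max_i 1/v_i$ holds for every tight inequality, and this caps the canonical $c_{ij}$. Nonempty pieces with lower-dimensional intersections need care, but they still fall within the same finite family.
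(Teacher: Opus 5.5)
Your proof is correct, but it obtains termination differently from the paper.

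\textbf{The paper's route.} It fixes the finite arrangement obtained by refining $X$ with the polytropal tilings of all admissible certificates $T_{\kk,\gamma}$ ($-1\le k_i\le v_i-1$). Every member of $L$ is a union of cells of this arrangement. Termination then follows from a potential-function argument: each pass through (c) removes at least one relatively open cell from the uncovered region.

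\textbf{Your route.} You show that the possible members of $L$ form a finite family, and that each child is a proper subset of its parent, since it misses $\xx\in T_{\kk,\gamma}$. You conclude via a finitely branching tree of bounded depth.

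\textbf{What your route buys.} It never needs $\bigcup L$ to shrink at every step. That can fail when $L$ contains overlapping closed pieces. For example, take adjacent exterior regions $R,R'$ of $T_{\kk,\gamma}$ such that $Y\cap R$ is lower-dimensional and lies in the boundary of the full-dimensional sibling $Y\cap R'$. Processing $Y\cap R$ while $Y\cap R'$ is still in $L$ leaves $\bigcup L$ unchanged. The paper addresses this case only briefly, and a fully rigorous version of its cell count essentially becomes your well-founded argument.

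\textbf{What the paper's route buys.} Its finiteness step is cheaper. Yours, as written, relies on $\gamma\in\mathbb{Q}$. That holds in practice, since $\gamma$ comes from the Stern--Brocot search, but it is not a hypothesis of the lemma. You can drop it by observing that every member of $L$ is $X$ cut by closed halfspaces bounded by hyperplanes $x_i-x_j=c$, with $c$ from one fixed finite list: the $b_{ij}$, $b_{kj}-b_{ij}$, $b_{ij}-b_{ik}$ for admissible $\kk$, together with the bounds of $X$. Hence only finitely many sets can occur.

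Equivalently, canonicalise the whole accumulated system at once. Then each canonical right-hand side is a simple-path sum of at most $n-1$ constants from that list, up to sign. This is the precise form of your ``bounded length'' remark. Closing the system step by step, as your sentence suggests, does not by itself give the bound.
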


\begin{proof}
Every polytrope that appears along the algorithm is a union of cells in the  common refinement of the polytropal tilings of all the certificate zonotopes $T_{\kk,\gamma}$ that can potentially be used during the algorithm. There is a finite number of such polytropes, since 
$-1\leq k_i \leq v_i-1$ for every $i$. Since every time that we go through (c) at least one cell of the arrangement is removed from the uncovered region, case (c) can occur only a finite number of times.

It may happen that the $Y$ in a particular iteration, and hence the part $Y\setminus T_{\kk,\gamma}$ of $X$ removed in this iteration, is lower-dimensional. 
But this does not invalidate the argument; every iteration removes from the uncovered space at least one relatively open cell of the finite polytropal arrangement obtained with all the $T_{\kk,\gamma}$ and their polytropal tilings, and the number of such cells is finite.
\end{proof}

\begin{remark}[The version with closed certificates]
\label{rem:closed_certificates}
In this version the uncovered space is no longer closed. But we can take advantage of the fact that, if the condition in part (2) of Corollary~\ref{coro:polytrope} is not met, then part of $X$ that cannot be covered is full-dimensional. Hence, rather than keeping track of what lower-dimensional parts of the boundary of each polytrope in $L$ are already covered, we still keep all polytropes stored as if they were closed in the understanding that:

\begin{itemize}
\item If at some point a polytrope to be added to $L$ is not full-dimensional then we simply do not add it. This may imply we are neglecting some lower-dimensional pieces of $X$ that have not yet been covered, but it does not affect the correctness of the output.

\item We need the certificate polytrope $\overline T_{\kk,\gamma}$ to have a full-dimensional intersection with the  $Y\in L$ currently being processed, in order to guarantee that we make progress at every iteration. 
Since  $\overline T_{\kk,\gamma}$ is itself full-dimensional (see, e.g., Proposition~\ref{prop:T_vs_Z}), it is sufficient to choose the new point $\xx$ at each iteration in \emph{the interior} of $Y$.
Our choice of $\xx$ satisfies this property, as we show in the next section.
\end{itemize}

The proof that the algorithm terminates is the same as in the version with open certificates, except now we only consider the full-dimensional cells in the joint polytropal arrangement.
\end{remark}

In practice, the source code determines the relationship between $\gamma^{\min}(\vv)$ and a guessed value of $\gamma$ in a single pass. It starts with the open-certificate version of the algorithm. If it finds a point $\xx$ that lies in no open certificate polytrope but does lie in some closed certificate polytrope, then it continues with the closed-certificate version. At that moment, the cells change from closed polytropes to open ones. This causes no problem, since only a subset of measure zero is removed. In particular, the complement of the closed certificate polytropes is open, so this change cannot eliminate all exterior points.

\subsection{Three technical details of the algorithm}
\label{sec:heuristics}

We here elaborate on some aspects of the algorithm that were glossed over in the previous explanation.

\subsubsection*{1. How to find an interior point $\xx$ in the polytrope $Y\in L$.} 
Finding a (relative) interior point of a polytope can be done with general purpose algorithms, but we need a fast way of doing it. For this we take advantage of the fact that our polytopes are polytropes.

If $(b_{ij})_{i,j\in[n]}$ is the vector of right-hand sides in the definition of our polytrope $Y$ (that is, $b_{ij}$ is the maximum of the functional $x_j-x_i$ on $Y$) we call $i$-th \emph{min-vertex} of $Y$ the point
\[
(b_{ij})_{j\in [n]} \in \R^n. 
\]
The reason for this name is that $Y$ equals the tropical convex hull of these points with respect to the min-plus algebra. (See, e.g.,~\cite[Theorem 6.37]{Joswig-etc}).

The min-vertices cannot all lie in the same proper face of $Y$: The $i$-th and $j$-th min-vertices have $x_i-x_j=-b_{ij}$ and $x_i-x_j=b_{ji}$ respectively so, if all the min-vertices lie in a facet $F_{ij}$ of $Y$ we have $b_{ij}+b_{ji}=0$ and hence $Y= F_{ij}$.
As a consequence, any positive barycentric combination of the min-tropical vertices of $Y$ lies in the relative interior of $Y$.
We pick as new point $\xx$ the average (i.e. the barycenter) of the min-tropical vertices of $Y$. 

If we wanted to minimize the total number of certificates needed for the overall algorithm, it would be sensible to design a choice of $\xx$ that is more centered in $Y$, since this should produce a more balanced partition of $Y\setminus T_{\kk,\gamma}$.
However, experimentally the performance of the algorithm was more sensitive to the speed at which $\xx$ was computed than it was to the quality of $\xx$. 

\subsubsection*{2. How to find a $T_{\kk,\gamma}$ containing  a given $\xx\in X$.} 
For given values $\xx,\vv$ we want to check whether there is a $\kk \in \Z^n$ such that the origin is lonely within the loop $k_i$ of each runner $i$. If the answer is yes then the algorithm removes the corresponding certificate polytrope $T_{\kk,\gamma}$ from the uncovered space; if the answer is no then we are in case (a) of the algorithm, so we have proved $\gamma^{\min}(\vv) \leq \gamma$.

Each choice of $i$ and $k_i$ corresponds to a closed interval 
\[
\left[x_i +\frac{(k_i-\gamma)}{v_i} , x_i + \frac{(k_i+\gamma)}{v_i} \right]
\]
of time where the origin is \emph{not} lonely because runner $i$ is close to it. (Compare to \eqref{eq:certificate-t}, which describes the intervals when the $i$-th runner is far from the origin).
Lemma~\ref{lemma:polytrope} implies that we only need to look at the $\kk$ with  $-1\leq k_i < v_i$, so the question is whether the corresponding $\sum v_i + n$ intervals cover $[0,1]$.
This can be checked in time $O( \sum v_i\log(\sum v_i))$ via the classical greedy interval cover algorithm~\cite[Chapter~4, Problem~3]{Erickson-algorithms}\cite[Section~3.4.1]{Halim-CP4}.

Our implementation does a minor refinement over this classic idea by exploiting that different intervals of the same runner do not intersect, so the priority queue that the algorithm maintains contains only $n$ entries at the same time, improving the running time to $O(\sum v_i \log n)$. 

When the intervals do not cover, out of all the possible $\kk$, we pick the one corresponding to the longest uncovered interval in the complement of the union; that is, the one that keeps the origin lonely for the longest time. This  heuristic is sensible, and it performs well experimentally. Another alternative would be finding the $\kk$ that gives the origin the largest loneliness gap.

\subsubsection*{3. Further reduction of the initial domain $X$.}

\begin{lemma}
  \label{lem:startingdomain}
We may assume without loss of generality that $0\leq x_2\leq \frac{1}{\operatorname{lcm}(v_1, v_2)}$.
\end{lemma}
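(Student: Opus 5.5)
The plan is to use the symmetries of the problem that preserve $\gamma(\vv;\ss)$ and to act with them on the starting data. Recall $x_i = -s_i/v_i$ is the first time runner $i$ hits the origin, and we have already normalized $x_1=0$. The relevant freedom is a time shift $t\mapsto t+\tau$: replacing $\ss$ by $\ss+\tau\vv$ leaves $\min_t\min_i\dist(s_i+v_it,\Z)$ unchanged. In the $\xx$ coordinates this subtracts $\tau$ from every $x_i$, modulo the periods $1/v_i$. We also have independent integer shifts of each $s_i$, i.e.\ $x_i\mapsto x_i+m_i/v_i$ with $m_i\in\Z$.

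To keep $x_1=0$ after a time shift, we need $\tau\in\tfrac1{v_1}\Z$ and then re-add an integer multiple of $1/v_1$. So the available moves on $x_2$ are
\[
x_2\mapsto x_2-\frac{a}{v_1}+\frac{b}{v_2},\qquad a,b\in\Z,
\]
while the other $x_i$ are only reduced modulo $1/v_i$. The set $\{a/v_1+b/v_2: a,b\in\Z\}$ is the additive group $\tfrac{\gcd(v_1,v_2)}{v_1v_2}\Z=\tfrac1{\operatorname{lcm}(v_1,v_2)}\Z$, by Bézout applied to $av_2+bv_1$. Hence choosing $a,b$ appropriately brings $x_2$ into $[0,1/\operatorname{lcm}(v_1,v_2))$. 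Afterwards we reduce each $x_i$, $i\ge3$, back into $[0,1/v_i]$ using its own integer shift.

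The last step is to check that none of these operations changes $\gamma(\vv;\ss)$, so $\gamma^{\min}$ can be computed over the smaller domain. The integer shifts are harmless because $s_i$ only matters mod $1$. The time shift is harmless because the supremum over $t\in\R$ is translation invariant. Since we only shrink the search domain for the minimum while every original $\xx$ has an equivalent representative inside it, the restricted minimum equals $\gamma^{\min}(\vv)$.

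The only step that needs care is the Bézout computation identifying the orbit of $x_2$ under the combined moves. The rest is bookkeeping of the normalizations.
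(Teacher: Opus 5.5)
Your proof is correct and follows essentially the paper's argument: both combine time shifts by multiples of $1/v_1$ (the instants when runner~1 is at the origin) with integer shifts of $s_2$, and your identity $\tfrac1{v_1}\Z+\tfrac1{v_2}\Z=\tfrac1{\operatorname{lcm}(v_1,v_2)}\Z$ is the algebraic form of the paper's observation that runner~2's orbit consists of $v_1/\gcd(v_1,v_2)$ equally spaced positions on the track. One small slip: the quantity invariant under time shifts is $\sup_t\min_i\dist(s_i+v_it,\Z)$, not $\min_t\min_i$, although you use the correct supremum later.
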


\begin{proof}
We have fixed that $x_1=0$. But for every vector of starting positions, there are $v_1$ time rotations (i.e. advancing time by $1/v_1$) that will maintain this invariant. These correspond to considering every time when the runner $1$ crosses the origin. Consider the orbit of runner $2$ under the action of $\mathbb{Z}$ given by the times runner $1$ crosses the origin. The position of the first runner cycles around the track, advancing $v_2/v_1$ every time. This is, the same as advancing $\frac{v_2/\gcd(v_1,v_2)}{v_1/\gcd(v_1,v_2)}$. Therefore, every $v_1/\gcd(v_1,v_2)$ iterations the position of runner $2$ repeats. The orbit of runner $2$ in this action consists on $v_1/\gcd(v_1,v_2)$ equally spaced positions along the unit track. Then one of them must be in $[-\gcd(v_1,v_2)/v_1,0]$. The corresponding value $x_2$ is then in $[0,\frac{\gcd(v_1,v_2)}{v_1v_2}]$.  This is the bound that we claimed.
\end{proof}

We do not claim this is the best way to exploit the symmetry. A better result can be obtained by cascading this divisibility phenomenon across $\vv$, using time/space inversion, or, alternatively, using the domain generated by the Hermite row normal form (the diagonal entries form a fundamental domain even if they are not a lattice basis). 
But the improvement in Lemma~\ref{lem:startingdomain}  is enough for the low dimensional examples we deal with, especially if $v_1,v_2$ are coprime. 
The source repository has this improved domain implemented in \lstinline{scripts/bounds.py}

Note that the choice of order of coordinates changes the domain $X$. We have no reason right now to prefer one order over another, and we do not expect the gains from this choice to be significant.

\subsection{Our counterexamples and experimental results}
\label{subsec:counter}

Using the algorithm described in the previous sections we
have obtained several interesting counterexamples to the sLRC conjecture. These are all in the repository in a human-readable format in the folder \lstinline{/output} of \cite{code_git}. The repository contains all the velocity vectors we generated, categorized by $n$ and sorted increasingly by loneliness gap. It also has information on how to interpret and replicate these outputs, as well as the graphs we show below and animations for some selected velocity vectors.

\begin{proposition}
\label{prop:counters5}
\begin{enumerate}
\item $\gamma^{\min}(1,2,3,4,5) =15/94 < 1/6$,
attained with the starting positions $\ss=\tfrac1{94}(0,46,38,47,72)$. 
\item Among the $\vv\in \Z_{>0}^5$ with $\sum v_i \leq 100$ there is no other vector  with $\gamma^{\min}(\vv) < 1/6$ and there are $19$ with $\gamma^{\min}(\vv) =\frac16$. The largest one is $(2, 3, 5, 8, 16)$, with $\sum_{i=1}^5 v_i=34$.
\end{enumerate}
\end{proposition}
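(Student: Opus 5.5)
Part (1) has two inequalities, and I will prove them separately. Part (2) is a finite computation.

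\emph{Upper bound in part (1).} By Definition~\ref{def:loneliness}, $\gamma^{\min}(\vv)\le\gamma(\vv;\ss)$ for every $\ss$. So it suffices to check that for $\ss=\tfrac1{94}(0,46,38,47,72)$ and every $t\in[0,1]$ some runner satisfies $\dist(s_i+v_it,\Z)\le\tfrac{15}{94}$; this is exactly what Figure~\ref{fig:12345} displays. To make the check exact, I list the closed ``non-lonely'' intervals
\[
\Big[\tfrac{k-s_i-15/94}{v_i},\tfrac{k-s_i+15/94}{v_i}\Big],\qquad -1\le k\le v_i,
\]
for $i=1,\dots,5$. All endpoints have denominator dividing $94\cdot 60$. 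I then sort them and check with the greedy interval-cover procedure of Section~\ref{sec:heuristics} that they cover $[0,1]$. At the four touching instants shown in the figure, consecutive intervals meet exactly at an endpoint. The inequality $15/94<1/6$ is the arithmetic fact $90<94$.

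\emph{Lower bound in part (1).} I need $\gamma^{\min}(1,2,3,4,5)\ge\tfrac{15}{94}$. By part (2) of Corollary~\ref{coro:polytrope}, this is equivalent to the closed certificate polytropes $\overline T_{\kk,\gamma}$, with $\gamma=\tfrac{15}{94}$ and $-1\le k_i\le v_i-1$, covering the domain $X$; by Lemma~\ref{lem:startingdomain} I may shrink $X$ to $x_1=0$, $0\le x_2\le\tfrac12$. I would run the algorithm of Section~\ref{sec:algorithm} in exact rational arithmetic and record the tree of polytropes it produces. Each leaf is a polytrope $Y$ together with one $\kk$ such that $Y\subseteq\overline T_{\kk,\gamma}$. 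That containment reduces to comparing the right-hand sides $b_{ij}$, since both sets are polytropes given by all $n(n-1)$ inequalities $x_i-x_j\le b_{ij}$ (after tightening $Y$'s bounds by shortest paths). Each splitting step is justified by the polytropal tiling, so the leaves cover $X$. This certificate proves $\gamma^{\min}\ge\tfrac{15}{94}$, and together with the upper bound gives equality.

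\emph{Part (2).} This is an exhaustive search. I would enumerate all $\vv\in\Z^5_{>0}$ with $\sum v_i\le100$. Using the invariances noted in the introduction, I may assume the entries are sorted and $\gcd(\vv)=1$; the sLRC hypothesis additionally requires distinct entries. For each such $\vv$ I run the decision procedure with $\gamma=\tfrac16$, which classifies $\gamma^{\min}(\vv)$ as $>$, $=$, or $<\tfrac16$. The claim is that the output reports ``$<$'' only for $(1,2,3,4,5)$ and ``$=$'' for exactly $19$ vectors, the one with largest sum being $(2,3,5,8,16)$. For the vectors in the ``$=$'' class, I would certify equality both ways: an explicit $\ss$ witnesses $\le$, and a closed-certificate covering witnesses $\ge$.

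The main obstacle is the lower bounds. They are only as trustworthy as the implementation, and part (2) requires on the order of millions of runs. I would address this by making every run emit a checkable certificate (a starting vector $\ss$, or a covering tree), and by verifying those certificates with an independent, simple exact-arithmetic checker rather than trusting the search code.
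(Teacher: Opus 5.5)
Your proposal is correct and follows essentially the paper's route: the upper bound is certified by the explicit $\ss$ through an exact interval-cover check, which is what Figure~\ref{fig:12345} encodes. The lower bound and all of part (2) rest on the polytrope-covering decision procedure of Section~\ref{sec:algorithm}, run exhaustively over sorted primitive vectors with distinct entries, which is indeed the implicit range of the statement. Your insistence that every run emit an independently checkable certificate (a witness $\ss$ or a covering tree) is a sensible strengthening of the paper's purely computational justification.
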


Figure~\ref{fig:n5-s100} plots the values of $\gamma^{\min}$ for all integer velocity vectors 
with $n=5$  and $\sum v_i \leq 100$. The figure strongly suggests that $(1,2,3,4,5)$ is most probably the only counterexample to sLRC for this $n$. 

\begin{figure}[htb]
\centerline{\includegraphics[width=.7\textwidth]{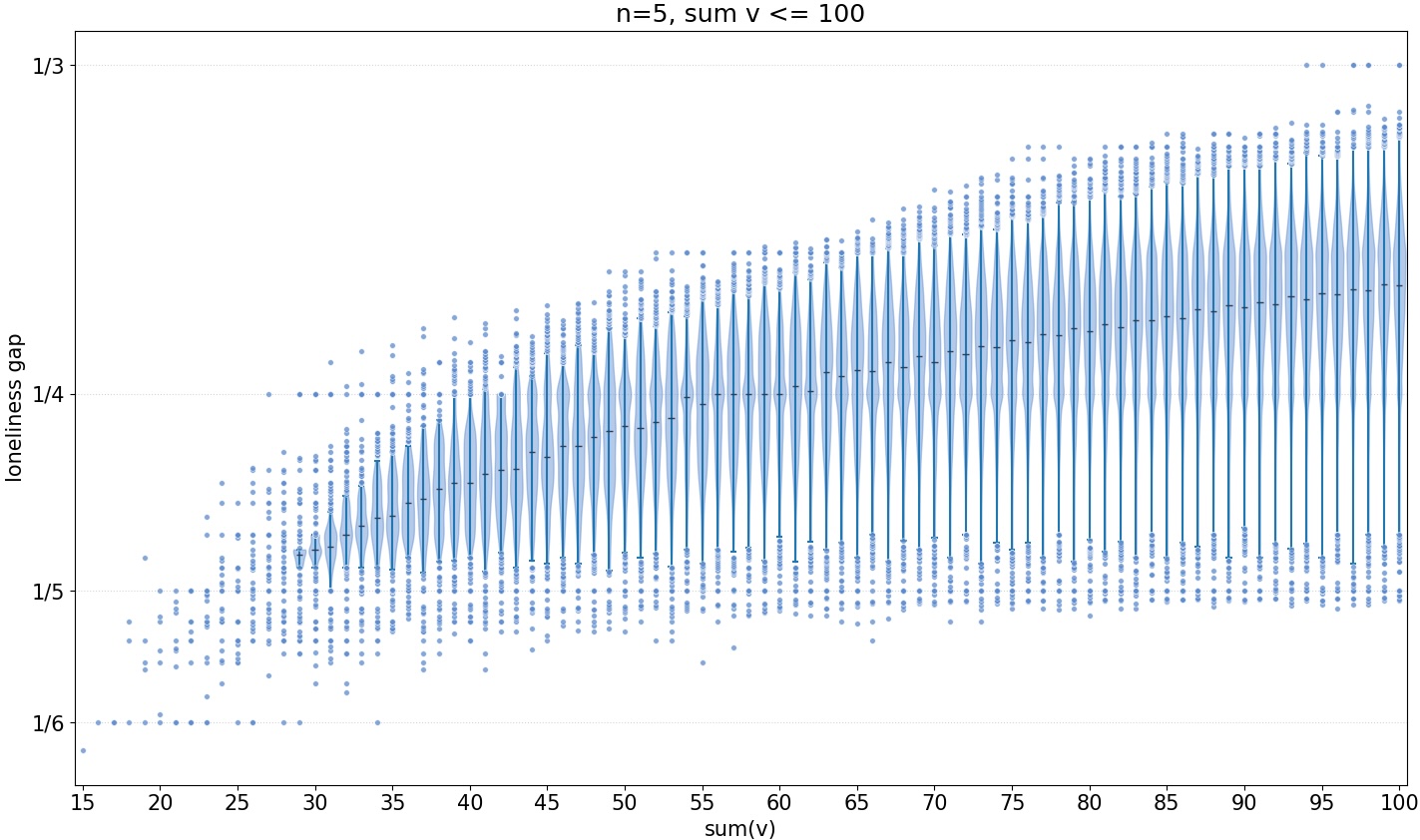}}
\caption{Values of $\gamma^{\min}$ for all integer velocity vectors $(v_1,\dots,v_5)$ with $\sum_{i} v_i \le100$. For each value of $\sum_i v_i$ the smallest $30$ and biggest $30$ values of $\gamma^{\min}$ are drawn as dots, and the rest as a ``violin'' of relative width representing the amount of vectors with (approximately) that value of $\gamma^{min}$.}
\label{fig:n5-s100}
\end{figure}


We then run our program with  $n=6, \sum\vv \leq 90$ and with $n=7, \sum\vv \leq 75$. 
The following statements sumarize our findings, plotted in Figure~\ref{fig:n6-s90}.

\begin{proposition}
\label{prop:counters6}
\begin{enumerate}
\item $\gamma^{\min}(2,3,4,5,6,8)\ =\ \frac{2}{15}\ <\ \frac17$, attained with the starting positions $\ss=\tfrac1{30}(0,29,17,0,16,22)$. \smallskip
\item $\gamma^{\min}(1,2,3,4,5,6) = 9/67 $, attained with the starting positions $\ss=\tfrac1{67}(0, 39, 37, 51, 62, 54)$. 
\item Among the $493314$ sorted primitive vectors with $n=6$ and $\sum v_i \le 90$ there are:
\begin{itemize}
 \item $23$  with $\gamma^{\min}(\vv)<1/7$ (including the two above). 
 The largest ones are $(1, 3, 4, 5, 7, 11)$ and  $(1, 2, 3, 4, 8, 13)$, with $\sum_i v_i =31$.
\item $21$ with $\gamma^{\min}(\vv)=1/7$. The largest one is $(1, 3, 4, 5, 10, 30)$ with $\sum_i v_i =53$.
\end{itemize}
\end{enumerate}
\end{proposition}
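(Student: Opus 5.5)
Proposition~\ref{prop:counters6} has three parts. Parts (1) and (2) are exact values. Part (3) is a computational census. I would treat each with the appropriate tool.

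\emph{Upper bounds in (1) and (2).} For $\gamma^{\min}(\vv)\le\gamma$ it suffices to exhibit one shift $\ss$ with $\gamma(\vv;\ss)\le\gamma$. Equivalently, for every $t\in[0,1]$ some runner satisfies $\dist(s_i+v_it,\Z)\le\gamma$. For $\vv=(2,3,4,5,6,8)$ with $\ss=\tfrac1{30}(0,29,17,0,16,22)$ and $\gamma=\tfrac2{15}=\tfrac4{30}$, I would list the closed ``non-lonely'' intervals
\[
\left[\tfrac{k-s_i-\gamma}{v_i},\tfrac{k-s_i+\gamma}{v_i}\right], \qquad -1\le k\le v_i,
\]
as in technical detail 2 of Section~\ref{sec:heuristics}. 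There are $\sum v_i+O(n)$ of them, all with rational endpoints of denominator dividing $30\cdot\operatorname{lcm}(v_i)$. I would then check by the greedy sweep that they cover $[0,1]$. The same check with denominator $67$ handles $(1,2,3,4,5,6)$ with $\gamma=\tfrac9{67}$. This step is a finite exact rational verification, exactly what \lstinline|scripts/verify_solution.py| certifies, and it already suffices to conclude $\gamma^{\min}<\tfrac17$. That inequality is the content used in Theorem~\ref{thm:12345}.

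\emph{Lower bounds in (1) and (2), i.e.\ equality.} For $\gamma^{\min}(\vv)\ge\gamma$ I would use Corollary~\ref{coro:polytrope}(2). I would run the covering algorithm of Section~\ref{sec:algorithm} with closed certificates at this $\gamma$, on the domain $X$ reduced by Lemma~\ref{lem:startingdomain}. If it terminates with the list $L$ empty, this proves that the certificate polytropes $\overline T_{\kk,\gamma}$, with $k_i\in[-1,v_i-1]$, cover $X$; the lemma on termination guarantees finiteness. The list of certificates used, with their rational polytrope inequalities, constitutes the proof. A run at any slightly larger rational $\gamma'$ exhibits a point of type (a), and this is consistent with the upper bound above. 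The sharp value should be certified with exact arithmetic rather than floats, since equality is claimed. For that reason I would run the decision procedure in exact rationals at $\gamma$ itself. The Stern--Brocot search is then only needed to discover the value, not to prove it.

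\emph{Part (3).} I would enumerate all sorted primitive $\vv\in\Z_{>0}^6$ with $\sum v_i\le 90$; checking the count $493314$ is a routine enumeration. For each $\vv$ I would run the decision algorithm at $\gamma=\tfrac17$ once, classifying the answer as $>$, $=$ or $<$. I would record the $23$ vectors in the ``$<$'' class and the $21$ in the ``$=$'' class, and read off the largest sums. The main obstacle is this step: both the sheer number of instances and the need for the boundary case ``$=$'' to be decided correctly, which requires the closed-certificate switch of Remark~\ref{rem:closed_certificates} and exact arithmetic. I would handle it by first filtering cheaply with floating point at $\gamma=\tfrac17\pm\varepsilon$. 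I would then rerun only the $44$ borderline or failing vectors in exact rational arithmetic, producing explicit shifts for the ``$<$'' cases and full certificate coverings for the ``$=$'' cases.
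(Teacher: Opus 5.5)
Your proposal is correct and follows essentially the paper's own computational route. An explicit shift, certified by an exact interval-cover check, gives $\gamma^{\min}\le\gamma$; a closed-certificate covering of $X$ as in Corollary~\ref{coro:polytrope}(2) gives $\gamma^{\min}\ge\gamma$; and the census uses the same decision procedure, where comparing only against $\tfrac17$ instead of computing every $\gamma^{\min}$ is a harmless shortcut.

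There is one cosmetic slip in the index range. With $\ss\in[0,1)^n$ as given, the non-lonely intervals meeting $[0,1]$ are those with $0\le k\le v_i+1$; your range $-1\le k\le v_i$ is the one suited to the paper's convention $s_i\in[-1,0]$. This is harmless here: dropping intervals only makes the cover check stricter, the omitted $k=v_i+1$ intervals only meet $[1-\gamma/v_i,1]$, and runner~1 (with $s_1=0$ and the smallest speed) already covers that in both instances.
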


\begin{figure}[htb]
\centerline{\includegraphics[width=.5\textwidth]{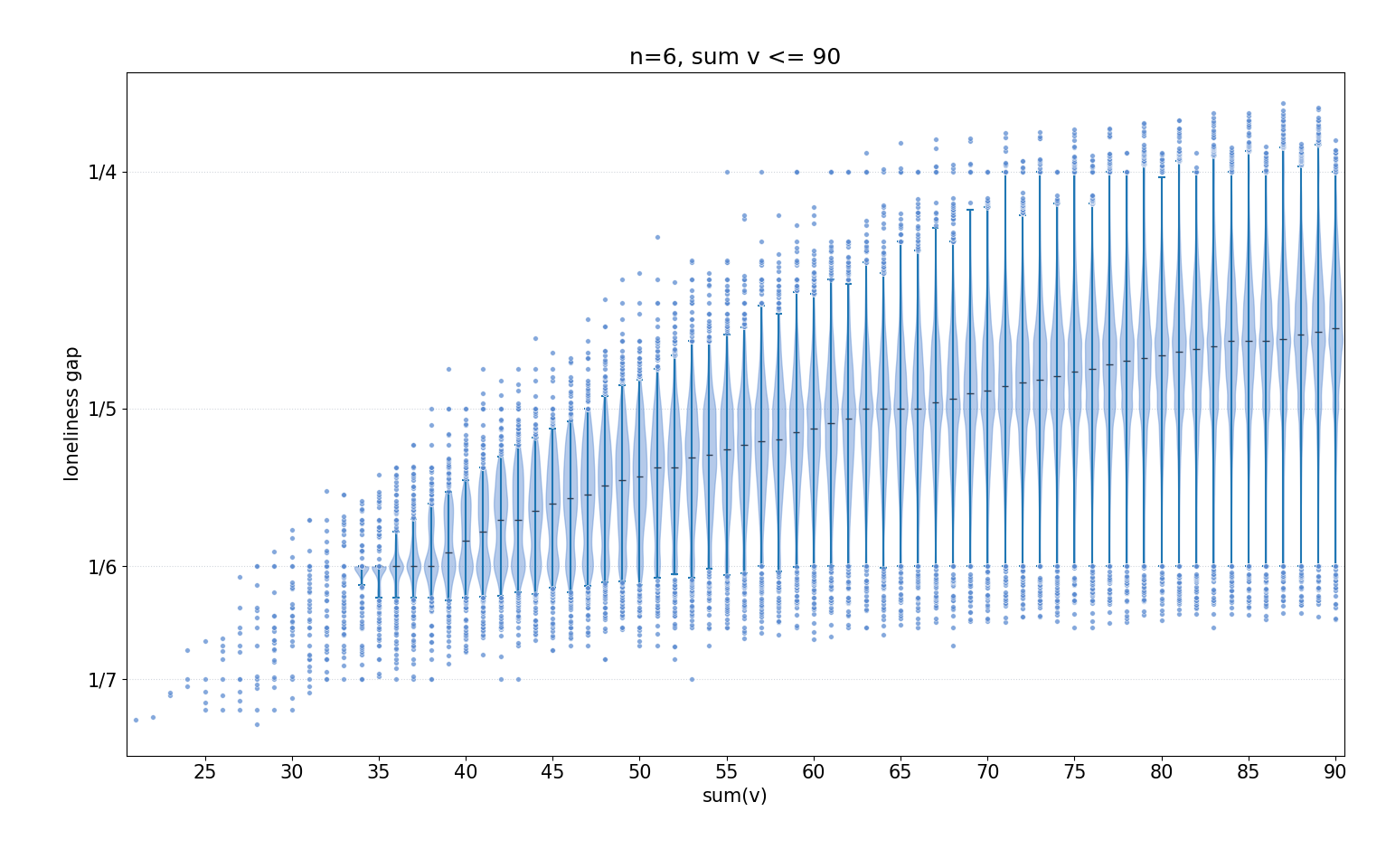}
\includegraphics[width=.5\textwidth]{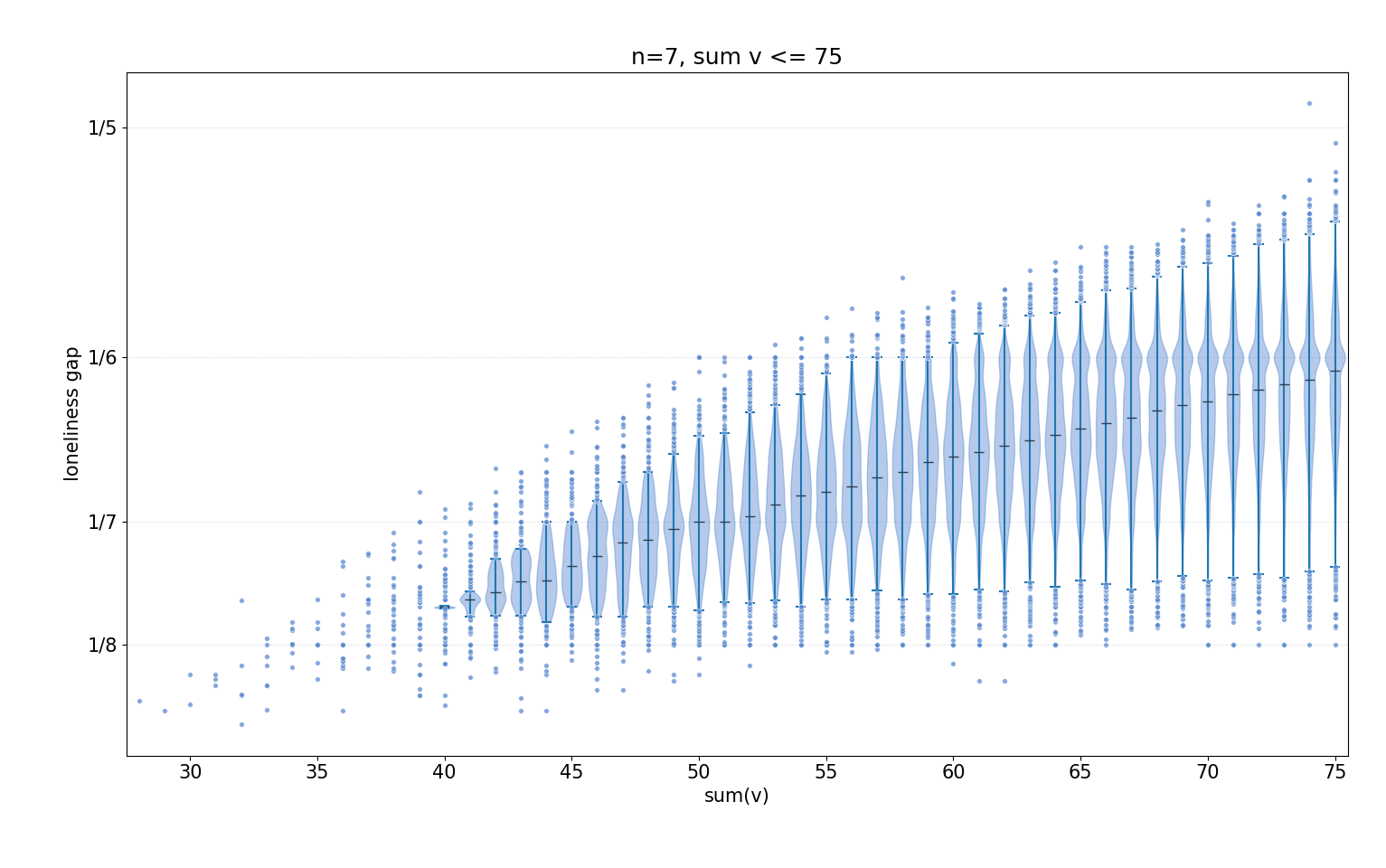}}
\caption{Same plots as in Figure~\ref{fig:n5-s100}, for $n=6,7$.}
\label{fig:n6-s90}
\label{fig:n7-s75}
\end{figure}

\begin{proposition}
\label{prop:counters7}
Among the 112501 sorted primitive vectors with $n=7$ and $\sum v_i \le 75$
there are  89  with $\gamma^{\min}(\vv)<1/8$ and $269$ with $\gamma^{\min}(\vv)=1/8$.
\end{proposition}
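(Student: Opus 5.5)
This is a computational statement, so the plan is to run the exact decision procedure of Section~\ref{sec:algorithm} on every vector in the finite list and record the outcomes. First I enumerate the vectors. These are the sorted primitive vectors $\vv\in\Z_{>0}^7$, meaning $v_1\le\dots\le v_7$ and $\gcd(\vv)=1$, with $\sum v_i\le 75$. Restricting to sorted vectors loses nothing, since $\gamma^{\min}$ is invariant under permuting entries. Dividing by the gcd also loses nothing, since rescaling time leaves $\gamma^{\min}$ unchanged. I generate the list by the standard recursion on partitions into at most seven positive parts and filter by the gcd condition. As a sanity check, the count should come out to $112501$.

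Next, for each $\vv$, I would not compute $\gamma^{\min}(\vv)$ exactly. It suffices to run the single-pass comparison with $\gamma=\tfrac18$ described after Remark~\ref{rem:closed_certificates}. The open-certificate phase either covers $X$, which gives $\gamma^{\min}(\vv)>\tfrac18$ by Corollary~\ref{coro:polytrope}(1), or it produces a point $\xx$ outside every open certificate polytrope $T_{\kk,1/8}$. In the second case the procedure switches to closed certificates. Then either $X$ is covered by the $\overline T_{\kk,1/8}$, giving $\gamma^{\min}(\vv)=\tfrac18$ by part (2), or a point outside all of them is found, giving $\gamma^{\min}(\vv)<\tfrac18$.

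Each outcome should come with a certificate checkable in exact rational arithmetic, as with \lstinline|scripts/verify_solution.py|. For $\gamma^{\min}<\tfrac18$, the certificate is a starting vector $\ss$ whose closed ``non-lonely'' intervals cover $[0,1]$ with slack, which is checked by the greedy interval-cover test. For $\gamma^{\min}\le\tfrac18$, it is such an $\ss$ with $\gamma=\tfrac18$ exactly. For $\gamma^{\min}\ge\tfrac18$, it is the list of $\kk$'s whose closed polytropes cover the domain of Lemma~\ref{lem:startingdomain}; by Lemma~\ref{lemma:polytrope}(2) only finitely many $\kk$ need to be tried. Tallying the outcomes then gives the counts $89$ and $269$.

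I expect the main obstacle to be the equality cases. To certify $\gamma^{\min}(\vv)=\tfrac18$ I need both a witness $\ss$ and a complete covering by closed polytropes. Boundary degeneracies there are delicate: lower-dimensional uncovered pieces are discarded, which is justified because a failure of part (2) of Corollary~\ref{coro:polytrope} leaves a full-dimensional uncovered set. So I would make sure that all arithmetic is exact over $\mathbb Q$, and that the interior-point choice, the barycenter of the min-vertices, guarantees progress at each step. Running time is a secondary concern, since termination is already proved and the cost grows like $\prod(v_i+1)$; the symmetry reduction of Lemma~\ref{lem:startingdomain} keeps it manageable.
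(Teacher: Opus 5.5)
\textbf{Gap: you enumerate the wrong set of vectors.} Your overall strategy is the same as the paper's, whose only support for this statement is the reported computation. That strategy is to run the $\gamma=\tfrac18$ decision procedure of Section~\ref{sec:algorithm} on every vector, first with open and then with closed certificates, using exact rational certificates, and tally the outcomes.

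You take ``sorted'' to mean $v_1\le\dots\le v_7$. But the shifted conjecture, Corollary~\ref{coro:loneliness-sLR} and Question~\ref{q:covering-sLRZ} only concern vectors with \emph{no repeated entries}, and the paper's lists consist of strictly increasing vectors. Proposition~\ref{prop:counters6} confirms this: $493314$ is exactly the number of strictly increasing primitive $\vv\in\Z_{>0}^6$ with $\sum v_i\le 90$.

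With weak inequalities your list has about $9.7\cdot10^5$ elements. It also contains vectors such as $(1,\dots,1)$, for which equispaced starting positions give $\gamma^{\min}\le\frac1{14}$ (see the remark after Question~\ref{q:covering}). Many other vectors with repeated entries presumably also have small $\gamma^{\min}$. So neither the size of your list nor the tallies $89$ and $269$ would come out.

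The fix is to enumerate partitions of $m\le 75$ into seven \emph{distinct} parts, then filter by $\gcd(\vv)=1$. One way is to subtract $(0,1,\dots,6)$ and enumerate partitions into exactly seven parts. Note that ``at most seven parts'', as you wrote, would also produce shorter vectors.

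\textbf{Do not rely on $112501$ as a sanity check.} Once you enumerate correctly, your check will still fail, for a different reason. Write $p_{\le 7}(j)$ for the number of partitions of $j$ into at most seven parts.
\begin{itemize}
\item The number of strictly increasing $\vv\in\Z_{>0}^7$ with $\sum v_i\le 75$ is $\sum_{j=0}^{47}p_{\le7}(j)=122595$.
\item Of these, $94$ have all entries even.
\item None has all entries divisible by an odd prime, since the minimal such sum is $3\cdot 28=84>75$.
\end{itemize}
This gives $122595-94=122501$ primitive vectors. The same counting reproduces the $n=6$ figure exactly, so the $112501$ in the statement is most likely a misprint.
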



We believe that our list of counterexamples for $n=6$ is complete, but we are not so sure for $n=7$  because in this case we observe a fast increase in the number of counterexamples, and some of them have $\sum_i v_i$ not far form the limits of our computations (see again Figure~\ref{fig:n7-s75}). 
In fact, we believe that the number of counterexamples explodes for large $n$, with most counterexamples having small volume.
We have also checked that 
\begin{proposition}
\label{prop:countersn}
$\gamma^{\min}(1,\dots,n) < 1/(n+1) $ for all $n\in [5,17]$.
\end{proposition}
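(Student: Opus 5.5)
To prove Proposition~\ref{prop:countersn} we only need an upper bound, so for each $n\in\{5,\dots,17\}$ I will exhibit a rational starting vector $\ss^{(n)}$ and certify $\gamma(\vv;\ss^{(n)})<\tfrac1{n+1}$ for $\vv=(1,2,\dots,n)$. Since $\gamma^{\min}(\vv)\le\gamma(\vv;\ss)$ for every $\ss$, this suffices. For $n=5,6$ the certificates are already given in Propositions~\ref{prop:counters5} and~\ref{prop:counters6}. For $n\ge 7$ I will run the decision algorithm of Section~\ref{sec:algorithm} with input $\vv=(1,\dots,n)$ and $\gamma=\tfrac1{n+1}$, restricting the initial domain as in Lemma~\ref{lem:startingdomain} (here $v_1=1$, so $x_2\in[0,\tfrac12]$). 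The algorithm must terminate in case (a), producing a point $\xx$ covered by no closed certificate polytrope $\overline T_{\kk,\gamma}$. By part (2) of Corollary~\ref{coro:polytrope} this yields $\gamma^{\min}(\vv)<\gamma$. I will then take $\ss=-(v_ix_i)_i$, rounding to a nearby rational point if needed. Rounding is harmless because the uncovered set is open.

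Each certificate is checked independently of the search. The key step is to verify, in exact rational arithmetic, that the $\sum_i v_i+n$ closed intervals
\[
\left[\tfrac{k_i-s_i-\gamma'}{v_i},\ \tfrac{k_i-s_i+\gamma'}{v_i}\right],\qquad -1\le k_i\le v_i-1,
\]
cover $[0,1]$ for some $\gamma'<\tfrac1{n+1}$. Here I take $\gamma'$ to be the minimum over the uncovered-time gaps, computed exactly. Lemma~\ref{lemma:polytrope}(2) shows that only these $k_i$ matter. I will check the covering with the greedy interval-cover sweep, which is what \lstinline|scripts/verify_solution.py| does. Its output is the exact value $\gamma(\vv;\ss)$: the maximum over $t$ of $\min_i\dist(s_i+v_it,\Z)$ is attained at a breakpoint where two runners are equidistant from the origin. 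So the plan is to enumerate all times $t\in[0,1]$ of the form $v_it+s_i\equiv\pm(v_jt+s_j)$ modulo $1$, together with the times when a runner is at $\tfrac12$, and to evaluate the piecewise-linear function there. This gives a finite rational computation of about $O(n^2\max v_i)$ candidate times per $n$.

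The main obstacle I expect is finding the certificates for the larger $n$, around $n=15$ to $17$. There the number of potential rounds $\prod(v_i+1)=(n+1)!$ is astronomically large, and the covering search may blow up before reaching case (a). To mitigate this, I would first try a local search directly on $\ss$: start from perturbations of the structured solutions found for small $n$, or from random rational $\ss$ with denominator near $n^2$, and hill-climb to minimize $\gamma(\vv;\ss)$ using the fast interval evaluation. The full polytrope algorithm would be invoked only as a fallback or to confirm exact values. The verification step itself is routine once an $\ss$ is in hand.
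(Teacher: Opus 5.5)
Your proposal is correct and follows the paper's route. The proposition rests on computation: the polytrope-covering algorithm of Section~\ref{sec:algorithm} is run on $\vv=(1,\dots,n)$ to obtain explicit starting positions $\ss$ (exact values of $\gamma^{\min}$ only up to $n=14$, upper-bound certificates for $n=15,16,17$), and $\gamma(\vv;\ss)<\tfrac1{n+1}$ is then checked in exact rational arithmetic, which your breakpoint enumeration of $t\mapsto\min_i\dist(s_i+v_it,\Z)$ does correctly. One small fix for the interval-cover variant: with $x_1=0$, the interval of runner~$1$ around $t=1$ needs $k_1=v_1$, which is outside the range $-1\le k_i\le v_i-1$, so either run the sweep on the circle $\R/\Z$ or allow $k_i=v_i$ (omitting intervals never makes the check unsound, but it can make it reject a valid certificate).
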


For $n \leq 14$ we have been able to compute the exact value of $\gamma^{\min}(1,\cdots, n)$,  shown in Table~\ref{table:consecutive}. For larger $n$ we can only certify an upper bound, but not the exact value. It is interesting to observe that for $n=9,\dots,14$ (and presumably for higher $n$) we have 
$
\gamma^{\min}(1,\cdots, n) < \frac{1}{n+2}.
$

\begin{table}[htb]
\setlength{\tabcolsep}{4pt}
\centering
\caption{Exact values of $\gamma^{\min}(1,\dots,n)$ obtained in our computations.}
\label{tab:gamma-initial-segments}
\footnotesize
\begin{tabular}{cccccccccccccc}
\toprule
\!$n$\! & 2 & 3 & 4 & 5 & 6 & 7 & 8 & 9 & 10 & 11 & 12 & 13 & 14 \\
\midrule
$\gamma^{\min}(1,\dots,n)$\quad\null & $\frac{1}{3}$ & $\frac{1}{4}$ & $\frac{1}{5}$ & $\frac{15}{94}$ & $\frac{9}{67}$ & $\frac{25}{214}$ & $\frac{1}{10}$ & $\frac{5}{58}$ & $\frac{27}{349}$ & $\frac{1}{14}$ & $\frac{6}{91}$ & $\frac{1}{16}$ & $\frac{71}{1228}$ \\
\midrule
 $\gamma^{\min}(1,\dots,n)^{-1}$ & ${3}$ & ${4}$ & ${5}$ & ${6.27}$ & ${7.44}$ & ${8.56}$ & ${10}$ & ${11.6}$ & ${12.93}$ & ${14}$ & ${15.17}$ & ${16}$ & ${17.30}$ \\
\bottomrule
\end{tabular}
\label{table:consecutive}
\end{table}

\bibliographystyle{amsplain}
\bibliography{mybib}

@book {Joswig-etc,
    AUTHOR = {Joswig, Michael},
     TITLE = {Essentials of tropical combinatorics},
    SERIES = {Graduate Studies in Mathematics},
    VOLUME = {219},
 PUBLISHER = {American Mathematical Society, Providence, RI},
      YEAR = {[2021] \copyright 2021},
     PAGES = {xx+398},
      ISBN = {978-1-4704-6653-4},
       DOI = {10.1090/gsm/219},
       URL = {https://doi-org.unican.idm.oclc.org/10.1090/gsm/219},
}

@article{ACS4slrz,
      title={Covering radii of $3$-zonotopes and the shifted Lonely Runner Conjecture}, 
      author={David Alcántara and Francisco Criado and Francisco Santos},
      year={2026},
      eprint={2506.13379},
      archivePrefix={arXiv},
      primaryClass={math.CO},
      journal={Experimental Mathematics,},
      volume={{\rm to appear}},
      url={https://arxiv.org/abs/2506.13379}, 
      doi={10.1080/10586458.2026.2651085},
      note = {\url{https://arxiv.org/abs/2506.13379}}
}

@misc{code_git,
      title={Shifted Lonely Runner Conjecture counterexample},
      author={Criado, Francisco},
      url={https://github.com/criado/shifted_lonely_runner_conjecture_counterexample},
      note = {\url{https://github.com/criado/shifted_lonely_runner_conjecture_counterexample}, code repository, 2026.}
}

@article{BetkeWills,
 Author = {Betke, Ulrich and Wills, J\"{o}rg M.},
 Title = {Untere {S}chranken f{\"u}r zwei diophantische {A}pproximations-{F}unktionen},
 FJournal = {Monatshefte f{\"u}r Mathematik},
 Journal = {Monatsh. Math.},
 Volume = {76},
 Pages = {214--217},
 Year = {1972},
 Language = {German}
}

@misc{Rosenfeld1,
      title={The lonely runner conjecture holds for eight runners}, 
      author={Matthieu Rosenfeld},
      year={2025},
      eprint={2509.14111},
      archivePrefix={arXiv},
      primaryClass={math.CO},
      url={https://arxiv.org/abs/2509.14111}, 
      note = {\url{https://arxiv.org/abs/2509.14111}}
}

@article{Trakulthongchai,
      title={Nine and ten lonely runners}, 
      author={Tanupat Trakulthongchai},
   JOURNAL = {Electron. J. Combin.},
  FJOURNAL = {Electronic Journal of Combinatorics},
    VOLUME = {{\rm to appear}},
      YEAR = {2026},
      eprint={2511.22427},
      archivePrefix={arXiv},
      primaryClass={math.CO},
      url={https://arxiv.org/abs/2511.22427}, 
      note = {\url{https://arxiv.org/abs/2511.22427}}
}

@misc{Rosenfeld2,
      title={The lonely runner conjecture holds for nine runners}, 
      author={Matthieu Rosenfeld},
      year={2025},
      eprint={2512.01912},
      archivePrefix={arXiv},
      primaryClass={cs.DM},
      url={https://arxiv.org/abs/2512.01912}, 
      note = {\url{https://arxiv.org/abs/2512.01912}}
}

@article {Jarnik2,
    AUTHOR = {Jarn\'ik, Vojt\v{e}ch},
     TITLE = {Zwei {B}emerkungen zur {G}eometrie der {Z}ahlen},
   JOURNAL = {V\v estn\'ik Kr\'alovsk\'e{} \v Cesk\'e{} Spole\v cnosti Nauk.
              T\v r\'ida Matemat.-P\v r\'irodov\v ed.},
  FJOURNAL = {V\v estn\'ik Kr\'alovsk\'e{} \v Cesk\'e{} Spole\v cnosti Nauk.
              T\v r\'ida Matemat.-P\v r\'irodov\v ed.},
    VOLUME = {1941},
      YEAR = {1941},
     PAGES = {12}
}

@article {Jarnik1,
    AUTHOR = {Jarn\'ik, Vojt\v{e}ch},
     TITLE = {Remarque \'a l'article precedent de {M}. {M}ahler},
   JOURNAL = {\v Casopis P\v est. Mat. Fys.},
  FJOURNAL = {\v Casopis P\v est. Mat. Fys.},
    VOLUME = {68},
      YEAR = {1939},
     PAGES = {103--111}
}

@article{BeckSchymura,
    author = {Beck, Matthias and Schymura, Matthias},
    title = {Deep Lattice Points in Zonotopes, Lonely Runners, and Lonely Rabbits},
    journal = {International Mathematics Research Notices},
    volume = {2024},
    number = {8},
    pages = {6553-6578},
    year = {2023},
    month = {10},
    issn = {1073-7928},
    doi = {10.1093/imrn/rnad232},
    url = {https://doi.org/10.1093/imrn/rnad232},
    eprint = {https://academic.oup.com/imrn/article-pdf/2024/8/6553/57274338/rnad232.pdf},
}

@book {BeckRobins,
    AUTHOR = {Beck, Matthias and Robins, Sinai},
     TITLE = {Computing the continuous discretely},
    SERIES = {Undergraduate Texts in Mathematics},
   EDITION = {Second},
      NOTE = {Integer-point enumeration in polyhedra,
              With illustrations by David Austin},
 PUBLISHER = {Springer, New York},
      YEAR = {2015},
     PAGES = {xx+285},
      ISBN = {978-1-4939-2968-9; 978-1-4939-2969-6},
       DOI = {10.1007/978-1-4939-2969-6},
       URL = {https://doi.org/10.1007/978-1-4939-2969-6},
}

@book {DLRS2010triangulations,
    AUTHOR = {De Loera, Jes\'us A. and Rambau, J\"org and Santos, Francisco},
     TITLE = {Triangulations},
    SERIES = {Algorithms and Computation in Mathematics},
    VOLUME = {25},
      NOTE = {Structures for algorithms and applications},
 PUBLISHER = {Springer-Verlag, Berlin},
      YEAR = {2010},
     PAGES = {xiv+535},
      ISBN = {978-3-642-12970-4},
       DOI = {10.1007/978-3-642-12971-1},
       URL = {https://doi.org/10.1007/978-3-642-12971-1},
}

@article{ betkehenkwills1993successive,
 Author = {Betke, Ulrich and Henk, Martin and Wills, J{\"o}rg M.},
 Title = {Successive-minima-type inequalities},
 FJournal = {Discrete \& Computational Geometry},
 Journal = {Discrete Comput. Geom.},
 Volume = {9},
 Number = {2},
 Pages = {165--175},
 Year = {1993}
}

@book{ gruber2007convex,
	title = {Convex and Discrete Geometry},
	author = {Gruber, Peter M.},
	publisher = {Springer-Verlag},
	address = {Berlin},
	year = {2007},
	volume = {336},
	series = {Grundlehren der Mathematischen Wissenschaften [Fundamental Principles of Mathematical Sciences]},
	pages = {xiv+578}
}

@article{perarnauserra2024thelonely,
title = {The Lonely Runner Conjecture turns 60},
journal = {Computer Science Review},
volume = {58},
pages = {100798},
year = {2025},
issn = {1574-0137},
doi = {https://doi.org/10.1016/j.cosrev.2025.100798},
url = {https://www.sciencedirect.com/science/article/pii/S1574013725000747},
author = {Guillem Perarnau and Oriol Serra}
}

@article{ girikravitz2023structurelonelyrunnerspectra,
title={The structure of Lonely Runner spectra}, 
DOI={10.1017/S0305004125101497}, 
journal={Mathematical Proceedings of the Cambridge Philosophical Society}, 
author={Vikram Giri and Noah Kravitz}, 
year={2025}, 
pages={1--19}}

@article{cslovjecsekmalikiosisnaszodischymura2022computing,
 Author = {Cslovjecsek, Jana and Malikiosis, Romanos Diogenes and Nasz{\'o}di, M{\'a}rton and Schymura, Matthias},
 Title = {Computing the covering radius of a polytope with an application to lonely runners},
 FJournal = {Combinatorica},
 Journal = {Combinatorica},
 Volume = {42},
 Number = {4},
 Pages = {463--490},
 Year = {2022}
}

@article{codenottisantosschymura2019the,
	author = {Codenotti, Giulia and Santos, Francisco and Schymura, Matthias},
	title = {The covering radius and a discrete surface area for non-hollow simplices},
   JOURNAL = {Discrete Comput. Geom.},
  FJOURNAL = {Discrete \& Computational Geometry. An International Journal of Mathematics and Computer Science},
	year = {2022},
	volume = {67},
	pages = {65--111},
	doi = {https://doi.org/10.1007/s00454-021-00330-3}
}

@Article{zonorunners,
 Author = {Henze, Matthias and Malikiosis, Romanos Diogenes},
 Title = {On the covering radius of lattice zonotopes and its relation to view-obstructions and the lonely runner conjecture},
 FJournal = {Aequationes Mathematicae},
 Journal = {Aequationes Math.},
 ISSN = {0001-9054},
 Volume = {91},
 Number = {2},
 Pages = {331--352},
 Year = {2017},
 Language = {English},
 DOI = {10.1007/s00010-016-0458-3}
}

@misc{willsthesis,
      title={Zwei Probleme der inhomogenen diophantischen Approximation}, 
      author={Wills, J{\"o}rg M.},
      year={1965},
      note = {PhD Thesis, TU Berlin}
}

@article {wills65,
    AUTHOR = {Wills, J{\"o}rg M.},
     TITLE = {Widerlegung einer {A}ussage von {E}. {B}orel \"uber diophantische {A}pproximationen},
   JOURNAL = {Math. Zeitschr.},
    VOLUME = {89},
      YEAR = {1965},
     PAGES = {411--413}
}

@article {wills67,
    AUTHOR = {Wills, J{\"o}rg M.},
     TITLE = {Zwei {S}\"atze \"uber inhomogene diophantische {A}pproximation von
{I}rrationalzahlen},
   JOURNAL = {Monatsh. Math.},
    VOLUME = {71},
      YEAR = {1967},
     PAGES = {263--269}
}

@article {willslrc,
    AUTHOR = {Wills, J{\"o}rg M.},
     TITLE = {Zur simultanen homogenen diophantischen {A}pproximation. {I}},
   JOURNAL = {Monatsh. Math.},
    VOLUME = {72},
      YEAR = {1968},
     PAGES = {254--263}
}

@article {willslrc2,
    AUTHOR = {Wills, J{\"o}rg M.},
     TITLE = {Zur simultanen homogenen diophantischen {A}pproximation. {II}},
   JOURNAL = {Monatsh. Math.},
    VOLUME = {72},
      YEAR = {1968},
     PAGES = {368--381}
}

@article {sixrunners,
    AUTHOR = {Bohman, Tom and Holzman, Ron and Kleitman, Dan},
     TITLE = {Six lonely runners},
   optNOTE = {In honor of Aviezri Fraenkel on the occasion of his 70th birthday},
   JOURNAL = {Electron. J. Combin.},
  FJOURNAL = {Electronic Journal of Combinatorics},
    VOLUME = {8},
      YEAR = {2001},
    NUMBER = {2},
     PAGES = {\#R3, 49 pp. (electronic)},
       URL = {http://www.combinatorics.org/Volume_8/Abstracts/v8i2r3.html}
}

@article {cusicksimultaneous,
    AUTHOR = {Cusick, T. W.},
     TITLE = {Simultaneous diophantine approximation of rational numbers},
   JOURNAL = {Acta Arith.},
  FJOURNAL = {Polska Akademia Nauk. Instytut Matematyczny. Acta Arithmetica},
    VOLUME = {22},
      YEAR = {1972},
     PAGES = {1--9},
      ISSN = {0065-1036},
       DOI = {10.4064/aa-22-1-1-9},
       URL = {https://doi-org.unican.idm.oclc.org/10.4064/aa-22-1-1-9},
}

@article {cusickviewob,
    AUTHOR = {Cusick, Thomas W.},
     TITLE = {View-obstruction problems},
   JOURNAL = {Aequationes Math.},
  FJOURNAL = {Aequationes Mathematicae},
    VOLUME = {9},
      YEAR = {1973},
     PAGES = {165--170}
}

@article { 7lrc,
    AUTHOR = {Barajas, Javier and Serra, Oriol},
     TITLE = {The lonely runner with seven runners},
   JOURNAL = {Electron. J. Combin.},
  FJOURNAL = {Electronic Journal of Combinatorics},
    VOLUME = {15},
      YEAR = {2008},
     PAGES = {\#48, 18 pp. (electronic)}
}

@article {tao,
  author 				   = {Tao, Terence},
  title 				   = {Some remarks on the lonely runner conjecture},
  journal 				   = {Contrib. Discrete Math.},
  Year                     = {2018},
  Number                   = {2},
  Pages                    = {1--31},
  Volume                   = {13}
}

@article{sLRC,
    AUTHOR = {Beck, Matthias and Ho\c{s}ten, Serkan and Schymura, Matthias},
     TITLE = {Lonely Runner  Polyhedra},
   JOURNAL = {Integers},
    VOLUME = {19},
     PAGES = {\#A29, 13 pp},
      YEAR = {2019}
}

@article {KannanLovasz,
    AUTHOR = {Kannan, Ravi and Lov\'asz, L\'aszl\'o},
     TITLE = {Covering minima and lattice-point-free convex bodies},
   JOURNAL = {Ann. of Math. (2)},
  FJOURNAL = {Annals of Mathematics. Second Series},
    VOLUME = {128},
      YEAR = {1988},
    NUMBER = {3},
     PAGES = {577--602},
       DOI = {10.2307/1971436},
       URL = {https://doi.org/10.2307/1971436}
}

@article {NamingLRC,
	AUTHOR = {Bienia, Wojciech and Goddyn, Luis and Gvozdjak, Pavol and
	Seb\H{o}, Andr\'as and Tarsi, Michael},
	TITLE = {Flows, view obstructions, and the lonely runner},
	JOURNAL = {J. Combin. Theory Ser. B},
	FJOURNAL = {Journal of Combinatorial Theory. Series B},
	VOLUME = {72},
	YEAR = {1998},
	NUMBER = {1},
	PAGES = {1--9},
	DOI = {10.1006/jctb.1997.1770},
	URL = {https://doi.org/10.1006/jctb.1997.1770}
}

@article {Schoenberg76,
	AUTHOR = {Schoenberg, Isaac J.},
	TITLE = {Extremum problems for the motions of a billiard ball. {II}.
	{T}he {$L\sb{\infty }$} norm},
	NOTE = {Nederl. Akad. Wetensch. Proc. Ser. A {\bf 79}},
	JOURNAL = {Indag. Math.},
	FJOURNAL = {},
	VOLUME = {38},
	YEAR = {1976},
	NUMBER = {3},
	PAGES = {263--279}
}

@article{Malikiosis2024LinExpCheckingLRC,
    title = {Linearly-exponential checking is enough for the Lonely Runner Conjecture and some of its variants},
    author = {Romanos Diogenes Malikiosis and Francisco Santos and Matthias Schymura},
    volume={13}, 
    DOI={10.1017/fms.2025.10107}, 
    journal={Forum of Mathematics, Sigma}, 
    year = {2025},
    ISSN = {2050-5094},
    publisher = {Cambridge University Press},
    pages={e164}
}

@book{Erickson-algorithms,
  AUTHOR    = {Erickson, Jeff},
  TITLE     = {Algorithms},
  PUBLISHER = {Self-published},
  YEAR      = {2019},
  URL       = {https://jeffe.cs.illinois.edu/teaching/algorithms/#book},
  NOTE      = {Available at \url{https://jeffe.cs.illinois.edu/teaching/algorithms/\#book}},
}

@book{Halim-CP4,
  AUTHOR    = {Halim, Steven and Halim, Felix and Effendy, Suhendry},
  TITLE     = {Competitive Programming 4, Book 1},
  PUBLISHER = {Lulu},
  YEAR      = {2020},
}

\end{document}